\numberwithin{equation}{section}
\numberwithin{figure}{section}
\newtheorem{theorem}{Theorem}[section]
\newtheorem{remark}[theorem]{Remark}
\newtheorem{lemma}[theorem]{Lemma}
\newtheorem{proposition}[theorem]{Proposition}
\newtheorem{corollary}[theorem]{Corollary}
\newtheorem{definition}[theorem]{Definition}
\let\C\relax
\newcommand{\C}{\mathbf{C}}
\newcommand{\D}{\mathbf{D}}
\newcommand{\h}{\mathbf{H}}
\newcommand{\N}{\mathbf{N}}
\newcommand{\Z}{\mathbf{Z}}
\newcommand{\p}{\mathbf{P}}
\newcommand{\R}{\mathbf{R}}
\newcommand{\FQ}{\mathfrak{Q}}
\newcommand{\CA}{\mathcal {A}}
\newcommand{\CC}{\mathcal {C}}
\newcommand{\CE}{\mathcal {E}}
\newcommand{\CF}{\mathcal {F}}
\newcommand{\CI}{\mathcal {I}}
\newcommand{\CJ}{\mathcal {J}}
\newcommand{\CK}{\mathcal {K}}
\newcommand{\CL}{\mathcal {L}}
\newcommand{\CP}{\mathcal {P}}
\newcommand{\CX}{\mathcal {X}}
\newcommand{\SLE}{{\rm SLE}}
\newcommand{\CLE}{{\rm CLE}}
\newcommand{\dist}{\mathrm{dist}}
\newcommand{\supp}{\mathrm{supp }}
\newcommand{\one}{{\bf 1}}
\newcommand{\giv}{\,|\,}
\newcommand{\wt}{\widetilde}
\newcommand{\wh}{\widehat}
\newcommand{\ol}{\overline}
\newcommand{\ul}{\underline}
\newcommand{\quant}[3][]{{{\mathfrak q}}_{#3}\if\relax\detokenize{#1}\relax\else^{#1}\fi(#2)}
\newcommand{\median}[2][]{{\mathfrak m}_{#2}\if\relax\detokenize{#1}\relax\else^{#1}\fi}
\newcommand{\mediant}[2][]{\wt{\mathfrak m}_{#2}\if\relax\detokenize{#1}\relax\else^{#1}\fi}
\newcommand{\Fd}{\mathfrak d}
\newcommand{\met}[3]{\Fd(#1,#2;#3)}
\newcommand{\metres}[4]{\Fd^{#1}(#2,#3;#4)}
\newcommand{\geoexp}{{\alpha_{\mathrm{g}}}}
\newcommand{\FR}{{\mathfrak R}}
\newcommand{\rmet}[3]{\FR(#1,#2;#3)}
\newcommand{\rmett}[3]{\wt{\FR}(#1,#2;#3)}
\newcommand{\rmetres}[4]{\FR^{#1}(#2,#3;#4)}
\newcommand{\rmettres}[4]{\wt{\FR}^{#1}(#2,#3;#4)}
\newcommand{\rmetapprox}[4]{\FR_{#1}(#2,#3;#4)}
\newcommand{\rmetapproxres}[5]{\FR_{#1}^{#2}(#3,#4;#5)}
\newcommand{\resexp}{{\alpha_{\mathrm{r}}}}
\newcommand{\meas}[2]{\mu(#1; #2)}
\newcommand{\clebm}{X}
\newcommand{\clebmlaw}[1]{\mathrm{P}_{#1}}
\newcommand{\rfdomain}[1]{\CF_{#1}}
\newcommand{\rfdomainres}[2]{\CF_{#1,#2}}
\newcommand{\trfdomainres}[2]{\wt{\CF}_{#1,#2}}
\newcommand{\rform}[3]{\CE(#1,#2;#3)}
\newcommand{\rformres}[4]{\CE^{#1}(#2,#3;#4)}
\newcommand{\rformtr}[4]{\CE|_{#1}(#2,#3;#4)}
\newcommand{\rformrestr}[5]{\CE^{#1}|_{#2}(#3,#4;#5)}
\newcommand{\trform}[3]{\wt{\CE}(#1,#2;#3)}
\newcommand{\trformres}[4]{\wt{\CE}^{#1}(#2,#3;#4)}
\newcommand{\trformrestr}[5]{\wt{\CE}^{#1}|_{#2}(#3,#4;#5)}
\newcommand*{\graph}{\mathfrak{G}}
\newcommand*{\vertexset}{\mathfrak{V}}
\newcommand*{\edgeset}{\mathfrak{E}}
\newcommand*{\approxgraph}[2][]{\graph_{#2}\if\relax\detokenize{#1}\relax\else^{#1}\fi}
\newcommand*{\approxvtcs}[2][]{\vertexset_{#2}\if\relax\detokenize{#1}\relax\else^{#1}\fi}
\newcommand*{\approxedges}[2][]{\edgeset_{#2}\if\relax\detokenize{#1}\relax\else^{#1}\fi}
\newcommand{\bIn}{\partial_{\mathrm{in}}}
\newcommand{\bOut}{\partial_{\mathrm{out}}}
\DeclarePairedDelimiter\abs{\lvert}{\rvert}
\newcommand*{\defeq}{\mathrel{\mathop:}=}
\newcommand*{\mmiddle}[1]{\mathrel{}\middle#1\mathrel{}}
\newcommand*{\Fill}{\operatorname{fill}}
\newcommand*{\sle}[1]{$\SLE_{#1}$}
\newcommand*{\slek}{\sle{\kappa}}
\newcommand*{\slekp}{\sle{\kappa'}}
\newcommand*{\slekr}[1]{$\SLE_{\kappa}(#1)$}
\newcommand*{\slekpr}[1]{$\SLE_{\kappa'}(#1)$}
\newcommand*{\cle}[1]{$\CLE_{#1}$}
\newcommand*{\clek}{\cle{\kappa}}
\newcommand*{\clekp}{\cle{\kappa'}}
\newcommand{\markeddomain}[1]{{\mathfrak D}_{#1}}
\newcommand{\eldomain}[1]{{\mathfrak D}_{#1}^{\mathrm{ext}}}
\newcommand{\mcclelaw}[1]{\p_{(#1)}^{\CLE_{\kappa}}}
\newcommand{\domainpair}[1]{{\mathfrak {P}}_{#1}}
\newcommand{\outside}{{\mathrm{out}}}
\newcommand{\inside}{{\mathrm{in}}}
\newcommand{\resampled}{{\mathrm{res}}}
\newcommand*{\metregions}[1][]{\mathfrak{C}\if\relax\detokenize{#1}\relax\else_{#1}\fi}
\newcommand*{\cserial}{c_{\mathrm{s}}}
\newcommand*{\cparallel}{c_{\mathrm{p}}}
\newcommand*{\ac}[1]{\mathbf{a}_{#1}}
\newcommand*{\distE}{\operatorname{dist_E}}
\newcommand*{\diamE}{\operatorname{diam_E}}
\newcommand*{\dpathY}[1][]{d_{\mathrm{path}}\if\relax\detokenize{#1}\relax\else^{#1}\fi}
\newcommand{\Bpath}{B_{\mathrm{path}}}
\newcommand{\projmap}[1]{\Pi_{#1}}
\newcommand{\gasketspace}[1]{\CX_{#1}}
\newcommand*{\paths}[4]{P(#1,#2;#3;#4)}
\newcommand{\lmet}[1]{L_{\Fd}(#1)}
\newcommand*{\dsle}{{d_{\SLE}}}
\newcommand*{\double}{{\mathrm{dbl}}}
\newcommand*{\ddouble}{{d_{\double}}}
\newcommand*{\angledouble}{{\theta_{\double}}}
\newcommand{\dcle}{d_\CLE}
\newcommand*{\rateexp}{c_0}
\title[The Brownian motion in non-simple CLE gaskets]{Existence and uniqueness of the canonical Brownian motion in non-simple conformal loop ensemble gaskets}
\author{Jason Miller}
\author{Yizheng Yuan}
\address{Department of Pure Mathematics and Mathematical Statistics, University of Cambridge}
\address{Faculty of Mathematics, University of Vienna}
\date{\today}
\begin{document}

\begin{abstract}
We construct the canonical Brownian motion on the gasket of conformal loop ensembles (CLE$_\kappa$) for $\kappa \in (4,8)$ (which is the range of parameter values in which loops of the CLE$_\kappa$ can intersect themselves, each other, and the domain boundary). More precisely, we show that there is a unique diffusion process on the CLE$_\kappa$ gasket whose law depends locally on the CLE$_\kappa$ and satisfies certain natural properties such as translation-invariance and scale-invariance (modulo time change). We characterize the diffusion process by its resistance form and show in particular that there is a unique resistance form on the CLE$_\kappa$ gasket that is locally determined by the CLE$_\kappa$ and satisfies certain natural properties such as translation-invariance and scale-covariance. We conjecture that the CLE$_\kappa$ Brownian motion describes the scaling limit of simple random walk on statistical mechanics models in two dimensions that converge to CLE$_\kappa$. In future work the results of this paper will be used to show that this is the case with $\kappa=6$ for critical percolation on the triangular lattice.
\end{abstract}

\maketitle

\setcounter{tocdepth}{1}

\tableofcontents

\parindent 0 pt
\setlength{\parskip}{0.20cm plus1mm minus1mm}

\section{Introduction}
\label{sec:intro}

\subsection{Overview}

The conformal loop ensembles (\clek{}) are canonical models for conformally invariant random planar fractals, indexed by a parameter $\kappa \in [8/3,8]$. They were introduced in \cite{s2009cle,sw2012cle} and are conjectured (and in some cases proved) to describe the scaling limits of a number of critical statistical mechanics models on two-dimensional lattices (such as critical percolation, the Ising model, the uniform spanning tree, and conjecturally the FK models and loop $O(n)$ models). Some convergence results have been established in \cite{bh2019ising,cn2006cle,ks2019fkising,lsw2004lerw}. In the setting of random lattices convergence results have been proved in \cite{s2016hc,gm2021percolation} using the framework developed in \cite{s2016zipper,dms2021mating}.

The goal of this paper is to construct the conjectural scaling limit of the simple random walk on critical models converging to \clek{} for $\kappa \in (4,8)$, giving rise to the \clek{} Brownian motion. The particular case of critical percolation, corresponding to $\kappa=6$, has a long history. It was popularized by de Gennes in 1976 \cite{dg-ant} and he named it the ``Ant in the Labyrinth'' problem. In the forthcoming work \cite{dmmy2025percolation} we will prove that the simple random walk on critical percolation on the triangular lattice converges to the \cle{6} Brownian motion by showing that the associated resistance metrics converge in the scaling limit using the results of the present paper and invoking the main result of \cite{c2018scalinglimits}.

Each \clek{} is a random collection of loops that do not cross themselves and each other. These loops describe the (conjectural) scaling limits of the interfaces in the aforementioned lattice models. The gaskets of a \clek{} are the sets of points connected by paths not crossing any loop, and describe the (conjectural) scaling limits of the clusters in the aforementioned lattice models. They are fractal sets whose dimension is (see \cite{ssw2009radii,msw2014dimension})
\begin{equation}
\label{eqn:cle_dim}
\dcle = 2 - \frac{(8-\kappa)(3\kappa-8)}{32\kappa} = 1 + \frac{2}{\kappa} + \frac{3\kappa}{32} .
\end{equation}
The trivial case \cle{8/3} is the empty collection of loops, and \cle{8} consists of a single space-filling loop. When $\kappa \in (8/3,8)$, \clek{} consists of a countable collection of loops. In the regime $\kappa \in (8/3,4]$ the loops are simple, do not intersect each other, or the domain boundary, while for $\kappa \in (4,8)$ the loops are self-intersecting, intersect each other, and the domain boundary. We note that it can be natural to consider either the nested or non-nested variants of $\CLE_\kappa$. In the former, inside of each loop one has another $\CLE_\kappa$ while the latter consists of just the outermost loops of a nested $\CLE_\kappa$.

In order to construct and characterize the \clek{} Brownian motion, it suffices to construct and characterize a Dirichlet form defined on the $\CLE_{\kappa}$ gasket. On fractals with dimension strictly smaller than~$2$ where the Brownian motion is recurrent in a strong sense, its Dirichlet form is typically given by a \emph{resistance form}. We will therefore construct and characterize the \clek{} Brownian motion using its resistance form. The definition of a resistance form is recalled in Definition~\ref{def:rform_definition}.

To each resistance form there is an associated \emph{resistance metric} and vice versa. We will review this in Section~\ref{se:rmet_rform}. We explain this correspondence in the case of finite graphs. Suppose that $G = (V,E)$ is a simple, finite, connected, undirected graph, and $w \colon E \to (0,\infty)$ is a collection of edge weights. For each function $f\colon V \to \R$ define
\[
 \CE(f,f) = \sum_{\{x,y\} \in E} w(x,y)(f(x)-f(y))^2 ,
\]
and for each $x,y \in V$ distinct define
\begin{equation}\label{eq:reff}
 R(x,y) = \bigl( \min\{ \CE(f,f) \mid f\colon V \to \R,\, f(x)=1,\, f(y)=0 \} \bigr)^{-1} .
\end{equation}
Then $\CE$ defines a symmetric quadratic form on the set of real-valued functions on $V$, called the \emph{Dirichlet form} on the network, and $R$ defines a metric on $V$, called the \emph{effective resistance} on the network. It turns out that each of $w$, $\CE$, and $R$ determine the others \cite[Section~2.1]{k2001analysis}. Further, given a finite measure $\mu$ on $V$, there is a natural Markov process given as follows. For each $x \in V$, let $\mu_0(x) = \sum_{y \sim x} w(x,y)$. At vertex $x$, the process jumps at an exponentially distributed time with rate $\mu_0(x)/\mu(x)$, and it jumps to $y$ with probability $w(x,y)/\mu_0(x)$. This Markov process is symmetric with respect to the measure $\mu$ (see \cite[Section~2.2.1]{cf2012dform}), and is called the Hunt process associated with the Dirichlet form $\CE$ on $L^2(\mu)$. We note that the measure $\mu$ only determines the rate of the jumps, whereas the jump probabilities are determined by $\CE$ (equivalently $R$).

The theory of resistance forms and resistance metrics developed in \cite{k2001analysis} generalizes this notion to continuum spaces. Each resistance form on a space~$F$ is a symmetric quadratic form on the space of real-valued functions on $F$, and is in one-to-one correspondence with a resistance metric given by the same identity~\eqref{eq:reff}. If we are given a finite Borel measure $\mu$ on $F$, we get a $\mu$-symmetric Markov process associated to the resistance form. See Section~\ref{se:rmet_rform} for a review.

Therefore, to construct and characterize a Markov process in the \clek{} gasket, it suffices to construct and characterize a resistance form and a Borel measure. The canonical measure on the \clek{} gasket was constructed in \cite{ms2022clemeasure} and is characterized as the unique measure that is locally determined by the \clek{} and conformally covariant (see Section~\ref{se:cle_measure} for a review). The main aim in this paper is to construct and characterize the canonical resistance metric on the \clek{} gasket. Once we have this, we obtain the canonical \clek{} Brownian motion as its associated Markov process, and we will prove an equivalent characterization of the \clek{} Brownian motion.

As alluded to above, it is shown in \cite{c2018scalinglimits} that convergence of resistance metric spaces equipped with measures implies convergence of the associated Markov processes. The \clek{} gasket measure is the conjectural scaling limit of the uniform measure on the cluster of a lattice model converging to \clek{}. We further conjecture that the \clek{} resistance metric is the scaling limit of the effective resistance metric on the cluster. Therefore it is natural to conjecture that the simple random walk on the lattice model converges to the \clek{} Brownian motion constructed in this paper, and to establish this using \cite{c2018scalinglimits} it suffices to prove the convergence of the resistance metric associated with the simple random walk equipped with the counting measure on the discrete clusters to the resistance metric we construct here together with the \clek{} measure.

\subsection{Main results}
\label{subsec:main_results}

Throughout the paper we let $\kappa' \in (4,8)$ be fixed. We let $\dcle$ be the dimension of the \clekp{} gasket given by~\eqref{eqn:cle_dim}, and
\begin{align}
 \ddouble &= 2 - \frac{(12-\kappa')(4+\kappa')}{8\kappa'} ,\label{eq:ddouble}\\
 \dsle &= 1+\frac{2}{\kappa'} ,\label{eq:dsle}
\end{align}
respectively be the double point dimension of \slekp{} \cite{mw2017intersections} and the outer boundary dimension \cite{b2008dimension}.

In order to start to state our main results, we need to be more precise regarding on which space the resistance metric and resistance form will live.  We first note that the \clekp{} gasket (as a set in the plane) consists of points some of which correspond to multiple ``prime ends''.  We will consider the space that encodes these ``prime ends'' and the manner in which the associated metric space is drawn in the plane. We use the same setup as in \cite{amy2025tightness} which we now briefly review.

For each $U \subseteq \C$, $x,y \in U$, and collection of loops $\Sigma$ we let $\paths{x}{y}{U}{\Sigma}$ denote the set of paths in $U$ from $x$ to $y$ that do not cross any of the loops in $\Sigma$.

Suppose that $\CL$ is a (random) non-self-crossing loop in $\C$, let $C$ denote the set of points that are inside $\CL$ (i.e.\ with winding number $1$), and let $\Gamma_C$ be a conditionally independent (non-nested) \clekp{} in each of the connected components of $C$. We write $\Gamma = \{\CL\} \cup \Gamma_C$. We define the gasket $\Upsilon_\Gamma$ of $\Gamma_C$ as the metric space $(\gasketspace{\Gamma},\dpathY)$ equipped with an embedding $\projmap{\Gamma}\colon \gasketspace{\Gamma} \to \C$ as follows. Let $\wt{\Upsilon}_\Gamma \subseteq C$ be the set of points in $C$ that do not lie on or inside any loop of $\Gamma_C$. For each $x,y \in \wt{\Upsilon}_\Gamma$ define
\begin{equation}\label{eq:dpath}
 \dpathY(x,y) = \inf\{ \diamE(\gamma) : \gamma \in \paths{x}{y}{\ol{C}}{\Gamma} \}
\end{equation}
where $\diamE$ denotes the diameter with respect to the Euclidean metric. We let $(\gasketspace{\Gamma},\dpathY)$ be the metric space completion of $(\wt{\Upsilon}_\Gamma, \dpathY)$, and let $\projmap{\Gamma}\colon \gasketspace{\Gamma} \to \C$ be the continuous extension of the natural embedding map. We let $\Upsilon_\Gamma$ denote the tuple $(\gasketspace{\Gamma},\dpathY,\projmap{\Gamma})$ and (with a slight abuse of notation) identify its points with their images in $\C$ under the embedding $\projmap{\Gamma}$.

Let $\metregions$ be the collection of regions $V \subseteq C$ such that there exists a finite collection of loops $\CL_1,\ldots,\CL_n \in \Gamma_C$ such that $V$ is a union of connected components of
$C \setminus (\CL_1 \cup \cdots \cup \CL_n)$ that are not inside the loops $\CL_1,\ldots,\CL_n$ and such that the completion $\ol{V}$ of $V$ with respect to the metric
\[ d_{\ol{V}}(x,y) = \inf\{ \diamE(\gamma) : \gamma \in \paths{x}{y}{\ol{V}}{\{\CL,\CL_1,\ldots,\CL_n\}} \} \]
is simply connected. Let
\[
 \dpathY[\ol{V}](x,y) = \inf\{ \diamE(\gamma) : \gamma \in \paths{x}{y}{\ol{V}}{\Gamma} \},
 \quad x,y \in \ol{V} \cap \Upsilon_\Gamma .
\]
It is explained in \cite[Lemma~1.10]{amy2025tightness} that $(\ol{V} \cap \Upsilon_\Gamma, \dpathY[\ol{V}])$ is compact.\footnote{Note that $\dpathY[\ol{V}](x,y) = \dpathY(x,y)$ for $\abs{x-y} < \min_i \diamE(\CL_i)$ where $\CL_1,\ldots,\CL_n$ is from the definition of $\metregions$.}

For each open, simply connected $U \subseteq \C$, let $\Gamma_{U^*} \subseteq \Gamma_C$ be the collection of loops that are entirely contained in $U$, and let $U^* \subseteq U \cap C$ be the set of points that are not on or inside any loop of $\Gamma_C\setminus\Gamma_{U^*}$. We will (with a slight abuse of notation) view $U^*$ as the metric space equipped with the metric
\[ d_{U^*}(x,y) = \inf\{ \diamE(\gamma) : \gamma \in \paths{x}{y}{U}{\Gamma\setminus\Gamma_{U^*}} \} . \]
In other words, we include the information how the connected components of $U^*$ are linked together. We further write $\metregions[U] = \{ V \in \metregions : \ol{V} \subseteq U \}$.

Our first main result is a characterization of the Brownian motion on the \clekp{} gasket in terms of its resistance form. We recall that a resistance form $(\CE,\CF)$ is called \emph{regular} if $\CF \cap C_c(F,\FR)$ is dense in $C_c(F,\FR)$ with respect to the supremum norm where $(F,\FR)$ is the resistance metric space associated with $(\CE,\CF)$ and $C_c(F,\FR)$ denotes the space of compactly supported continuous functions on $(F,\FR)$. Together with a measure~$\mu$ whose support is $F$, it gives rise to a regular Dirichlet form on $L^2(F,\mu)$ and thus a $\mu$-symmetric Hunt process (see \cite[Theorem~9.4]{k2012resistance}). A resistance form is called \emph{local} if $\CE(f,g) = 0$ for each $f,g \in \CF$ with $\supp_\FR(f) \cap \supp_\FR(g) = \emptyset$. It is then \emph{strongly local} as a Dirichlet form which means that the Hunt process has continuous sample paths, i.e.\ it is a \emph{diffusion} (see \cite[Theorem~4.3.4]{cf2012dform}).

\begin{figure}[ht]
\centering
\includegraphics[width=0.5\textwidth]{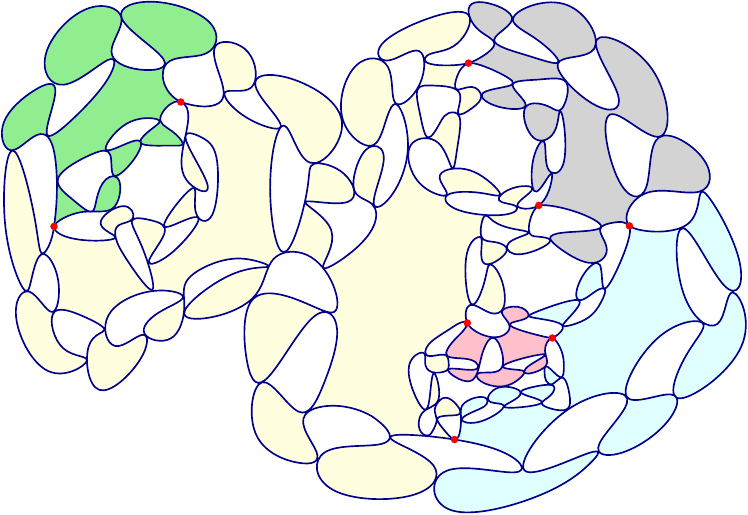}
\caption{Illustration of the locality property of the \clekp{} resistance form. The energy of $f$ is given by the sum of the energies of the restrictions of $f$ to the subregions (indicated with different colors), and the energy of $f$ in each subregion is locally determined by the \clekp{}. The subregions are connected at a finite number of points (indicated by red dots).}
\label{fi:cle_rf_locality}
\end{figure}

\begin{definition}
\label{def:cle_rform}
Suppose that we have the setup described just above. We say that a resistance form $(\rform{\cdot}{\cdot}{\Gamma}, \rfdomain{\Gamma})$ on $\Upsilon_\Gamma$ is a \clekp{} resistance form if the following holds. For each $V \in \metregions$, there exists a resistance form $(\rformres{V}{\cdot}{\cdot}{\Gamma}, \rfdomainres{V}{\Gamma})$ on $\ol{V} \cap \Upsilon_\Gamma$  such that the following properties are satisfied.
\begin{enumerate}[I.]
\item\label{it:rform_resistance} $(\rform{\cdot}{\cdot}{\Gamma}, \rfdomain{\Gamma})$ is a local, regular resistance form, and its associated resistance metric $\rmet{\cdot}{\cdot}{\Gamma}$ is topologically equivalent to $\dpathY$.
\item\label{it:rform_additive} Let $V_1,\ldots,V_n \in \metregions$, $n \in \N$, such that $V_i \cap V_j = \emptyset$ for $i \neq j$ and $\bigcup_i \ol{V_i} \cap \Upsilon_\Gamma = \Upsilon_\Gamma$. Then
 \begin{align*}
  \rfdomain{\Gamma} &= \{ f : f|_{\ol{V_i} \cap \Upsilon_\Gamma} \in \rfdomainres{V_i}{\Gamma} \text{ for each } i \} ,\\
  \text{and}\quad
  \rform{f}{f}{\Gamma} &= \sum_i \rformres{V_i}{f|_{\ol{V_i} \cap \Upsilon_\Gamma}}{f|_{\ol{V_i} \cap \Upsilon_\Gamma}}{\Gamma} .
 \end{align*}
 See Figure~\ref{fi:cle_rf_locality}.
\item\label{it:rform_compatibility} If $V,V' \in \metregions$, $V \subseteq V'$, and $f\colon \ol{V'} \cap \Upsilon_\Gamma \to \R$ is a continuous function that is constant on each $\dpathY$-connected component of $\ol{V'} \cap \Upsilon_\Gamma \setminus \ol{V}$, then
\begin{align*}
 & f \in \rfdomainres{V'}{\Gamma} \text{ if and only if } f|_{\ol{V} \cap \Upsilon_\Gamma} \in \rfdomainres{V}{\Gamma} \\
 \text{and}\quad & \rformres{V'}{f}{f}{\Gamma} = \rformres{V}{f}{f}{\Gamma} .
\end{align*}
\item\label{it:rform_determined} For each open, simply connected $U \subseteq \C$, the collection $(\rformres{V}{\cdot}{\cdot}{\Gamma})_{V \in \metregions[U]}$ is a.s.\ determined by $(\Gamma_{U^*}, \dpathY[U])$.
\item\label{it:rform_translation_invariant} Let $U \subseteq \C$ be open, simply connected, and $z \in \C$. Then
\[ (\rformres{V}{\cdot}{\cdot}{\Gamma_{U^*}})_{V \in \metregions[U]} = (\rformres{V+z}{T_{z}\cdot}{T_{z}\cdot}{\Gamma_{U^*}+z})_{V \in \metregions[U]} \quad\text{a.s.,} \]
where $T_{z} f = f(\cdot -z)$ denotes translation by $z$.
\item\label{it:rform_scale_covariant} There exists a constant $\resexp > 0$ so that the following holds. Let $U \subseteq \C$ be open, simply connected, and $\lambda > 0$. Then
\[ (\rformres{V}{\cdot}{\cdot}{\Gamma_{U^*}})_{V \in \metregions[U]} = (\lambda^{\resexp}\rformres{\lambda V}{S_{\lambda}\cdot}{S_{\lambda}\cdot}{\lambda\Gamma_{U^*}})_{V \in \metregions[U]} \quad\text{a.s.,} \]
where $S_{\lambda} f = f(\lambda^{-1}\cdot)$.
\end{enumerate}
\end{definition}

Condition~\ref{it:rform_additive} can be interpreted as giving that the space $(\Upsilon_\Gamma, \rmet{\cdot}{\cdot}{\Gamma})$ is obtained by ``gluing'' the spaces $(\ol{V_i} \cap \Upsilon_\Gamma, \rmetres{V_i}{\cdot}{\cdot}{\Gamma})$, see Lemma~\ref{le:weights_gluing}. We will see in Proposition~\ref{pr:cle_rmet_char} that the forms $\rformres{V}{\cdot}{\cdot}{\Gamma}$, $V \in \metregions$, are determined by the form $\rform{\cdot}{\cdot}{\Gamma}$.

To give the existence and uniqueness theorem, we start by considering the following particular setup. Let $\Gamma_\D$ be a nested $\CLE_{\kappa'}$ on $\D$, let $\CL$ be the outermost loop of $\Gamma$ such that $0$ is inside $\CL$, and let $C$ be the regions inside $\CL$. We let $\Gamma_C$ be the loops of $\Gamma_\D$ contained in $\ol{C}$ and let $\Gamma = \{ \CL \} \cup \Gamma_C$. We equip the gasket $\Upsilon_\Gamma$ of $\Gamma_C$ with the metric $\dpathY$ defined in~\eqref{eq:dpath}.

\begin{theorem}
\label{thm:unique_metric}
For each $\kappa' \in (4,8)$ there exists a unique (up to a deterministic factor) \clekp{}~resistance form in the sense of Definition~\ref{def:cle_rform}. The exponent $\resexp$ depends only on $\kappa'$ and satisfies $\resexp \in [\ddouble,\dsle]$.
\end{theorem}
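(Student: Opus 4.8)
\emph{Strategy.} The proof splits into existence of at least one \clekp{} resistance form, uniqueness up to a deterministic multiplicative constant (which in particular forces $\alpha$ to be a deterministic function of $\kappa'$), and two-sided estimates placing $\alpha$ in $[\ddouble,\dsle]$. The guiding principle is that a local, regular resistance form on the (finitely ramified) gasket is completely encoded by how it glues together the cells obtained by cutting along finite sets of $\dpathY$-cut points: by Lemma~\ref{le:weights_gluing} and Proposition~\ref{pr:cle_rmet_char} the whole object is equivalent to a consistent family of ``cell conductances'', and properties~\ref{it:rform_resistance}--\ref{it:rform_scale_covariant} are constraints on that family.

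\emph{Existence.} I would obtain $(\CE,\CF)$ as a subsequential limit of renormalised approximate resistance forms built from the CLE --- for concreteness, the effective-resistance forms of fine-mesh lattice approximations of the carpet (or of the iterated cut-point decomposition), multiplied by $\epsilon^{-\alpha}$ at mesh $\epsilon$. Tightness of this family, and the two-sided resistance bounds needed to control it, are precisely the content of \cite{amy2025tightness}; $\alpha$ is identified as the unique renormalisation exponent making the limit a nondegenerate resistance form. One then verifies~\ref{it:rform_resistance}--\ref{it:rform_scale_covariant}: locality and the additivity~\ref{it:rform_additive} pass to the limit since the approximations are themselves local and additive over cells; regularity and topological equivalence of $\rmet{\cdot}{\cdot}{\Gamma}$ with $\dpathY$ come from the resistance estimates of \cite{amy2025tightness}; compatibility~\ref{it:rform_compatibility} is visible at the network level (inserting loops where the test function is constant does not change the energy); local determination~\ref{it:rform_determined} holds because the approximation on $\metregions[U]$ only reads $(\Gamma_{U^*},\dpathY[U])$; and \ref{it:rform_translation_invariant}--\ref{it:rform_scale_covariant} are inherited from the exact symmetries of the approximations together with the choice of $\alpha$.

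\emph{Uniqueness.} Let $(\CE^1,\CF^1),(\CE^2,\CF^2)$ be \clekp{} resistance forms with exponents $\alpha_1,\alpha_2$. By Proposition~\ref{pr:cle_rmet_char} each is determined by $(\CE^{j,V})_{V\in\metregions}$, and by~\ref{it:rform_additive} these are determined by their values on arbitrarily fine cut-point decompositions. The core is a Markovian exploration of $\Gamma_C$: revealing its loops one at a time produces a branching family of sub-cells, and by the Markov/resampling property of \clekp{} together with~\ref{it:rform_determined} and the symmetries~\ref{it:rform_translation_invariant}--\ref{it:rform_scale_covariant} (the latter used to reduce, up to controlled distortion, the maps between cells to similarities), the restriction of \emph{any} \clekp{} resistance form to a cell is a rescaled, conditionally independent copy of a \clekp{} resistance form on a fresh gasket. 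Hence the cell-conductance families coming from $\CE^1$ and $\CE^2$ are carried by the same branching structure, and the ratio of effective resistances across a fixed macroscopic region is a bounded quantity stationary along the exploration; a backward martingale and $0$-$1$ law argument (as in the uniqueness proof of \cite{ms2022clemeasure}) identifies it a.s.\ with a deterministic constant $c$, giving $\CE^2=c\,\CE^1$ a.s.\ and in particular $\alpha_1=\alpha_2$. Since the forms were arbitrary, $\alpha=\alpha(\kappa')$.

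\emph{The exponent bounds, and the main obstacle.} With uniqueness in hand, $\alpha$ is read from the effective resistance of the essentially unique form across a macroscopic annulus of the carpet, which by~\ref{it:rform_scale_covariant} scales like $\lambda^{\alpha}$. For $\alpha\le\dsle$ I would prove a matching lower bound on this resistance: every admissible unit flow across the annulus must route around the largest loop it meets, concentrating current in a neighbourhood of that loop's outer boundary, whose dimension is $\dsle$ by~\eqref{eq:dsle} and \cite{b2008dimension}; summing the resulting contribution over scales via~\ref{it:rform_scale_covariant} yields the bound. For $\alpha\ge\ddouble$ I would prove an upper bound on the resistance by exhibiting an efficient flow (or test function) spread over the macroscopic crossing clusters, whose number of disjoint crossings at scale $\delta$ is governed by the double-point dimension~\eqref{eq:ddouble} and \cite{mw2017intersections}, and optimising over $\delta$. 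I expect the uniqueness step to be the principal difficulty: one must bootstrap the weak symmetries (translations and scalings only) into enough self-similarity to run the exploration, and, more delicately, show that the gluing data of two \clekp{} resistance forms at the junctions between cells genuinely coincide rather than merely agree in law --- this is where the quantitative resistance estimates of \cite{amy2025tightness} and the bookkeeping of the linking metric $\dpathY[U]$ are indispensable.
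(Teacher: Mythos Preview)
Your existence sketch is broadly aligned with the paper: one builds approximate resistance metrics on the gasket, invokes the tightness machinery of \cite{amy2025tightness} to extract subsequential limits, and checks the axioms in the limit. The paper does this in Section~\ref{sec:weak_cle_rform}, first for a weaker list of axioms (Definition~\ref{def:weak_cle_rform}) and only later upgrading to Definition~\ref{def:cle_rform}; you elide this two-stage structure but that is not a serious issue.

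Your uniqueness argument, however, takes a genuinely different route from the paper's, and I think it has a real gap. You propose a backward-martingale / $0$--$1$ law argument modeled on the measure uniqueness in \cite{ms2022clemeasure}: explore loops, observe that restricted resistance forms on cells are rescaled i.i.d.\ copies, and conclude the ratio of two forms is a.s.\ constant. The difficulty is that effective resistances do not combine linearly the way measures do. The measure argument works because $\mu(A\cup B)=\mu(A)+\mu(B)$, so ratios of measures of nested sets form a martingale directly. For resistance forms, what is additive is the \emph{energy} (Definition~\ref{def:cle_rform}\ref{it:rform_additive}), not the resistance; the resistance across a region depends on the whole minimizer, and when you refine a decomposition the harmonic extension on each cell depends on the boundary values inherited from neighboring cells. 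So the claimed stationarity of the resistance ratio along the exploration is not at all clear, and neither is its a priori boundedness --- you would need bi-Lipschitz equivalence first, which is itself the hard part. You flag this yourself in the final paragraph, but it is not a technical wrinkle: it is the crux.

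The paper's actual route is quite different and closer in spirit to the LQG metric uniqueness. First (Section~\ref{sec:bilipschitz}) one proves that any two conditionally independent weak \clekp{} resistance forms are bi-Lipschitz with \emph{deterministic} constants $c_1\le c_2$, by covering the gasket with ``good'' regions on which harmonic functions of either form have energy comparable to $(\sup f-\inf f)^2/\median[2^{-j}]{}$ (Proposition~\ref{pr:good_region}) and then building, for a given $\rform{\cdot}{\cdot}{\Gamma}$-harmonic $f$, a competitor $\wt f$ with controlled $\trform{\cdot}{\cdot}{\Gamma}$-energy by a greedy level-set construction. Second (Section~\ref{subsec:uniqueness}) one shows $c_1=c_2$: if not, on a positive density of scales one can find ``shortcut'' regions where the resistance ratio is bounded away from $c_2$ (Lemma~\ref{le:annulus_with_shortcuts}), and by independence across scales such shortcuts occur everywhere, so one can strictly improve $c_2$. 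This two-step scheme --- bi-Lipschitz then sharpen --- replaces your martingale idea.

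Finally, your exponent bounds are obtained differently too. The paper does not argue via flows concentrating near outer boundaries or counting disjoint crossings; instead $\alpha\in[\ddouble,\dsle]$ falls out immediately from the median scaling estimate $\lambda^{\dsle+o(1)}\median[\delta]{}\le\median[\lambda\delta]{}\le\lambda^{\ddouble+o(1)}\median[\delta]{}$ (Proposition~\ref{pr:quantiles_metric}), once scale covariance forces $\median[\lambda]{}\asymp\lambda^\alpha$ (Lemma~\ref{lem:scaling_factor_power}).
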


In fact, we will prove that the \clekp{} resistance form is already determined by a weaker list of assumptions; we will call them \emph{weak} \clekp{} resistance forms, see Definition~\ref{def:weak_cle_rform}. We will show that every weak \clekp{} resistance form in fact satisfies the stronger list of properties in Definition~\ref{def:cle_rform}.

As explained above, a \clekp{} resistance form determines a Markov process once we specify a measure. It was shown in \cite{ms2022clemeasure} that there is a canonical measure $\meas{\cdot}{\Gamma}$ on the gasket $\Upsilon_\Gamma$ that is conformally covariant and Markovian (see Section~\ref{se:cle_measure}), and this will be our canonical choice of the measure. By the locality of the \clekp{} resistance form, the associated process is a \emph{diffusion} which by definition is a Hunt process with continuous sample paths (up to its extinction time). We now give a direct characterization of this diffusion process.

\begin{definition}
\label{def:cle_brownian_motion}
Suppose that we have the setup described above where $\p$ denotes the law of $\Gamma$. We call a continuous Markov process $\clebm$ with law $(\clebmlaw{x})_{x \in \Upsilon_\Gamma}$ on the gasket $\Upsilon_\Gamma$ a \clekp{} Brownian motion if it satisfies the following properties.
\begin{enumerate}[I.]
\item It is a symmetric diffusion process with infinite lifetime on $(\Upsilon_\Gamma,\dpathY)$ with respect to the gasket measure $\meas{\cdot}{\Gamma}$.
\item Its Dirichlet form is a resistance form, and its resistance metric is topologically equivalent to~$\dpathY$.
\item For each open, simply connected $U \subseteq \C$, consider the process $\clebm_U$ obtained from killing it upon exiting $U$. The law of this Markov process is $\p$-a.s.\ determined by $(\Gamma_{U^*}, \dpathY[U])$.
\item Let $F_U$ be a measurable function that associates to $(\Gamma_{U^*}, \dpathY[U])$ the Markov process $\clebm_U$.
\begin{enumerate}[(a)]
\item For each open, simply connected $U \subseteq \C$ and $z \in \C$, the functions $F_U$, $F_{U+z}$ can be chosen so that $F_{U+z} = (T_z)_* F_U(\cdot -z)$ where $T_z \clebm(t) = \clebm(t)+z$ denotes translation by $z$.
\item There exists a constant $\resexp > 0$ so that the following holds. For each open, simply connected $U \subseteq \C$ and $\lambda > 0$, the functions $F_U$, $F_{\lambda U}$ can be chosen so that $F_{\lambda U} = (S_\lambda)_* F_U(\lambda^{-1}\cdot)$ where $S_\lambda \clebm(t) = \lambda\clebm(\lambda^{-\dcle-\resexp}t)$.
\end{enumerate}
\end{enumerate}
\end{definition}

\begin{theorem}
\label{thm:unique_process}
Consider the same setup as described above Theorem~\ref{thm:unique_metric}. For each $\kappa' \in (4,8)$ there exists a unique (up to time-change by a deterministic factor) \clekp{} Brownian motion in the sense of Definition~\ref{def:cle_brownian_motion}. Its associated resistance form is a \clekp{} resistance form in the sense of Definition~\ref{def:cle_rform}.
\end{theorem}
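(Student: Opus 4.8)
The plan is to deduce Theorem~\ref{thm:unique_process} from Theorem~\ref{thm:unique_metric} together with the properties of the canonical gasket measure $\meas{\cdot}{\Gamma}$ established in \cite{ms2022clemeasure}, using the standard correspondence (as developed in \cite{k2012resistance}) between a regular resistance form equipped with a reference measure of full support and a $\mu$-symmetric Hunt process, and the correspondence between $\CE(f,g)=0$ for functions with disjoint supports (locality) and continuity of sample paths. In the existence direction one transports the unique \clekp{}~resistance form to a process; in the uniqueness direction one reads off a \clekp{}~resistance form from a given \clekp{}~Brownian motion and invokes uniqueness of the form.

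\textbf{Existence.} I would take the \clekp{}~resistance form $(\rform{\cdot}{\cdot}{\Gamma},\rfdomain{\Gamma})$ from Theorem~\ref{thm:unique_metric} (with one representative of the deterministic scaling class fixed) and set $\mu=\meas{\cdot}{\Gamma}$, whose topological support is all of $\Upsilon_\Gamma$. By property~\ref{it:rform_resistance} the form is local, regular, and its resistance metric is topologically equivalent to $\dpathY$; hence by \cite[Theorem~9.4]{k2012resistance} it gives a regular Dirichlet form on $L^2(\Upsilon_\Gamma,\mu)$ whose associated Hunt process $\clebm$ with law $(\clebmlaw{x})_{x}$ is $\mu$-symmetric, and locality of the resistance form makes the Dirichlet form strongly local, so $\clebm$ is a diffusion. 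Infinite lifetime follows because a resistance form has no killing part and the relevant space carries a finite measure with resistance balls of finite mass, so the process is conservative; this gives property~I, and property~II is immediate from property~\ref{it:rform_resistance}. For property~III, killing $\clebm$ on exiting an open simply connected $U$ yields the part process on $U^*$, whose Dirichlet form is the part form of $\rform{\cdot}{\cdot}{\Gamma}$ with reference measure $\mu|_{U^*}$; by properties~\ref{it:rform_additive}, \ref{it:rform_compatibility}, \ref{it:rform_determined} this form is a.s.\ determined by $(\Gamma_{U^*},\dpathY[U])$, and by the Markovian (restriction) property of the gasket measure so is $\mu|_{U^*}$, whence the part process is determined by the same data since a Dirichlet form together with its symmetrizing measure determines the Hunt process. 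Properties~IV(a), IV(b) follow from properties~\ref{it:rform_translation_invariant}, \ref{it:rform_scale_covariant} of the resistance form and the translation invariance and $\lambda^{\dcle}$-covariance of the gasket measure: under scaling by $\lambda$ the energy acquires the factor $\lambda^{\alpha}$ and the reference measure the factor $\lambda^{\dcle}$, so the time parametrization of the associated process rescales by $\lambda^{\dcle+\alpha}$, which is exactly the map $S_\lambda\clebm(t)=\lambda\clebm(\lambda^{-\dcle-\alpha}t)$ in Definition~\ref{def:cle_brownian_motion}. The exponent $\alpha$ is the one from Theorem~\ref{thm:unique_metric}, so it depends only on $\kappa'$.

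\textbf{Uniqueness.} Conversely, given any \clekp{}~Brownian motion $\clebm$ in the sense of Definition~\ref{def:cle_brownian_motion}, property~I says it is symmetric with respect to $\mu=\meas{\cdot}{\Gamma}$, so its Dirichlet form $(\CE,\CF)$ on $L^2(\Upsilon_\Gamma,\mu)$ is uniquely determined by $\clebm$; by property~II it is a resistance form whose resistance metric is topologically equivalent to $\dpathY$, and continuity of sample paths forces this resistance form to be local. It remains to produce the family $(\rformres{V}{\cdot}{\cdot}{\Gamma})_{V\in\metregions}$ and check properties~\ref{it:rform_resistance}--\ref{it:rform_scale_covariant} of Definition~\ref{def:cle_rform}. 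I would define $\rformres{V}{\cdot}{\cdot}{\Gamma}$ as the restriction of $\CE$ to $\ol V\cap\Upsilon_\Gamma$ in the resistance-form sense (consistently with Proposition~\ref{pr:cle_rmet_char}); properties~\ref{it:rform_additive} and~\ref{it:rform_compatibility} then hold by general resistance-form theory, namely that effective resistances are unchanged under restriction and that the form is additive over a decomposition into regions meeting at finitely many points, using here that the $\ol{V_i}\cap\Upsilon_\Gamma$ are glued at finitely many points (cf.\ Lemma~\ref{le:weights_gluing}). Properties~\ref{it:rform_determined}, \ref{it:rform_translation_invariant}, \ref{it:rform_scale_covariant} follow from properties~III and~IV of Definition~\ref{def:cle_brownian_motion} by running the dictionary above in reverse: the killed process $\clebm_U$ has Dirichlet form the part form on $U^*$, which determines $\rformres{V}{\cdot}{\cdot}{\Gamma}$ for $V\in\metregions[U]$, and the local-determination, translation, and scaling statements for $\clebm_U$ transfer to the corresponding statements for these forms (after dividing out the $\lambda^{\dcle}$ coming from $\mu$, the $\lambda^{\dcle+\alpha}$ time change of the process gives the $\lambda^{\alpha}$ covariance of the form). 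Hence $(\CE,\CF)$ is a \clekp{}~resistance form, so by the uniqueness part of Theorem~\ref{thm:unique_metric} it equals a deterministic multiple of the form used above; since the process depends on the resistance form only through an overall deterministic time change, $\clebm$ coincides with the process constructed in the existence step up to a deterministic time change.

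\textbf{Main obstacle.} I expect the crux to be two transfer arguments. First, that ``the killed process is determined by the local data'' (property~III for the process) is genuinely equivalent to ``the restricted resistance form is determined by the local data'' (property~\ref{it:rform_determined} for the form): this requires the symmetrizing measure $\mu$ itself to be locally determined, i.e.\ the Markovian property of the gasket measure from \cite{ms2022clemeasure}, so that the process and the pair (form, measure) carry the same information, and it requires matching ``killing on exiting $U$'' with restriction of the form to $U^*$. Second, the bookkeeping of exponents, namely that the $\lambda^{\dcle+\alpha}$ time change of the process corresponds exactly to the $\lambda^{\alpha}$ covariance of the form once the $\lambda^{\dcle}$ scaling of $\mu$ is separated out, and that the scaling exponent in Definition~\ref{def:cle_brownian_motion} is forced to agree with the one in Definition~\ref{def:cle_rform} provided by Theorem~\ref{thm:unique_metric}. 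The conservativeness/infinite-lifetime claim in property~I also needs a short independent argument via the recurrence criterion for resistance forms.
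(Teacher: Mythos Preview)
Your overall strategy matches the paper's: Section~\ref{se:diffusion} splits the proof into Proposition~\ref{prop:resistance_defines_bm} (existence: a \clekp{} resistance form plus the gasket measure yields a \clekp{} Brownian motion) and Proposition~\ref{prop:bm_defines_resistance} (uniqueness: the Dirichlet form of any \clekp{} Brownian motion is a \clekp{} resistance form). Your existence argument is essentially the paper's; in particular the part-process identification via \cite[Theorem~3.3.8]{cf2012dform} together with property~\ref{it:rform_additive} is exactly how the paper verifies property~III.

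In the uniqueness direction there is a genuine gap. You write that you would ``define $\rformres{V}{\cdot}{\cdot}{\Gamma}$ as the restriction of $\CE$ to $\ol V\cap\Upsilon_\Gamma$ in the resistance-form sense'' and that additivity~\ref{it:rform_additive} and compatibility~\ref{it:rform_compatibility} ``then hold by general resistance-form theory.'' But the natural restriction, the trace~\eqref{eq:trace_form}, is \emph{not} additive over a decomposition $\ol V=\bigcup_i\ol{V_i}$ meeting at finitely many points: the trace minimizes energy by extending harmonically through the \emph{complement} of $\ol V$, so it sees connections between the $v_i$ that go outside $V$. Lemma~\ref{le:weights_gluing} goes the other way---given the pieces, it describes the glued form---and does not tell you how to extract the pieces from the global form. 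This is precisely the technical content of Proposition~\ref{prop:bm_defines_resistance}: the paper constructs $\rmetres{V}{\cdot}{\cdot}{\Gamma}$ on finite sets $A\supseteq\{v_1,\dots,v_n\}$ (the separating points of $V$) by taking the weight function $w$ associated to $\rmet{\cdot}{\cdot}{\Gamma}|_A$ and \emph{forcing} $w_V(v_i,v_j)=0$ for $i\neq j$. The nontrivial step is consistency under enlarging $A$, and this is where the path-continuity of $\clebm$ enters via~\eqref{eq:harm_ext}: if $x'\in A'\setminus A$ is separated from $v_j$ by $A$, then the harmonic function with boundary value $\delta_{v_j}$ vanishes at $x'$, forcing $w'(x',v_j)=0$ and hence $w(v_i,v_j)=w'(v_i,v_j)$. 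Only after this construction can one verify the conditions of Proposition~\ref{pr:cle_rmet_char}. Your list of obstacles misses this; it is the heart of the uniqueness half.
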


We will show in Section~\ref{se:non_conf_inv} that the \clekp{} Brownian motion is \emph{not} conformally invariant. This is not surprising considering that the clusters have dimension given in~\eqref{eqn:cle_dim} which is strictly smaller than $2$.

By \cite[Theorem~10.4]{k2012resistance}, for each compact resistance metric space and each finite Borel measure with full support the associated Hunt process possesses a continuous heat kernel. Moreover, by \cite[Theorem~1]{cro-heat-kernel}, the volume growth $\meas{\Bpath(x,r)}{\Gamma} = r^{\dcle+o(1)}$ (see Propositions~\ref{pr:measure_ub}--\ref{pr:measure_lb}) and resistance growth $\rmet{x}{y}{\Gamma} = \dpathY(x,y)^{\resexp+o(1)}$ (see Lemmas~\ref{le:resistance_ub_cle}--\ref{le:resistance_lb_cle}) imply the following on-diagonal heat kernel bounds.

\begin{proposition}
 Let $\resexp$ be the exponent in Theorem~\ref{thm:unique_metric}. Almost surely, the heat kernel of the \clekp{} Brownian motion satisfies
 \[
 p(t,x,x) = t^{-\dcle/(\dcle+\resexp)+o(1)}
 \quad\text{for each } x \in \Upsilon_\Gamma \quad\text{as } t \searrow 0 .
 \]
 That is, the spectral dimension of the \clekp{} Brownian motion is $2\dcle/(\dcle+\resexp)$.
\end{proposition}

We now explain that the setup can be generalized to define the \clekp{} Brownian motion (equivalently \clekp{} resistance form) on each \emph{interior} cluster of a \clekp{} in a simply connected domain or of a whole-plane \clekp{}. This is because the process is determined by the local geometry of the cluster, and the laws of the local geometry of each cluster near each point (away from the domain boundary) are mutually absolutely continuous. Hence, the behavior of the process can be deduced from the special case considered above Theorem~\ref{thm:unique_metric}.

We give a precise statement. In the following, we let $\Gamma^D$ be a nested \clekp{} on a simply connected domain $D \subseteq \C$ or a whole-plane \clekp{}. We consider the countable collection of its gaskets, i.e.\ the $\dpathY[D]$-completions of the spaces of points that do not lie on any loop of $\Gamma^D$. Here, the exterior gasket (i.e.\ the one with $\partial D$ as its exterior boundary) is split by $\partial D$ into a countable number of gaskets. Let $(\Upsilon_k)_{k \in \N}$ be an enumeration of all the gaskets of $\Gamma^D$. For each open $U \subseteq \C$ we let $\Gamma^D|_U$ denote the countable collection of loops and maximal strands of $\Gamma^D$ that are contained in $U$. We define $\metregions[k]$ and $\metregions[k,U]$ as above for each cluster $\Upsilon_k$, and we let $\metregions = \bigcup_{k \in \N} \metregions[k]$ and $\metregions[U] = \bigcup_{k \in \N} \metregions[k,U]$.

\begin{theorem}\label{th:all_clusters_wp}
Consider the setup described just above where we let $\Gamma^\C$ be a nested whole-plane~\clekp{}. There exists a unique (up to a deterministic factor) collection of resistance forms $\rform{\cdot}{\cdot}{\Gamma^\C} = ((\rformres{k}{\cdot}{\cdot}{\Gamma^\C}, \rfdomainres{\Upsilon_k}{\Gamma^\C}))_{k \in \N}$ that is measurable with respect to $\Gamma^\C$ and satisfies the following properties. There is a further collection of resistance forms $((\rformres{V}{\cdot}{\cdot}{\Gamma^\C}, \rfdomainres{V}{\Gamma^\C}))_{V \in \metregions}$ such that the following hold.
\begin{enumerate}[I.]
 \item For each $k \in \N$ we have that $(\rformres{k}{\cdot}{\cdot}{\Gamma^\C}, \rfdomainres{\Upsilon_k}{\Gamma^\C})$ is a local, regular resistance form on $\Upsilon_k$. Its associated resistance metric $\rmetres{k}{\cdot}{\cdot}{\Gamma^\C}$ is topologically equivalent to $\dpathY[\Upsilon_k]$.
 \item Let $V_1,\ldots,V_n \in \metregions[k]$, $n \in \N$, such that $V_i \cap V_j = \emptyset$ for $i \neq j$ and $\bigcup_i \ol{V_i} \cap \Upsilon_k = \Upsilon_k$. Then
 \begin{align*}
  \rfdomainres{\Upsilon_k}{\Gamma^\C} &= \{ f : f|_{\ol{V_i} \cap \Upsilon_k} \in \rfdomainres{V_i}{\Gamma^\C} \text{ for each } i \} ,\\
  \text{and}\quad
  \rformres{k}{f}{f}{\Gamma^\C} &= \sum_i \rformres{V_i}{f|_{\ol{V_i} \cap \Upsilon_k}}{f|_{\ol{V_i} \cap \Upsilon_k}}{\Gamma^\C} .
 \end{align*}
\item If $V,V' \in \metregions[k]$, $V \subseteq V'$, and $f\colon \ol{V'} \cap \Upsilon_k \to \R$ is a continuous function that is constant on each $\dpathY$-connected component of $\ol{V'} \cap \Upsilon_k \setminus \ol{V}$, then
\begin{align*}
 & f \in \rfdomainres{V'}{\Gamma^\C} \text{ if and only if } f|_{\ol{V} \cap \Upsilon_k} \in \rfdomainres{V}{\Gamma^\C} \\
 \text{and}\quad & \rformres{V'}{f}{f}{\Gamma^\C} = \rformres{V}{f}{f}{\Gamma^\C} .
\end{align*}
 \item For each open set $U \subseteq \C$, the collection of forms $(\rformres{V}{\cdot}{\cdot}{\Gamma^\C})_{V \in \metregions[U]}$ is a.s.\ a measurable function of $\Gamma^\C|_U$.
\item For each $z \in \C$, we have
\[ \rform{\cdot}{\cdot}{\Gamma^\C} = \rform{T_{z}\cdot}{T_{z}\cdot}{\Gamma^\C+z} \quad\text{a.s.,} \]
where $T_{z} f = f(\cdot -z)$ denotes translation by $z$.
\item There exists a constant $\resexp > 0$ so that for each $\lambda > 0$, we have
\[ \rform{\cdot}{\cdot}{\Gamma^\C} = \lambda^{\resexp}\rform{S_{\lambda}\cdot}{S_{\lambda}\cdot}{\lambda\Gamma^\C} \quad\text{a.s.,} \]
where $S_{\lambda} f = f(\lambda^{-1}\cdot)$.
\end{enumerate}
The analogous statement holds for the \clekp{} Brownian motion.
\end{theorem}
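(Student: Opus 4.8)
\emph{Proof proposal.} The plan is to bootstrap from Theorem~\ref{thm:unique_metric}, using the locality of the \clekp{} resistance form and the mutual absolute continuity of local pictures, as announced in the discussion preceding this statement. The geometric input is that the local picture of any gasket $\Upsilon_k$ of $\Gamma^\C$ near any of its points is mutually absolutely continuous to the local picture of the cluster in Theorem~\ref{thm:unique_metric}: for a bounded gasket, enclosed by a loop $\CL$ of $\Gamma^\C$, one conditions on $\CL$ and uses that a nested \clekp{} inside $\CL$ has the same conditional law whether the ambient \clekp{} is on the whole plane or on $\D$; for the single unbounded outer gasket one uses the standard mutual absolute continuity between a whole-plane \clekp{} and a \clekp{} on $\D$, each restricted to a small ball away from the boundary and the marked point $0$. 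In particular this gives the analogue of Theorem~\ref{thm:unique_metric} on each cluster $\Upsilon_k$, which we use freely below.

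\emph{Existence.} Fix $V \in \metregions[k]$ and pick an open, simply connected $U$ with $\ol V \subseteq U$ so small that, after a similarity of the plane, the law of $\Gamma^\C|_U$ together with its metric data is mutually absolutely continuous to the corresponding restriction in the setup of Theorem~\ref{thm:unique_metric}, with $V$ identified with some $V' \in \metregions[U']$ there. Let $F_{U'}$ be the measurable map supplied by property~\ref{it:rform_determined} of Definition~\ref{def:cle_rform} in that setup, and set $\rformres{V}{\cdot}{\cdot}{\Gamma^\C} \defeq F_{U'}(\Gamma^\C|_U)(V)$ via this identification. Using property~\ref{it:rform_determined} and Proposition~\ref{pr:cle_rmet_char} (which shows that the forms $\rformres{V}{\cdot}{\cdot}{\Gamma}$ are restrictions of the single ambient form $\rform{\cdot}{\cdot}{\Gamma}$), two admissible choices of $U$ assign the same form to $V$, so $\rformres{V}{\cdot}{\cdot}{\Gamma^\C}$ is a well-defined local, regular resistance form on $\ol V \cap \Upsilon_k$; the family $(\rformres{V}{\cdot}{\cdot}{\Gamma^\C})_{V\in\metregions}$ is measurable with respect to $\Gamma^\C$, and properties~III and~IV hold by construction. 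For each $k$, choosing a disjoint cover $V_1,\dots,V_n \in \metregions[k]$ (an increasing exhaustion in the case of the unbounded gasket) with $\bigcup_i \ol{V_i}\cap\Upsilon_k = \Upsilon_k$, we \emph{define} $\rfdomainres{\Upsilon_k}{\Gamma^\C}$ and $\rformres{k}{\cdot}{\cdot}{\Gamma^\C}$ by the additivity formula of property~II; by Lemma~\ref{le:weights_gluing} and the compatibility property~III this is a well-defined local, regular resistance form, independent of the cover, whose resistance metric is topologically equivalent to $\dpathY[\Upsilon_k]$ (in the unbounded case one passes to the monotone limit of the finite-cover forms, using \cite{k2012resistance}). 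This gives properties~I and~II; properties~V and~VI follow because the forms are built from the same maps $F_{U'}$, which in the single-cluster setup are translation-equivariant and scale-covariant by properties~\ref{it:rform_translation_invariant} and~\ref{it:rform_scale_covariant} of Definition~\ref{def:cle_rform}, while $\Gamma^\C$ is invariant in law under translations and scalings; here $\alpha$ is the exponent from Theorem~\ref{thm:unique_metric}.

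\emph{Uniqueness.} Let $((\wt\CE^k,\wt\CF_{\Upsilon_k}))_k$, with auxiliary forms $(\wt\CE^V)_V$ and exponent $\wt\alpha$, be another collection satisfying I--VI. Restricting I--VI to a fixed cluster $\Upsilon_k$, the restricted data is a \clekp{} resistance form on $\Upsilon_k$ (for the natural analogue of Definition~\ref{def:cle_rform}); since the uniqueness in Theorem~\ref{thm:unique_metric} already holds for weak \clekp{} resistance forms (Definition~\ref{def:weak_cle_rform}) and hence depends only on the local behaviour of the form, we obtain $\wt\CE^k = c_k\,\rformres{k}{\cdot}{\cdot}{\Gamma^\C}$ for a deterministic $c_k > 0$, hence $\wt\CE^V = c_k\,\rformres{V}{\cdot}{\cdot}{\Gamma^\C}$ for every $V \in \metregions[k]$ by Proposition~\ref{pr:cle_rmet_char}, and $\wt\alpha = \alpha$ since both exponents are pinned down by Theorem~\ref{thm:unique_metric}. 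It remains to see that $c_k$ does not depend on $k$. By the local-determination property~IV, the deterministic number $c_k$ equals a fixed measurable functional $g$ evaluated on $\Gamma^\C|_U$ for every small $U$ meeting $\Upsilon_k$; by properties~V and~VI applied to both collections with the common exponent $\alpha$, the ratio $\wt\CE^V/\rformres{V}{\cdot}{\cdot}{\Gamma^\C}$, and hence $g$, is invariant under similarities of the local data; and the local picture of $\Upsilon_k$ near a point is, after a similarity, mutually absolutely continuous to that of $\Upsilon_j$ near a point, which forces $c_k = c_j$. Thus the collection is unique up to one deterministic factor. The \clekp{} Brownian motion statement follows by equipping each $\Upsilon_k$ with the \clekp{} gasket measure of \cite{ms2022clemeasure} --- which likewise admits a version on all clusters of a whole-plane \clekp{} --- and passing to the associated Hunt process, with uniqueness argued exactly as for Theorem~\ref{thm:unique_process}. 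The construction, the gluing and the measurability merely repackage the single-cluster tools and \cite{k2012resistance}; the genuinely new work is to establish, robustly enough both to transport the construction and to compare different clusters, the mutual absolute continuity between the local geometry of an arbitrary cluster of $\Gamma^\C$ --- in particular the unbounded outer gasket, compared with a \clekp{} on $\D$ away from $0$ --- and that of the cluster in Theorem~\ref{thm:unique_metric}; once this is in place, the matching of the constants $c_k$ across clusters is routine.
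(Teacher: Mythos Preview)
Your approach is essentially the one the paper indicates: the paper gives no self-contained proof but states that the result ``follows by the exact same argument as in \cite[Theorems~1.7 and~1.8]{my2025geouniqueness}'', and the preceding discussion explains that this argument rests on the mutual absolute continuity of the local geometry of any cluster with that of the single cluster in Theorem~\ref{thm:unique_metric}, which is precisely the mechanism you employ for both existence (transporting the local maps $F_{U'}$) and uniqueness (matching the constants $c_k$).

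One inaccuracy worth flagging: in the whole-plane nested \clekp{} every point of $\C$ is surrounded by infinitely many loops, so every gasket $\Upsilon_k$ is bounded; there is no ``single unbounded outer gasket'' and hence no need for the exhaustion/monotone-limit step you sketch for that case. (The unbounded/exterior gasket, split by $\partial D$, arises only in the simply connected domain setting of Theorem~\ref{th:all_clusters_domain}.) This actually simplifies your argument: every $\Upsilon_k$ is enclosed by a loop of $\Gamma^\C$, and conditionally on that loop the inside is an ordinary nested \clekp{}, making the absolute-continuity comparison with the disk setup uniform across all~$k$. With this correction your outline matches the paper's intended route.
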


\begin{theorem}\label{th:all_clusters_domain}
Consider the setup described just above. There exists a unique (up to a deterministic factor) collection of measurable functions $\Gamma^D \mapsto \rform{\cdot}{\cdot}{\Gamma^D}$ for each simply connected domain $D \subseteq \C$ where each $\rform{\cdot}{\cdot}{\Gamma^D} = ((\rformres{k}{\cdot}{\cdot}{\Gamma^D}, \rfdomainres{\Upsilon_k}{\Gamma^D}))_{k \in \N}$ is a collection of resistance forms with the following properties. There is a further collection of resistance forms $((\rformres{V}{\cdot}{\cdot}{\Gamma^D}, \rfdomainres{V}{\Gamma^D}))_{V \in \metregions}$ such that the following hold.
\begin{enumerate}[I.]
 \item For each $k \in \N$ we have that $(\rformres{k}{\cdot}{\cdot}{\Gamma^D}, \rfdomainres{\Upsilon_k}{\Gamma^D})$ is a local, regular resistance form on $\Upsilon_k$. Its associated resistance metric $\rmetres{k}{\cdot}{\cdot}{\Gamma^D}$ is topologically equivalent to $\dpathY[\Upsilon_k]$.
 \item Let $V_1,\ldots,V_n \in \metregions[k]$, $n \in \N$, such that $V_i \cap V_j = \emptyset$ for $i \neq j$ and $\bigcup_i \ol{V_i} \cap \Upsilon_k = \Upsilon_k$. Then
 \begin{align*}
  \rfdomainres{\Upsilon_k}{\Gamma^D} &= \{ f : f|_{\ol{V_i} \cap \Upsilon_k} \in \rfdomainres{V_i}{\Gamma^D} \text{ for each } i \} ,\\
  \text{and}\quad
  \rformres{k}{f}{f}{\Gamma^D} &= \sum_i \rformres{V_i}{f|_{\ol{V_i} \cap \Upsilon_k}}{f|_{\ol{V_i} \cap \Upsilon_k}}{\Gamma^D} .
 \end{align*}
\item If $V,V' \in \metregions[k]$, $V \subseteq V'$, and $f\colon \ol{V'} \cap \Upsilon_k \to \R$ is a continuous function that is constant on each $\dpathY$-connected component of $\ol{V'} \cap \Upsilon_k \setminus \ol{V}$, then
\begin{align*}
 & f \in \rfdomainres{V'}{\Gamma^D} \text{ if and only if } f|_{\ol{V} \cap \Upsilon_k} \in \rfdomainres{V}{\Gamma^D} \\
 \text{and}\quad & \rformres{V'}{f}{f}{\Gamma^D} = \rformres{V}{f}{f}{\Gamma^D} .
\end{align*}
 \item For each open set $U \subseteq \C$, the collection of forms $(\rformres{V}{\cdot}{\cdot}{\Gamma^D})_{V \in \metregions[U]}$ is a.s.\ a measurable function of $\Gamma^D|_U$ that does not depend on the choice of $D$.
\item For each simply connected domain $D \subseteq \C$ and $z \in \C$, we have
\[ \rform{\cdot}{\cdot}{\Gamma^D} = \rform{T_{z}\cdot}{T_{z}\cdot}{\Gamma^D+z} \quad\text{a.s.,} \]
where $T_{z} f = f(\cdot -z)$ denotes translation by $z$.
\item There exists a constant $\resexp > 0$ so that the following holds. For each simply connected domain $D \subseteq \C$ and $\lambda > 0$, we have
\[ \rform{\cdot}{\cdot}{\Gamma^D} = \lambda^{\resexp}\rform{S_{\lambda}\cdot}{S_{\lambda}\cdot}{\lambda\Gamma^D} \quad\text{a.s.,} \]
where $S_{\lambda} f = f(\lambda^{-1}\cdot)$.
\end{enumerate}
The analogous statement holds for the \clekp{} Brownian motion.
\end{theorem}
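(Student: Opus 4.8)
The plan is to bootstrap from Theorem~\ref{thm:unique_metric}, which provides a single reference $\CLE_{\kappa'}$ resistance form, to all clusters of a $\CLE_{\kappa'}$ in an arbitrary simply connected domain; the mechanism is the locality Property~\ref{it:rform_determined} of Definition~\ref{def:cle_rform} combined with the interior local absolute continuity of $\CLE_{\kappa'}$. The whole-plane statement (Theorem~\ref{th:all_clusters_wp}) is morally the special case $D = \C$, and we treat both in parallel; the genuinely new features of the domain version are the compatibility across domains asserted in its Property~IV and the handling of the regions abutting $\partial D$, both again consequences of locality.

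First I would extract from Theorem~\ref{thm:unique_metric} a single intrinsic local rule. Fix once and for all a normalisation of the reference form $(\rform{\cdot}{\cdot}{\Gamma},\rfdomain{\Gamma})$ together with its regional forms $(\rformres{V}{\cdot}{\cdot}{\Gamma})_{V\in\metregions}$. Property~\ref{it:rform_determined} says that for each open simply connected $U\subseteq\C$ the tuple $(\rformres{V}{\cdot}{\cdot}{\Gamma})_{V\in\metregions[U]}$ is a measurable functional $\Phi$ of the local datum $(\Gamma_{U^*},\dpathY[U])$. Applying this to nested $U'\Subset U$ shows that $\Phi$ is consistent under restriction, i.e.\ its output on a region $V$ with $\ol V\subseteq U'$ depends only on $(\Gamma_{U'^*},\dpathY[U'])$, and Properties~\ref{it:rform_translation_invariant}--\ref{it:rform_scale_covariant} show that $\Phi$ is translation-invariant and scale-covariant with the exponent $\alpha$ of Theorem~\ref{thm:unique_metric}. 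Thus $\Phi$ is a canonical rule assigning resistance forms to the metric regions sitting inside any admissible local $\CLE_{\kappa'}$ picture, regardless of how that picture arises.

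Next I would transport $\Phi$. Given a nested $\CLE_{\kappa'}$ $\Gamma^D$ on a simply connected $D$ (or the whole plane), a cluster $\Upsilon_k$, and a region $V\in\metregions[k]$ with $\ol V$ in the interior of $D$, choose $\ol V\subseteq U'\Subset U\Subset D$ with $U$ simply connected. By the restriction-consistency of $\Phi$ its output on $V$ is a functional of the local datum $(\Gamma^D|_{U'},\dpathY[U'])$ only, and by the interior local absolute continuity of $\CLE_{\kappa'}$ (as exploited in \cite{ms2022clemeasure,amy2025tightness}) the law of this datum is mutually absolutely continuous with that of the corresponding local picture of the reference $\Gamma$; hence $\Phi$ applies and we set $\rformres{V}{\cdot}{\cdot}{\Gamma^D}$ to be its $V$-component. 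Restriction-consistency makes this independent of $U',U$, so we obtain a measurable collection $(\rformres{V}{\cdot}{\cdot}{\Gamma^D})_{V\in\metregions}$ whose value on $V$ is a functional of $\Gamma^D|_U$ alone for any $U\supseteq\ol V$ --- in particular it is unchanged when $D$ is replaced by another simply connected domain containing $\ol V$, which is Property~IV, and it is translation-invariant and scale-covariant, which gives Properties~V and~VI at the regional level. (The countably many regions abutting $\partial D$ are handled by the analogous boundary-local absolute continuity, comparing to the reference cluster near its loop boundary $\CL$, of the type developed in \cite{amy2025tightness}.) To assemble the cluster form, for each $\Upsilon_k$ fix a countable partition $V_1,V_2,\dots\in\metregions[k]$ with $\bigcup_i\ol{V_i}\cap\Upsilon_k=\Upsilon_k$ and \emph{define} $\rfdomainres{\Upsilon_k}{\Gamma^D}$ and $\rformres{k}{\cdot}{\cdot}{\Gamma^D}$ by the additivity formula of Property~II. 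That the result is independent of the partition, is a local regular resistance form, and has resistance metric $\rmetres{k}{\cdot}{\cdot}{\Gamma^D}$ topologically equivalent to $\dpathY[\Upsilon_k]$ follows from Property~\ref{it:rform_compatibility}, the gluing lemma (Lemma~\ref{le:weights_gluing}), and the resistance estimate (Lemma~\ref{le:resistance_ub_cle}), exactly as for the reference cluster in the proof of Theorem~\ref{thm:unique_metric}; for unbounded clusters one realises the form as the increasing limit of the finite gluings and checks that the estimate survives the limit.

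Finally, uniqueness. If $(\wt{\CE}^V)_{V\in\metregions}$ is another collection satisfying I--VI with some exponent, fix $V_0\in\metregions[k]$ and a small simply connected $U\supseteq\ol{V_0}$, and transport $(\wt{\CE}^V)_{\ol V\subseteq U}$ back to the reference configuration $\Gamma$ by the same absolute continuity. Near $\ol{V_0}$ it then satisfies the localised axioms of a weak $\CLE_{\kappa'}$ resistance form (Definition~\ref{def:weak_cle_rform}), so by the uniqueness half of Theorem~\ref{thm:unique_metric} it is determined up to a single deterministic constant; translation-invariance and scale-covariance force this constant to be the same for every cluster, domain, and scale, so $\wt{\CE}^{V_0}$ agrees with $\rformres{V_0}{\cdot}{\cdot}{\Gamma^D}$ up to that one factor, and then the cluster forms agree by Property~II. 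The statement for the $\CLE_{\kappa'}$ Brownian motion follows by pairing each $\rformres{k}{\cdot}{\cdot}{\Gamma^D}$ with the canonical gasket measure of \cite{ms2022clemeasure} and invoking the resistance-form/Hunt-process correspondence, with locality giving a diffusion, exactly as in Theorem~\ref{thm:unique_process}. The main obstacle is the transport step: one must set up the interior and boundary absolute continuity precisely enough that $\Phi$ applies to the local picture of an arbitrary cluster, that the object transported in the uniqueness argument genuinely obeys the weak axioms, and --- crucially --- that the normalising constant coming from Theorem~\ref{thm:unique_metric} is genuinely global rather than a hidden function of the cluster or the base point, which is precisely what translation-invariance and scale-covariance are used to rule out.
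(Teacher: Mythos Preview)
Your proposal is correct and follows the same strategy the paper indicates: the paper itself gives no detailed proof, remarking only that the equivalence of Theorems~\ref{th:all_clusters_wp} and~\ref{th:all_clusters_domain} with Theorem~\ref{thm:unique_metric} ``follows by the exact same argument as in \cite[Theorems~1.7 and~1.8]{my2025geouniqueness}'', and the text just above explains that this is because the process is locally determined and the local laws of different clusters are mutually absolutely continuous. Your sketch---extracting a local rule $\Phi$ from Property~\ref{it:rform_determined}, transporting it via interior absolute continuity, gluing via Property~II, and running uniqueness back through Theorem~\ref{thm:unique_metric}---is precisely an elaboration of that deferred argument.
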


The equivalence of Theorems~\ref{th:all_clusters_wp} and~\ref{th:all_clusters_domain} with Theorem~\ref{thm:unique_metric} resp.~\ref{thm:unique_process} follows by the exact same argument as in \cite[Theorems~1.7 and~1.8]{my2025geouniqueness}.

\subsection{Related work}

Random walks on percolation models and diffusion processes on fractals have been subject to extensive research. We review some of the literature related to the present work.

\subsubsection*{Two-dimensional critical percolation}

One of the motivations for this work is to gain more understanding on the simple random walk on two-dimensional critical percolation which has remained poorly understood to date. The natural setup for studying the large-scale geometry of critical percolation is the \emph{incipient infinite cluster} (IIC) which is obtained by conditioning the percolation cluster of the origin to be infinite \cite{kes-iic}. In this model, Kesten \cite{kes-rw-percolation} proved that the random walk on the IIC is subdiffusive with respect to Euclidean distance. The work \cite{gl2022chemicalsub} improves the result by showing that it is subdiffusive with respect to the chemical distance metric (which by \cite{ab1999holder} is strictly larger than the Euclidean metric). Another interesting number measuring the geometry of the cluster is the spectral dimension of the walk. The spectral dimension for the IIC is estimated by simulations to be approximately $1.318$ \cite{bah-diffusion-fractals}. (A conjecture $d_s = 4/3$ by Alexander and Orbach \cite{ao-spectral} is believed to be false in dimensions smaller than $6$; see~\cite[Section~7.4]{kum-rw-book}.)

\subsubsection*{High-dimensional percolation}

The random walk and its resistance metric in critical percolation in high dimensions are studied in \cite{bjks-rw-oriented-percolation,kn-alexander-orbach,hhh-rw-iic,cchs-chemical-high-dim}. In high dimensions, the critical percolation behaves very differently, one main difference being that the clusters are tree-like. In that case, it is established that the Alexander-Orbach conjecture $d_s = 4/3$ does hold. The recent work \cite{acf-rw-lattice-trees} establishes a scaling limit result for the random walk for a related model that is expected to resemble high-dimensional critical percolation.

\subsubsection*{Supercritical percolation}

In the case of supercritical percolation much more is known. This model is simpler since it is close to Euclidean space. The random walk on supercritical percolation clusters converges to Brownian motion on the Euclidean space; several variants of this result have been established in \cite{bar-rw-supercritical,ss-rw-percolation,bb-rw-percolation,mp-rw-percolation}.

\subsubsection*{Uniform spanning tree}

The scaling limit of the uniform spanning tree in two dimensions has been established in \cite{lsw2004lerw,lv2021natural,hs2018euclideanmating} and is given by the \cle{8}. The work \cite{bck2017tightness} proves the convergence of the random walk to the Brownian motion on the continuum tree. Note that in this case the construction of the Brownian motion from the metric space is trivial since the space is a tree.

\subsubsection*{Brownian motion on deterministic fractals}

For some deterministic fractals (including the Sierpi\'nski gasket and carpet), an extensive literature has developed which is aimed at constructing and characterizing the Brownian motions on these fractals, see \cite{gol-gasket,kus-gasket,bp-gasket,fs-gasket,lin-nested-fractals,kig-pcf,sab-nested-fractals,metz-nested-fractals,pei-pcf,hmt-pcf,bb-carpet,bb-carpet-hd,kz-carpet,bbkt-carpet}.

\subsection{Outline}

We will now give an overview of how the remainder of this article is structured. In Section~\ref{sec:preliminaries}, we will collect a number of preliminaries. In particular, we review the definition and basic properties of SLE and CLE, the gasket measure and the geodesic metric on the CLE gasket, and finally give an overview of resistance forms and resistance metrics.

The main part of the paper is dedicated to proving Theorem~\ref{thm:unique_metric}. This will consist of two main steps:
\begin{enumerate}[1.]
\item tightness of a certain approximation procedure for the resistance metric, and
\item the uniqueness of the subsequential limits.
\end{enumerate}
The proof of tightness will be given in Section~\ref{sec:weak_cle_rform} where we also construct a particular approximation of the \clekp{} gasket by a sequence of graphs. The main external input that we will make use of is the main result from \cite{amy2025tightness}; we will recall the statement and the general setup in Section~\ref{sec:tightness_general}. This will establish that the family of associated resistance metrics properly normalized is tight, and we will further prove that each subsequential limit is a resistance metric whose resistance form satisfies the definition of a weak \clekp{} resistance form given in Definition~\ref{def:weak_cle_rform}. (At this point, we have not proved that they satisfy the stronger Definition~\ref{def:cle_rform}.)

The proof of the uniqueness of weak \clekp{} resistance forms will be divided into Sections~\ref{sec:bilipschitz} and~\ref{sec:uniqueness}. We show in Section~\ref{sec:bilipschitz} that any two weak \clekp{} resistance forms are bi-Lipschitz equivalent with deterministic constants. We then prove in Section~\ref{sec:uniqueness} that they are scalar multiples of each other by showing that the optimal bi-Lipschitz constants can be chosen to be equal. We then conclude that each weak \clekp{} resistance form satisfies the stronger Definition~\ref{def:cle_rform}, completing the proof of Theorem~\ref{thm:unique_metric}. The general principle in both Section~\ref{sec:bilipschitz} and~\ref{sec:uniqueness} is that we can divide the gasket into small regions which are approximately i.i.d., so that each weak \clekp{} resistance form behaves like the average over these small regions. To make this argument rigorous, we will make use of the independence across scales property established in \cite{amy-cle-resampling} to find a covering of the space by small regions in which the resistance forms are locally comparable to their average. We will show along the way that the resistance form is essentially determined by a dense collection of regions each of which are bounded between two intersecting \clekp{} loops.

Finally, we show in Section~\ref{se:diffusion} the equivalence between the characterization of the \clekp{} resistance form in Definition~\ref{def:cle_rform} and the characterization of the \clekp{} Brownian motion in Definition~\ref{def:cle_brownian_motion}, thus showing Theorem~\ref{thm:unique_process}. We will also show that the \clekp{} Brownian motion is \emph{not} conformally invariant.

\subsection*{Notation}

For $z\in\C$ and $r_2>r_1>0$, we denote $A(z,r_1,r_2) = B(z,r_2)\setminus\ol{B}(z,r_1)$. Let $\bIn A(z,r_1,r_2) = \partial B(z,r_1)$ and $\bOut A(z,r_1,r_2) = \partial B(z,r_2)$. For a bounded set $A \subseteq \C$ we let $\Fill(A)$ denote the union of $A$ with the bounded connected components of $\C \setminus A$. We write $U \Subset V$ to denote that $U$ is compactly contained in $V$, i.e.\ $\ol{U}$ is compact and $\ol{U} \subseteq V$. We sometimes write $\distE,\diamE,B_E$, etc.\ to refer to the Euclidean metric (as opposed to the CLE metric or the $\dpathY$ metric).

For a function $f$ depending on $\delta \in (0,1]$, we write $f(\delta) = O(\delta^b)$ if $f$ is bounded by a constant times $\delta^b$.  We also write $f(\delta) = o^\infty(\delta)$ if $f(\delta) = O(\delta^b)$ for any fixed $b>0$. We write $a \lesssim b$ if $a \le cb$ for a constant $c>0$, and we write $a \asymp b$ if $a \lesssim b$ and $b \lesssim a$.

We assume throughout the paper that $\kappa' \in (4,8)$ is fixed. We always consider nested \clekp{} in the proofs (which is required to use the resampling tools from \cite{amy-cle-resampling}; see Section~\ref{se:mcle}).

For the resistance form $\rformres{V}{\cdot}{\cdot}{\Gamma}$ whose domain $\rfdomainres{V}{\Gamma}$ consists of functions defined on $\ol{V} \cap \Upsilon_\Gamma$, we will often write $\rformres{V}{f}{f}{\Gamma}$ for $\rformres{V}{f|_{\ol{V} \cap \Upsilon_\Gamma}}{f|_{\ol{V} \cap \Upsilon_\Gamma}}{\Gamma}$ when $f$ is a function defined on a larger domain (e.g.\ $\Upsilon_\Gamma$ or $\ol{V_1} \cap \Upsilon_\Gamma$ with $V_1 \supseteq V$). We will also write $f|_V$ for $f|_{\ol{V} \cap \Upsilon_\Gamma}$.

\subsection*{Acknowledgements}
J.M. and Y.Y.\ were supported by ERC starting grant SPRS (804116) and from ERC consolidator grant ARPF (Horizon Europe UKRI G120614). Y.Y.\ in addition received support from the Royal Society.

\section{Preliminaries}
\label{sec:preliminaries}

\subsection{Schramm-Loewner evolution}
\label{subsec:sle}

The Schramm-Loewner evolution ($\SLE_\kappa$) was introduced by Schramm in 1999 \cite{s2000sle} as a candidate to describe the scaling limit of discrete models from statistical mechanics in two dimensions. For a simply connected domain $D \subsetneq \C$ and $x,y \in \partial D$ distinct, for each $\kappa \ge 0$, the \slek{} in $D$ from $x$ to $y$ is a random non-self-crossing curve in $\ol{D}$ from $x$ to $y$. The \slek{} in $\h$ from $0$ to $\infty$ is characterized by the following property. There is a parameterization so that if for each $t \ge 0$ we let $\h_t$ be the unbounded connected component of $\h \setminus \eta([0,t])$ and $g_t$ is the unique conformal map $\h_t \to \h$ with $g_t(z) - z \to 0$ as $z \to \infty$, then
\begin{equation}
\label{eqn:loewner_ode}
\partial_t g_t(z) = \frac{2}{g_t(z) - U_t} \quad\text{for } z \in \h_t, \quad g_0(z) = z ,
\end{equation}
where $U_t = \sqrt{\kappa}B_t$ and $B$ is a standard real-valued Brownian motion. (The existence of the $\SLE_\kappa$ curve was proved for $\kappa \neq 8$ in \cite{rs2005basic}, for $\kappa = 8$ in \cite{lsw2004lerw} using the convergence of the UST Peano curve to $\SLE_8$, and alternatively for $\kappa = 8$ using continuum methods in \cite{am2022sle8}). The \slek{} in a general simply connected domain $D$ from $x$ to $y$ is then defined as the image of an \slek{} in $\h$ from~$0$ to~$\infty$ under any conformal map $D \to \h$ that takes $0$ to $x$ and $y$ to $\infty$.

The \slekr{\rho} processes are an important variant of \slek{} first introduced in \cite[Section~8.3]{lsw2003restriction}. They depend on an additional marked point $v_0 \in \partial\h$ (called the \emph{force point}) and $\rho$ (the \emph{weight} of the force point). In the case $\rho > -2$, the \slekr{\rho} in $\h$ from $0$ to $\infty$ with force point at $v_0$ is constructed by solving~\eqref{eqn:loewner_ode} where $U$ is taken to be the solution of the SDE
\begin{equation}
\label{eqn:sle_kappa_rho_sde}
 dU_t = \sqrt{\kappa} dB_t + \frac{\rho}{U_t - V_t} dt,\quad dV_t = \frac{2}{V_t - U_t} dt,\quad V_0 = v_0.
\end{equation}
If $D \subsetneq \C$ is another simply connected domain and $x,y,z \in \partial D$, the $\SLE_\kappa(\rho)$ process in $D$ from $x$ to $y$ with force point at $z$ is given by the image under the unique conformal map $\h \to D$ that takes $0$, $\infty$, and $v_0$ to $x$, $y$, and $z$, respectively. The existence of the $\SLE_\kappa(\rho)$ curve was proved in \cite{ms2016ig1}.

\subsection{Conformal loop ensembles}
\label{subsec:cle}

The conformal loop ensembles ($\CLE_\kappa$) are the loop version of $\SLE_\kappa$.  They were introduced in \cite{s2009cle,sw2012cle} and aim to describe the joint law of the scaling limit of all of the interfaces in a discrete model from statistical mechanics in two dimensions, whereas an $\SLE_\kappa$ only describes the scaling limit of a single interface.  A $\CLE_\kappa$ consists of a countable collection of loops each of which locally look like an $\SLE_\kappa$.  The $\CLE_\kappa$ are defined for $\kappa \in [8/3,8]$.  At the extreme $\kappa =8/3$ it is the empty collection of loops and for $\kappa = 8$ it is a single space-filling loop.  In this paper, we will be focused on the regime $\kappa' \in (4,8)$.  In what follows, we will recall the construction and basic facts regarding \clekp{}, then the so-called partial explorations of \clekp{}, and then resampling operations that one can perform on \clekp{}.

\subsubsection{Definition of \clekp{}}

We now review an explicit construction of \clekp{} for $\kappa' \in (4,8)$. Suppose that the domain is $\h$, and we fix the point $0 \in \partial\h$, referred to as the \emph{root} of the exploration. Let $\eta'$ be an \slekpr{\kappa'-6} from $0$ to $\infty$ with a force point at $0^+$. Suppose that $0 < \sigma < \tau$ are such that $\eta'(\sigma),\eta'(\tau) \in \R_+$ and $\eta'((\sigma,\tau)) \cap \R_+ = \emptyset$. For each such $(\sigma,\tau)$, we let $\CL_{\sigma,\tau}$ be the loop consisting of $\eta'([\sigma,\tau])$ followed by a conditionally independent \slekp{} from $\eta'(\tau)$ to $\eta'(\sigma)$ in the complementary connected component. Given the collection of loops $\CL_{\sigma,\tau}$ just constructed, we repeat the procedure in each complementary connected component of the union of these loops. Let $\Gamma$ be the countable collection of loops constructed from each iteration of this procedure.

It is shown in \cite{s2009cle,ms2016ig3} that the law of $\Gamma$ does not depend on the choice of the root, and is invariant under conformal transformations of the domain. We call $\Gamma$ the \emph{nested \clekp{}}, and we call the collection of outermost loops of $\Gamma$ (i.e.\ when we do not repeat the exploration in the components inside the loops) the \emph{non-nested \clekp{}}.

The \emph{whole-plane nested \clekp{}} is constructed in \cite{mww2016extreme} as the limit of nested \clekp{} in any increasing sequence of domains $D_n$ with $\dist(0,\partial D_n) \to \infty$.

\subsubsection{Multichordal $\CLE_{\kappa'}$ and partial explorations}
\label{se:mcle}

We are now going to recall some of the results from \cite{amy-cle-resampling} about partially explored $\CLE_{\kappa'}$, as they will be used extensively in the present article.  Let us begin by recalling \cite[Definition~1.1]{amy-cle-resampling}.
\begin{definition}
\label{def:marked_domain}
Let $D \subsetneq \C$ be a simply connected domain.  Fix $N \in \N_0$ and let $\ul{x} = (x_1,\ldots,x_{2N})$ be a collection of distinct prime ends which are ordered counterclockwise.
\begin{enumerate}[(i)]
\item We call $(D;\ul{x})$ a \emph{marked domain}. Let $\markeddomain{2N}$ be the collection of marked domains with $2N$ marked points.
\item Suppose that $(D;\ul{x}) \in \markeddomain{2N}$, and we have a non-crossing collection of paths $\ul{\gamma} = (\gamma_1,\ldots,\gamma_N)$ outside of $D$ where each $\gamma_i$ connects a distinct pair of marked points. The paths $\ul{\gamma}$ induce a planar link pattern $\beta$ on the exterior of $D$, formally described by a partition of the marked points into pairs $\beta = \{\{a_1,b_1\},\ldots,\{a_N,b_N\}\}$ where $\{a_1,b_1,\ldots,a_N,b_N\} = \{1,\ldots,2N\}$, $a_r<b_r$, and such that the configuration $a_r<a_s<b_r<b_s$ does not occur. We let $\eldomain{2N}$ be the collection of exterior link pattern decorated marked domains.
\item Suppose that $(D;\ul{x}) \in \markeddomain{2N}$, and we have a non-crossing collection of paths $\ul{\eta} = (\eta_1,\ldots,\eta_N)$ in $D$ where each $\eta_i$ connects a distinct pair of marked points. We will also consider the planar link pattern $\alpha$ on the interior of $D$ induced by the paths $\ul{\eta}$.
\end{enumerate}
\end{definition}

We review the definition of the multichordal \clekp{} in a marked domain $(D;\ul{x};\beta) \in \eldomain{2N}$ whose law we will denote by $\mcclelaw{D;\ul{x};\beta}$. A sample from $\mcclelaw{D;\ul{x};\beta}$ consists of a pair $(\ul{\eta}, \Gamma)$ where $\ul{\eta} = (\eta_1,\ldots,\eta_N)$ is a family of curves that connect the marked points $\ul{x}$ and, given $\ul{\eta}$, the collection $\Gamma$ consists of a conditionally independent nested $\CLE_{\kappa'}$ in each of the complementary components of the~$\ul{\eta}$. (Sometimes we write just $\Gamma$ to denote both $(\ul{\eta},\Gamma)$.) In the case $N=1$ which we call a \emph{monochordal~\clekp{}}, the law of $\eta_1$ is given by an \slekp{} in $D$ from $x_1$ to $x_2$. In the case $N=2$ which we call a \emph{bichordal \clekp{}}, the interior link pattern $\alpha$ between the four points $x_1,\ldots,x_4$ is sampled from an explicit law determined in \cite{msw2020nonsimple,mw2018connection} (see \cite[equation~(1.1)]{amy-cle-resampling}), and given $\alpha$, the two chords $(\eta_1,\eta_2)$ are sampled from the unique law such that the conditional law of $\eta_1$ given $\eta_2$ is that of a \slekp{} in the complementary component and vice versa. It is shown in \cite{msw2020nonsimple} that the law of $(\eta_1,\eta_2)$ is characterized as the unique invariant law of a certain Markov chain. For general $N \ge 2$, the law of $\ul{\eta}$ under the multichordal \clekp{} is the unique law that is invariant under the following resampling kernels: Fix any $1 \leq i_1 < i_2 < i_3 < i_4 \leq 2N$ and condition on the $\eta_k$ that do not start at $x_{i_1},\ldots,x_{i_4}$. If there are exactly two chords connecting these four points and they are in the same complementary component of the remaining chords, resample the two chords according to the law of a bichordal \clekp{} with exterior link pattern induced by $\beta$ and the remaining chords.

The multichordal \clekp{} law is conformally invariant, i.e., if $\varphi\colon D \to \wt{D}$ is a conformal transformation, then the law $\mcclelaw{\wt{D};\varphi(\ul{x});\beta}$ is given by the law of $\varphi(\Gamma)$ under $\mcclelaw{D;\ul{x};\beta}$.

It is shown in \cite[Theorem~1.11]{amy-cle-resampling} that multichordal \clekp{} arises naturally from partial explorations of $\CLE_{\kappa'}$.  Suppose that $D \subseteq \C$ is a simply connected domain and let $\varphi \colon D \to \D$ a conformal transformation.  Let $\domainpair{D}$ consist of all pairs $(U,V)$ of simply connected domains $U \subseteq V \subseteq D$ with $\dist(\varphi(D \setminus V), \varphi(U)) > 0$.  (Note that $\domainpair{D}$ does not depend on $\varphi$.)  Suppose that $\Gamma$ is a nested $\CLE_{\kappa'}$ in $D$ and $(U,V) \in \domainpair{D}$.   Let $\Gamma_\outside^{*,V,U}$ be the collection of maximal arcs of loops of $\Gamma$ that are disjoint from $U$ and intersect $D \setminus V$.  We call $\Gamma_\outside^{*,V,U}$ the \emph{partial exploration} of the loops of $\Gamma$ intersecting $D \setminus V$ up until they hit $U$. Let $V^{*,U}$ be the connected component containing $U$ after removing the loops and strands of $\Gamma_\outside^{*,V,U}$, and let $\Gamma_\inside^{*,V,U}$ denote the remainder of $\Gamma$ in $V^{*,U}$.

\begin{theorem}[{\cite[Theorem~1.11]{amy-cle-resampling}}]\label{thm:cle_partially_explored}
In the setup described above, the conditional law given $\Gamma_\outside^{*,V,U}$ of the remainder $\Gamma_\inside^{*,V,U}$ has the law of a multichordal $\CLE_{\kappa'}$ in $V^{*,U}$ where the exterior link pattern $\beta$ is induced by the arcs of~$\Gamma_\outside^{*,V,U}$.
\end{theorem}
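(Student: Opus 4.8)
The plan is to verify the defining property of multichordal $\CLE_{\kappa'}$, namely that the conditional law of $\Gamma_\inside^{*,V,U}$ given $\Gamma_\outside^{*,V,U}$ is invariant under the bichordal resampling kernels, and then invoke the uniqueness of the invariant law from \cite{msw2020nonsimple,amy-cle-resampling}. First I would set up the partial exploration carefully: starting from the root of the $\CLE_{\kappa'}$ exploration in $D$, one discovers loops one at a time, tracking at each stage which loops have been found to intersect $D \setminus V$ and stopping each such loop's arc the moment it would enter $U$. Because $(U,V) \in \domainpair{D}$ has $\dist(\varphi(D\setminus V),\varphi(U)) > 0$, a.s.\ only finitely many loops intersect $D\setminus V$, so $\Gamma_\outside^{*,V,U}$ consists of finitely many arcs and the domain $V^{*,U}$ is well-defined with a finite collection of marked boundary prime ends carrying an exterior link pattern $\beta$ induced by how these arcs connect up. The key structural point to establish is that $\Gamma_\inside^{*,V,U}$ is a collection of chords $\ul\eta$ (the continuations of the partially explored loops, which connect the marked points of $V^{*,U}$) with a conditionally independent nested $\CLE_{\kappa'}$ filled into each complementary component --- i.e.\ it has the \emph{form} of a multichordal configuration.

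Next I would establish the resampling invariance. Fix four marked points of $V^{*,U}$ and suppose exactly two chords of $\Gamma_\inside^{*,V,U}$ connect them and lie in a common complementary component of the other chords. One resamples these two chords via a bichordal $\CLE_{\kappa'}$. The claim is that this operation preserves the conditional law. The natural way to see this is to realize the resampling move intrinsically inside the $\CLE_{\kappa'}$: by the Markov chain / resampling characterization of $\CLE_{\kappa'}$ itself (the monochordal and bichordal resampling properties of $\SLE_{\kappa'}$-type curves, going back to \cite{msw2020nonsimple,mw2018connection}), resampling a pair of loop-continuations conditionally on everything else --- including the already-explored arcs $\Gamma_\outside^{*,V,U}$ --- leaves the law of the full $\CLE_{\kappa'}$ unchanged, hence leaves the conditional law of $\Gamma_\inside^{*,V,U}$ given $\Gamma_\outside^{*,V,U}$ unchanged. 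Making this precise requires checking that conditioning on $\Gamma_\outside^{*,V,U}$ is compatible with conditioning on ``all loops except the relevant two continuations'', which is where the $\domainpair{D}$ separation hypothesis and the spatial Markov property of $\CLE_{\kappa'}$ enter: the exploration that produces $\Gamma_\outside^{*,V,U}$ can be carried out without ever looking at the portions of the relevant loops inside $V^{*,U}$.

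Finally, I would conclude by uniqueness: since the conditional law of $\Gamma_\inside^{*,V,U}$ given $\Gamma_\outside^{*,V,U}$ is supported on multichordal-type configurations in $V^{*,U}$ with exterior link pattern $\beta$ and is invariant under all the bichordal resampling kernels, and since \cite{amy-cle-resampling} (building on \cite{msw2020nonsimple}) shows that the multichordal $\CLE_{\kappa'}$ law with a given exterior link pattern is the \emph{unique} such invariant law, the two must coincide. I expect the main obstacle to be the second step --- rigorously justifying that the partial exploration can be decoupled from the interior portions of the explored loops, so that resampling those interior portions commutes with the conditioning. This is essentially a careful application of the domain Markov property of $\CLE_{\kappa'}$ together with the fact that, on the event under consideration, the four marked points and the relevant chords are genuinely ``localized'' inside $V^{*,U}$ away from $\Gamma_\outside^{*,V,U}$; one also needs to verify that the resampling kernels act transitively enough on link patterns for the uniqueness result to apply, and to handle the bookkeeping of which marked points of $V^{*,U}$ are joined by chords versus by arcs of $\Gamma_\outside^{*,V,U}$.
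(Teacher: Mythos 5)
This statement is not proved in the paper at hand: it is a recalled result, imported verbatim from the external reference \cite[Theorem~1.11]{amy-cle-resampling}, and the paper simply cites it without argument. There is therefore no in-paper proof against which to compare your proposal.

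That said, a few remarks on the sketch itself. The overall shape --- reduce to the resampling characterization of multichordal $\CLE_{\kappa'}$, show the conditional law of $\Gamma_\inside^{*,V,U}$ given $\Gamma_\outside^{*,V,U}$ is invariant under the bichordal resampling kernels, and conclude by uniqueness of the invariant law --- is consistent with the philosophy described in Section~\ref{se:mcle} and is the natural route. Two cautions. First, the claim that ``a.s.\ only finitely many loops intersect $D\setminus V$'' is false for $\kappa'\in(4,8)$: there are infinitely many small loops intersecting $D\setminus V$. What is finite is the set of loops whose arcs actually cross from $D\setminus V$ to $\partial U$, because $(U,V)\in\domainpair{D}$ forces a positive conformal-distance annulus; these crossing arcs are what produce the finitely many marked points and the link pattern $\beta$ on $\partial V^{*,U}$. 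Second, and more substantively, the entire weight of the argument rests on your step establishing that conditioning on $\Gamma_\outside^{*,V,U}$ commutes with the bichordal resampling of two loop-continuations. You flag this as requiring ``a careful application of the domain Markov property,'' but as written this is a statement of what needs to be done rather than a proof of it: one must show that the $\sigma$-algebra generated by $\Gamma_\outside^{*,V,U}$ is measurable with respect to data that the resampling move leaves untouched, and this is exactly the content of the cited theorem. In its current form the sketch identifies the right target and the right mechanism, but leaves the decisive step open. Since the paper offloads this to \cite{amy-cle-resampling}, there is no basis here to certify or refute the details of your proposed argument.
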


It is also shown in \cite{amy2025tightness} that the law of a multichordal $\CLE_{\kappa'}$ is continuous as one moves the marked points and the domain. We recall two such statements. The next proposition concerns the link pattern $\alpha$ induced by chords $\ul{\eta}$ of a multichordal \clekp{}.

\begin{proposition}[{\cite[Theorem~1.6(iv)]{amy-cle-resampling}}]\label{pr:link_probability}
For each $(D;\ul{x};\beta) \in \eldomain{2N}$ and each interior link pattern $\alpha_0$, we have $\mcclelaw{D;\ul{x};\beta}[\alpha = \alpha_0] > 0$, and this probability depends continuously on $\ul{x}$.
\end{proposition}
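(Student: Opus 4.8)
The plan is to prove the two assertions separately: positivity of $\mcclelaw{D;\ul x;\beta}[\alpha=\alpha_0]$ is the substantive part and will come from the resampling characterization of multichordal $\CLE_{\kappa'}$, while continuity in $\ul x$ follows quickly from the continuity of the multichordal $\CLE_{\kappa'}$ law recalled above. For positivity the cases $N\le 1$ are trivial, and $N=2$ is exactly the statement that in a bichordal $\CLE_{\kappa'}$ on any simply connected domain with four distinct marked boundary prime ends (and any exterior link pattern) both interior link patterns occur with positive probability, which can be read off from the explicit connection-probability formula of \cite{msw2020nonsimple,mw2018connection}. For $N\ge 3$ I would bootstrap to this case using (i) the resampling invariance: by definition the law of $\ul\eta$ under $\mcclelaw{D;\ul x;\beta}$ is invariant under the bichordal resampling kernels; and (ii) the purely combinatorial fact that the \emph{flip graph} on non-crossing perfect matchings of $\{1,\ldots,2N\}$ --- two matchings being adjacent when one is obtained from the other by replacing two chords $c_1,c_2$ that bound a common face by the other non-crossing pairing of their four endpoints --- is connected. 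Fact (ii) is standard and can be proved by induction on $N$ using that every non-crossing matching has an ``ear'' (a chord joining two cyclically consecutive points), which one flips successively into position $\{1,2\}$ and then recurses.

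Granting these, the argument runs as follows. Since $\sum_\alpha \mcclelaw{D;\ul x;\beta}[\alpha=\alpha]=1$, some matching $\alpha^*$ has positive probability. The key claim is that positive probability propagates along edges of the flip graph: if $\mcclelaw{D;\ul x;\beta}[\alpha=\alpha']>0$ and $\alpha''$ is obtained from $\alpha'$ by flipping chords $c_1,c_2$, then $\mcclelaw{D;\ul x;\beta}[\alpha=\alpha'']>0$. Indeed, for \emph{every} realization $\ul\eta$ with link pattern $\alpha'$ the chords $c_1,c_2$ lie in a common complementary component of the remaining chords --- the incidences between faces and complementary components of a planar non-crossing diagram are determined by its combinatorics --- so the resampling kernel at the four endpoints of $c_1,c_2$ is applicable on the event $\{\alpha=\alpha'\}$. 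Sampling $(\ul\eta,\Gamma)\sim\mcclelaw{D;\ul x;\beta}$ and applying this kernel, on the positive-probability event $\{\alpha=\alpha'\}$ the pair $c_1,c_2$ is resampled as a bichordal $\CLE_{\kappa'}$ in that component (with exterior link pattern induced by $\beta$ and the remaining chords), which by conformal invariance and the bichordal positivity produces the flipped pairing with positive probability; hence the resampled configuration has link pattern $\alpha''$ with positive probability. By the resampling invariance this probability equals $\mcclelaw{D;\ul x;\beta}[\alpha=\alpha'']$, which proves the claim. Iterating along a path in the connected flip graph from $\alpha^*$ to an arbitrary $\alpha_0$ gives $\mcclelaw{D;\ul x;\beta}[\alpha=\alpha_0]>0$.

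For continuity in $\ul x$, I would use that the law of the curve family $\ul\eta$ under $\mcclelaw{D;\ul x;\beta}$ depends continuously --- for weak convergence with respect to uniform convergence of curves --- on $\ul x$, which is part of the continuity of multichordal $\CLE_{\kappa'}$ recalled above. For $\ul x$ in a small neighbourhood of a fixed $\ul x^0$ the marked prime ends stay distinct and cyclically ordered, so ``which pairs of points are joined by a chord of $\ul\eta$'' agrees almost surely with one fixed bounded continuous functional of $\ul\eta$ (match each curve endpoint to the nearest of $x^0_1,\ldots,x^0_{2N}$ and record the resulting pairing). Hence, near $\ul x^0$, $\ul x\mapsto\mcclelaw{D;\ul x;\beta}[\alpha=\alpha_0]$ is the integral of a fixed bounded continuous functional against a weakly convergent family of probability measures, and is therefore continuous.

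The step I expect to require the most care is the propagation claim, specifically the passage between the combinatorial notion of two chords ``bounding a common face'' and the requirement that they lie ``in the same complementary component of the remaining chords'' for the resampling kernel to fire, together with checking that the four endpoints of $c_1,c_2$ really are distinct prime ends of the component being resampled (so that the bichordal positivity applies via conformal invariance). These are topological facts about the random non-crossing curve configuration $\ul\eta$ --- whose curves may touch themselves, one another and $\partial D$ since $\kappa'\in(4,8)$ --- but they rely only on standard almost-sure properties of $\CLE_{\kappa'}$ such as the curves avoiding any fixed point, and can be established routinely.
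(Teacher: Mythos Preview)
This proposition is not proved in the present paper---it is imported as \cite[Theorem~1.6(iv)]{amy-cle-resampling}---so there is no in-paper proof to compare against. Your outline is the natural one, but the propagation step contains a combinatorial error. You assert that if two chords $c_1,c_2$ bound a common face of the non-crossing diagram then they lie in the same complementary component of the remaining $N-2$ chords, which is what the resampling kernel requires. This is false: for the matching $\{1,2\},\{3,4\},\{5,6\}$ on six cyclically ordered points, the chords $\{1,2\}$ and $\{5,6\}$ both bound the central face yet are separated by $\{3,4\}$, so the kernel at indices $1,2,5,6$ does not fire. The correct implication is the reverse (same component implies common face), so the flip graph you actually need is the one whose edges are flips of pairs lying in the same complementary component of the other chords. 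That restricted flip graph is still connected---if $1$ is matched to $a>2$ and $2$ to some $b\in(2,a)$, then every remaining chord has both endpoints in one of the arcs $(2,b),(b,a),(a,1)$ and so cannot separate $\{1,a\}$ from $\{2,b\}$; flip to create the ear $\{1,2\}$ and recurse---but this is what you must argue, not the common-face version.

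Your continuity argument also has a gap: you invoke weak convergence of the full curve family $\ul\eta$ as $\ul x$ varies, but Proposition~\ref{prop:mccle_tv_convergence_int} only gives total-variation convergence of the interior part $\Gamma_\inside^{*,V,U}$ for $(U,V)\in\domainpair{D}$ with $V\Subset D$, which a priori does not see how the chords link up at $\partial D$. A full continuity statement for the law of $\ul\eta$ may be available in \cite{amy-cle-resampling}, but it is not among the results recalled in this paper, so as written the step is unjustified.
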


We say that a sequence of $(D_n;\ul{x}_n) \in \markeddomain{2N}$ converges to $(D;\ul{x}) \in \markeddomain{2N}$ in the Carathéodory sense if $\varphi_n \to \varphi$ locally uniformly and $\varphi_n^{-1}(\ul{x}_n) \to \varphi^{-1}(\ul{x})$ pointwise where $\varphi_n\colon \D \to D_n$ (resp.\ $\varphi\colon \D \to D$) is the unique conformal map with $\varphi_n(0) = z_0$, $\varphi_n'(0) > 0$ (resp.\ $\varphi(0) = z_0$, $\varphi'(0) > 0$) for some fixed $z_0 \in D$.

\begin{proposition}[{\cite[Proposition~6.1]{amy-cle-resampling}}]
\label{prop:mccle_tv_convergence_int}
Let $(D;\ul{x};\beta) \in \eldomain{2N}$, and suppose that $((D_n;\ul{x}_n))$ is a sequence in $\markeddomain{2N}$ converging to $(D;\ul{x})$ in the Carathéodory sense. For each $n$ let $\Gamma_n$ have law $\mcclelaw{D_n;\ul{x}_n;\beta}$, and let $\Gamma$ have law $\mcclelaw{D;\ul{x};\beta}$. Let $(U,V)\in\domainpair{D}$ with $V \Subset D$, then the law of $(\Gamma_n)_\inside^{*,V,U}$ converges to the law of $\Gamma_\inside^{*,V,U}$ in total variation as $n \to \infty$.
\end{proposition}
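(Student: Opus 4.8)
The plan is to prove this via the characterization of the multichordal $\CLE_{\kappa'}$ as the unique law of the family of chords $\ul{\eta}$ that is invariant under the bichordal resampling kernels, combined with the continuity in total variation of the elementary $\SLE_{\kappa'}$-type building blocks. First, using the uniformizing maps $\varphi_n\colon\D\to D_n$, $\varphi\colon\D\to D$ from the definition of Carathéodory convergence together with the conformal invariance of the multichordal $\CLE_{\kappa'}$, I would transfer $\Gamma_n$ to the fixed domain $D$ via $\Psi_n \defeq \varphi_n\circ\varphi^{-1}\colon D\to D_n$. This replaces $\Gamma_n$ by a sample from $\mcclelaw{D;\,\Psi_n^{-1}(\ul{x}_n);\,\beta}$, replaces the window $(U,V)$ by $(\Psi_n^{-1}(U),\Psi_n^{-1}(V))$, and, since partial exploration commutes with conformal maps, replaces $(\Gamma_n)_\inside^{*,V,U}$ by the $\Psi_n$-image of the correspondingly transferred partial exploration. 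Since $V \Subset D$ we have $\Psi_n \to \mathrm{id}_D$ uniformly on a neighbourhood of $\ol{V}$, $\Psi_n^{-1}(\ul{x}_n) \to \ul{x}$ as prime ends, and $\Psi_n^{-1}(V),\Psi_n^{-1}(U) \to V,U$; using that $\Gamma_\inside^{*,V,U}$ is a.s.\ supported in a compact subset of $D$ (the $\domainpair{D}$ condition and $V\Subset D$ forcing the partial exploration to localize in a compactly contained region) and depends a.s.\ continuously on $\Gamma$ and on the window, it therefore suffices to treat the case where $D$ is fixed, $\ul{x}_n \to \ul{x}$ in $\markeddomain{2N}$, and the windows converge in $\domainpair{D}$ (we continue to write $\ul{x}_n$ for the transferred marked points).

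The second ingredient is the continuity in total variation of the elementary pieces. The basic input is that the law of an $\SLE_{\kappa'}(\rho)$ curve in a domain $D'$, stopped upon leaving a region compactly contained in $D'$, depends continuously in total variation on $D'$ and on the marked points; this follows from Girsanov/absolute-continuity estimates for the driving Loewner SDE~\eqref{eqn:sle_kappa_rho_sde} together with Carathéodory continuity of the uniformizing maps. Taking $\rho=\kappa'-6$ and using the branching-$\SLE_{\kappa'}(\kappa'-6)$ construction of $\CLE_{\kappa'}$, this upgrades to the $N=0$ statement: a nested $\CLE_{\kappa'}$ restricted to a compactly contained subregion converges in total variation under Carathéodory convergence of its domain. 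Taking $\rho=0$ and using that a monochordal chord is an $\SLE_{\kappa'}$ gives the $N=1$ statement. For $N\ge2$ the chords are no longer given by an explicit law, only by invariance under the resampling kernels; but the above does give that the one-sweep resampling Markov kernel $P^{(D;\ul{x}_n;\beta)}$ — acting on tuples of chords that stay a fixed distance from the marked points, a restriction automatic along the dynamics — converges as $\ul{x}_n\to\ul{x}$: $\sup_{\ul{\eta}}\norm{P^{(D;\ul{x}_n;\beta)}(\ul{\eta},\cdot)-P^{(D;\ul{x};\beta)}(\ul{\eta},\cdot)}_{\mathrm{TV}}\to0$.

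The third and crucial ingredient is a uniform ergodicity estimate for the resampling chain. Here I would use a Doeblin/coupling argument: after one sweep every chord is an $\SLE_{\kappa'}$-type curve in its complementary component, and then, on an event of probability bounded below, a second sweep couples two copies of the chain driven by the same $(D;\ul{x}_n;\beta)$ so that they agree, using that the law of a resampled chord is, uniformly in the conditioning on that ``good'' event, absolutely continuous with respect to a fixed reference law with Radon--Nikodym density bounded above and below. Since all relevant curves and events stay at positive distance from $\partial D$ and from one another — the limiting marked points $\ul{x}$ being distinct — the resulting ergodicity constants $k\in\N$, $\rho<1$ with $\norm{P^k(\ul{\eta},\cdot)-P^k(\ul{\eta}',\cdot)}_{\mathrm{TV}}\le\rho$ for all admissible $\ul{\eta},\ul{\eta}'$, and likewise for $P_n\defeq P^{(D;\ul{x}_n;\beta)}$ for all large $n$, can be taken uniform over a neighbourhood of $(D;\ul{x})$ in $\eldomain{2N}$. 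Writing $\pi_n$ for the law of the chords under $\mcclelaw{D;\ul{x}_n;\beta}$ and $\pi$ for its candidate limit, the stationarity identities $\pi_n=\pi_n P_n^k$ and $\pi=\pi P^k$ give $\norm{\pi_n-\pi}_{\mathrm{TV}}\le\sup_{\ul{\eta}}\norm{P_n^k(\ul{\eta},\cdot)-P^k(\ul{\eta},\cdot)}_{\mathrm{TV}}+\rho\,\norm{\pi_n-\pi}_{\mathrm{TV}}$, and the supremum tends to $0$ by the previous paragraph, so $\norm{\pi_n-\pi}_{\mathrm{TV}}\to0$. Finally, coupling the chords so that they agree on a neighbourhood of $\ol{V}$ with probability tending to $1$ and then filling in the conditionally independent nested $\CLE_{\kappa'}$'s in each complementary component by means of the $N=0$ continuity upgrades this to total-variation convergence of $\Gamma_\inside^{*,V_n,U_n}$, as desired.

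The step I expect to be the main obstacle is the third one: obtaining the ergodicity constants and the Radon--Nikodym bounds \emph{uniformly} as $\ul{x}_n\to\ul{x}$. The danger is that a resampled chord is forced close to $\partial D$, or that the complementary component in which a chord is resampled degenerates, which would spoil the absolute-continuity bounds; one rules this out by working inside the compactly contained window where the partial exploration lives and by using that the $\ul{x}_n$ converge to a tuple of \emph{distinct} prime ends and hence remain uniformly separated. The remaining inputs — the $\SLE$ absolute-continuity and Carathéodory-continuity estimates — are by now routine, if technical.
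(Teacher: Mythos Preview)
This statement is not proved in the present paper: it is quoted verbatim as Proposition~6.1 of \cite{amy-cle-resampling} and used as a black box. There is therefore no proof in the paper to compare your proposal against.

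For what it is worth, your outline is the natural strategy and is broadly in line with how the result is established in \cite{amy-cle-resampling}: reduce to a fixed domain by conformal invariance, use total-variation continuity of the $\SLE_{\kappa'}$ building blocks under Carath\'eodory convergence, and then pass from convergence of the bichordal resampling kernels to convergence of their stationary laws via a uniform contraction estimate for the resampling Markov chain. Your identification of the main obstacle --- obtaining the Doeblin/coupling bounds uniformly over a neighbourhood of $(D;\ul{x})$ --- is accurate; in \cite{amy-cle-resampling} this is handled through careful a priori control on where the chords can go, which is essentially the separation-of-marked-points argument you sketch.
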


\subsubsection{Resampling target pivotals}
\label{subsubsec:target_pivotals}

We will now describe the results of \cite[Section~5]{amy-cle-resampling}, which is focused on using a ``resampling procedure'' that alters the connections in a (multichordal) \clekp{}.  Let us now describe more precisely what we mean by a \emph{resampling procedure}.  Suppose that $\Gamma$ is a multichordal \clekp{} in $(D;\ul{x};\beta)$, and let $W \subseteq D$.  Then we can construct another multichordal \clekp{} in $(D;\ul{x};\beta)$ coupled with $\Gamma$ by repeatedly applying the following procedure. We select $(U,V) \in \domainpair{D}$ with $V \subseteq W$ randomly in a way that is independent of the CLE configuration within $W$, and let $\wt{\Gamma}$ be such that $\wt{\Gamma}_\outside^{*,V,U} = \Gamma_\outside^{*,V,U}$, and $\wt{\Gamma}_\inside^{*,V,U}$ is sampled from its conditional law given $\wt{\Gamma}_\outside^{*,V,U}$ (independently of $\Gamma_\inside^{*,V,U}$). We say that $\Gamma^\resampled$ is a \emph{resampling of $\Gamma$ within $W$} if it is obtained by applying such a procedure a (random) number of times.

Intersection points between two strands of loops are called \emph{pivotal points} because changing the connections between them changes the macroscopic behavior of the loops. Suppose that we select small regions randomly and resample the CLE in these small regions. If we are given a finite choice of pivotal points, by Proposition~\ref{pr:link_probability} there is a positive probability that the resampling changes the connections at the desired pivotal points, and the new CLE is the same as the old one away from these small regions. The results in \cite[Section~5]{amy-cle-resampling} give uniform bounds for the success probabilities. We can use this to compare probabilities of certain events to probabilities where the loops are additionally required to be linked in some particular way. (This tool was already extensively used in \cite{amy2025tightness}.)

We recall one particular application where we aim to break the loops that cross a given annulus.

\begin{lemma}[{\cite[Lemma~5.9]{amy-cle-resampling}}]
\label{lem:break_loops}
Suppose that $\Gamma$ is a nested \clekp{} in a simply connected domain $D$. For each $a\in(0,1)$, $b>0$ there exist $p > 0$ and $c>0$ such that the following holds.

Let $z\in D$ and $j_0\in\Z$ be such that $B(z,2^{-j_0})\subseteq D$. There exists a resampling $\Gamma^\resampled_{z,j}$ of $\Gamma$ within $A(z,2^{-j},2^{-j+1})$ for each $j>j_0$ with the following property. Let $F_{z,j}$ be the event that
\begin{itemize}
 \item no loop in $\Gamma^\resampled_{z,j}$ crosses the annulus $A(z,2^{-j},2^{-j+1})$,
 \item denoting $\CC$ the loops in $\Gamma$ that cross $A(z,2^{-j},2^{-j+1})$, the collection of loops of $\Gamma$ and $\Gamma^\resampled_{z,j}$ remain the same in each connected component of $\C \setminus \bigcup\CC$ that is not surrounded by a loop in $\CC$ and intersects $B(z,2^{-j})$, and the gasket of $\Gamma$ in these components is contained in the gasket of $\Gamma^\resampled_{z,j}$.
\end{itemize}
For $k\in\N$, let $\wt G_{z,j_0,k}$ be the event that the number of $j=j_0+1,\ldots,j_0+k$ so that
\[ \p[ F_{z,j} \mid \Gamma ] \geq p\]
is at least $(1-a)k$.  Then
\[ \p[(\wt G_{z,j_0,k})^c] \le c e^{-b k} . \]
\end{lemma}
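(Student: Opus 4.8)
\textbf{Proof strategy for Lemma~\ref{lem:break_loops}.}

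The plan is to apply the resampling machinery from \cite[Section~5]{amy-cle-resampling} annulus by annulus and then chain the resulting uniform lower bounds across scales using an independence-across-scales argument. First, for a single annulus $A(z,2^{-j},2^{-j+1})$ I would construct $\Gamma^\resampled_{z,j}$ by the resampling procedure described just before the lemma: select $(U,V)\in\domainpair{D}$ with $V\subseteq A(z,2^{-j},2^{-j+1})$ in an independent fashion, and resample the multichordal \clekp{} configuration inside. The goal is to break all loops that cross the annulus, which amounts to changing the link pattern of the relevant arcs so that they are all ``turned back'' inside the annulus. By Theorem~\ref{thm:cle_partially_explored} the conditional law inside is a multichordal \clekp{}, and by Proposition~\ref{pr:link_probability} every interior link pattern—in particular the one that disconnects—has positive probability, with the bound continuous in the marked points; the uniform lower bound $p>0$ (depending only on $\kappa'$ and the annulus modulus, not on $z$, $j$, or the precise domain) comes from the uniform success-probability estimates of \cite[Section~5]{amy-cle-resampling}. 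The second bullet in the definition of $F_{z,j}$, namely that $\Gamma$ and $\Gamma^\resampled_{z,j}$ agree outside $\bigcup\CC$ in the components touching $B(z,2^{-j})$ not surrounded by a crossing loop, and the gasket monotonicity statement, are automatic from the locality of the resampling: we only alter the configuration inside $A(z,2^{-j},2^{-j+1})$, and breaking the crossing loops only adds to the gasket in the inner components.

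The main step is then the concentration statement $\p[(\wt G_{z,j_0,k})^c]\le c e^{-bk}$. Here I would invoke the independence-across-scales property of \clekp{} (the same tool referenced in the outline and used in \cite{amy2025tightness,amy-cle-resampling}): the events $\{\p[F_{z,j}\mid\Gamma]\ge p\}$ for consecutive $j$ are not independent, but conditioning on the configuration outside $B(z,2^{-j+1})$ the law inside is (close to) that of a fresh \clekp{} in a disk, so $\p[F_{z,j}\mid\Gamma]\ge p'$ holds with probability bounded below by some $q>0$ uniformly, given any admissible outer configuration. One then compares the count $\#\{j\in\{j_0+1,\ldots,j_0+k\}:\p[F_{z,j}\mid\Gamma]\ge p\}$ with a sum of Bernoulli$(q)$ variables via a standard stochastic-domination / martingale argument (e.g.\ writing it as a sum of indicators each of which, conditioned on the past, dominates a Bernoulli$(q)$), and applies a Chernoff/Azuma bound to get exponential decay of the probability that the count falls below $(1-a)k$, provided $q$ is close enough to $1$; one absorbs the discrepancy between $q$ and $1$ into the choice of $a$ by first thinning to a sub-geometric sequence of scales $j$ (take every $m$-th scale for $m=m(a)$ large) so that the per-scale success probability is as close to $1$ as desired, at the cost of the constants $c,b$. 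The parameter $b$ enters through this choice of thinning and the Chernoff exponent.

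\textbf{Main obstacle.} The delicate point is making the ``resampling inside $A(z,2^{-j},2^{-j+1})$ breaks all crossing loops with probability $\ge p$ uniformly'' claim genuinely uniform in the outer configuration. A priori the number of loops crossing the annulus, and the geometry of the partial exploration $\Gamma_\outside^{*,V,U}$, can be arbitrarily complicated, and one needs the bichordal/multichordal resampling kernels of \cite[Section~5]{amy-cle-resampling} to disconnect crossings one at a time at a uniformly positive rate regardless of how many there are and how they are nested—this is exactly the content of the target-pivotal resampling estimates, so the work is in citing the correct statement from \cite{amy-cle-resampling} and checking its hypotheses apply with constants depending only on the fixed modulus-$2$ annulus and $\kappa'$. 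The second, more bookkeeping-heavy obstacle is verifying that ``$\p[F_{z,j}\mid\Gamma]$'' is a genuine function of $\Gamma$ restricted appropriately (so that the events in $\wt G_{z,j_0,k}$ are measurable) and that the conditional independence used in the Chernoff step is set up correctly, i.e.\ that conditioning on $\Gamma$ outside $B(z,2^{-j+1})$ really does leave enough freedom inside; this follows from the Markovian/domain-Markov structure of \clekp{} but requires care because the loops crossing the annulus are not determined by the outside configuration alone.
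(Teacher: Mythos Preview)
This lemma is not proved in the present paper: it is stated with a citation to \cite[Lemma~5.9]{amy-cle-resampling} and no proof is given here, so there is no ``paper's own proof'' to compare against. Your outline is broadly consistent with the description of the resampling machinery in Section~\ref{subsubsec:target_pivotals} (in particular the role of Proposition~\ref{pr:link_probability} for positive link-pattern probabilities and the remark that \cite[Section~5]{amy-cle-resampling} provides uniform success bounds), and the independence-across-scales/Chernoff structure you describe is the standard mechanism behind such exponential concentration statements in this line of work. Since the actual argument lives in \cite{amy-cle-resampling}, any detailed comparison would have to be against that paper rather than this one.
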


\subsection{CLE gasket measure}
\label{se:cle_measure}

Let $D \subseteq \C$ be a simply connected domain, let $\Gamma$ be a non-nested \clekp{} in $D$, and let $\Upsilon_{\Gamma}$ be its gasket. Let $\dcle$ be as in~\eqref{eqn:cle_dim}. It is shown in \cite{ms2022clemeasure} that there exists a unique (up to a deterministic constant) measure $\meas{\cdot}{\Upsilon_\Gamma}$ on $\Upsilon_{\Gamma}$ that has finite expectation, is conformally covariant with the exponent~$\dcle$, and respects the Markov property of the \clekp{}. It is shown later in \cite{my2026cleminkowski} that the measure $\meas{\cdot}{\Upsilon_\Gamma}$ has an explicit description as the Minkowski content measure of~$\Upsilon_\Gamma$. That is, for a deterministic constant $c>0$, almost surely,
\[
\meas{A}{\Upsilon_\Gamma} = c\lim_{\delta\searrow 0} \delta^{\dcle-2} \,\operatorname{Leb}{\left( \bigcup_{x\in A \cap \Upsilon_\Gamma} B_E(x,\delta) \right)} 
\]
for each $A \Subset D$ with, say, rectifiable boundary.

It is further proved that the measure has local dimension~$\dcle$ at each point with superpolynomial concentration. In particular, we recall the following upper and lower bounds which will be used later in this work. Suppose that $\Gamma$ is a nested \clekp{} in $D$. Let $(\Upsilon_k)$ be an enumeration of its gaskets, i.e.\ each $\Upsilon_k$ is the gasket associated with the non-nested \clekp{} inside some loop of $\Gamma$. Let $\meas{\cdot}{\Upsilon_k}$ denote the gasket measure on $\Upsilon_k$.

\begin{proposition}[{\cite[Proposition~1.4]{my2026cleminkowski}}]\label{pr:measure_ub}
Let $\Gamma$ be a nested \clekp{} in a simply connected domain $D \subseteq \C$. Fix any $\ell \in \N$, $a>0$, and a compact set $K \subseteq D$. For each $r>0$, let $E_r$ be the event that
\[ \sup_{z \in K} \meas{B_E(z,r)}{\Upsilon_k} \le r^{\dcle-a} \]
for each gasket $\Upsilon_k$ of $\Gamma$ of level at most $\ell$. Then
\[ \p[(E_r)^c] = o^\infty(r) \quad\text{as } r \searrow 0 . \]
\end{proposition}

\begin{proposition}[{\cite[Proposition~1.5]{my2026cleminkowski}}]\label{pr:measure_lb}
Let $\Gamma$ be a nested \clekp{} in a simply connected domain $D \subseteq \C$. Fix any $a>0$ and a compact set $K \subseteq D$. For each $r>0$, let $E_r$ be the event that the following holds. Let $\Upsilon_k$ be any of the gaskets of~$\Gamma$, let $x \in \Upsilon_k \cap K$ and $w \in \partial B_E(x,r) \cap \partial\Bpath(x,r)$. Let $B_{x,w} \subseteq \ol{B_E}(x,r)$ be a simply connected set containing all simple admissible paths within $\ol{B_E}(x,r)$ from $x$ to $w$. Then
\[ \meas{B_{x,w}}{\Upsilon_k} \ge r^{\dcle+a} \quad\text{for each such $x,w$, and $k$.} \]
Then
\[ \p[(E_r)^c] = o^\infty(r) \quad\text{as } r \searrow 0 . \]
\end{proposition}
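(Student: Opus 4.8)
Fix $a>0$ and $K\Subset D$. The key structural fact is that $B_{x,w}$ is closed under local splicing. If $\gamma_0$ is a simple admissible path from $x$ to $w$ inside $\overline{B_E}(x,r)$ (such exists, with $\diamE(\gamma_0)\ge\dpathY(x,w)=r$ and hence, as $x\in\gamma_0\subseteq\overline{B_E}(x,r)$, attaining Euclidean distance in $[r/2,r]$ from $x$), and if $B_E(\zeta,2\sigma)\subseteq\overline{B_E}(x,r)$ with $x,w\notin B_E(\zeta,2\sigma)$, then one can splice into $\gamma_0$ any simple admissible path in $\overline{B_E(\zeta,\sigma)}$ joining suitable first/last crossing points $p,q$ of $\gamma_0$ with $\partial B_E(\zeta,\sigma)$, again obtaining a simple admissible $x\to w$ path in $\overline{B_E}(x,r)$; hence $B_{x,w}\cap B_E(\zeta,\sigma)$ contains the union of all such paths. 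Covering $K$ by $O(r^{-2})$ balls of radius $r/4$ and adjusting constants, it therefore suffices to prove that, outside an event of probability $o^\infty(r)$, for every $z_0\in K$ and every simple admissible $\gamma_0\subseteq\overline{B_E}(z_0,r)$ of Euclidean diameter $\ge r/2$ with endpoints in $\Upsilon_\Gamma$, the splicing closure $\widehat\gamma_0$ (relative to $\overline{B_E}(z_0,r)$) satisfies $\meas{\widehat\gamma_0}{\Gamma}\ge r^{\dcle+a}$.

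\textbf{Step 2: the geometric skeleton and the bookkeeping.} Set $\epsilon_0:=a/(2(\dcle-1))>0$ (valid since $\dcle\in(1,2)$ by~\eqref{eqn:cle_dim}), $\sigma:=r^{1+\epsilon_0}$, and $N:=\lfloor c_*\,r/\sigma\rfloor\asymp r^{-\epsilon_0}$ for a small absolute constant $c_*>0$. Since $\gamma_0$ is connected of diameter $\ge r/2\gg N\sigma$, one can choose $\zeta_1,\dots,\zeta_N\in\gamma_0$ pairwise at distance $\ge 4\sigma$, each at distance $\ge 2\sigma$ from the endpoints of $\gamma_0$ and from $\partial B_E(z_0,r)$; the balls $B_E(\zeta_i,2\sigma)$ are then pairwise disjoint and contained in $\overline{B_E}(z_0,r)\cap D$. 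By Step 1, $\widehat\gamma_0\cap B_E(\zeta_i,\sigma)$ contains the pocket $\mathcal P_i$ consisting of all simple admissible paths in $\overline{B_E(\zeta_i,\sigma)}$ between the corresponding crossing points $p_i,q_i$; since $\zeta_i\in\Upsilon_\Gamma$, no loop of $\Gamma$ surrounds $B_E(\zeta_i,\sigma)$, so $\mathcal P_i$ is a macroscopic piece of gasket inside $B_E(\zeta_i,\sigma)$. If $\meas{\mathcal P_i}{\Gamma}\ge c\sigma^{\dcle}$ for at least $N/2$ indices $i$, then by disjointness $\meas{\widehat\gamma_0}{\Gamma}\ge (N/2)\,c\,\sigma^{\dcle}\asymp r^{\dcle+\epsilon_0(\dcle-1)}=r^{\dcle+a/2}\ge r^{\dcle+a}$ for $r$ small. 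So it remains to control the event that fewer than $N/2$ of the windows carry mass $\ge c\sigma^{\dcle}$.

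\textbf{Step 3: uniform positive-probability mass, with independence across windows.} I would explore $\Gamma$ from the complement of $\bigcup_i B_E(\zeta_i,2\sigma)$ using Theorem~\ref{thm:cle_partially_explored}: conditionally on this partial exploration, the configuration of $\Gamma$ in the unexplored component meeting $B_E(\zeta_i,\sigma)$ is a multichordal \clekp{}, and these configurations are conditionally independent over $i$. Call the $i$-th window \emph{admissible} if the exterior data near $B_E(\zeta_i,\sigma)$ is non-degenerate (e.g.\ the marked points are well-separated and the relevant component is not too thin). For an admissible window, the pocket $\mathcal P_i$ contains with conditional probability $\ge 2q_0>0$ a subregion bounded by two intersecting loops of diameter $\gtrsim\sigma$, and such a subregion has $\meas{\cdot}{\Gamma}\ge c\sigma^{\dcle}$ with conditional probability bounded below, uniformly: this follows from the conformal covariance of the gasket measure (Section~\ref{se:cle_measure}), the non-triviality of $\meas{\cdot}{\cdot}$ established in~\cite{ms2022clemeasure}, and the continuity of the multichordal \clekp{} law in the boundary data (Propositions~\ref{pr:link_probability} and~\ref{prop:mccle_tv_convergence_int}) together with compactness. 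Using the resampling of Section~\ref{subsubsec:target_pivotals} and Proposition~\ref{pr:link_probability} exactly as in the proof of Lemma~\ref{lem:break_loops}, one arranges that at least $(1-q_0/4)N$ of the windows are admissible outside an event of probability $o^\infty(r)$. A Chernoff bound over the conditionally independent admissible windows then bounds the probability that fewer than $N/2$ windows carry mass $\ge c\sigma^{\dcle}$ by $e^{-cN}=e^{-c'r^{-\epsilon_0}}=o^\infty(r)$. Summing over the $O(r^{-2})$ net of $K$ keeps the total $o^\infty(r)$, completing the proof. (If desired, the moment and concentration bounds for $\meas{\cdot}{\cdot}$ proved earlier in this appendix can replace the bare non-triviality input and make the uniformity statements more transparent.)

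\textbf{The main obstacle.} I expect the real work to be in Steps 2--3: first, carrying out the splicing rigorously — selecting $p_i,q_i$ so that every spliced curve is simple, admissible and stays in $\overline{B_E}(x,r)$, and verifying that $\mathcal P_i$ is a genuine two-dimensional piece of the gasket rather than, say, a single loop strand of zero $\mu$-mass; and second, obtaining the \emph{uniform} lower bound $q_0$ on the conditional probability that an admissible window carries mass $\ge c\sigma^{\dcle}$, which requires controlling the partial-exploration boundary data for all but a small proportion of the $N$ windows. This second point is precisely where the independence-across-scales and resampling machinery of~\cite{amy-cle-resampling} (cf.\ Lemma~\ref{lem:break_loops}) is essential.
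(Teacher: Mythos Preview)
There is a genuine measurability gap in Step~3. You place the window centers $\zeta_1,\dots,\zeta_N$ on the admissible path $\gamma_0\subseteq\Upsilon_\Gamma$, so they are functions of the full configuration $\Gamma$; you then propose to ``explore $\Gamma$ from the complement of $\bigcup_i B_E(\zeta_i,2\sigma)$'' and invoke Theorem~\ref{thm:cle_partially_explored} to get conditional independence across the windows. But the partial-exploration $\sigma$-algebra you condition on already depends on the window locations, which in turn depend on $\Gamma$ inside the windows --- the filtration is circular and the conditional-independence claim does not follow. The crossing points $p_i,q_i$ have the same defect: they are determined by where $\gamma_0$ meets $\partial B_E(\zeta_i,\sigma)$, and $\gamma_0$ runs through the interiors. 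One can try to repair this by fixing the $\zeta_i$ on a deterministic $\sigma$-grid and union-bounding over all connected grid-skeletons of length $N$ that a path of diameter $\ge r/2$ could trace; this costs a factor $e^{O(N)}$, which the Chernoff rate $e^{-cN}$ beats only if the per-window success probability is driven close to $1$. Even then, the ``good'' window event must be defined independently of $\gamma_0$ (since the bound has to hold for all paths simultaneously), and the spatial conditional independence you need --- across many balls at the same scale, with boundary data controlled for most --- is not what Lemma~\ref{lem:break_loops} provides (that lemma is scale-independence at a single center).

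The paper avoids all of this by using scales rather than space. For each $z$ on a deterministic grid of mesh $r^{1+a}$ it runs the independence-across-scales argument (as in Lemma~\ref{le:good_ann_event_harm}) over the $\asymp a\log_2(1/r)$ dyadic annuli between radii $r^{1+a}$ and $r$: at a good scale $r'$ there are at most $M$ intersection points $\{u_i\}$ separating the annulus, and each $u_i$ borders a region $V_i$ between two intersecting strands with $\meas{V_i}{\Gamma}\ge M^{-1}(r')^{\dcle}$ (by Lemma~\ref{le:meas_nonzero} and conformal covariance). Enlarging $M$ pushes the per-scale success probability to $1$, so failure at every scale has probability $O(r^b)$ for any $b$. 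On the good event, any admissible path from $x$ near $z$ to $w\in\partial\Bpath(x,r)$ must pass through some $u_i$, and since $B_{x,w}$ is simply connected and contains admissible paths along both bounding strands of $V_i$, it contains $V_i$ itself --- a single region already gives $\meas{B_{x,w}}{\Gamma}\ge M^{-1}r^{(1+a)\dcle}$. No summation over windows, no Chernoff, and no union bound over path skeletons is needed.
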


\subsection{Geodesic \clekp{} metrics}
\label{se:geodesic_metric}

Let $D \subseteq \C$ be a simply connected domain, let $\Gamma$ be a nested \clekp{} in $D$. Recall the notation $\paths{x}{y}{U}{\Gamma}$ and $\dpathY$ defined in Section~\ref{subsec:main_results}. If $U \subseteq \C$ is an open set, we let $\Gamma|_U$ denote the (countable) collection of loops and maximal strands of $\Gamma$ contained in $U$.

It is shown in \cite{amy2025tightness,my2025geouniqueness} that one can define uniquely (up to a deterministic constant) a geodesic metric $\met{\cdot}{\cdot}{\Gamma}$ on each of the gaskets\footnote{As in Theorem~\ref{th:all_clusters_domain}, the exterior gasket is split up by $\partial D$.} of $\Gamma$ satisfying the following properties. Let $\lmet{\gamma}$ denote the length of a path $\gamma$ with respect to $\met{\cdot}{\cdot}{\Gamma}$.
\begin{itemize}
 \item $\met{\cdot}{\cdot}{\Gamma}$ is continuous with respect to $\dpathY$.
 \item If we define for each open set $U \subseteq \C$ the internal metric
 \[ \metres{U}{x}{y}{\Gamma} = \inf_{\gamma \in \paths{x}{y}{U}{\Gamma}} \lmet{\gamma} , \]
 then $\metres{U}{\cdot}{\cdot}{\Gamma}$ is a measurable function of $\Gamma|_U$ and for each $z \in \C$ we have
 \[ \metres{U+z}{\cdot+z}{\cdot+z}{\Gamma|_{U+z}} = \metres{U}{\cdot}{\cdot}{\Gamma|_{U+z}-z} \quad\text{almost surely.} \]
\end{itemize}
Further, it is shown in \cite{my2025geouniqueness} that the metric is scale covariant\footnote{It is also conformally covariant, but we will not need this fact in the present paper.} in the following sense. There exists a constant $\geoexp > 0$ (depending only on $\kappa'$) such that for each $\lambda > 0$ and each open set $U \subseteq \C$ we have
\[ \metres{\lambda U}{\lambda\cdot}{\lambda\cdot}{\Gamma|_{\lambda U}} = \lambda^{\geoexp}\metres{U}{\cdot}{\cdot}{\lambda^{-1}\Gamma|_{\lambda U}} \quad\text{almost surely.} \]
For each gasket $\Upsilon$ of $\Gamma$ we have that $(\Upsilon, \met{\cdot}{\cdot}{\Gamma})$ is a compact geodesic metric space. Moreover, the metric $\met{\cdot}{\cdot}{\Gamma}$ is H\"older continuous with respect to $\dpathY$ with superpolynomial tails. More precisely, we have the following concentration estimates.

\begin{proposition}[{\cite[Proposition~1.9]{my2025geouniqueness}}]
\label{pr:cle_metric_estimates}
Fix $a>0$ and a compact set $K \subseteq D$. Then
\[
 \p\left[ \sup_{\Upsilon} \sup_{\substack{x,y \in \Upsilon \cap K\\ \dpathY(x,y) < \delta}} \metres{B(x,3\delta)}{x}{y}{\Gamma} \ge \delta^{\geoexp-a} \right] = o^\infty(\delta) \quad\text{as } \delta \searrow 0 ,
\]
and
\[
 \p\left[ \inf_{\Upsilon} \inf_{\substack{x,y \in \Upsilon \cap K\\ \dpathY(x,y) \ge \delta}} \met{x}{y}{\Gamma} \le \delta^{\geoexp+a} \right] = o^\infty(\delta) \quad\text{as } \delta \searrow 0 .
\]
\end{proposition}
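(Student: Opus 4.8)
\emph{Plan.} Both displays are quantitative moduli of continuity relating $\Fd$ and $\dpathY$, and the plan is to reduce each of them — using the scale covariance and translation invariance of the internal metrics recorded in Section~\ref{se:geodesic_metric}, together with the total-variation continuity of the (multichordal) $\CLE_{\kappa'}$ law under Carathéodory perturbations (Proposition~\ref{prop:mccle_tv_convergence_int}) — to a \emph{superpolynomial} tail bound for a single quantity observed at a fixed Euclidean scale, and then to conclude by a union bound over the $O(\delta^{-2})$ balls of radius $\asymp\delta$ needed to cover $K$. The supremum/infimum over the a priori countably many gaskets $\Upsilon$ of $\Gamma$ is harmless: with probability $1-o^\infty(\delta)$ only $\mathrm{poly}(1/\delta)$ of them contain a subset of Euclidean diameter at least $\delta$ meeting $K$, and the rest are irrelevant at scale $\delta$, so the union bound runs over these as well. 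Since each $\metres{U}{\cdot}{\cdot}{\Gamma}$ is a measurable function of $\Gamma|_U$, all the events below are local, so the independence-across-scales/resampling tools of \cite{amy-cle-resampling} (e.g.\ Lemma~\ref{lem:break_loops}) apply, and the one-scale tightness estimates for the approximating metrics of \cite{amy2025tightness} provide the base case.

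\emph{Upper bound.} If $x,y\in\Upsilon\cap K$ with $\dpathY(x,y)<\delta$ then some admissible path from $x$ to $y$ has Euclidean diameter $<\delta$, hence lies in $B(x,\delta)$; choosing the covering balls of radius $\delta/10$, for $x\in B(z_i,\delta/10)$ one has $B(z_i,2\delta)\subseteq B(x,3\delta)$, so
\[ \metres{B(x,3\delta)}{x}{y}{\Gamma}\ \le\ \metres{B(z_i,2\delta)}{x}{y}{\Gamma}\ \le\ \Delta_i , \]
where $\Delta_i$ is the largest $\metres{B(z_i,2\delta)}{\cdot}{\cdot}{\Gamma}$-diameter of a connected component of $\Upsilon\cap B(z_i,2\delta)$, a measurable function of $\Gamma|_{B(z_i,2\delta)}$. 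By scale covariance and translation invariance, $\Delta_i$ is distributed as $(2\delta)^{\geoexp}$ times a random variable whose law, by Proposition~\ref{prop:mccle_tv_convergence_int}, lies within $o^\infty(\delta)$ in total variation of the fixed law of the $\Fd$-diameter $D$ of the gasket inside a unit ball. Thus the first display follows once $\p[D>t]\le C_b t^{-b}$ for every $b>0$. To upgrade from the polynomial decay a moment bound would give to this superpolynomial decay, one iterates a one-scale estimate: tile the unit ball by sub-balls of a fixed small ratio, bound $D$ by a sum of the component-diameters of $O(1)$ sub-balls along a covering chain between any two points, and use that at each of the $\asymp\log(1/t)$ intermediate dyadic scales a definite fraction of sub-balls are ``good'' (diameter within a constant of the scaling prediction), with approximate independence across disjoint sub-balls from \cite{amy-cle-resampling} and goodness probability tending to $1$ from \cite{amy2025tightness}; a percolation-type argument on the good scales then yields stretched-exponential, hence superpolynomial, tails.

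\emph{Lower bound.} Let $x,y\in\Upsilon\cap K$ with $\dpathY(x,y)\ge\delta$ and let $\gamma$ be an $\Fd$-geodesic from $x$ to $y$ (these exist since $(\Upsilon,\Fd)$ is a compact geodesic space). Since $\gamma$ is admissible, $\diamE(\gamma)\ge\dpathY(x,y)\ge\delta$, so $\gamma$ exits $B(x,\delta/3)$ and contains a sub-arc crossing the annulus $A(x,\delta/6,\delta/3)$; hence $\met{x}{y}{\Gamma}\ge\metres{A(x,\delta/6,\delta/3)}{\bIn}{\bOut}{\Gamma}$, the internal $\Fd$-distance between the two boundary circles. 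By a standard net argument — a crossing of $A(x,\delta/6,\delta/3)$ contains a crossing of a fixed-modulus annulus of size $\asymp\delta$ centered at the nearest net point, and the a priori continuity of $\Fd$ in $\dpathY$ bridges from the net to all $x$ — it suffices to prove that the internal $\Fd$-distance across $A(z,r,2r)$ is $\ge r^{\geoexp+a}$ with probability $1-o^\infty(r)$, uniformly in $z\in K$ and dyadic $r$. By scale covariance and translation invariance this reduces to: the internal $\Fd$-distance $Y$ across a fixed annulus satisfies $\p[Y<t]\le C_b t^{b}$ for every $b>0$. This follows from an independence-across-scales argument dual to the previous one: a crossing of $\Fd$-length $<t$ has Euclidean diameter $\gtrsim 1$, hence at every small scale $\delta_0$ traverses $\gtrsim\delta_0^{-1}$ disjoint sub-balls of radius $\delta_0$, and a total $\Fd$-length $<t$ forces an atypically fast crossing in most of these sub-balls at each of $\asymp\log(1/t)$ scales, which has superpolynomially small probability. (These metric estimates are, in the constructions of \cite{amy2025tightness,my2025geouniqueness}, established together with the gasket measure estimates of Propositions~\ref{pr:measure_lb} and~\ref{pr:measure_ub}, which play the analogous role for the measure $\mu$ and control the $\mu$-mass of $\Fd$-balls from below and above.)

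\emph{Main obstacle.} The heart of both directions is upgrading polynomial to superpolynomial tails, which forces a genuine multiscale independence argument; since a $\CLE_{\kappa'}$ has no exact spatial Markov property, this must be routed through the resampling machinery of \cite{amy-cle-resampling} with the errors accumulated carefully over the $\log(1/\delta)$ scales involved. A secondary technical point is uniformity — over the continuum of base points (handled by nets plus the a priori continuity of $\Fd$) and over the countably many gaskets (handled by first showing only $\mathrm{poly}(1/\delta)$ of them are macroscopic near $K$).
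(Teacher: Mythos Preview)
The paper does not prove this proposition: it is quoted verbatim as \cite[Proposition~1.9]{my2025geouniqueness} and immediately followed by the next subsection, with no argument given. So there is nothing in the present paper to compare your proposal against; you are attempting to reconstruct a proof that lives entirely in the cited reference.

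That said, a brief assessment of your outline. The high-level strategy---reduce via scale covariance and translation invariance to a single-scale tail bound, then union-bound over a $\delta$-net---is the standard one and is in the right spirit. Two points are imprecise. First, your invocation of Proposition~\ref{prop:mccle_tv_convergence_int} to control the law of $\Gamma|_{B(z_i,2\delta)}$ is not the right tool: that proposition concerns Carath\'eodory convergence of \emph{marked} domains for multichordal $\CLE_{\kappa'}$, whereas here you need to compare an ordinary $\CLE_{\kappa'}$ in $D$, restricted to a shrinking ball, with a reference law. The actual mechanism in \cite{amy2025tightness,my2025geouniqueness} is the partial-exploration/resampling machinery (Theorem~\ref{thm:cle_partially_explored}, Lemma~\ref{lem:break_loops}), which lets one decouple the local picture from the domain boundary up to events of controllable probability. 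Second, your ``percolation-type argument on the good scales'' for upgrading polynomial to superpolynomial tails is where all the work is; the sketch you give is plausible but does not indicate how the approximate independence is cashed out quantitatively across the $\asymp\log(1/\delta)$ scales (this is exactly what the renormalization arguments in \cite{amy2025tightness} do, cf.\ the proof of \cite[Proposition~5.17]{amy2025tightness} referenced in Lemma~\ref{le:resistance_ub_cle}). The lower-bound reduction to annulus crossings is correct; again the substance is the superpolynomial lower tail, which you only gesture at.
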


\subsection{Resistance metrics, resistance forms, and Dirichlet forms}
\label{subsec:dirichlet_resistance}

\subsubsection{Electrical networks and effective resistance}

All graphs in this section are assumed to be finite and simple.

Let $\vertexset$ be a finite set, and $w\colon \vertexset\times\vertexset \to [0,\infty)$ a function with $w(x,y) = w(y,x)$ and $w(x,x) = 0$ for each $x,y \in \vertexset$. We consider $(\vertexset,w)$ as a weighted graph (also called an \emph{electrical network}) with edge set given by $\edgeset = \{ \{x,y\} \mid w(x,y) > 0 \}$, and refer to $w$ as \emph{edge conductances}. (Sometimes it is useful to parameterize the network by the \emph{edge resistances} $r(x,y) = 1/w(x,y)$ instead.) We say that $(\vertexset,w)$ is connected if the induced graph $(\vertexset,\edgeset)$ is connected.

There is a natural Markov chain on $\vertexset$ associated with $w$. At each step, the chain jumps from $x$ to $y$ with probability $w(x,y)/\lambda(x)$ where $\lambda(x) = \sum_z w(x,z)$. This Markov chain is irreducible if and only if $(\vertexset,w)$ is connected. In that case, we associate with it a (discrete) Dirichlet form and an effective resistance metric as follows. For a function $f\colon \vertexset \to \R$, let
\begin{equation}\label{eq:df_discrete}
 \CE(f,f) = \frac{1}{2} \sum_{x,y \in V} w(x,y)(f(x)-f(y))^2 .
\end{equation}
The effective resistance $R\colon \vertexset\times\vertexset \to [0,\infty)$ is defined as
\[
R(x,y) = 
\begin{cases}
 \bigl( \min\{ \CE(f,f) \mid f\colon \vertexset \to \R,\, f(x)=1,\, f(y)=0 \} \bigr)^{-1} ,& x \neq y ,\\
 0 ,& x = y .
\end{cases}
\]
If $(\vertexset,w)$ is connected, then $R$ is finite and defines a metric on $\vertexset$ \cite[Theorem~2.1.14]{k2001analysis}. Conversely, the edge conductances $w$ are uniquely determined by $R$ \cite[Theorem~2.1.12]{k2001analysis}.

Clearly, the map that associates to $w$ the effective resistance $R$ is continuous. The following lemma shows that the converse is also true.

\begin{lemma}\label{le:reff_lim}
 Let $\vertexset$ be a finite set and $w_n\colon \vertexset\times\vertexset \to [0,\infty)$ be a sequence of edge conductances such that each $(\vertexset,w_n)$ is connected. Let $R_n\colon \vertexset\times\vertexset \to [0,\infty)$ be the associated effective resistance metrics. Suppose that $R_n \to R$ for some $R\colon \vertexset\times\vertexset \to [0,\infty)$ with $R(x,y) > 0$ for each $x \neq y$. Then there exists $w\colon \vertexset\times\vertexset \to [0,\infty)$ such that $w_n \to w$ and $R$ is the effective resistance associated with $w$.
\end{lemma}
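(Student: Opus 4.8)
The plan is to combine a soft compactness argument with the continuity and uniqueness of the finite-network correspondence $w \leftrightarrow R$ recalled just above; the only real work is to rule out the two ways a sequence of conductances can degenerate. First I would show that the conductances are uniformly bounded. Fix distinct $x,y \in \vertexset$, and suppose $w_n(x,y) > 0$. For any $f\colon \vertexset \to \R$ with $f(x)=1$, $f(y)=0$, retaining only the term of the edge $\{x,y\}$ in \eqref{eq:df_discrete} gives $\CE(f,f) \ge w_n(x,y)(f(x)-f(y))^2 = w_n(x,y)$, hence $R_n(x,y) \le 1/w_n(x,y)$. If $w_n(x,y) \to \infty$ along a subsequence we would get $R_n(x,y) \to 0$ along it, forcing $R(x,y) = 0$ and contradicting the hypothesis $R(x,y) > 0$; therefore $M \defeq \sup_n \max_{x \neq y} w_n(x,y) < \infty$. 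Thus the conductances lie in the compact set of symmetric functions $\vertexset\times\vertexset \to [0,M]$ vanishing on the diagonal, so every subsequence of $(w_n)$ has a further subsequence converging pointwise to some such $w$.

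Next I would argue that any such subsequential limit $w$ defines a \emph{connected} network. If not, let $A$ be a proper nonempty connected component of $(\vertexset,w)$ and pick $x \in A$, $y \in \vertexset \setminus A$; every edge between $A$ and its complement has $w$-weight $0$, so along the convergent subsequence $(w_{n_k})$ the total weight $\sum_{u \in A,\, v \notin A} w_{n_k}(u,v)$ tends to $0$. Testing the definition of $R_{n_k}(x,y)$ against $f = \one_A$ gives $R_{n_k}(x,y)^{-1} = \min_f \CE_{n_k}(f,f) \le \CE_{n_k}(\one_A,\one_A) = \sum_{u \in A,\, v \notin A} w_{n_k}(u,v)$, which tends to $0$ as $k \to \infty$, so $R_{n_k}(x,y) \to \infty$, contradicting $R(x,y) < \infty$. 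Hence $w$ is connected, and by the continuity of $w \mapsto R$ on connected networks the effective resistance of $w$ equals $\lim_k R_{n_k} = R$. By \cite[Theorem~2.1.12]{k2001analysis} the effective resistance of a connected network determines its conductances, so $w$ is the \emph{unique} connected network with effective resistance $R$; in particular every subsequential limit of $(w_n)$ in the above sense is this same $w$, and since a sequence in a compact metric space with a unique subsequential limit converges, $w_n \to w$, with $R$ its associated effective resistance.

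The main obstacle, to the extent there is one, is precisely identifying and handling these two degenerations of the parameter space: conductances escaping to $+\infty$, excluded in the first step via $R(x,y) \le r(x,y)$ together with positivity of $R$, and a cut of edges all collapsing to $0$ (which would make a subsequential limit disconnected), excluded in the second step via the dual bound $R_n(x,y)^{-1} \le \CE_n(\one_A,\one_A)$ together with finiteness of $R$. Once both are ruled out, the conclusion is immediate from the already-recalled continuity and injectivity of the finite-network correspondence plus the elementary compactness fact just used.
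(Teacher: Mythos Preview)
Your proof is correct and follows essentially the same compactness-plus-continuity-plus-uniqueness route as the paper's proof. The only difference is cosmetic: you explicitly verify that a subsequential limit $w$ is connected (via the test function $\one_A$), whereas the paper leaves this implicit in its appeal to continuity and uniqueness of the conductance-to-resistance correspondence.
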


\begin{proof}
We view $w_n$ as functions taking values in the space $[0,\infty]$ equipped with the topology of a compact interval. Then, by compactness, every subsequence $(w_{n_k})$ of $(w_n)$ contains a convergent subsequence $(w_{n_{k_l}})$ where the limit is a function $w\colon \vertexset\times\vertexset \to [0,\infty]$. But in fact, $w$ must take values in $[0,\infty)$ since if $w_n(x,y) \to \infty$ for some $x,y \in \vertexset$, then we would have $R_n(x,y) \to 0$ which is excluded in our assumption.

Now, since the map that associates to the edge conductance function the effective resistance metric is continuous, we have that $(R_{n_{k_l}})$ converges to the effective resistance associated with $w$. But by the assumption, the latter agrees with $R$. Recalling that $w$ is uniquely determined by $R$, we have shown that every subsequence of $(w_n)$ contains a subsequence converging to $w$. This implies that $w_n \to w$.
\end{proof}

\begin{lemma}\label{le:reff_cut_point}
 Let $\vertexset_1,\vertexset_2$ be two finite sets with $\vertexset_1 \cap \vertexset_2 = \{z\}$. For $i=1,2$, let $w_i\colon \vertexset_i \times \vertexset_i \to [0,\infty)$ be edge conductances, each inducing a connected subgraph, and let $R_i\colon \vertexset_i \times \vertexset_i \to [0,\infty)$ be the associated effective resistance. Define $R\colon (\vertexset_1 \cup \vertexset_2) \times (\vertexset_1 \cup \vertexset_2) \to [0,\infty)$ by
 \[
 R(x,y) = \begin{cases}
 R_1(x,y) ,& x,y \in \vertexset_1,\\
 R_2(x,y) ,& x,y \in \vertexset_2,\\
 R_1(x,z)+R_2(z,y) ,& x \in \vertexset_1,\, y \in \vertexset_2 . 
 \end{cases}
 \]
 Then $R$ is the effective resistance associated with $w$ given by
 \begin{equation}\label{eq:wgraph_cut_point}
  w(x,y) =
  \begin{cases}
   w_1(x,y) ,& x,y \in \vertexset_1,\\
   w_2(x,y) ,& x,y \in \vertexset_2,\\
   0 & \text{otherwise.}
  \end{cases}
 \end{equation}
\end{lemma}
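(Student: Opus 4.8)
The plan is to verify directly from the variational definition that the effective resistance of the glued network $(\vertexset_1\cup\vertexset_2,w)$ coincides with the function $R$ in the statement. First, $(\vertexset_1\cup\vertexset_2,w)$ is connected, since each $(\vertexset_i,w_i)$ is and they share the vertex $z$, so its effective resistance is a finite metric. Let $\CE$ be its Dirichlet form and $\CE_i$ that of $(\vertexset_i,w_i)$, both as in~\eqref{eq:df_discrete}. Since by~\eqref{eq:wgraph_cut_point} we have $w(x,y)=0$ whenever $x\in\vertexset_1\setminus\{z\}$ and $y\in\vertexset_2\setminus\{z\}$, the energy splits additively over the two pieces:
\[ \CE(f,f)=\CE_1(f|_{\vertexset_1},f|_{\vertexset_1})+\CE_2(f|_{\vertexset_2},f|_{\vertexset_2}),\qquad f\colon\vertexset_1\cup\vertexset_2\to\R. \]

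Next I would record the elementary identity that for a connected network $(\vertexset_i,w_i)$, distinct $p,q\in\vertexset_i$ and any $\alpha,\beta\in\R$,
\[ \min\{\CE_i(g,g)\mid g(p)=\alpha,\ g(q)=\beta\}=\frac{(\alpha-\beta)^2}{R_i(p,q)}, \]
which for $\alpha\neq\beta$ follows from the bijection $g=\beta+(\alpha-\beta)h$ onto $\{h:h(p)=1,\ h(q)=0\}$ (scaling $\CE_i$ by $(\alpha-\beta)^2$) and is trivial for $\alpha=\beta$. Then one argues by cases on the positions of $x,y$. If $x,y\in\vertexset_1$ (this also covers the degenerate case $z\in\{x,y\}$), then for any prescribed value $t$ of $f(z)$ the minimum of $\CE_2(g,g)$ over $g\colon\vertexset_2\to\R$ with $g(z)=t$ equals $0$ (attained at the constant $g\equiv t$), so the constrained minimum of $\CE(f,f)$ reduces to that of $\CE_1$ over functions on $\vertexset_1$ with values $1$ at $x$ and $0$ at $y$, namely $R_1(x,y)^{-1}$; the case $x,y\in\vertexset_2$ is symmetric. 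In the remaining case $x\in\vertexset_1\setminus\{z\}$, $y\in\vertexset_2\setminus\{z\}$, I would parametrize by $s=f(z)$; since $\vertexset_1$ and $\vertexset_2$ share only $z$, once $s$ is fixed the minimizations over $f|_{\vertexset_1}$ and over $f|_{\vertexset_2}$ decouple, so combining the additivity of the energy with the elementary identity gives
\[ \min\{\CE(f,f):f(x)=1,\ f(y)=0\}=\min_{s\in\R}\left(\frac{(1-s)^2}{R_1(x,z)}+\frac{s^2}{R_2(z,y)}\right)=\frac{1}{R_1(x,z)+R_2(z,y)}, \]
the last equality being the one-variable minimization, attained at $s=R_2(z,y)/(R_1(x,z)+R_2(z,y))$. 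Taking reciprocals in each case recovers exactly the displayed formula for $R$, so $R$ is the effective resistance associated with $w$.

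The argument is routine; the only steps requiring care are the decoupling of the minimization over the two sides once the value at the cut vertex $z$ is fixed — this is precisely where the hypothesis $\vertexset_1\cap\vertexset_2=\{z\}$ enters — and the bookkeeping ensuring that the degenerate configurations with $z\in\{x,y\}$ fall correctly into the same-side cases, which is consistent with the displayed formula for $R$ since $R_1(z,z)=R_2(z,z)=0$.
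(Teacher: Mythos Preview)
Your proof is correct and essentially the same as the paper's, though considerably more explicit: the paper dispatches the lemma in one line by noting that $z$ is a cut point of the weighted graph $(\vertexset_1\cup\vertexset_2,w)$ and appealing to the standard fact that effective resistance adds in series across a cut vertex, whereas you supply a self-contained verification of that fact from the variational definition. The added detail (the energy decomposition, the scaling identity, the one-variable minimization over $s=f(z)$) is exactly what one would write out to justify the paper's appeal.
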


\begin{proof}
 Define $w$ by~\eqref{eq:wgraph_cut_point}. Then $z$ is a cut point of the associated weighted graph, and therefore the associated effective resistance is exactly $R$.
\end{proof}

\begin{lemma}
\label{le:gen_parallel_law_graph}
Let $(\vertexset,w)$ be a weighted graph, and let $\graph = (\vertexset,\edgeset)$ be the subgraph induced by the edges with positive conductances. Let $x,y,z_1,\ldots,z_N \in \vertexset$ such that $\{z_1,\ldots,z_N\}$ separates $x$ from $y$ in $\graph$. Let $K_x$ be the subgraph induced by removing the vertices that are strictly separated from $x$ by $\{z_1,\ldots,z_N\}$. Let $R_\graph$ (resp.\ $R_{K_x}$) be the effective resistance associated with $(\graph,w)$ (resp.\ $(K_x, w|_{K_x \times K_x})$). Then
\[ R_\graph(x,y)^{-1} \le \sum_i R_{K_x}(x,z_i)^{-1} . \]
\end{lemma}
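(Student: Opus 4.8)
The plan is to pass to effective conductances $C_\bullet := R_\bullet^{-1}$ and prove the equivalent inequality $C_\graph(x,y) \le \sum_i C_{K_x}(x,z_i)$. The degenerate cases (some $z_i$ equal to $x$, or $x$ and $y$ in different components of $\graph$) make one side infinite or the other side zero and are disposed of immediately, so I will assume $x,y \notin \{z_1,\dots,z_N\}$ and that $x,y$ lie in a common component. First I would \emph{short the separating set}: let $\graph'$ be the weighted graph obtained from $(\graph,w)$ by identifying $z_1,\dots,z_N$ to a single vertex $z^*$ (edges among the $z_i$ become energy-free self-loops). A function on $\graph'$ is precisely a function on $\graph$ that is constant on $\{z_1,\dots,z_N\}$, with the same Dirichlet energy, so minimizing over the smaller class and using the variational formula for effective conductance gives the shorting law $C_\graph(x,y)\le C_{\graph'}(x,y)$.

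Next I would use that $\{z_1,\dots,z_N\}$ separates $x$ from $y$ in $\graph$, which means $z^*$ is a cut vertex of $\graph'$ with $x$ and $y$ in different components of $\graph'\setminus\{z^*\}$. Let $K$ be the $x$-side of $\graph'$, i.e.\ the subgraph induced by $\{z^*\}$ together with the vertices reachable from $x$ without passing through $z^*$; unwinding the identification, $K$ is exactly $K_x$ with $z_1,\dots,z_N$ merged to $z^*$ (this is where the definition of $K_x$ as ``$\graph$ with the vertices strictly separated from $x$ removed'' is used). Writing the component of $x$ in $\graph'$ as $K$ glued to the $y$-side at the single vertex $z^*$ and applying Lemma~\ref{le:reff_cut_point}, we get $R_{\graph'}(x,y)=R_K(x,z^*)+R_{y\text{-side}}(z^*,y)\ge R_K(x,z^*)$ and also $R_{\graph'}(x,z^*)=R_K(x,z^*)$, hence $C_{\graph'}(x,y)\le C_{\graph'}(x,z^*)=C_K(x,z^*)$.

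It then remains to prove $C_K(x,z^*)\le\sum_i C_{K_x}(x,z_i)$, i.e.\ that the conductance from $x$ to the \emph{set} $\{z_1,\dots,z_N\}$ inside $K_x$ is bounded by the sum of the conductances from $x$ to the individual $z_i$; this sub-additivity is the one genuine point and is what I would regard as the main step. I would argue it either probabilistically, via the escape-probability identity $C_K(x,z^*)=\lambda(x)\,\p_x[\tau_{\{z_1,\dots,z_N\}}<\tau_x^+]$ and $C_{K_x}(x,z_i)=\lambda(x)\,\p_x[\tau_{z_i}<\tau_x^+]$ with $\lambda(x)=\sum_v w(x,v)$, followed by the union bound $\p_x[\tau_{\{z_1,\dots,z_N\}}<\tau_x^+]\le\sum_i\p_x[\tau_{z_i}<\tau_x^+]$; or, keeping everything deterministic, by a maximum-principle argument: let $u$ be the equilibrium potential on $K_x$ with $u(x)=1$, $u\equiv0$ on $\{z_1,\dots,z_N\}$, harmonic elsewhere, and let $u_i$ be the analogous potential for the pair $(x,z_i)$, so that $C_K(x,z^*)=\sum_v w(x,v)(1-u(v))$ and $C_{K_x}(x,z_i)=\sum_v w(x,v)(1-u_i(v))$ by the current–voltage identity; then $\phi:=\sum_i(1-u_i)-(1-u)$ is harmonic off $\{x\}\cup\{z_1,\dots,z_N\}$, vanishes at $x$, and at each $z_j$ equals $\sum_{i\ne j}(1-u_i(z_j))\ge0$, so $\phi\ge0$ on $K_x$ by the minimum principle, and evaluating at the neighbours of $x$ gives the claim. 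Chaining the three steps yields $C_\graph(x,y)\le\sum_i C_{K_x}(x,z_i)$. I do not anticipate a real obstacle—everything reduces to the shorting law, the series law at a cut vertex (Lemma~\ref{le:reff_cut_point}), and sub-additivity of escape probabilities—but the bookkeeping needed to check that after shorting the $y$-side is irrelevant and that $K$ really is $K_x$ with the $z_i$ merged is the part that wants care.
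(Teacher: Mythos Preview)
Your argument is correct but follows a different route from the paper. The paper invokes a path-based representation of effective resistance (\cite[Proposition~2.4]{dw2024tightness}): it partitions the set of paths from $x$ to $y$ according to which $z_i$ is hit first, applies sub-additivity of the ``path conductance'' to get $R_\graph(x,y)^{-1}\le\sum_i R_\graph(\CA_i)^{-1}$, and then bounds each $R_\graph(\CA_i)$ below by $R_{K_x}(x,z_i)$ via monotonicity. Your approach instead shorts the separating set to a single vertex, uses the series law at the resulting cut vertex (Lemma~\ref{le:reff_cut_point}) to reduce to the $x$-side, and then proves the sub-additivity $C_K(x,z^*)\le\sum_i C_{K_x}(x,z_i)$ either by the union bound on escape probabilities or by a maximum-principle comparison of equilibrium potentials. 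The paper's argument is shorter once the external proposition is granted, and the path decomposition makes the role of ``first $z_i$ hit'' explicit; your argument is more self-contained, needing only the classical shorting and cutting laws together with the escape-probability formula, and it avoids introducing the path formalism altogether. The bookkeeping you flag (identifying $K$ with $K_x$ after merging the $z_i$, and isolating the $x$-side after shorting) is routine and your description of it is accurate.
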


\begin{proof}
This is a consequence of \cite[Proposition~2.4]{dw2024tightness}. To apply the proposition, let $\CP$ be the collection of paths in $\graph$ from $x$ to $y$, and for each $i$ let $\CA_i$ be the collection of paths from $x$ to $y$ that visit $z_i$ before $\{z_j : j \neq i\}$. Then $\CP = \bigcup_i \CA_i$ and by \cite[Proposition~2.4]{dw2024tightness} we have $R_\graph(x,y)^{-1} = R_\graph(\CP)^{-1} \le \sum_i R_\graph(\CA_i)^{-1}$. Further, if we let $\CA'_i$ be the collection of paths from $x$ to $z_i$ in $\graph\setminus\{z_j : j \neq i\}$, then by monotonicity we have $R_{K_x}(x,z_i) \le R_{\graph\setminus\{z_j : j \neq i\}}(x,z_i) = R_\graph(\CA'_i) \le R_\graph(\CA_i)$.
\end{proof}

\begin{lemma}\label{le:reff_contraction}
 Let $(\vertexset,w)$ be a weighted graph, let $x_0,y_0 \in \vertexset$ two distinct vertices, and let
 \[
  w'(x,y) =
  \begin{cases}
   w(x,y) ,& (x,y) \neq (x_0,y_0) ,\\
   \infty ,& (x,y) = (x_0,y_0) .
  \end{cases}
 \]
 (In other words, we consider the weighted graph $(\vertexset',w')$ obtained from $(\vertexset,w)$ by contracting $(x_0,y_0)$ to a single vertex.)
 Let $R$ (resp.\ $R'$) the effective resistance associated with $w$ (resp.\ $w'$).
 Then
 \[ R(x,y) \le R'(x,y)+\frac{1}{w(x_0,y_0)} \quad\text{for each } x,y \in \vertexset . \]
\end{lemma}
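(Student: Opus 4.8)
The right tool here is the variational characterisation of effective resistance \emph{dual} to the Dirichlet one, i.e.\ Thomson's principle: for distinct $a,b$ in a connected weighted graph $(\vertexset,u)$ one has $R(a,b) = \min\{ \sum_{e} \theta(e)^2/u(e) : \theta \text{ a unit flow from } a \text{ to } b\}$, with the unit current flow as minimiser. I would emphasise that the formula $R(a,b)^{-1} = \min_f \CE(f,f)$ is not directly useful for this lemma, since bounding $R(x,y)$ from above would amount to bounding a minimum from below. I may assume $w(x_0,y_0) \in (0,\infty)$, since otherwise the right-hand side is $+\infty$; then $(\vertexset,w)$ and $(\vertexset,w')$ have the same underlying graph, with the convention that the edge $\{x_0,y_0\}$ carries resistance $0$ under $w'$ and hence contributes nothing to the flow energy. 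Treating the connected components separately, I may also assume the graph is connected; the cases $x=y$ and $\{x,y\}=\{x_0,y_0\}$ are immediate (in the latter, the edge $\{x_0,y_0\}$ alone carries a unit flow of energy $1/w(x_0,y_0)$, so $R(x_0,y_0)\le 1/w(x_0,y_0)$ while $R'(x_0,y_0)=0$).

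\textbf{Core step.} Let $\theta$ be the unit current flow from $x$ to $y$ in $(\vertexset,w')$, so that $\sum_{e\neq\{x_0,y_0\}} \theta(e)^2/w(e) = R'(x,y)$. Since $(\vertexset,w)$ and $(\vertexset,w')$ have the same edge set, $\theta$ is also a unit flow from $x$ to $y$ in the network $(\vertexset,w)$, and its energy there is
\[ \sum_{e} \frac{\theta(e)^2}{w(e)} = R'(x,y) + \frac{\theta(x_0,y_0)^2}{w(x_0,y_0)} . \]
By Thomson's principle applied to $(\vertexset,w)$, the left-hand side bounds $R(x,y)$ from above, so the lemma reduces to the single claim $|\theta(x_0,y_0)| \le 1$.

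\textbf{The bounded-current fact, and what's hard.} For the remaining claim I would invoke the standard fact that a unit current flow $\theta$ satisfies $|\theta(e)|\le 1$ on every edge: being a potential gradient, $\theta$ has no cyclic component, hence decomposes into directed paths from $x$ to $y$ carrying non-negative weights summing to $1$, no edge being traversed in both directions; the value of $\theta$ on a fixed edge is then a signed partial sum of those weights and thus lies in $[-1,1]$. (Equivalently, one could note $\partial R(x,y)/\partial r(e) = \theta(e)^2$ and combine with concavity of effective resistance in the edge resistances, which gives $R(x,y)-R'(x,y) \le \tfrac{1}{w(x_0,y_0)}\cdot\theta(x_0,y_0)^2 \le \tfrac{1}{w(x_0,y_0)}$ directly.) Honestly the lemma is short; there is no real obstacle, only two points to get right: choosing the flow (Thomson) formulation rather than the Dirichlet one, and the observation that the current through the contracted edge cannot exceed $1$ in absolute value. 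Everything else is bookkeeping about the two networks sharing an edge set.
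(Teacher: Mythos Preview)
Your proof is correct and proceeds via the same underlying principle as the paper's: the Thomson (flow) variational characterisation of effective resistance. The paper, rather than invoking Thomson's principle in the standard flow form, uses an equivalent path-based variational formula from \cite[Proposition~2.1]{dw2024tightness}: one represents $R(x,y)$ as an infimum over weighted collections of paths $(P_k,\alpha_k)$ with $\sum_k\alpha_k=1$ together with a consistent allocation of edge-resistances $r_{e,P_k}$. Given such data in the contracted graph, each path is extended to the original graph by inserting the edge $(x_0,y_0)$ where needed, with $r_{(x_0,y_0),P_k}=1/(\alpha_k w(x_0,y_0))$; the extra cost is then $\sum_k \alpha_k^2/(\alpha_k w(x_0,y_0)) = 1/w(x_0,y_0)$. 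This is morally the same computation as yours --- your bound $|\theta(x_0,y_0)|\le 1$ is replaced by the identity $\sum_k\alpha_k=1$, which plays the identical role once the flow is decomposed into paths. Your version is self-contained and avoids the external reference, at the cost of treating ``the unit current flow in $(\vertexset,w')$'' (with one infinite-conductance edge) slightly informally; the paper's path formulation sidesteps that issue.
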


\begin{proof}
 This can be seen from the representation of the effective resistance given in~\cite[Proposition~2.1]{dw2024tightness}. Indeed, suppose that $P_1,\ldots,P_n$ are paths in $\vertexset'$ from $x$ to $y$, and $\alpha_1,\ldots,\alpha_n > 0$, $\sum \alpha_k = 1$. We can extend each path (if necessary) to a path in $\vertexset$ by adding the edge $(x_0,y_0)$ and set $r_{(x_0,y_0),P_k} = \frac{1}{\alpha_k w(x_0,y_0)}$. Then the constraint $\sum_k \frac{1}{r_{(x_0,y_0),P_k}} = \sum_k \alpha_k w(x_0,y_0) \le w(x_0,y_0)$ is satisfied. By~\cite[Proposition~2.1]{dw2024tightness} we have
 \[
  R(x,y) \le \sum_k \alpha_k^2 \sum_{e \in P_k} r_{e,P_k}
  \le \sum_k \alpha_k^2 \sum_{e \in P_k,\, e \neq (x_0,y_0)} r_{e,P_k} + \frac{1}{w(x_0,y_0)} .
 \]
 Taking the infimum over all choices of $(P_k),(\alpha_k),(r_{e,P_k})$ gives the result.
\end{proof}

\subsubsection{Resistance metrics and resistance forms}
\label{se:rmet_rform}

We will now review some of the basics of resistance forms and Dirichlet forms.

The following general definitions of a resistance metric and of a resistance form was introduced by Kigami \cite[Section~2.3]{k2001analysis}.

\begin{definition}
\label{def:resistance_metric}
Let $F$ be a non-empty set.  A metric $R$ on $F$ is called a \emph{resistance metric} if the following is true.  For every $B \subseteq F$ finite there exists a choice of weights $w\colon B \times B \to [0,\infty)$ so that $R|_{B \times B}$ is equal to the effective resistance metric associated with $(B,w)$.
\end{definition}

\begin{definition}
\label{def:rform_definition}
Let $F$ be a non-empty set.  A pair $(\CE,\CF)$ is called a \emph{resistance form} if the following are true:
\begin{enumerate}[(RF1)]
\item\label{it:rf_domain} $\CF$ is a linear subspace of the space of $\R$-valued functions on $F$ that contains the constant functions, and $\CE$ is a non-negative symmetric quadratic form on $\CF$.  Moreover, $\CE(f,f) = 0$ if and only if $f \in \CF$ is a constant function.
\item\label{it:rf_closed} Let $\sim$ be the equivalence relation on $\CF$ given by $f \sim g$ if and only if $f-g$ is constant.  Then $(\CF/{\sim}, \CE)$ is a Hilbert space.
\item\label{it:rf_separating} For each $x \neq y$ there exists $f \in \CF$ such that $f(x) \neq f(y)$.
\item\label{it:rf_rmet} For each $x,y \in F$, we have
\begin{equation}\label{eq:rform_to_met}
 R(x,y) \defeq \sup \left\{ \frac{| f(x) - f(y)|^2}{\CE(f,f)} : f \in \CF,\, \CE(f,f) > 0 \right\} < \infty .
\end{equation}
\item\label{it:rf_markov} We have $\ol{f} \in \CF$ and $\CE(\ol{f},\ol{f}) \leq \CE(f,f)$ for each $f \in \CF$ where $\ol{f} = (f \vee 0) \wedge 1$.
\end{enumerate}
\end{definition}

Suppose that we are given a resistance form $(\CE,\CF)$ on a set $F$. Then the function $R$ given by~\eqref{eq:rform_to_met} is a resistance metric on $F$. Conversely, for each \emph{separable} resistance metric space $(F,R)$ there is a unique resistance form $(\CE,\CF)$ such that~\eqref{eq:rform_to_met} holds.

For each $f \in \CF$ and $a,b \in \R$ we have $\CE(af+b,af+b) = a^2 \CE(f,f)$, and therefore~\eqref{eq:rform_to_met} can be equivalently written as
\begin{equation}\label{eq:rform_to_met10}
R(x,y) =
\begin{cases}
 \bigl( \inf\{ \CE(f,f) \mid f\in\CF,\, f(x)=1,\, f(y)=0 \} \bigr)^{-1} ,& x \neq y ,\\
 0 ,& x = y ,
\end{cases}
\end{equation}
In fact, the infimum in~\eqref{eq:rform_to_met10} is uniquely attained as it can be realized as an orthogonal projection. The definition implies also
\begin{equation}\label{eq:fin_energy_continuous}
 \abs{f(x)-f(y)}^2 \le \CE(f,f)R(x,y) .
\end{equation}
In particular, every $f \in \CF$ is continuous with respect to $R$.

In the following, let $(F,R)$ be a separable resistance metric space and $(\CE,\CF)$ its associated resistance form. Then the following properties hold (see~\cite[Section~2.3]{k2001analysis}).

If $B$ is a finite subset of $F$ and $w\colon B \times B \to [0,\infty)$ is the weight function associated with $R|_{B \times B}$, then the energy of a function $g\colon B \to \R$ with respect to $(B,w)$ as defined in~\eqref{eq:df_discrete} is given by
\begin{equation*}
\CE|_B(g,g) = \min\{ \CE(f,f) \mid f\in\CF,\, f(x) = g(x) \text{ for each } x \in B \} .
\end{equation*}
Here, the minimum is uniquely attained as it can be realized as an orthogonal projection. If $(A_m)$ is an increasing sequence of finite subsets of $F$ such that $\bigcup_m A_m$ is dense in $F$, then
\begin{align}\begin{split}\label{eq:rmet_to_form_sep}
 \CF &= \{ f \in C(F,R) : \lim_m \CE|_{A_m}(f|_{A_m},f|_{A_m}) < \infty \} ,\\
 \CE(f,f) &= \lim_m \CE|_{A_m}(f|_{A_m},f|_{A_m}) .
\end{split}\end{align}

For each non-empty $B \subseteq F$, one can define more generally the \emph{trace} $(\CE|_B,\CF|_B)$ of $(\CE,\CF)$ on $B$ by
\begin{align}\begin{split}\label{eq:trace_form}
 \CF|_B &= \{ f|_B : f \in \CF \} ,\\
 \CE|_B(g,g) &= \min\{ \CE(f,f) \mid f\in\CF,\, f(x) = g(x) \text{ for each } x \in B \} .
\end{split}\end{align}
In the case when $B$ is finite, it follows from~(RF\ref{it:rf_domain}),(RF\ref{it:rf_separating}),(RF\ref{it:rf_markov}) that $\CF|_B$ contains all functions $g\colon B \to \R$.
A function $f \in \CF$ minimizing~\eqref{eq:trace_form} for some $g \in \CF|_B$ is called \emph{$\CE$-harmonic} on $F \setminus B$.

We will be interested in the case when $(\CE,\CF)$ defines a regular Dirichlet form and therefore a symmetric Hunt process \cite[Theorem~1.5.1]{cf2012dform}.

In the remainder of this section, we suppose that $(F,R)$ is a \emph{compact} resistance metric space with associated resistance form $(\CE,\CF)$. Let $\mu$ be a finite Borel measure on $(F,R)$ with $\supp_R(\mu) = F$. By \cite[Corollary~6.4 and Theorem~9.4]{k2012resistance}, $(\CE,\CF)$ is then a regular Dirichlet form on $L^2(F,\mu)$ (see~\cite[Chapter~1]{cf2012dform} for the definition of a regular Dirichlet form). By the general theory of Dirichlet forms (see e.g.\ \cite[Theorem~1.5.1]{cf2012dform}), there is a $\mu$-symmetric Hunt process $X$ on $F$ associated with $(\CE,\CF)$. This means that the transition functions $(P_t)_{t \ge 0}$ of the process yield symmetric operators on $L^2(F,\mu)$ and
\begin{align*}
 \CF &= \{ f \in L^2(F,\mu) \mid \lim_{t \searrow 0} \frac{1}{t}(f-P_t f, f)_{L^2(\mu)} < \infty \} ,\\
 \CE(f,g) &= \lim_{t \searrow 0} \frac{1}{t}(f-P_t f, g)_{L^2(\mu)} .
\end{align*}
Moreover, in this setup the process has infinite lifetime, and it holds that $\mathrm{P}_x[\sigma_y < \infty] = 1$ for every $x,y \in F$ where $\sigma_y = \inf\{ t > 0 \mid X_t = y \}$ (see~\cite[Proof of Lemma~3.14]{chk-time-change-resistance}). We remark also that in this setup every non-empty subset of $F$ has positive $\CE_1$-capacity, and every $\CE_1$-quasi-continuous function is continuous (see~\cite[Section~9]{k2012resistance}). In particular, by~\cite[Theorems~1.3.14,~3.1.10, and~Lemma~A.2.18]{cf2012dform}, every point is \emph{regular}, i.e.\ $\mathrm{P}_x[\sigma_x = 0] = 1$.

If $B \subseteq F$ is a compact subset and $\nu$ is a positive Radon measure on $(F,R)$ with $\supp_R(\nu) = B$, then one can define the trace $\check{X}$ of the process $X$ with respect to $\nu$ which is given by a time change of $X$ (restricted to the times where it is on $B$; see~\cite[Section~5.2]{cf2012dform} and~\cite[Section~2.1]{chk-time-change-resistance}). By~\cite[Theorem~5.2.2]{cf2012dform} and~\cite[Lemma~2.6]{chk-time-change-resistance}, the Dirichlet form of $\check{X}$ is given by the trace $(\CE|_B,\CF|_B)$ of the resistance form $(\CE,\CF)$ defined in~\eqref{eq:trace_form}, and its associated resistance metric is $R|_{B \times B}$. By~\cite[Proposition~3.4.1]{cf2012dform} and since every point is regular, the unique minimizer in~\eqref{eq:trace_form} is given by
\begin{align}\label{eq:harm_ext}
 H_B g(x) = \mathrm{E}_x[g(X_{\sigma_B})]
 \quad\text{where } \sigma_B = \inf\{t > 0 \mid X_t \in B\} .
\end{align}
In particular, for finite $B$, the electrical network associated with $R|_{B \times B}$ corresponds to the Markov chain obtained from restricting $X$ to the times where it visits $B$.

The resistance form $(\CE,\CF)$ is called \emph{local} if $\CE(f,g) = 0$ for each $f,g \in \CF$ with $\supp_R(f) \cap \supp_R(g) = \emptyset$. Since $\CE(f,g+c) = \CE(f,g)$ for each $c \in \R$, this implies that $(\CE,\CF)$ is strongly local in the Dirichlet form sense \cite[Section~2.4]{cf2012dform}. By~\cite[Theorem~4.3.4]{cf2012dform}, the associated Hunt process has continuous sample paths (up until its extinction time which is infinite in our setup). Such a process is called a \emph{diffusion}.

We conclude this subsection with a few lemmas which will be used later in the paper.

\begin{lemma}\label{le:rforms_cutpoint}
 Let $F_1,F_2$ be two sets with $F_1 \cap F_2 = \{z\}$. For $i=1,2$, let $(\CE_i,\CF_i)$ be a resistance form on $F_i$ with associated resistance metric $R_i$. Define $R\colon (F_1 \cup F_2) \times (F_1 \cup F_2) \to [0,\infty)$ by
 \[
 R(x,y) = \begin{cases}
 R_1(x,y) ,& x,y \in F_1,\\
 R_2(x,y) ,& x,y \in F_2,\\
 R_1(x,z)+R_2(z,y) ,& x \in F_1,\, y \in F_2 .
 \end{cases}
 \]
 Then $R$ is a resistance metric on $F_1 \cup F_2$ and the associated resistance form $(\CE,\CF)$ is given by
 \begin{align}\begin{split}\label{eq:rform_cutpoint}
  \CF &= \{ f : f|_{F_1} \in \CF_1,\, f|_{F_2} \in \CF_2 \} ,\\
  \CE(f,f) &= \CE_1(f|_{F_1},f|_{F_1})+\CE_2(f|_{F_2},f|_{F_2}) .
 \end{split}\end{align}
\end{lemma}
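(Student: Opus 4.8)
The plan is to reduce everything to the finite-graph statement Lemma~\ref{le:reff_cut_point} together with the finite-approximation formulas~\eqref{eq:rmet_to_form_sep}. First I would check that $R$ as defined is a resistance metric on $F_1 \cup F_2$. The triangle inequality and positivity are immediate from the definition once one notes that for $x \in F_1$, $y \in F_2$, every ``path'' must pass through the common point $z$, so $R(x,y) = R_1(x,z) + R_2(z,y)$ is consistent with symmetry and with the three sub-cases. To verify the defining property of a resistance metric (Definition~\ref{def:resistance_metric}), take an arbitrary finite $B \subseteq F_1 \cup F_2$; enlarge it to $B' = B \cup \{z\}$ and set $B_i = B' \cap F_i$. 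Since $R_i$ is a resistance metric on $F_i$, there are weights $w_i$ on $B_i$ realizing $R_i|_{B_i \times B_i}$ as an effective resistance (after possibly collapsing $z$ into $B_i$ if $z\notin B$, but it is cleaner to keep $z$). Now $B_1 \cap B_2 = \{z\}$, and Lemma~\ref{le:reff_cut_point} applied to $(B_1, w_1)$ and $(B_2, w_2)$ shows that the glued weight function $w$ on $B'$ given by~\eqref{eq:wgraph_cut_point} has effective resistance exactly the glued metric, which on $B_1 \times B_1$ is $R_1$, on $B_2 \times B_2$ is $R_2$, and across is $R_1(\cdot,z)+R_2(z,\cdot)$ — i.e.\ precisely $R|_{B' \times B'}$. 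Restricting the network further to $B$ (which only changes the effective resistance by the trace/Schur-complement operation, still giving an electrical network on $B$) shows $R|_{B \times B}$ is an effective resistance metric. Hence $R$ is a resistance metric, and by the correspondence recalled in Section~\ref{se:rmet_rform} it has a unique associated resistance form $(\CE,\CF)$; it remains to identify it with~\eqref{eq:rform_cutpoint}.

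For the identification, the cleanest route is through the approximation formula~\eqref{eq:rmet_to_form_sep}. Fix increasing sequences of finite sets $A_m^{(i)} \subseteq F_i$ with $z \in A_m^{(i)}$ for all $m$ and $\bigcup_m A_m^{(i)}$ dense in $F_i$ (using separability of each $F_i$, which follows from it carrying a resistance form). Let $A_m = A_m^{(1)} \cup A_m^{(2)}$; then $\bigcup_m A_m$ is dense in $F_1 \cup F_2$. By the previous paragraph, the electrical network realizing $R|_{A_m \times A_m}$ is obtained by gluing the networks realizing $R_i|_{A_m^{(i)} \times A_m^{(i)}}$ at the cut point $z$, so by Lemma~\ref{le:reff_cut_point} (or directly from the fact that $z$ is a cut vertex, so the Dirichlet energy of any function on $A_m$ splits as a sum over the two pieces) we get, for every $g\colon A_m \to \R$,
\[
 \CE|_{A_m}(g,g) = \CE^{(1)}|_{A_m^{(1)}}(g|_{A_m^{(1)}}, g|_{A_m^{(1)}}) + \CE^{(2)}|_{A_m^{(2)}}(g|_{A_m^{(2)}}, g|_{A_m^{(2)}}) ,
\]
where $\CE^{(i)}$ denotes the resistance form on $F_i$. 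Now apply~\eqref{eq:rmet_to_form_sep} three times. A continuous function $f$ on $F_1 \cup F_2$ is continuous on each $F_i$; and $\lim_m \CE|_{A_m}(f|_{A_m}, f|_{A_m})$ is finite if and only if both $\lim_m \CE^{(i)}|_{A_m^{(i)}}(f|_{A_m^{(i)}}, f|_{A_m^{(i)}})$ are finite (the terms are nonnegative and monotone increasing in $m$), i.e.\ if and only if $f|_{F_i} \in \CF_i$ for $i=1,2$; and in that case the limit equals $\CE_1(f|_{F_1},f|_{F_1}) + \CE_2(f|_{F_2},f|_{F_2})$. This is exactly~\eqref{eq:rform_cutpoint}, and by uniqueness of the resistance form associated with $R$ we are done.

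The only genuinely delicate point is the monotonicity/interchange of limits in the last step — specifically, that the energies $\CE|_{A_m}(f|_{A_m},f|_{A_m})$ are nondecreasing in $m$ (which is standard: restricting to a smaller vertex set can only decrease effective conductance, equivalently, the trace of a form on a subset has smaller energy, by~\eqref{eq:trace_form}) and that the chosen exhausting sequences $A_m^{(i)}$ can be taken all to contain $z$, which is harmless. One should also make sure the case $z \in B$ vs.\ $z \notin B$ in the first paragraph is handled uniformly; keeping $z$ in all the finite approximations and only restricting at the very end avoids any case distinction. Everything else is a direct application of Lemma~\ref{le:reff_cut_point} and the resistance-form/resistance-metric dictionary from Section~\ref{se:rmet_rform}, so I do not anticipate any serious obstacle.
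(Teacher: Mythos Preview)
Your proof is correct, but it takes a different route from the paper. The paper proceeds in the opposite direction: it \emph{defines} $(\CE,\CF)$ by the sum formula~\eqref{eq:rform_cutpoint}, checks the resistance-form axioms directly, and then computes the associated resistance metric by explicitly identifying the minimizer in~\eqref{eq:rform_to_met10}. For $x,y \in F_i$ the minimizer is the $\CE_i$-minimizer extended to be constant on $F_{3-i}$; for $x \in F_1$, $y \in F_2$ the minimizer takes the value $R_2(z,y)/(R_1(x,z)+R_2(z,y))$ at $z$ and is then the $\CE_i$-minimizer on each piece. This immediately gives $R$ as stated, and since $R$ is the metric of a resistance form it is a resistance metric.

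Your approach is more roundabout but also more systematic: you first verify the resistance-metric property of $R$ via finite restrictions and Lemma~\ref{le:reff_cut_point}, and then recover the form through~\eqref{eq:rmet_to_form_sep}. This has the advantage of making the connection to the finite-graph case completely explicit, at the cost of needing separability to invoke~\eqref{eq:rmet_to_form_sep}. One small slip: separability does \emph{not} follow merely from carrying a resistance form; it is an additional hypothesis (implicit in the paper's setting, where all spaces are separable). The paper's direct computation of the minimizer avoids this issue for the forward direction and is shorter overall, but your argument is perfectly valid once separability is taken as given.
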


\begin{proof}
Define $(\CE,\CF)$ by~\eqref{eq:rform_cutpoint}. Then one verifies that this defines a resistance form and its associated resistance metric is given by $R$. Indeed, for $i=1,2$ and $x,y \in F_i$, the minimizer in~\eqref{eq:rform_to_met10} is given by the function that minimizes~\eqref{eq:rform_to_met10} on~$F_i$ and is constant on $F_{3-i}$; and for $x \in F_1$, $y \in F_2$, the minimizer is given by the function with value $\frac{R_2(z,y)}{R_1(x,z)+R_2(z,y)}$ on $z$.
\end{proof}

Note that in the setup of Lemma~\ref{le:rforms_cutpoint}, if $B \subseteq F_1 \cup F_2$ is a finite set with $z \in B$, then the edge conductances associated with $R|_B$, $R_1|_{B \cap F_1}$, $R_2|_{B \cap F_2}$, respectively, are related by Lemma~\ref{le:reff_cut_point}.

\begin{lemma}\label{le:weights_gluing}
 Let $F_1,F_2$ be two sets such that $F_1 \cap F_2$ is a non-empty finite set. For $i=1,2$, let $(\CE_i,\CF_i)$ be a resistance form on $F_i$, and let $(\CE,\CF)$ be the resistance form on $F_1 \cup F_2$ defined by
 \begin{align*}\begin{split}
  \CF &= \{ f : f|_{F_1} \in \CF_1,\, f|_{F_2} \in \CF_2 \} ,\\
  \CE(f,f) &= \CE_1(f|_{F_1},f|_{F_1})+\CE_2(f|_{F_2},f|_{F_2}) .
 \end{split}\end{align*}
 Then the following holds. Let $A \subseteq F_1 \cup F_2$ be a finite set containing $F_1 \cap F_2$, let $A_i = A \cap F_i$ for $i=1,2$, and let $w_A, w_{A,i}$ be the weight functions associated with $\CE|_A, \CE_i|_{A_i}$, respectively.
 Then
 \[
  w_A(x,y) =
  \begin{cases}
   w_{A,1}(x,y)+w_{A,2}(x,y) ,& x,y \in F_1 \cap F_2 ,\\
   w_{A,1}(x,y) ,& x,y \in F_1 ,\, \{x,y\} \nsubseteq F_1 \cap F_2 ,\\
   w_{A,2}(x,y) ,& x,y \in F_2 ,\, \{x,y\} \nsubseteq F_1 \cap F_2 ,\\
   0 ,& \text{otherwise.}
  \end{cases}
 \]
\end{lemma}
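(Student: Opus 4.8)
\textbf{Proof plan for Lemma~\ref{le:weights_gluing}.}
The plan is to reduce the statement to a computation about the discrete energy form $\CE|_A$ and to exploit the gluing structure of $(\CE,\CF)$ along the finite set $F_1 \cap F_2$. First I would recall that, by the correspondence between resistance metrics and weights on finite sets, it suffices to identify the discrete energy form $\CE|_A(g,g)$ for an arbitrary function $g\colon A \to \R$ and then read off the weights from the coefficients of $(g(x)-g(y))^2$. (Equivalently, the weights are uniquely determined by $\CE|_A$ via the standard formula $w_A(x,y) = -\tfrac12 \partial^2 \CE|_A / \partial g(x)\partial g(y)$; since both sides are quadratic forms in $g$, matching the forms matches the weights.) So the whole lemma becomes the identity
\[
 \CE|_A(g,g) = \CE_1|_{A_1}(g|_{A_1},g|_{A_1}) + \CE_2|_{A_2}(g|_{A_2},g|_{A_2})
\]
for every $g\colon A \to \R$, because expanding the right-hand side as a sum of two discrete energies on the networks $(A_1,w_{A,1})$ and $(A_2,w_{A,2})$ and collecting terms gives exactly the weight function claimed in the statement (edges inside $F_1\cap F_2$ receive contributions from both, edges in only one side receive one contribution, edges straddling the two sides but not in the intersection get nothing since $g$ is unconstrained and the two networks share no such edge).

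To prove this energy identity I would use the variational (trace) characterization~\eqref{eq:trace_form}: $\CE|_A(g,g) = \min\{ \CE(f,f) : f \in \CF,\ f|_A = g \}$, and by definition of $(\CE,\CF)$ this equals
\[
 \min\{ \CE_1(f_1,f_1) + \CE_2(f_2,f_2) : f_1 \in \CF_1,\ f_2 \in \CF_2,\ f_1|_{A_1} = g|_{A_1},\ f_2|_{A_2} = g|_{A_2},\ f_1|_{F_1 \cap F_2} = f_2|_{F_1 \cap F_2} \}.
\]
The key point is that the constraint $f_1|_{F_1\cap F_2} = f_2|_{F_1\cap F_2}$ is automatically satisfied once $f_1|_{A_1} = g|_{A_1}$ and $f_2|_{A_2} = g|_{A_2}$, precisely because $F_1 \cap F_2 \subseteq A$ and hence $F_1\cap F_2 \subseteq A_1 \cap A_2$, so both restrictions to $F_1\cap F_2$ equal $g|_{F_1\cap F_2}$. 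Therefore the minimization decouples: the two summands can be optimized independently, giving $\min \CE_1(f_1,f_1) + \min \CE_2(f_2,f_2)$ over the respective constraint sets, which by~\eqref{eq:trace_form} applied to each $(\CE_i,\CF_i)$ is exactly $\CE_1|_{A_1}(g|_{A_1},g|_{A_1}) + \CE_2|_{A_2}(g|_{A_2},g|_{A_2})$. (One should note here, as in the proof of Lemma~\ref{le:rforms_cutpoint}, that $(\CE,\CF)$ as defined is indeed a resistance form — this uses that $F_1\cap F_2$ is finite, so that one can always extend independently chosen boundary values on each side to finite-energy functions agreeing on the overlap; this is the same verification already invoked for Lemma~\ref{le:weights_gluing} in the text, where it refers back to the gluing discussion.)

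The remaining work is bookkeeping: expand $\CE_i|_{A_i}(h,h) = \tfrac12\sum_{x,y \in A_i} w_{A,i}(x,y)(h(x)-h(y))^2$ for $h = g|_{A_i}$, add the two expressions, and partition the pairs $\{x,y\} \subseteq A$ into those with both endpoints in $F_1\cap F_2$, those with both in $F_i$ but not both in the intersection (for $i=1,2$), and the remaining pairs, which must have one endpoint strictly in $F_1\setminus F_2$ and the other strictly in $F_2\setminus F_1$ and hence appear in neither sum. Reading off the coefficient of $(g(x)-g(y))^2$ in each case yields the stated formula for $w_A$. I do not expect a genuine obstacle here; the only subtlety worth stating carefully is the decoupling of the minimization, i.e.\ that the overlap constraint is implied by the data on $A$ because $F_1\cap F_2 \subseteq A$ — without this hypothesis the conclusion would fail. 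This mirrors the structure of the proof of Lemma~\ref{le:rforms_cutpoint} and the relation noted there via Lemma~\ref{le:reff_cut_point}, and in fact Lemma~\ref{le:rforms_cutpoint} is the special case where $F_1\cap F_2$ is a single point.
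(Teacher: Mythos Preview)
Your proof is correct and rests on the same key observation as the paper's: because $F_1\cap F_2 \subseteq A$, the boundary data on $A$ already pin down the values on the overlap, so the energy minimization decouples across $F_1$ and $F_2$. The execution differs slightly. You establish the quadratic-form identity $\CE|_A(g,g) = \CE_1|_{A_1}(g|_{A_1},g|_{A_1}) + \CE_2|_{A_2}(g|_{A_2},g|_{A_2})$ directly from the trace definition~\eqref{eq:trace_form} and then read off the weights from the coefficients of $(g(x)-g(y))^2$. The paper instead uses the formula $w_A(x,y) = -\CE(f_x,f_y)$ with $f_x$ the $\CE$-harmonic extension of $\delta_x$ on $A$, computes each $f_x$ explicitly as the obvious piecewise function (harmonic on one side, zero on the other, or harmonic on both when $x\in F_1\cap F_2$), and finishes by polarization. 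Your route is marginally more economical since it avoids the polarization step; the paper's route makes the structure of the harmonic extensions more visible. Both are standard and equally valid.
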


\begin{proof}
Since $F_1 \cap F_2$ is a non-empty finite set, $(\CE,\CF)$ is a resistance form. Suppose $A \subseteq F_1 \cup F_2$ is a finite set containing $F_1 \cap F_2$. The weights $w_A(x,y)$ are given by $-\CE(f_x,f_y)$ where $f_x$ (resp.\ $f_y$) is the harmonic function with boundary values $f_x(u) = \delta_{ux}$ (resp.\ $f_y(u) = \delta_{uy}$) on $A$. For $i=1,2$ and $x \in A_i$ let $f_{i,x}$ be the $\CE_i$-harmonic function with boundary values $f_{i,x}(u) = \delta_{ux}$ on $A_i$. Since $A$ contains $F_1 \cap F_2$, we have for $x \in A_i \setminus A_{3-i}$ that
\[
 f_x(u) =
 \begin{cases}
  f_{i,x}(u) ,& u \in F_i ,\\
  0 ,& u \in F_{3-i} ,
 \end{cases}
\]
and for $x \in A_1 \cap A_2$ that
\[
 f_x(u) =
 \begin{cases}
  f_{1,x}(u) ,& u \in F_1 ,\\
  f_{2,x}(u) ,& u \in F_2 .
 \end{cases}
\]
We then conclude by the polarization
\[\begin{split}
\CE(f_x,f_y) &= \frac{1}{4}(\CE(f_x+f_y,f_x+f_y)-\CE(f_x-f_y,f_x-f_y)) \\
&= \CE_1(f_x|_{F_1},f_y|_{F_1})+\CE_2(f_x|_{F_2},f_y|_{F_2}) .
\end{split}\]
\end{proof}

\section{Weak \clekp{} resistance forms}
\label{sec:weak_cle_rform}

In this section, we introduce the definition of weak \clekp{} resistance forms. We will then prove their existence by describing an approximation scheme for the \clekp{} gasket. Since resistance forms are in bijection with resistance metrics (recall Section~\ref{se:rmet_rform}), we will first construct \clekp{} resistance metrics using the main result of~\cite{amy2025tightness} which we review in Section~\ref{sec:tightness_general}. We then describe in Section~\ref{subsec:resistance_tightness} an approximation scheme for resistance metrics on the \clekp{} gasket, and show that the main result in~\cite{amy2025tightness} applies to the approximation scheme, i.e.\ it has subsequential limits. We then show in Section~\ref{se:rmet_construction} that the subsequential limits give rise to resistance metrics, and their associated resistance forms satisfy the definition of weak \clekp{} resistance forms.

\subsection{Definition of weak \clekp{} resistance forms}

Suppose that $\Gamma$ is a \clekp{} in $C$ as described in Section~\ref{subsec:main_results} and $(\Upsilon_\Gamma,\dpathY)$ is the space defined in Section~\ref{subsec:main_results}. Recall the definition of $\metregions$ and $\metregions[U]$ given in Section~\ref{subsec:main_results}. Suppose that for each $V \in \metregions$ we have a resistance form $(\rformres{V}{\cdot}{\cdot}{\Gamma}, \rfdomainres{V}{\Gamma})$ on $\ol{V} \cap \Upsilon_\Gamma$. In the following, if $f$ is a function defined on a set containing $\ol{V} \cap \Upsilon_\Gamma$, we will frequently write $f|_V$ for $f|_{\ol{V} \cap \Upsilon_\Gamma}$ and $\rformres{V}{f}{f}{\Gamma}$ for $\rformres{V}{f|_V}{f|_V}{\Gamma}$.

\begin{figure}[ht]
\centering
\includegraphics[width=0.5\textwidth]{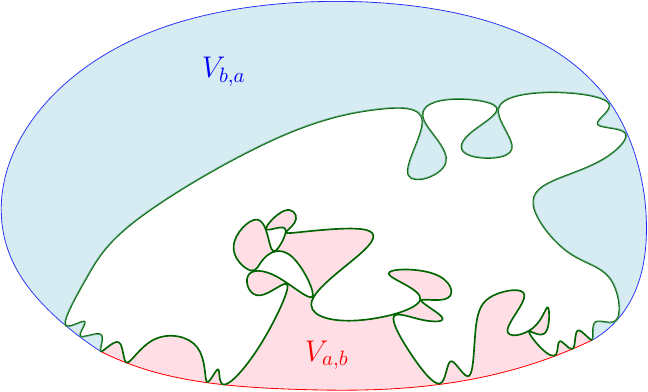}
\caption{Illustration of the locality property~\eqref{it:weak_rform_cut_loop} of weak \clekp{} resistance forms. (A simplified sketch of) a loop $\CL$ contained in $\ol{V}$ is shown in green.}
\label{fi:cut_loop}
\end{figure}

\begin{definition}
\label{def:weak_cle_rform}
 Suppose that we have the setup described above, and $\rform{\cdot}{\cdot}{\Gamma} = (\rformres{V}{\cdot}{\cdot}{\Gamma})_{V \in \metregions} = ((\rformres{V}{\cdot}{\cdot}{\Gamma}, \rfdomainres{V}{\Gamma}))_{V \in \metregions}$ is a random family of resistance forms on $\ol{V} \cap \Upsilon_\Gamma$ for $V \in \metregions$, respectively, coupled with~$\Gamma$ (it is not required that they are determined by $\Gamma$). We say that $\rform{\cdot}{\cdot}{\Gamma}$ is a weak \clekp{} resistance form if the following properties are satisfied almost surely.

\begin{enumerate}[(i)]
\item\label{it:weak_rform_rmet} The resistance metric associated with $(\rformres{C}{\cdot}{\cdot}{\Gamma}, \rfdomainres{C}{\Gamma})$ is continuous with respect to $\dpathY$.

\item\label{it:weak_rform_markov} Let $U \subseteq \C$ be open, simply connected.  The conditional law of the collection $( \rformres{V}{\cdot}{\cdot}{\Gamma})_{V \in \metregions[U]}$ given $\Gamma \setminus \Gamma_{U^*}$ and $(\rformres{V'}{\cdot}{\cdot}{\Gamma})_{V' \in \metregions[\C \setminus \ol{U}]}$ is almost surely measurable with respect to $U^*$.

\item\label{it:weak_rform_invariance} Let $U \subseteq \C$ be open, simply connected. There exists a probability kernel $\mu^{U^*}$ such that for each $z \in \C$, the conditional law of $(\rformres{V}{\cdot}{\cdot}{\Gamma})_{V \in \metregions[U+z]}$ given $(U+z)^*$ is $(T_z)_* \mu^{U^*}(\cdot - z)$ where $T_z \CE(f,f) = \CE(f(\cdot +z),f(\cdot +z))$ denotes translation by $z$.

\item\label{it:weak_rform_compatibility} Suppose that $V,V' \in \metregions$, $V \subseteq V'$, such that for every $u \in V' \setminus V$ there is a point $z \in \ol{V}$ that separates $u$ from $V$ in $\ol{V'} \cap \Upsilon_\Gamma$. Let $f\colon \ol{V'} \cap \Upsilon_\Gamma \to \R$ be a continuous function that is constant on each $\dpathY$-connected component of $\ol{V'} \cap \Upsilon_\Gamma \setminus \ol{V}$. Then
\begin{align*}
 & f \in \rfdomainres{V'}{\Gamma} \text{ if and only if } f|_V \in \rfdomainres{V}{\Gamma} \\
 \text{and}\quad & \rformres{V'}{f}{f}{\Gamma} = \rformres{V}{f}{f}{\Gamma} .
\end{align*}

\item\label{it:weak_rform_cut_point} Suppose that $V_1,V_2 \in \metregions$ with $\ol{V_1} \cap \ol{V_2} = \{z\}$ (where the closure is taken with respect to $d_{\ol{V}}$, see Section~\ref{subsec:main_results}). Then
 \begin{align*}
  \rfdomainres{V_1 \cup V_2}{\Gamma} &= \{ f : f|_{V_1} \in \rfdomainres{V_1}{\Gamma},\, f|_{V_2} \in \rfdomainres{V_2}{\Gamma} \} \\
  \text{and}\quad
  \rformres{V_1 \cup V_2}{f}{f}{\Gamma} &= \rformres{V_1}{f}{f}{\Gamma} + \rformres{V_2}{f}{f}{\Gamma} .
 \end{align*}

\item\label{it:weak_rform_cut_loop} Suppose that $V \in \metregions$ consists of a single connected component and let $\CL \in \Gamma$ be a loop contained in $\ol{V}$ and intersecting $\partial V$. Let $a,b \in \CL \cap \partial V$ be distinct, let $V_{a,b}$ (resp.\ $V_{b,a}$) be the collection of components of $V \setminus \CL$ that are connected to the counterclockwise arc of $\partial V$ from $a$ to $b$ (resp.\ from $b$ to $a$) by an admissible path within $V$ (see Figure~\ref{fi:cut_loop}). Then
 \begin{align*}
  \rfdomainres{V}{\Gamma} &= \{ f : f|_{V_{a,b}} \in \rfdomainres{V_{a,b}}{\Gamma},\, f|_{V_{b,a}} \in \rfdomainres{V_{b,a}}{\Gamma} \} \\
  \text{and}\quad
  \rformres{V}{f}{f}{\Gamma} &= \rformres{V_{a,b}}{f}{f}{\Gamma} + \rformres{V_{b,a}}{f}{f}{\Gamma} .
 \end{align*}
\end{enumerate}
\end{definition}

We note that the items~\eqref{it:weak_rform_markov} and~\eqref{it:weak_rform_invariance} in the definition of a weak \clekp{} resistance form are weaker than their counterparts~\ref{it:rform_determined},~\ref{it:rform_translation_invariant} in the Definition~\ref{def:cle_rform} of a (strong) \clekp{} resistance form. Here, we do not require that the resistance forms within $U$ are determined by the CLE gasket within $U$ but only that their conditional laws are determined. Since $\Gamma_{U^*}$ is conditionally independent of $\Gamma \setminus \Gamma_{U^*}$ given $U^*$, we have that every \clekp{} resistance form satisfies~\eqref{it:weak_rform_markov} and~\eqref{it:weak_rform_invariance}. The items~\eqref{it:weak_rform_cut_point}, \eqref{it:weak_rform_cut_loop} are equivalent to~\ref{it:rform_additive} in Definition~\ref{def:cle_rform} (see Lemma~\ref{le:rform_additivity}), but we find it useful to break it down into these two special cases. In particular, every \clekp{} resistance form in the sense of Definition~\ref{def:cle_rform} is a weak \clekp{} resistance form. We will prove in Sections~\ref{sec:bilipschitz} and~\ref{sec:uniqueness} that, in fact, every weak \clekp{} resistance form is a \clekp{} resistance form, and is determined by the \clekp{}.

Recall~\eqref{eq:fin_energy_continuous} that each function in the domain of a resistance form is continuous with respect to the resistance metric. We note that requiring the resistance metric $\rmetres{C}{\cdot}{\cdot}{\Gamma}$ associated with $(\rformres{C}{\cdot}{\cdot}{\Gamma}, \rfdomainres{C}{\Gamma})$ to be continuous in~\eqref{it:weak_rform_rmet} already implies that each $\rmetres{V}{\cdot}{\cdot}{\Gamma}$, $V \in \metregions$, is continuous with respect to $\dpathY$ (see Lemma~\ref{le:rmet_continuous}). In particular, each function $f \in \rfdomainres{V}{\Gamma}$ is continuous with respect to $\rmetres{V}{\cdot}{\cdot}{\Gamma}$ and hence also with respect to $\dpathY[\ol{V}]$.

We give an equivalent characterization of weak \clekp{} resistance forms in terms of their associated resistance metrics. This will be used later as we will construct a weak \clekp{} resistance form from its resistance metric. The Proposition~\ref{pr:cle_rmet_char} below also implies that $\rmetres{V'}{\cdot}{\cdot}{\Gamma}$ determines $\rmetres{V}{\cdot}{\cdot}{\Gamma}$ for each $V \subseteq V'$.

\begin{proposition}\label{pr:cle_rmet_char}
 Suppose that we have a family $(\rmetres{V}{\cdot}{\cdot}{\Gamma})_{V \in \metregions}$ of resistance metrics coupled with $\Gamma$ where $\rmetres{V}{\cdot}{\cdot}{\Gamma}\colon (\ol{V} \cap \Upsilon_\Gamma) \times (\ol{V} \cap \Upsilon_\Gamma) \to [0,\infty)$. Then it is associated with a weak \clekp{} resistance form if and only if the following hold.
 \begin{enumerate}[(a)]
  \item\label{it:rmet_cont} The metric $\rmetres{C}{\cdot}{\cdot}{\Gamma}$ is continuous with respect to $\dpathY$.

  \item\label{it:rmet_markov} Let $U \subseteq \C$ be open, simply connected. The conditional law of the collection $(\rmetres{V}{\cdot}{\cdot}{\Gamma})_{V \in \metregions[U]}$ given $\Gamma\setminus\Gamma_{U^*}$ and $(\rmetres{V'}{\cdot}{\cdot}{\Gamma})_{V' \in \metregions[\C\setminus\ol{U}]}$ is almost surely measurable with respect to~$U^*$.

  \item\label{it:rmet_translation} Let $U \subseteq \C$ be open, simply connected. There exists a probability kernel $\mu^{U^*}$ such that for each $z \in \C$, the conditional law of $(\rmetres{V}{\cdot}{\cdot}{\Gamma})_{V \in \metregions[U+z]}$ given $(U+z)^*$ is $(T_z)_* \mu^{U^*}(\cdot -z)$ where $T_z \FR(x,y) = \FR(x-z,y-z)$ denotes translation by $z$.

  \item\label{it:rmet_compatibility} Let $V,V' \in \metregions$, $V \subseteq V'$, and $x,y \in \ol{V} \cap \Upsilon_\Gamma$ such that for every $u \in V' \setminus V$ there is a point $z \in \ol{V}$ that separates $u$ from $x,y$ in $\ol{V'} \cap \Upsilon_\Gamma$. Then
  \[ \rmetres{V}{x}{y}{\Gamma} = \rmetres{V'}{x}{y}{\Gamma} . \]

  \item\label{it:rmet_cut_point} Suppose that $V_1,V_2 \in \metregions$ with $\ol{V_1} \cap \ol{V_2} = \{z\}$ (where the closure is taken with respect to $d_{\ol{V}}$, see Section~\ref{subsec:main_results}). If $x \in \ol{V_1} \cap \Upsilon_\Gamma$, $y \in \ol{V_2} \cap \Upsilon_\Gamma$, then
  \[ \rmetres{V_1 \cup V_2}{x}{y}{\Gamma} = \rmetres{V_1}{x}{z}{\Gamma} + \rmetres{V_2}{z}{y}{\Gamma} . \]

  \item\label{it:rmet_cut_loop} There exists a countable set $\{z_m\}_{m \in \N} \subseteq \Upsilon_\Gamma$ that is dense with respect to $\dpathY$ and such that the following holds. Suppose that $V \in \metregions$ consists of a single connected component and let $\CL \in \Gamma$ be a loop contained in $\ol{V}$ and intersecting $\partial V$. Then there exist two distinct points $a,b \in \CL \cap \partial V$ such that the following holds for sufficiently large $m \in \N$. Let $V_{a,b}$ (resp.\ $V_{b,a}$) be the collection of components of $V \setminus \CL$ that are connected to the counterclockwise arc of $\partial V$ from $a$ to $b$ (resp.\ from $b$ to $a$) by an admissible path within $V$. Let $A = \{z_1,\ldots,z_m\}$, $A_{a,b} = A \cap \ol{V_{a,b}}$, $A_{b,a} = A \cap \ol{V_{b,a}}$, and let $w, w_{a,b}, w_{b,a}$ be the weight functions associated with $\rmetres{V}{\cdot}{\cdot}{\Gamma}|_{A}$, $\rmetres{V_{a,b}}{\cdot}{\cdot}{\Gamma}|_{A_{a,b}}$, $\rmetres{V_{b,a}}{\cdot}{\cdot}{\Gamma}|_{A_{b,a}}$, respectively. Then
  \[
   w(x,y) =
   \begin{cases}
    w_{a,b}(a,b)+w_{b,a}(a,b) ,& \{x,y\} = \{a,b\} ,\\
    w_{a,b}(x,y) ,& x,y \in A_{a,b} ,\, \{x,y\} \neq \{a,b\} ,\\
    w_{b,a}(x,y) ,& x,y \in A_{b,a} ,\, \{x,y\} \neq \{a,b\} ,\\
    0 ,& \text{otherwise.}
   \end{cases}
  \]
 \end{enumerate}
\end{proposition}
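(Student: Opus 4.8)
The plan is to prove the equivalence by using the bijection between resistance metrics and resistance forms (reviewed in Section~\ref{se:rmet_rform}) together with the translation dictionary between resistance forms and their traces on finite sets. The key structural fact I will rely on throughout is that for a finite subset $B$ of a resistance metric space $(F,R)$, the weight function $w$ of the electrical network with resistance $R|_{B\times B}$ is determined by $R|_{B\times B}$ (and vice versa), and that $\CE|_B$ as defined in~\eqref{eq:df_discrete} via $w$ coincides with the trace $\CE|_B$ of the resistance form via~\eqref{eq:trace_form}; combined with~\eqref{eq:rmet_to_form_sep} along an exhausting sequence of finite sets, this lets me pass freely between statements about the form $(\rformres{V}{\cdot}{\cdot}{\Gamma},\rfdomainres{V}{\Gamma})$ and statements about its resistance metric $\rmetres{V}{\cdot}{\cdot}{\Gamma}$ restricted to finite sets. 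The first step is therefore to fix, once and for all, a countable dense set $\{z_m\}_{m\in\N}\subseteq\Upsilon_\Gamma$ adapted to the loop-cutting structure (this is the set appearing in~\eqref{it:rmet_cut_loop}); I want it dense and containing, for each loop $\CL$ cutting a region $V$, a choice of the two points $a,b\in\CL\cap\partial V$ from~\eqref{it:weak_rform_cut_loop}.

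Next I would match the conditions one by one. Conditions~\eqref{it:rmet_cont}$\Leftrightarrow$\eqref{it:weak_rform_rmet}, \eqref{it:rmet_markov}$\Leftrightarrow$\eqref{it:weak_rform_markov}, and \eqref{it:rmet_translation}$\Leftrightarrow$\eqref{it:weak_rform_invariance} are essentially immediate from the bijection between a separable resistance metric and its resistance form being a measurable (indeed continuous, via Lemma~\ref{le:reff_lim} on each finite set) isomorphism: a collection of metrics is a measurable function of some data iff the associated collection of forms is, so the conditional-law statements transfer verbatim. For the compatibility conditions, \eqref{it:weak_rform_compatibility}$\Rightarrow$\eqref{it:rmet_compatibility} is direct by writing $\rmetres{V}{x}{y}{\Gamma}$ and $\rmetres{V'}{x}{y}{\Gamma}$ via~\eqref{eq:rform_to_met10} and noting that the $\CE$-harmonic minimizer on $\ol{V'}\cap\Upsilon_\Gamma\setminus\ol{V}$ is constant on the relevant connected components (by the separating-point hypothesis, harmonic functions are constant there), so the two infima agree. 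For the converse direction I would take $B$ finite containing $x,y$ and the separating points, express $\CE|_B$ as a network trace, and use Lemma~\ref{le:gen_parallel_law_graph} / the cut-point structure (Lemma~\ref{le:reff_cut_point}) to see that the energy computed in $\ol{V}\cap\Upsilon_\Gamma$ and in $\ol{V'}\cap\Upsilon_\Gamma$ for functions constant off $\ol{V}$ agree, then pass to the limit over an exhausting sequence of finite sets using~\eqref{eq:rmet_to_form_sep}.

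For the cut-point condition, \eqref{it:weak_rform_cut_point}$\Leftrightarrow$\eqref{it:rmet_cut_point} is exactly Lemma~\ref{le:rforms_cutpoint} together with the observation recorded just after it that the network weights glue via Lemma~\ref{le:reff_cut_point}; I would spell out that the additive form identity \eqref{it:weak_rform_cut_point} and the additive metric identity \eqref{it:rmet_cut_point} are the form-side and metric-side of the same gluing. The most delicate equivalence is \eqref{it:weak_rform_cut_loop}$\Leftrightarrow$\eqref{it:rmet_cut_loop}, and this is where I expect the main obstacle. On the form side one has a clean additive decomposition $\rformres{V}{f}{f}{\Gamma}=\rformres{V_{a,b}}{f}{f}{\Gamma}+\rformres{V_{b,a}}{f}{f}{\Gamma}$, but on the metric side the two subregions $V_{a,b}$ and $V_{b,a}$ are glued along the \emph{two} points $a$ and $b$ (not one), so there is no pointwise additive formula for the resistance metric; instead one must phrase the gluing at the level of finite-network weights, which is precisely the content of Lemma~\ref{le:weights_gluing} with $F_1\cap F_2=\{a,b\}$. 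The direction \eqref{it:weak_rform_cut_loop}$\Rightarrow$\eqref{it:rmet_cut_loop} then follows by applying Lemma~\ref{le:weights_gluing} to the finite set $A=\{z_1,\dots,z_m\}$ (which contains $a,b$ for $m$ large by our choice of the dense set), reading off the weight formula stated in~\eqref{it:rmet_cut_loop}. For the converse, I would fix an exhausting sequence of finite sets $A_m\supseteq\{a,b\}$, use the hypothesized weight formula to identify $\CE|_{A_m}$ on $\ol V\cap\Upsilon_\Gamma$ with the sum of $\CE_{a,b}|_{A_m\cap\ol{V_{a,b}}}$ and $\CE_{b,a}|_{A_m\cap\ol{V_{b,a}}}$, and then pass to the limit via~\eqref{eq:rmet_to_form_sep} to recover the additive form identity and the description of $\rfdomainres{V}{\Gamma}$. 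The only genuinely technical point is checking that the ``sufficiently large $m$'' in~\eqref{it:rmet_cut_loop} is enough to run the limiting argument, which it is because $\{z_m\}$ is dense in each of $\ol V\cap\Upsilon_\Gamma$, $\ol{V_{a,b}}\cap\Upsilon_\Gamma$, $\ol{V_{b,a}}\cap\Upsilon_\Gamma$ (here one uses that $\dpathY$-density of $\{z_m\}$ in $\Upsilon_\Gamma$ is equivalent, via~\eqref{it:rmet_cont} and Lemma~\ref{le:rmet_continuous}, to density with respect to each $\rmetres{V}{\cdot}{\cdot}{\Gamma}$), so~\eqref{eq:rmet_to_form_sep} applies in all three spaces simultaneously.
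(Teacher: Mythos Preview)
Your approach is essentially the same as the paper's: both directions rely on the form/metric bijection, Lemma~\ref{le:rforms_cutpoint} for the cut-point equivalence, Lemma~\ref{le:weights_gluing} for the two-point gluing in the loop-cutting equivalence, and the exhaustion formula~\eqref{eq:rmet_to_form_sep} to pass from finite-set traces to the full form. The construction of the dense set $\{z_m\}$, your handling of \eqref{it:rmet_compatibility}$\Leftrightarrow$\eqref{it:weak_rform_compatibility}, and the limiting argument for \eqref{it:rmet_cut_loop}$\Rightarrow$\eqref{it:weak_rform_cut_loop} all match the paper's line of reasoning.

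There is one genuine omission in your reverse direction for \eqref{it:rmet_cut_loop}$\Rightarrow$\eqref{it:weak_rform_cut_loop}. Condition~\eqref{it:rmet_cut_loop} only asserts the weight formula for \emph{some} pair $a,b\in\CL\cap\partial V$ (the ones lying in $\{z_m\}$), whereas \eqref{it:weak_rform_cut_loop} requires the additive energy decomposition for \emph{every} pair $a,b\in\CL\cap\partial V$. Your argument recovers the decomposition only for the specific $a,b$ furnished by~\eqref{it:rmet_cut_loop}; you still need to propagate it to arbitrary $a,b$. The paper handles this by first establishing~\eqref{it:weak_rform_cut_point} (from \eqref{it:rmet_compatibility} and \eqref{it:rmet_cut_point} via Lemma~\ref{le:rforms_cutpoint}) and then observing that once the decomposition holds for one pair $(a,b)$ it can be shifted to any other pair $(a',b)$ using~\eqref{it:weak_rform_cut_point}: if $a'$ lies on the counterclockwise arc from $a$ to $b$, then $\rformres{V_{a,b}}{f}{f}{\Gamma}=\rformres{V_{a,a'}}{f}{f}{\Gamma}+\rformres{V_{a',b}}{f}{f}{\Gamma}$ and $\rformres{V_{a,a'}}{f}{f}{\Gamma}+\rformres{V_{b,a}}{f}{f}{\Gamma}=\rformres{V_{b,a'}}{f}{f}{\Gamma}$, so the sum is unchanged. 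This also means the reverse implications are not independent: you must prove \eqref{it:weak_rform_cut_point} before completing \eqref{it:weak_rform_cut_loop} (and, as the paper notes, before completing \eqref{it:weak_rform_compatibility} as well, since the argument that harmonic extensions are constant on the dangling components uses~\eqref{it:weak_rform_cut_point}). Making this dependency order explicit would close the gap.
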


\begin{proof}
The properties~\eqref{it:rmet_cont}--\eqref{it:rmet_translation} correspond exactly to the properties~\eqref{it:weak_rform_rmet}--\eqref{it:weak_rform_invariance} in Definition~\ref{def:weak_cle_rform}. We need to show that the properties~\eqref{it:rmet_compatibility}--\eqref{it:rmet_cut_loop} are equivalent to~\eqref{it:weak_rform_compatibility}--\eqref{it:weak_rform_cut_loop}.

Suppose that $(\rformres{V}{\cdot}{\cdot}{\Gamma})_{V \in \metregions}$ satisfies~\eqref{it:weak_rform_rmet}--\eqref{it:weak_rform_cut_loop}, and let $(\rmetres{V}{\cdot}{\cdot}{\Gamma})_{V \in \metregions}$ be the associated family of resistance metrics.

We show~\eqref{it:rmet_compatibility}. Let $f$ be the $\rformres{V}{\cdot}{\cdot}{\Gamma}$-harmonic function with $f(x) = 1$, $f(y) = 0$. As a consequence of~\eqref{it:weak_rform_cut_point}, $f$ must be constant on each component of $V$ that is separated from $x,y$ by a single point $z \in \ol{V}$. Therefore we can extend $f$ to $\ol{V'}$ by setting it constant on each component that is separated from $x,y$ by a single point $z \in \ol{V}$, and by~\eqref{it:weak_rform_compatibility} we have $\rformres{V'}{\wt{f}}{\wt{f}}{\Gamma} = \rformres{V}{f}{f}{\Gamma}$. By the same argument, if $f'$ is the $\rformres{V'}{\cdot}{\cdot}{\Gamma}$-harmonic function with $f'(x) = 1$, $f'(y) = 0$, then $\rformres{V}{f'}{f'}{\Gamma} = \rformres{V'}{f'}{f'}{\Gamma}$. This shows that $\rmetres{V'}{x}{y}{\Gamma} = \rmetres{V}{x}{y}{\Gamma}$.

Property~\eqref{it:rmet_cut_point} follows from~\eqref{it:weak_rform_cut_point} by Lemma~\ref{le:rforms_cutpoint}.

We now show~\eqref{it:rmet_cut_loop}. Consider the countable collection $\CK$ of loop segments $\ell \subseteq \CL$ starting and ending at rational times of all $\CL \in \Gamma$. For each non-overlapping, intersecting pair $\ell,\ell' \in \CK$ the intersection set $\ell \cap \ell'$ is compact. Let $\{z_m\}$ be any countable dense set containing a dense set of $\ell \cap \ell'$ for each such pair $\ell,\ell' \in \CK$. Then $\{z_m\}$ is also dense in $\CL \cap \partial V$ for each choice of $V$ and $\CL$ in~\eqref{it:rmet_cut_loop}. Choose any distinct pair of $a,b \in \CL \cap \partial V \cap \{z_m\}$. Then~\eqref{it:rmet_cut_loop} follows by~\eqref{it:weak_rform_cut_loop} and Lemma~\ref{le:weights_gluing}.

Conversely, suppose now that $(\rmetres{V}{\cdot}{\cdot}{\Gamma})_{V \in \metregions}$ satisfies~\eqref{it:rmet_cont}--\eqref{it:rmet_cut_loop}.

Property~\eqref{it:weak_rform_cut_point} follows from~\eqref{it:rmet_compatibility} and~\eqref{it:rmet_cut_point} by Lemma~\ref{le:rforms_cutpoint}.

We show~\eqref{it:weak_rform_compatibility}. Property~\eqref{it:rmet_compatibility} implies that $\rformres{V}{\cdot}{\cdot}{\Gamma}$ is the trace of $\rformres{V'}{\cdot}{\cdot}{\Gamma}$ on $\ol{V} \cap \Upsilon_\Gamma$. By~\eqref{it:weak_rform_cut_point}, the minimizer in~\eqref{eq:trace_form} must be constant on each component of $\ol{V'} \cap \Upsilon_\Gamma \setminus \ol{V}$. This shows~\eqref{it:weak_rform_compatibility}.

We now show~\eqref{it:weak_rform_cut_loop}. Let $\{z_m\}$ be given as in~\eqref{it:rmet_cut_loop}. Note that given $V$ and $\CL$, it suffices to show the statement for one particular choice of $a,b$ since we can shift them using~\eqref{it:weak_rform_cut_point}. Indeed, if $a' \in \CL \cap \partial V$ lies on the counterclockwise arc of $\partial V$ from $a$ to $b$, then
 \[\begin{split}
  \rformres{V_{a,b}}{f}{f}{\Gamma} + \rformres{V_{b,a}}{f}{f}{\Gamma}
  &= \rformres{V_{a,a'}}{f}{f}{\Gamma} + \rformres{V_{a',b}}{f}{f}{\Gamma} + \rformres{V_{b,a}}{f}{f}{\Gamma} \\
  &= \rformres{V_{a',b}}{f}{f}{\Gamma} + \rformres{V_{b,a'}}{f}{f}{\Gamma} .
 \end{split}\]
Therefore we can assume that $a,b$ are as given in~\eqref{it:rmet_cut_loop}.

Let $m \in \N$ and $A = \{z_1,\ldots,z_m\}$, $A_{a,b} = A \cap \ol{V_{a,b}}$, $A_{b,a} = A \cap \ol{V_{b,a}}$. By~\eqref{it:rmet_cut_loop} we have for sufficiently large $m$ that
 \[
  \rformrestr{V}{A}{f}{f}{\Gamma} = \frac{1}{2} \sum_{x,y \in A} w(x,y)(f(x)-f(y))^2 = \rformrestr{V_{a,b}}{A_{a,b}}{f}{f}{\Gamma}+\rformrestr{V_{b,a}}{A_{b,a}}{f}{f}{\Gamma} .
 \]
Sending $m \to \infty$, we obtain by~\eqref{eq:rmet_to_form_sep} that
 \[
  \rformres{V}{f}{f}{\Gamma} = \rformres{V_{a,b}}{f}{f}{\Gamma} + \rformres{V_{b,a}}{f}{f}{\Gamma} .
 \]
\end{proof}

In the remainder of this subsection, we show a few basic properties of weak \clekp{} resistance forms and metrics.

\begin{lemma}\label{le:rmet_continuous}
Almost surely, for each $V \in \metregions$, the metric $\rmetres{V}{\cdot}{\cdot}{\Gamma}$ is topologically equivalent to~$\dpathY$ on $\ol{V} \cap \Upsilon_\Gamma$.
\end{lemma}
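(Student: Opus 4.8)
The plan is to deduce the topological equivalence from the hypothesis that $\rmetres{C}{\cdot}{\cdot}{\Gamma}$ is continuous with respect to $\dpathY$ (Definition~\ref{def:weak_cle_rform}\eqref{it:weak_rform_rmet}, equivalently Proposition~\ref{pr:cle_rmet_char}\eqref{it:rmet_cont}). First I would record that it is enough to prove that the identity map $(\ol{V}\cap\Upsilon_\Gamma,\dpathY)\to(\ol{V}\cap\Upsilon_\Gamma,\rmetres{V}{\cdot}{\cdot}{\Gamma})$ is continuous. Indeed, by \cite[Lemma~1.10]{amy2025tightness} the space $(\ol{V}\cap\Upsilon_\Gamma,\dpathY[\ol{V}])$ is compact, and by the footnote following that lemma $\dpathY[\ol{V}]=\dpathY$ on pairs of points whose Euclidean distance is less than $\min_i\diamE(\CL_i)$, where $\CL_1,\dots,\CL_n$ are the loops defining $V$; since also $\dpathY\le\dpathY[\ol{V}]$ and both dominate the Euclidean metric, $\dpathY$ and $\dpathY[\ol{V}]$ induce the same topology on $\ol{V}\cap\Upsilon_\Gamma$, so $(\ol{V}\cap\Upsilon_\Gamma,\dpathY)$ is compact. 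A continuous bijection from a compact space to a metric space is a homeomorphism, so continuity of the identity map yields the claim.

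To prove continuity at a given $x\in\ol{V}\cap\Upsilon_\Gamma$, the idea is to find, for each small $\delta>0$, a region $V''\in\metregions$ with $V''\subseteq V$, with $\ol{V''}\cap\Upsilon_\Gamma$ a $\dpathY$-neighbourhood of $x$ of Euclidean (hence also $\dpathY$-) diameter at most $\delta$, and such that every connected component of $\Upsilon_\Gamma\setminus(\ol{V''}\cap\Upsilon_\Gamma)$ is separated from $\ol{V''}\cap\Upsilon_\Gamma$ by a single point of $\ol{V''}\cap\Upsilon_\Gamma$ --- in other words, $V''$ is glued to the rest of the gasket only at cut points. Granting such a $V''$, Proposition~\ref{pr:cle_rmet_char}\eqref{it:rmet_compatibility} applied to the pair $(V'',C)$ and then to the pair $(V'',V)$ (using $V\setminus V''\subseteq C\setminus V''$) gives $\rmetres{C}{p}{q}{\Gamma}=\rmetres{V''}{p}{q}{\Gamma}=\rmetres{V}{p}{q}{\Gamma}$ for all $p,q\in\ol{V''}\cap\Upsilon_\Gamma$. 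Since $\rmetres{C}{\cdot}{\cdot}{\Gamma}$ is continuous, hence uniformly continuous, on the compact space $(\Upsilon_\Gamma,\dpathY)$, for any prescribed $\varepsilon>0$ one can take $\delta$ small enough that $\ol{V''}\cap\Upsilon_\Gamma$ has $\rmetres{C}$-diameter, hence $\rmetres{V}$-diameter, at most $\varepsilon$; as $\ol{V''}\cap\Upsilon_\Gamma$ is a $\dpathY$-neighbourhood of $x$, this is precisely continuity of the identity map at $x$.

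The remaining and main point is the construction of $V''$, which rests on the fine geometry of $\CLE_{\kappa'}$ gaskets for $\kappa'\in(4,8)$. The input needed is that, almost surely, around every point of the gasket there exist at arbitrarily small scales regions $V''\in\metregions$ as above: bounded by finitely many loops of $\Gamma$ (and, near $\partial C$, by $\partial C$), whose complement in $\Upsilon_\Gamma$ hangs off only at isolated pivotal points of $\Upsilon_\Gamma$. This is the kind of structural statement established in \cite{amy2025tightness}: at a positive density of dyadic scales, the loops of $\Gamma$ crossing the annulus $A(x,2^{-k-1},2^{-k})$ pinch $B(x,2^{-k-1})$ off from the complement of $B(x,2^{-k})$ at finitely many points which are cut points of the gasket, and a covering argument of the type carried out there upgrades this so that it holds simultaneously around every gasket point almost surely; the case where $x$ is a boundary prime end of $V$ is handled by the same construction because $V''$ has small Euclidean diameter and $x\in\ol{V}$. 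I expect verifying this geometric input --- and checking that the resulting set lies in $\metregions$ and contains a $\dpathY$-neighbourhood of $x$ --- to be the hardest step.
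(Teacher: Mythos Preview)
Your approach is correct but takes a genuinely different route from the paper's proof.

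The paper argues by contradiction and resampling rather than by direct construction. It supposes that with positive probability some $\rmetres{V}{\cdot}{\cdot}{\Gamma}$ is discontinuous. Using the resampling procedure from \cite[Section~5.2]{amy-cle-resampling} (applied outside a neighbourhood of $V$) together with the Markovian property~\eqref{it:weak_rform_markov}, it argues that with positive probability one can arrange \emph{the whole complement} $\Upsilon_\Gamma\setminus\ol{V}$ to consist of dead ends, i.e.\ every point outside $\ol{V}$ is separated from $V$ by a single point. On that event, compatibility~\eqref{it:rmet_compatibility} applied to the pair $(V,C)$ gives $\rmetres{V}{\cdot}{\cdot}{\Gamma}=\rmetres{C}{\cdot}{\cdot}{\Gamma}|_{\ol V\cap\Upsilon_\Gamma}$, contradicting the assumed discontinuity. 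Thus the paper never builds your small neighbourhoods $V''$ at all; it modifies the CLE \emph{outside} $V$ instead.

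What each approach buys: the paper's resampling trick is shorter and sidesteps exactly the step you flag as hardest --- it needs only a single-scale, positive-probability statement (after resampling, $\Upsilon_\Gamma\setminus\ol V$ is a union of dead ends) rather than the almost-sure, every-point, arbitrarily-small-scale statement you require. Your argument, by contrast, is constructive and would give more: it shows directly that $\rmetres{V}{\cdot}{\cdot}{\Gamma}$ and $\rmetres{C}{\cdot}{\cdot}{\Gamma}$ agree on small neighbourhoods, not just that one is continuous. The geometric input you invoke is indeed available in this paper (the good-annulus events of Section~\ref{subsec:good_annuli} and Corollary~\ref{co:covering} give, a.s.\ and around every gasket point, the regions $U_l\in\metregions$ you want, and the argument in the proof of Lemma~\ref{le:energy_small_nbhd} uses exactly such neighbourhoods). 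One point to be careful about: compatibility~\eqref{it:rmet_compatibility} requires each $u\in V'\setminus V''$ to be separated from $p,q$ by a \emph{single} point of $\ol{V''}$, so you need that every connected component of $\Upsilon_\Gamma\setminus\ol{V''}$ is attached to $\ol{V''}$ at exactly one of the pivotal points $v_i$, not merely separated by the finite set $\{v_i\}$; this is true in the good-annulus construction (each $v_i$ is a genuine cut point of the gasket, being a pinch point of a pair of strands) but deserves a sentence of justification.
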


\begin{proof}
We will explain that the continuity of $\rmetres{C}{\cdot}{\cdot}{\Gamma}$ already implies the continuity of each $\rmetres{V}{\cdot}{\cdot}{\Gamma}$, $V \in \metregions$. Suppose that there is an event with positive probability on which some $\rmetres{V}{\cdot}{\cdot}{\Gamma}$, $V \in \metregions$, is discontinuous. By performing the random resampling procedure described in \cite[Section~5.2]{amy-cle-resampling} and using the Markovian property~\eqref{it:rmet_markov}, we conclude that there is a positive probability that some $\rmetres{V}{\cdot}{\cdot}{\Gamma}$ is discontinuous and further that every point in $\Upsilon_\Gamma \setminus \ol{V}$ is separated from $V$ by a single point $z$. But by property~\eqref{it:rmet_compatibility} we then have $\rmetres{V}{\cdot}{\cdot}{\Gamma} = \rmet{\cdot}{\cdot}{\Gamma}|_{\ol{V} \cap \Upsilon_\Gamma}$ which contradicts the continuity of $\rmetres{C}{\cdot}{\cdot}{\Gamma}$.

Finally, since $(\ol{V} \cap \Upsilon_\Gamma, \dpathY[\ol{V}])$ is compact and $\rmetres{V}{\cdot}{\cdot}{\Gamma}$ is a true metric, we also have that $\dpathY[\ol{V}]$ is continuous with respect to $\rmetres{V}{\cdot}{\cdot}{\Gamma}$, i.e.\ the two metrics are topologically equivalent.
\end{proof}

The next lemma generalizes the properties~\eqref{it:weak_rform_compatibility},~\eqref{it:weak_rform_cut_point} to non-constant functions and a countable number of ``dead ends'', respectively.

\begin{lemma}\label{le:weak_rform_dead_ends}
 Suppose that $n \in \N \cup \{\infty\}$ and $V_i \in \metregions$, $0 \le i < n$, are such that for each $i,j > 0$, $i \neq j$, we have $\dpathY(V_i,V_j) > 0$ and $\ol{V_i} \cap \ol{V_0}$ consists of a single point. Let $V = \bigcup_{i=0}^n V_i$. Then
\begin{align*}
 \rfdomainres{V}{\Gamma} &= \left\{ f : f|_{V_i} \in \rfdomainres{V_i}{\Gamma} \text{ for each } 0 \le i < n ,\ \sum_{i=0}^n \rformres{V_i}{f}{f}{\Gamma} < \infty \right\} \\
 \text{and}\quad
 \rformres{V}{f}{f}{\Gamma} &= \sum_{i=0}^n \rformres{V_i}{f}{f}{\Gamma} .
\end{align*}
\end{lemma}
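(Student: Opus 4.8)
The plan is to settle the case $n < \infty$ first, by an induction that reduces it to the two-region version of property~\eqref{it:weak_rform_cut_point}, and then to bootstrap to $n = \infty$ using property~\eqref{it:weak_rform_compatibility} (equivalently~\eqref{it:rmet_compatibility}) together with the fact recorded in~\eqref{eq:rmet_to_form_sep} that a resistance form is recovered as the limit of its restrictions to an increasing sequence of finite sets with dense union. For the induction on the number of regions, the one-region case is trivial; for the step I would put $W = V_0 \cup \cdots \cup V_{n-1}$ and note that $W \in \metregions$ (take the union of the finitely many loops defining $V_0, \ldots, V_{n-1}$; $\ol W$ is simply connected because it is a gluing of the simply connected sets $\ol{V_i}$ along single points). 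Since $\dpathY(V_n, V_i) > 0$ for $0 < i < n$, we have $\ol{V_n} \cap \ol W = \ol{V_n} \cap \ol{V_0} = \{z_n\}$, a single point, so property~\eqref{it:weak_rform_cut_point} applies to $W$ and $V_n$; combined with the inductive hypothesis for $W$ this gives both the domain description and the energy identity for $V = W \cup V_n$.

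For $n = \infty$ I would write $W_k = V_0 \cup \cdots \cup V_k \in \metregions$, so $W_k \uparrow V$. For $u \in V \setminus W_k$ the point $u$ lies in some $V_j$ with $j > k$, and $z_j \in \ol{V_0} \subseteq \ol{W_k}$ separates $u$ from $\ol{W_k} \cap \Upsilon_\Gamma$ in $\ol V \cap \Upsilon_\Gamma$ (using $\dpathY(V_i, V_j) > 0$ for $i \ne j$, $i,j > 0$, which forces $V_j$ to meet the rest of $\ol V$ only at $z_j$); hence property~\eqref{it:rmet_compatibility} gives $\rmetres{V}{\cdot}{\cdot}{\Gamma}|_{\ol{W_k} \cap \Upsilon_\Gamma} = \rmetres{W_k}{\cdot}{\cdot}{\Gamma}$. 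By uniqueness of the resistance form of a separable resistance metric space this identifies $(\rformres{W_k}{\cdot}{\cdot}{\Gamma}, \rfdomainres{W_k}{\Gamma})$ with the trace of $(\rformres{V}{\cdot}{\cdot}{\Gamma}, \rfdomainres{V}{\Gamma})$ onto $\ol{W_k} \cap \Upsilon_\Gamma$, which by the finite case equals $\sum_{i=0}^k \rformres{V_i}{\cdot}{\cdot}{\Gamma}$. I would then fix an increasing sequence of finite sets $A_m$ with $\bigcup_m A_m$ dense in $\ol V \cap \Upsilon_\Gamma$ and $A_m \subseteq \ol{W_{k_m}} \cap \Upsilon_\Gamma$ for some $k_m \to \infty$; since $\rmetres{V}{\cdot}{\cdot}{\Gamma}$ and $\rmetres{W_{k_m}}{\cdot}{\cdot}{\Gamma}$ coincide on $A_m$ (hence induce the same weights), we get $\rformrestr{V}{A_m}{f}{f}{\Gamma} = \rformrestr{W_{k_m}}{A_m}{f}{f}{\Gamma} \le \rformres{W_{k_m}}{f}{f}{\Gamma} = \sum_{i \le k_m} \rformres{V_i}{f}{f}{\Gamma}$ whenever $f|_{W_{k_m}} \in \rfdomainres{W_{k_m}}{\Gamma}$. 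If $f \in \rfdomainres{V}{\Gamma}$ then $f|_{W_k} \in \rfdomainres{W_k}{\Gamma}$ (a trace), so $f|_{V_i} \in \rfdomainres{V_i}{\Gamma}$ for all $i$ by the finite case, and since $\rformres{W_k}{f}{f}{\Gamma} \le \rformres{V}{f}{f}{\Gamma}$ we get $\sum_i \rformres{V_i}{f}{f}{\Gamma} \le \rformres{V}{f}{f}{\Gamma}$; the displayed bound and~\eqref{eq:rmet_to_form_sep} give the reverse inequality, hence equality. Conversely, if $f|_{V_i} \in \rfdomainres{V_i}{\Gamma}$ for all $i$ with $\sum_i \rformres{V_i}{f}{f}{\Gamma} < \infty$, the displayed bound shows $\lim_m \rformrestr{V}{A_m}{f}{f}{\Gamma} < \infty$, so~\eqref{eq:rmet_to_form_sep} yields $f \in \rfdomainres{V}{\Gamma}$ (and then the identity) --- provided $f$ is continuous on $(\ol V \cap \Upsilon_\Gamma, \dpathY[\ol V])$.

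The step I expect to be the main obstacle is precisely establishing this continuity. The restriction $f|_{\ol{V_i} \cap \Upsilon_\Gamma}$ is continuous for each $i$, so it suffices to control the oscillation of $f$ over the dead ends $\ol{V_j} \cap \Upsilon_\Gamma$ as $j \to \infty$. Here I would use~\eqref{eq:fin_energy_continuous} together with property~\eqref{it:rmet_compatibility} (which gives $\rmetres{V_j}{x}{z_j}{\Gamma} = \rmetres{V}{x}{z_j}{\Gamma}$) to obtain, for $x \in \ol{V_j} \cap \Upsilon_\Gamma$, the bound $|f(x) - f(z_j)|^2 \le \rformres{V_j}{f}{f}{\Gamma} \cdot D$, where $D < \infty$ is the $\rmetres{V}{\cdot}{\cdot}{\Gamma}$-diameter of the compact space $\ol V \cap \Upsilon_\Gamma$; since $\sum_j \rformres{V_j}{f}{f}{\Gamma} < \infty$, the oscillation of $f$ over $\ol{V_j} \cap \Upsilon_\Gamma$ around $f(z_j)$ tends to $0$. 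Combining this with the structure of $\ol V \cap \Upsilon_\Gamma$ --- the $\ol{V_j} \cap \Upsilon_\Gamma$, $j \ge 1$, are pairwise disjoint, each attached to $\ol{V_0} \cap \Upsilon_\Gamma$ only at $z_j$, and have $\dpathY[\ol V]$-diameter tending to $0$ by compactness --- one checks that $f$ is continuous at every point: the only points where continuity is not immediate are accumulation points of $\{z_j\}_{j \ge 1}$, which lie in $\ol{V_0} \cap \Upsilon_\Gamma$, and there continuity follows from the oscillation bound and the continuity of $f|_{\ol{V_0} \cap \Upsilon_\Gamma}$.
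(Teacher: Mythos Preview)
Your proof is correct and follows essentially the same route as the paper: the finite case is handled by iterating property~\eqref{it:weak_rform_cut_point}, and the infinite case is obtained by combining the trace identification coming from~\eqref{it:weak_rform_compatibility}/\eqref{it:rmet_compatibility} with the reconstruction formula~\eqref{eq:rmet_to_form_sep} along an exhausting sequence of finite sets. The paper's proof is slightly more direct in that it chooses the dense sequence $\{z_m\}$ so that the cut point $z'_i$ always precedes any other point of $\ol{V_i}$, which makes the trace decomposition $\rformrestr{V}{A_m}{f}{f}{\Gamma} = \sum_{i:\,A_m \cap \ol{V_i} \neq \emptyset} \rformrestr{V_i}{A_m \cap \ol{V_i}}{f}{f}{\Gamma}$ immediate; you instead route through the intermediate regions $W_k$ and the identification of $\rformres{W_k}{\cdot}{\cdot}{\Gamma}$ with the trace of $\rformres{V}{\cdot}{\cdot}{\Gamma}$, which amounts to the same thing.

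Where you go beyond the paper is in treating the domain characterisation carefully: the paper's proof establishes the energy identity but does not spell out why a function with $f|_{V_i} \in \rfdomainres{V_i}{\Gamma}$ for all $i$ and $\sum_i \rformres{V_i}{f}{f}{\Gamma} < \infty$ is automatically continuous on $(\ol V \cap \Upsilon_\Gamma, \rmetres{V}{\cdot}{\cdot}{\Gamma})$, which is needed to invoke~\eqref{eq:rmet_to_form_sep}. Your oscillation argument via~\eqref{eq:fin_energy_continuous} and the bound $\rmetres{V_j}{x}{z_j}{\Gamma} = \rmetres{V}{x}{z_j}{\Gamma} \le D$ fills this in correctly; the identification $\rmetres{V_j}{\cdot}{\cdot}{\Gamma} = \rmetres{V}{\cdot}{\cdot}{\Gamma}|_{\ol{V_j}}$ is indeed a consequence of~\eqref{it:rmet_compatibility} since $z_j$ separates every $u \in V \setminus V_j$ from any point of $\ol{V_j}$.
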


\begin{proof}
By property~\eqref{it:weak_rform_cut_point}, the statement holds when $n$ is finite. When $n=\infty$, we argue as follows. Choose a countable dense set $\{z_m\}$ in $\ol{V} \cap \Upsilon_\Gamma$ including the points $z'_i \in \ol{V_i} \cap \ol{V_0}$ and enumerate them so that $z'_i$ comes first among $\{z_m\} \cap \ol{V_i}$ for each $i$. For each $m$, let $A_m = \{z_1,\ldots,z_m\}$, and let $\rformrestr{V}{A_m}{\cdot}{\cdot}{\Gamma}$ be the trace of the resistance form $\rformres{V}{\cdot}{\cdot}{\Gamma}$ (see Section~\ref{se:rmet_rform}). By property~\eqref{it:weak_rform_cut_point} we have
\[
 \rformrestr{V}{A_m}{f}{f}{\Gamma} = \sum_{i: A_m \cap \ol{V_i} \neq \emptyset} \rformrestr{V_i}{A_m \cap \ol{V_i}}{f}{f}{\Gamma} \le \sum_{i=0}^\infty \rformres{V_i}{f}{f}{\Gamma}
\]
for each $m$, hence by~\eqref{eq:rmet_to_form_sep},
\[
 \rformres{V}{f}{f}{\Gamma} \le \sum_{i=0}^\infty \rformres{V_i}{f}{f}{\Gamma} .
\]
Conversely, due to~\eqref{it:weak_rform_compatibility} and~\eqref{it:weak_rform_cut_point} we have
\[
 \rformres{V}{f}{f}{\Gamma} \ge \rformres{\cup_{i=0}^k V_i}{f}{f}{\Gamma} = \sum_{i=0}^k \rformres{V_i}{f}{f}{\Gamma}
\]
for each $k \in \N$, and hence
\[
 \rformres{V}{f}{f}{\Gamma} \ge \sum_{i=0}^\infty \rformres{V_i}{f}{f}{\Gamma} .
\]
\end{proof}

The next lemma generalizes~\eqref{it:weak_rform_cut_point},~\eqref{it:weak_rform_cut_loop} to the decomposition into a finite number of subregions.

\begin{lemma}\label{le:rform_additivity}
 Suppose that $\rform{\cdot}{\cdot}{\Gamma}$ is a weak \clekp{} resistance form. Then almost surely the following holds. Let $V \in \metregions$ and $V_1,\ldots,V_n \in \metregions$, $n \in \N$, such that $V_i \cap V_j = \emptyset$ for $i \neq j$ and $\bigcup_i \ol{V_i} \cap \Upsilon_\Gamma = \ol{V} \cap \Upsilon_\Gamma$. Then
 \begin{align*}
  \rfdomainres{V}{\Gamma} &= \{ f : f|_{V_i} \in \rfdomainres{V_i}{\Gamma} \text{ for each } i \} ,\\
  \text{and}\quad
  \rformres{V}{f}{f}{\Gamma} &= \sum_i \rformres{V_i}{f}{f}{\Gamma} .
 \end{align*}
\end{lemma}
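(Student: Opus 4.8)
The plan is to obtain the identity by reducing it, via induction, to the two ``atomic'' additivity properties~\eqref{it:weak_rform_cut_point} and~\eqref{it:weak_rform_cut_loop} of Definition~\ref{def:weak_cle_rform} together with their countable extension Lemma~\ref{le:weak_rform_dead_ends}. We work on the almost sure event on which the defining properties of the weak $\CLE_{\kappa'}$ resistance form hold and on which the structural facts about the non-simple $\CLE_{\kappa'}$ gasket recalled below are valid. Since $V, V_1, \ldots, V_n \in \metregions$, by the definition of $\metregions$ we may choose a single finite collection of loops $\CL_1, \ldots, \CL_p \in \Gamma_C$ such that each of $V, V_1, \ldots, V_n$ is a union of connected components of $C \setminus (\CL_1 \cup \cdots \cup \CL_p)$ that are not surrounded by any of these loops (take the union of the collections witnessing the individual regions). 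Let $W_1, \ldots, W_k$ be all those components that are contained in $V$; then $V = \bigsqcup_{j=1}^k W_j$, each $V_i$ is a union of some of the $W_j$, and, since $\bigcup_i \ol{V_i} \cap \Upsilon_\Gamma = \ol{V} \cap \Upsilon_\Gamma$, the $V_i$ induce a partition of $\{W_1, \ldots, W_k\}$. We argue by induction on $k$; if $k = 1$ then $V = W_1$, necessarily $n = 1$, $V_1 = V$, and there is nothing to prove.

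For the inductive step, suppose $k \ge 2$. The structural input we use is that the $W_j$ are glued to one another inside $\ol{V} \cap \Upsilon_\Gamma$ only at isolated (prime-end) points and in a tree-like (acyclic) fashion, except where two $W_j$ lie on opposite sides of one of the loops $\CL_1, \ldots, \CL_p$. Consequently, either (i) $V$ admits a cut point $z$, i.e.\ $V = V' \cup V''$ with $V', V'' \in \metregions$ both non-empty, $\ol{V'} \cap \ol{V''} = \{z\}$, and each $W_j$ lying in $V'$ or in $V''$; or (ii) some $\CL = \CL_{j_0}$ is contained in $\ol{V}$, meets $\partial V$, and separates the $W_j$. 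In case~(i), property~\eqref{it:weak_rform_cut_point} gives $\rformres{V}{f}{f}{\Gamma} = \rformres{V'}{f}{f}{\Gamma} + \rformres{V''}{f}{f}{\Gamma}$ together with the corresponding description of $\rfdomainres{V}{\Gamma}$; set $V^{(1)} = V'$, $V^{(2)} = V''$. In case~(ii), applying property~\eqref{it:weak_rform_cut_loop} to $\CL$ at a pair of points of $\CL \cap \partial V$ and iterating --- decomposing into $\dpathY$-connected components via property~\eqref{it:weak_rform_cut_point} whenever a piece becomes disconnected, and using~\eqref{it:weak_rform_cut_point} at isolated touchings of $\CL$ with $\partial V$ --- we decompose $V$ into regions $V^{(1)}, \ldots, V^{(q)} \in \metregions$ with $q \ge 2$, each lying on one side of $\CL$ and each a union of some of the $W_j$, with $\rformres{V}{f}{f}{\Gamma} = \sum_{r} \rformres{V^{(r)}}{f}{f}{\Gamma}$ and the analogous statement for the domains. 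In either case every $V^{(r)}$ contains at most $k - 1$ of the $W_j$.

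It remains to refine the decomposition $\{V_i\}$ of $V$ into a decomposition of each $V^{(r)}$. A given $V_i$ is a union of $W_j$, which may be distributed among several of the $V^{(r)}$; but wherever $V_i$ meets two different $V^{(r)}$ it does so only at the finitely many points of $\CL \cap \partial V$ used above (or at the cut point $z$), so splitting $V_i$ there by property~\eqref{it:weak_rform_cut_point} --- the resulting pieces belonging to $\metregions$ by the same structural input --- replaces $V_i$ by finitely many regions of $\metregions$, each contained in a single $V^{(r)}$, with the corresponding additivity of the forms and the domains. After this refinement, for each $r$ the regions contained in $V^{(r)}$ form a decomposition of $V^{(r)}$ in the sense of the lemma, and since $V^{(r)}$ contains at most $k-1$ of the $W_j$ the induction hypothesis applies. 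Summing the resulting identities over $r$ and combining with the $\{V^{(r)}\}$-decomposition of $V$ yields $\rfdomainres{V}{\Gamma} = \{ f : f|_{V_i} \in \rfdomainres{V_i}{\Gamma} \text{ for all } i \}$ and $\rformres{V}{f}{f}{\Gamma} = \sum_i \rformres{V_i}{f}{f}{\Gamma}$, as required.

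The analytic part --- converting a geometric decomposition into a sum of energies --- is supplied entirely by~\eqref{it:weak_rform_cut_point},~\eqref{it:weak_rform_cut_loop} and Lemma~\ref{le:weak_rform_dead_ends}, so the real work, and the main obstacle, is the geometric bookkeeping invoked above: that cutting a region of $\metregions$ along a loop that re-meets its boundary (or at an interior cut point) again yields regions of $\metregions$; that the components $W_j$ constituting a region of $\metregions$ are glued only at isolated prime-end points in an acyclic fashion (so that Lemma~\ref{le:weak_rform_dead_ends} applies); and that a $V_i$ straddling a cutting loop $\CL$ meets the other side only at such points. These are statements about the geometry of the non-simple $\CLE_{\kappa'}$ gasket that are available from~\cite{amy2025tightness}, and once they are granted the lemma follows formally.
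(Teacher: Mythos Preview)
Your proposal is correct and follows essentially the same route as the paper: reduce to repeated application of properties~\eqref{it:weak_rform_cut_point}, \eqref{it:weak_rform_cut_loop} together with Lemma~\ref{le:weak_rform_dead_ends}, inducting on the number of pieces. The paper's argument is a bit more direct than yours: rather than introducing the auxiliary atomic components $W_j$ and then having to refine the $V_i$ to match an intermediate decomposition into $V^{(r)}$, it observes that for $V$ connected the finitely many points of $\ol{V_i}\cap\ol{V_j}$ must include at least one on $\partial V$ (since each $(\ol{V_i},d_{\ol{V_i}})$ is simply connected by the definition of $\metregions$), and cuts there via~\eqref{it:weak_rform_cut_loop} directly --- this avoids your refinement step and the appeal to external structural facts, since the needed geometric input is already encoded in the simple-connectedness requirement of $\metregions$.
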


\begin{proof}
This follow by repeatedly applying~\eqref{it:weak_rform_cut_loop} and Lemma~\ref{le:weak_rform_dead_ends}. First, we can decompose $V$ into its connected components via Lemma~\ref{le:weak_rform_dead_ends} (recall that $(\ol{V},d_{\ol{V}})$ is simply connected by the definition of~$\metregions$). Suppose now that $V$ consists of a single component and contains at least two of the $V_i$. By the assumption, for each $i \neq j$, the set $\ol{V_i} \cap \ol{V_j}$ is finite. We inductively decompose $\ol{V} \cap \Upsilon_\Gamma$ along these finitely many intersection points. Note that at least one of the intersection points must lie on $\partial V$ since each $(\ol{V_i},d_{\ol{V_i}})$ is supposed to be simply connected. We can therefore apply~\eqref{it:weak_rform_cut_loop}. Proceeding by induction, we obtain the desired decomposition.
\end{proof}

\begin{lemma}
 Suppose that $(\rformres{V}{\cdot}{\cdot}{\Gamma})_{V \in \metregions}$ is a weak \clekp{} resistance form. Then each $\rformres{V}{\cdot}{\cdot}{\Gamma}$ is a local, regular resistance form.
\end{lemma}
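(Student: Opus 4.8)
The plan is to verify the two extra properties, \emph{locality} and \emph{regularity}, separately: regularity will follow at once from compactness of the underlying space, while locality will be derived from the additivity property of Lemma~\ref{le:rform_additivity} combined with a sufficiently fine decomposition of the gasket.

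For regularity I would first observe that by \cite[Lemma~1.10]{amy2025tightness} the space $(\ol V\cap\Upsilon_\Gamma,\dpathY[\ol V])$ is compact, and by Lemma~\ref{le:rmet_continuous} the resistance metric $\rmetres{V}{\cdot}{\cdot}{\Gamma}$ is topologically equivalent to $\dpathY[\ol V]$; hence $(\ol V\cap\Upsilon_\Gamma,\rmetres{V}{\cdot}{\cdot}{\Gamma})$ is a compact resistance metric space, so in particular $C_c(\ol V\cap\Upsilon_\Gamma)=C(\ol V\cap\Upsilon_\Gamma)$ and, by~\eqref{eq:fin_energy_continuous}, $\rfdomainres{V}{\Gamma}\subseteq C(\ol V\cap\Upsilon_\Gamma)$. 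On a compact resistance metric space the resistance form is automatically regular (this is part of \cite[Corollary~6.4 and Theorem~9.4]{k2012resistance}); alternatively one argues directly that $\rfdomainres{V}{\Gamma}$, being stable under normal contractions, is a vector sublattice of $C(\ol V\cap\Upsilon_\Gamma)$ which by (RF\ref{it:rf_domain}) contains the constants and by (RF\ref{it:rf_separating}) separates points, so the lattice form of the Stone--Weierstrass theorem shows it is dense in $C(\ol V\cap\Upsilon_\Gamma)$. Either way $\rformres{V}{\cdot}{\cdot}{\Gamma}$ is a regular resistance form.

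For locality, let $f,g\in\rfdomainres{V}{\Gamma}$ have disjoint supports with respect to $\rmetres{V}{\cdot}{\cdot}{\Gamma}$. By Lemma~\ref{le:rmet_continuous} these supports coincide with the $\dpathY$-supports, so $K_f\defeq\supp_{\dpathY}(f)$ and $K_g\defeq\supp_{\dpathY}(g)$ are disjoint compact subsets of $\ol V\cap\Upsilon_\Gamma$ and $\varepsilon\defeq\dpathY(K_f,K_g)>0$. I would then pick a finite decomposition $V=\bigcup_{i=1}^n V_i$ with $V_i\in\metregions$ pairwise disjoint, $\bigcup_i(\ol{V_i}\cap\Upsilon_\Gamma)=\ol V\cap\Upsilon_\Gamma$, and with the $\dpathY[\ol{V_i}]$-diameter of each $\ol{V_i}\cap\Upsilon_\Gamma$ smaller than $\varepsilon$; such a decomposition is obtained by cutting $\ol V$ along a suitable finite collection of loops of $\Gamma$, reflecting the fact that the small regions bounded between two intersecting $\CLE_{\kappa'}$ loops form a dense collection of pieces of arbitrarily small diameter (cf.\ the geometric estimates of \cite{amy2025tightness}). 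Since $\dpathY\le\dpathY[\ol{V_i}]$, no $\ol{V_i}\cap\Upsilon_\Gamma$ can meet both $K_f$ and $K_g$, so for each $i$ either $f$ or $g$ vanishes identically on $\ol{V_i}\cap\Upsilon_\Gamma$; as $\rfdomainres{V}{\Gamma}$ is linear and, by Lemma~\ref{le:rform_additivity}, the restrictions lie in $\rfdomainres{V_i}{\Gamma}$, the functions $(f+g)|_{V_i}$ and $(f-g)|_{V_i}$ then coincide up to a global sign, whence $\rformres{V_i}{f+g}{f+g}{\Gamma}=\rformres{V_i}{f-g}{f-g}{\Gamma}$ for every $i$. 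Summing and using Lemma~\ref{le:rform_additivity} again,
\begin{align*}
 \rformres{V}{f}{g}{\Gamma}
 &= \tfrac14\Bigl(\rformres{V}{f+g}{f+g}{\Gamma}-\rformres{V}{f-g}{f-g}{\Gamma}\Bigr)\\
 &= \tfrac14\sum_{i=1}^n\Bigl(\rformres{V_i}{f+g}{f+g}{\Gamma}-\rformres{V_i}{f-g}{f-g}{\Gamma}\Bigr)=0 ,
\end{align*}
so $\rformres{V}{\cdot}{\cdot}{\Gamma}$ is local.

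The main obstacle is the geometric input used in the locality step: the existence, for every $\varepsilon>0$, of a finite partition of $\ol V\cap\Upsilon_\Gamma$ into $\metregions$-regions of $\dpathY$-diameter at most $\varepsilon$. Establishing this amounts to knowing that removing the (finitely many, by local finiteness of $\CLE_{\kappa'}$) loops of $\Gamma$ of macroscopic Euclidean size that meet $\ol V$ leaves only components of small $\dpathY[\cdot]$-diameter, after possibly cutting further along additional loops to ensure each completion is simply connected; this is precisely the kind of geometric control provided by \cite{amy2025tightness}. The remaining ingredients—the reduction to ``no piece meets both supports'', the polarization identity, and the bookkeeping with Lemma~\ref{le:rform_additivity}—are routine.
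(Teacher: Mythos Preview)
Your regularity argument is correct and matches the paper exactly: compactness of $(\ol V\cap\Upsilon_\Gamma,\dpathY[\ol V])$ from \cite[Lemma~1.10]{amy2025tightness} together with Lemma~\ref{le:rmet_continuous} gives compactness in the resistance metric, and then \cite[Corollary~6.4, Theorem~9.4]{k2012resistance} yields regularity.

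For locality your approach is valid once the fine partition is granted, but the paper's route is shorter and avoids precisely the obstacle you flag. Rather than decomposing all of $V$ into pieces of diameter $<\varepsilon$, the paper invokes \cite[Lemmas~5.12,~5.14]{amy-cle-resampling} to find a \emph{finite set of points} separating $K_f$ from $K_g$ in $\ol V\cap\Upsilon_\Gamma$, and then takes just two disjoint subregions $V_1,V_2\in\metregions$ with $\supp(f)\subseteq\ol{V_1}$, $\supp(g)\subseteq\ol{V_2}$. Applying Lemma~\ref{le:rform_additivity} twice gives $\rformres{V}{f+g}{f+g}{\Gamma}=\rformres{V_1}{f}{f}{\Gamma}+\rformres{V_2}{g}{g}{\Gamma}=\rformres{V}{f}{f}{\Gamma}+\rformres{V}{g}{g}{\Gamma}$, and polarization finishes.

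Your proposed justification for the partition (``removing the macroscopic loops leaves only components of small $\dpathY$-diameter'') is not correct as stated: a component of $C\setminus\bigcup_i\CL_i$ bounded only by small loops and by pieces of $\partial V$ can have arbitrarily large Euclidean (hence $\dpathY[\cdot]$-) diameter. The small-diameter partition you want \emph{is} true, but proving it requires exactly the finite-separating-set lemma from \cite{amy-cle-resampling}---one covers $\ol V\cap\Upsilon_\Gamma$ by small $\dpathY$-balls, separates each from its complement, and collects the finitely many loops carrying the separating points. So both arguments rest on the same geometric input; the paper just applies it once rather than uniformly.
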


\begin{proof}
Recall that each resistance form associated to a compact resistance metric space is regular. It is shown in \cite[Lemma~1.10]{amy2025tightness} that each $(\ol{V} \cap \Upsilon_\Gamma, \dpathY[\ol{V}])$ is compact, and since $\rmetres{V}{\cdot}{\cdot}{\Gamma}$ is continuous with respect to $\dpathY$ (Lemma~\ref{le:rmet_continuous}), also $(\ol{V} \cap \Upsilon_\Gamma, \rmetres{V}{\cdot}{\cdot}{\Gamma})$ is compact.

We now argue that $\rformres{V}{\cdot}{\cdot}{\Gamma}$ is local. Suppose that $f,g \in \rfdomainres{V}{\Gamma}$ and $\supp(f) \cap \supp(g) = \emptyset$. By \cite[Lemma~5.12,~5.14]{amy-cle-resampling}, we can find a finite number of points in $\ol{V} \cap \Upsilon_\Gamma$ that separate $\supp(f)$ from $\supp(g)$. Let $V_1,V_2 \in \metregions$ be disjoint subregions containing $\supp(f),\supp(g)$, respectively. Applying Lemma~\ref{le:rform_additivity} twice, we see that
\[
 \rformres{V}{f+g}{f+g}{\Gamma}
 = \rformres{V_1}{f}{f}{\Gamma}+\rformres{V_2}{g}{g}{\Gamma}
 = \rformres{V}{f}{f}{\Gamma}+\rformres{V}{g}{g}{\Gamma}
\]
and therefore
\[
 4\rformres{V}{f}{g}{\Gamma} = \rformres{V}{f+g}{f+g}{\Gamma}-\rformres{V}{f-g}{f-g}{\Gamma} = 0 .
\]
\end{proof}

The next lemma states that the associated family of resistance metrics $\rmet{\cdot}{\cdot}{\Gamma} = (\rmetres{V}{\cdot}{\cdot}{\Gamma})_{V \in \metregions}$ satisfies the definition of a \clekp{} metric in~\cite[Definition~1.5]{amy2025tightness} with $\epsilon = 0$ (which is recalled in Section~\ref{se:approx_scheme_general_def} below). As a result, all results from \cite{amy2025tightness} apply to $\rmet{\cdot}{\cdot}{\Gamma}$.

\begin{lemma}
 Suppose that $\rmet{\cdot}{\cdot}{\Gamma} = (\rmetres{V}{\cdot}{\cdot}{\Gamma})_{V \in \metregions}$ is the family of resistance metrics associated with a weak \clekp{} resistance form. Then $\rmet{\cdot}{\cdot}{\Gamma}$ is a \clekp{} metric in the sense of~\cite[Definition~1.5]{amy2025tightness} with $\epsilon = 0$.
\end{lemma}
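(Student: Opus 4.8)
The plan is to go through the defining conditions of a \clekp{}~metric in \cite[Definition~1.5]{amy2025tightness} with $\epsilon = 0$ one at a time and check that each is implied by properties we have already established for the family $\rmet{\cdot}{\cdot}{\Gamma}$ associated with a weak \clekp{} resistance form. The point is that, after unwinding the definitions, the conditions for a \clekp{}~metric with $\epsilon = 0$ are (up to repackaging) exactly the list of properties \eqref{it:rmet_cont}--\eqref{it:rmet_cut_loop} of Proposition~\ref{pr:cle_rmet_char}; and we proved in Proposition~\ref{pr:cle_rmet_char} that the resistance metrics coming from a weak \clekp{} resistance form satisfy \eqref{it:rmet_cont}--\eqref{it:rmet_cut_loop}. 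So the bulk of the proof is a matter of matching conventions, and no new probabilistic input is required.

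Concretely, I would record the correspondence as follows. The continuity of $\rmetres{C}{\cdot}{\cdot}{\Gamma}$ with respect to $\dpathY$ is Proposition~\ref{pr:cle_rmet_char}\eqref{it:rmet_cont}, and by Lemma~\ref{le:rmet_continuous} together with the compactness of $(\ol{V} \cap \Upsilon_\Gamma, \dpathY[\ol{V}])$ from \cite[Lemma~1.10]{amy2025tightness} this upgrades to topological equivalence of $\rmetres{V}{\cdot}{\cdot}{\Gamma}$ and $\dpathY[\ol{V}]$ for every $V \in \metregions$. The Markovian/locality property of a \clekp{}~metric is Proposition~\ref{pr:cle_rmet_char}\eqref{it:rmet_markov}; translation invariance (and the attendant measurability of the metric inside $U$ as a function of the local data $(\Gamma_{U^*},\dpathY[U])$, at least on the level of conditional laws) is Proposition~\ref{pr:cle_rmet_char}\eqref{it:rmet_translation}; and insensitivity to ``dead ends'', i.e.\ the agreement $\rmetres{V}{x}{y}{\Gamma} = \rmetres{V'}{x}{y}{\Gamma}$ for $V \subseteq V'$ and $x,y$ separated from $V'\setminus V$ inside $\ol{V}$, is Proposition~\ref{pr:cle_rmet_char}\eqref{it:rmet_compatibility}. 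The remaining ingredient is the additive gluing law: for an arbitrary finite partition of a region $V \in \metregions$ into subregions $V_1,\ldots,V_n \in \metregions$, the effective resistance across $V$ is obtained from those across the $V_i$ by the series and parallel laws at the finitely many interface points. This follows from Proposition~\ref{pr:cle_rmet_char}\eqref{it:rmet_cut_point} and \eqref{it:rmet_cut_loop} by exactly the induction used in the proof of Lemma~\ref{le:rform_additivity}: first decompose $V$ into its connected components (Lemma~\ref{le:weak_rform_dead_ends}), then peel off the finitely many interface points of a single component one at a time, each step being either a cut point (handled by Lemma~\ref{le:rforms_cutpoint}/Lemma~\ref{le:reff_cut_point}) or a loop crossing (handled by \eqref{it:rmet_cut_loop} and Lemma~\ref{le:weights_gluing}), translating freely between the resistance-form level and the resistance-metric level via \eqref{eq:rmet_to_form_sep}.

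The only place that needs care, and hence the main obstacle, is precisely this last step: reconciling the two special cases \eqref{it:rmet_cut_point} and \eqref{it:rmet_cut_loop} of Proposition~\ref{pr:cle_rmet_char} with whatever single combined decomposition statement appears in \cite[Definition~1.5]{amy2025tightness}, and verifying that the metric-level formulation used there (edge weights adding on finite subsets, series/parallel combinations at the interface points) is genuinely equivalent to the resistance-form-level additivity we have at hand. This is deterministic electrical-network bookkeeping — it uses only the facts collected in Section~\ref{se:rmet_rform} (in particular Lemmas~\ref{le:reff_cut_point}, \ref{le:rforms_cutpoint}, \ref{le:weights_gluing}) together with the already-proven Proposition~\ref{pr:cle_rmet_char} and Lemma~\ref{le:rform_additivity} — but it does require stating the induction carefully, since a general finite partition of a region may require splitting along several loop crossings as well as several genuine cut points, and one has to know at each stage that at least one interface point lies on the outer boundary of the current piece (which is where simple connectedness of the regions in $\metregions$, already used in the proof of Lemma~\ref{le:rform_additivity}, enters).
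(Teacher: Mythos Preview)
Your overall plan is right in spirit --- verify each axiom of \cite[Definition~1.5]{amy2025tightness} --- and your treatment of continuity, the Markovian property, translation invariance, and compatibility via Proposition~\ref{pr:cle_rmet_char}\eqref{it:rmet_cont}--\eqref{it:rmet_compatibility} and Lemma~\ref{le:rmet_continuous} matches the paper's proof exactly.

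The gap is in what you call ``the remaining ingredient.'' You assert that the leftover axiom in \cite[Definition~1.5]{amy2025tightness} is a single ``additive gluing law'' for finite partitions, and that this law is equivalent (``up to repackaging'') to items~\eqref{it:rmet_cut_point}--\eqref{it:rmet_cut_loop} of Proposition~\ref{pr:cle_rmet_char}. That is not what the definition says. As recalled in Section~\ref{se:approx_scheme_general_def}, the remaining axioms for $\epsilon=0$ are four separate properties: \emph{separability}, \emph{monotonicity}, the \emph{series law}, and the \emph{generalized parallel law}. None of these is a global partition/gluing statement, and none of them appears among \eqref{it:rmet_cont}--\eqref{it:rmet_cut_loop}. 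So the claim that Definition~1.5 and Proposition~\ref{pr:cle_rmet_char} are the same list ``up to repackaging'' is simply false, and your argument as written does not address separability or monotonicity at all.

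The paper handles these four properties individually in Lemmas~\ref{le:weak_rmet_separability}--\ref{le:weak_rmet_gen_parallel_law}. The additivity you emphasize (Lemma~\ref{le:rform_additivity}) is indeed the workhorse for monotonicity, the series law, and the generalized parallel law, so your instinct there is correct; but each deduction requires its own short argument (e.g.\ monotonicity compares the energy of the $V'$-harmonic minimizer on $V$ versus $V'$; the generalized parallel law needs Lemma~\ref{le:gen_parallel_law_graph} on the induced finite network). Separability, meanwhile, does not use additivity at all --- it follows directly from compatibility~\eqref{it:rmet_compatibility} once you observe that any sufficiently close $V' \supseteq V$ differs from $V$ only by dead ends. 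You should replace your ``additive gluing law'' paragraph with these four short verifications.
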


\begin{proof}
The continuity, the Markovian property, the translation invariance, and the compatibility property are exactly items~\eqref{it:rmet_cont}--\eqref{it:rmet_compatibility} in Proposition~\ref{pr:cle_rmet_char} and Lemma~\ref{le:rmet_continuous}. The remaining properties are stated in the Lemmas~\ref{le:weak_rmet_separability}--\ref{le:weak_rmet_gen_parallel_law} below.
\end{proof}

\begin{lemma}[Separability]\label{le:weak_rmet_separability}
The following holds almost surely. For every $V \in \metregions$,
\[ \rmetres{V}{x}{y}{\Gamma} = \lim_{V' \searrow V} \rmetres{V'}{x}{y}{\Gamma} ,\quad x,y \in \ol{V} \cap \Upsilon_\Gamma . \]
\end{lemma}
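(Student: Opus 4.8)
The plan is to obtain the identity by combining a monotonicity property of the family $(\rmetres{V'}{\cdot}{\cdot}{\Gamma})_{V'\in\metregions}$ with the compatibility property~\eqref{it:rmet_compatibility} of Proposition~\ref{pr:cle_rmet_char}, the latter being invoked through the same resampling device used in the proof of Lemma~\ref{le:rmet_continuous}.

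First I would record monotonicity: if $V\subseteq V'$ in $\metregions$ and $x,y\in\ol{V}\cap\Upsilon_\Gamma$, then $\rmetres{V'}{x}{y}{\Gamma}\le\rmetres{V}{x}{y}{\Gamma}$. Indeed, iterating the cut-loop identity~\eqref{it:weak_rform_cut_loop} (i.e.\ Lemma~\ref{le:rform_additivity}) exhibits $\ol{V'}\cap\Upsilon_\Gamma$ as $\ol{V}\cap\Upsilon_\Gamma$ glued to finitely many further $\metregions$-regions at finitely many points, so that $\rformres{V'}{g}{g}{\Gamma}\ge\rformres{V}{g|_V}{g|_V}{\Gamma}$ for every function $g$; taking the infimum over $g$ with $g(x)=1$, $g(y)=0$ and using~\eqref{eq:rform_to_met10} gives the claim. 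Consequently the net $(\rmetres{V'}{x}{y}{\Gamma})_{V'\searrow V}$ is monotone, so its limit exists and is at most $\rmetres{V}{x}{y}{\Gamma}$; it remains to prove the matching lower bound.

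Suppose the lower bound fails. Since $\metregions$ is countable, there is then a fixed $V\in\metregions$ such that, with positive probability, $\lim_{V'\searrow V}\rmetres{V'}{x}{y}{\Gamma}<\rmetres{V}{x}{y}{\Gamma}$ for some $x,y\in\ol{V}\cap\Upsilon_\Gamma$. I would then apply the random resampling procedure of \cite[Section~5.2]{amy-cle-resampling} within $\C\setminus\ol{V}$: this leaves the configuration inside $V$ (and hence, by the Markovian property~\eqref{it:rmet_markov}, the joint law of $\rmetres{V}{\cdot}{\cdot}{\Gamma}$ together with the relevant events) unchanged, while, exactly as in the proof of Lemma~\ref{le:rmet_continuous}, with positive conditional probability it produces a configuration in which every point of $\Upsilon_\Gamma\setminus\ol{V}$ is separated from $V$ by a single point $z\in\ol{V}$, which we may take with $z\notin\{x,y\}$. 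On this event, for every $V'\in\metregions$ with $V\subseteq V'$, every $u\in V'\setminus V$ lies in $\Upsilon_\Gamma\setminus\ol{V}$ and is therefore separated from $x$ and $y$ in $\ol{V'}\cap\Upsilon_\Gamma$ by $z$, so property~\eqref{it:rmet_compatibility} yields $\rmetres{V'}{x}{y}{\Gamma}=\rmetres{V}{x}{y}{\Gamma}$; thus the net $(\rmetres{V'}{x}{y}{\Gamma})_{V'\searrow V}$ is constant equal to $\rmetres{V}{x}{y}{\Gamma}$ and the identity holds. Since the resampled configuration has the same law as $\Gamma$, and since by iterating the resampling the ``single separating point'' configuration can be reached with conditional probability arbitrarily close to $1$, it follows that the failure event has probability zero, which is the desired contradiction.

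The delicate point, which I would import essentially verbatim from the argument behind Lemma~\ref{le:rmet_continuous}, is the resampling input: that with positive (and, after iteration, high) conditional probability one can resample the CLE outside $\ol{V}$ so that $\Upsilon_\Gamma\setminus\ol{V}$ attaches to $V$ at a single point, and that this can be transported to the level of the metrics $(\rmetres{V'}{\cdot}{\cdot}{\Gamma})_{V'}$ via~\eqref{it:rmet_markov}. Granting this, the remainder of the argument is soft; the only other thing to check carefully is the decomposition of $\ol{V'}\cap\Upsilon_\Gamma$ used for monotonicity, which is a bookkeeping exercise with~\eqref{it:weak_rform_cut_loop} and Lemma~\ref{le:rform_additivity}.
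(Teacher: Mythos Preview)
Your approach is far more elaborate than what the paper does, and it has a genuine gap in the resampling step.

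The paper's proof is a two-line observation: by the very definition of $\metregions$, when $V'\supseteq V$ lies in a sufficiently small neighborhood of $V$, then \emph{automatically} every $u\in V'\setminus V$ is separated from $V$ in $\ol{V'}$ by a single point $z\in\ol{V}$. This is a deterministic geometric fact about regions bounded by finitely many non-crossing loops: the extra pieces of $V'\setminus V$ are small pockets attached at isolated touching points of $\partial V$. Hence the compatibility property~\eqref{it:rmet_compatibility} applies directly for all such $V'$, giving $\rmetres{V'}{x}{y}{\Gamma}=\rmetres{V}{x}{y}{\Gamma}$. The net is eventually constant, so no monotonicity, no resampling, and no contradiction argument are needed.

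Your argument, by contrast, tries to force the single-separating-point configuration probabilistically via resampling. The problem is in the sentence claiming that the resampling ``leaves the configuration inside $V$ (and hence \ldots the joint law of $\rmetres{V}{\cdot}{\cdot}{\Gamma}$ together with the relevant events) unchanged.'' The failure event you want to contradict depends on $\rmetres{V'}{\cdot}{\cdot}{\Gamma}$ for $V'\supsetneq V$, and these metrics depend on the CLE configuration \emph{outside} $V$; they are precisely what the resampling is allowed to alter. The Markovian property~\eqref{it:rmet_markov} gives you control of $(\rmetres{V''}{\cdot}{\cdot}{\Gamma})_{V''\in\metregions[U]}$ only for $U$ open with $\ol{V}\subseteq U$, but then resampling outside $U$ does not force single-point separation for points in $U\setminus V$, which is exactly where the relevant $V'$ live. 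Your final paragraph (iterating so that the separation event has probability close to $1$, then using $B\subseteq A^c$) is a different and more defensible line, but you would still need to argue carefully that $V$ itself survives the resampling as an element of $\metregions$ and that the iterated procedure really drives $\p[B]\to 1$; none of this is needed once you notice the geometric fact the paper uses.
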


\begin{proof}
 The definition of~$\metregions$ implies that if $V' \supseteq V$ is contained in a sufficiently small neighborhood of $V$, then for every $u \in V' \setminus V$ there is a point $z \in \ol{V}$ that separates $u$ from $V$ in $\ol{V'}$. In that case, we have $\rmetres{V}{x}{y}{\Gamma} = \rmetres{V'}{x}{y}{\Gamma}$ for $x,y \in \ol{V} \cap \Upsilon_\Gamma$ by Proposition~\ref{pr:cle_rmet_char}\eqref{it:rmet_compatibility}.
\end{proof}

\begin{lemma}[Monotonicity]\label{le:weak_rmet_monotonicity}
The following holds almost surely. Let $V,V' \in \metregions$, $V \subseteq V'$. Then
\[ \rmetres{V'}{x}{y}{\Gamma} \le \rmetres{V}{x}{y}{\Gamma} ,\quad x,y \in \ol{V} \cap \Upsilon_\Gamma . \]
\end{lemma}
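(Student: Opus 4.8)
The plan is to reduce the statement to an inequality between the two resistance \emph{forms}, and then establish that inequality by decomposing $\ol{V'}\cap\Upsilon_\Gamma$ into $\ol V\cap\Upsilon_\Gamma$ together with countably many ``extra'' subregions which only add conductance. Since $\rmetres{V}{\cdot}{\cdot}{\Gamma}$ and $\rmetres{V'}{\cdot}{\cdot}{\Gamma}$ are the resistance metrics of $\rformres{V}{\cdot}{\cdot}{\Gamma}$ and $\rformres{V'}{\cdot}{\cdot}{\Gamma}$, the variational formula~\eqref{eq:rform_to_met10} gives, for $x,y\in\ol V\cap\Upsilon_\Gamma$, that $\rmetres{V}{x}{y}{\Gamma}^{-1}=\inf\{\rformres{V}{f}{f}{\Gamma}:f\in\rfdomainres{V}{\Gamma},\,f(x)=1,\,f(y)=0\}$ and similarly for $V'$. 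Hence it suffices to show that every $g\in\rfdomainres{V'}{\Gamma}$ satisfies $g|_V\in\rfdomainres{V}{\Gamma}$ and $\rformres{V}{g}{g}{\Gamma}\le\rformres{V'}{g}{g}{\Gamma}$: then any competitor $g$ for the infimum defining $\rmetres{V'}{x}{y}{\Gamma}^{-1}$ restricts to a competitor $g|_V$ for the infimum defining $\rmetres{V}{x}{y}{\Gamma}^{-1}$ with no larger energy, so $\rmetres{V}{x}{y}{\Gamma}^{-1}\le\rmetres{V'}{x}{y}{\Gamma}^{-1}$, which is the claim.

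To obtain the form inequality I would decompose $V'$ relative to $V$. Writing $\CL_1,\dots,\CL_n$ for the loops cutting out $V$, one checks that $V$ is a union of connected components of $V'\setminus(\CL_1\cup\cdots\cup\CL_n)$ and that the remaining components not lying inside any $\CL_i$ are separated from $V$ by arcs of the $\CL_i$ contained in $\ol{V'}$. Peeling these off one at a time by applying the cut-loop property~\eqref{it:weak_rform_cut_loop} (each time along a loop meeting the boundary of the current region) and the cut-point property~\eqref{it:weak_rform_cut_point}, together with Lemmas~\ref{le:weak_rform_dead_ends} and~\ref{le:rform_additivity}, produces a countable family $(W_i)\subseteq\metregions$ with pairwise disjoint interiors such that $\ol{V'}\cap\Upsilon_\Gamma=(\ol V\cap\Upsilon_\Gamma)\cup\bigcup_i(\ol{W_i}\cap\Upsilon_\Gamma)$ and, for every $g\in\rfdomainres{V'}{\Gamma}$,
\[
 g|_V\in\rfdomainres{V}{\Gamma},\qquad g|_{W_i}\in\rfdomainres{W_i}{\Gamma}\ \text{for each }i,\qquad
 \rformres{V'}{g}{g}{\Gamma}=\rformres{V}{g}{g}{\Gamma}+\sum_i\rformres{W_i}{g}{g}{\Gamma}.
\]
As each summand is non-negative this yields $\rformres{V}{g}{g}{\Gamma}\le\rformres{V'}{g}{g}{\Gamma}$, finishing the proof by the first paragraph. (In the basic case where the decomposition has only one extra piece $W_1$ meeting $\ol V$ at two points $a,b$, one can argue even more directly at the level of finite networks: the weight identity~\eqref{it:rmet_cut_loop} shows that the network of $\rmetres{V'}{\cdot}{\cdot}{\Gamma}$ on a large finite set equals that of $\rmetres{V}{\cdot}{\cdot}{\Gamma}$ with an extra non-negative conductance added between $a$ and $b$, and Rayleigh monotonicity gives $\rmetres{V'}{x}{y}{\Gamma}\le\rmetres{V}{x}{y}{\Gamma}$.)

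The main obstacle is the geometric bookkeeping in the decomposition step: one must arrange that all the intermediate regions produced by the cutting have simply connected closure, so that they genuinely lie in $\metregions$ and~\eqref{it:weak_rform_cut_loop} applies at each stage, and one must handle the fact that a single \clekp{} loop can meet the boundary of a region in an uncountable (fractal) set, so the decomposition is only countable and has to be reached by a limiting procedure rather than a finite iteration. I expect both points to be dispatched exactly as in the proof of the cut-loop direction of Proposition~\ref{pr:cle_rmet_char}: fix the dense countable set $\{z_m\}$ as there, apply the finite-network identity~\eqref{it:rmet_cut_loop} on $A=\{z_1,\dots,z_m\}$, and send $m\to\infty$ using~\eqref{eq:rmet_to_form_sep}; equivalently, run the whole argument at the level of the finite networks, noting that for each fixed $m$ only finitely many of the $W_i$ are relevant, and conclude by Rayleigh monotonicity.
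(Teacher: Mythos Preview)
Your proposal is correct and follows essentially the same approach as the paper: reduce to the energy inequality $\rformres{V}{g}{g}{\Gamma}\le\rformres{V'}{g}{g}{\Gamma}$ for $g\in\rfdomainres{V'}{\Gamma}$ via a decomposition of $V'$ into $V$ and extra pieces. The paper's proof is simply more terse because the decomposition you describe (and the ``geometric bookkeeping'' you worry about in your last paragraph) has already been packaged into Lemmas~\ref{le:weak_rform_dead_ends} and~\ref{le:rform_additivity}; the paper just takes the $\rformres{V'}{\cdot}{\cdot}{\Gamma}$-harmonic function $f$ with $f(x)=1$, $f(y)=0$, cites those two lemmas to get $\rformres{V}{f}{f}{\Gamma}\le\rformres{V'}{f}{f}{\Gamma}$, and concludes in one line.
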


\begin{proof}
Let $f$ be the $\rformres{V'}{\cdot}{\cdot}{\Gamma}$-harmonic function with $f(x) = 1$, $f(y) = 0$. The Lemma~\ref{le:weak_rform_dead_ends} and~\ref{le:rform_additivity} imply that $\rformres{V}{f}{f}{\Gamma} \le \rformres{V'}{f}{f}{\Gamma}$. Therefore $\rmetres{V}{x}{y}{\Gamma} \ge \rformres{V}{f}{f}{\Gamma}^{-1} \ge \rformres{V'}{f}{f}{\Gamma}^{-1} = \rmetres{V'}{x}{y}{\Gamma}$.
\end{proof}

\begin{lemma}[Series law]\label{le:weak_rmet_series_law}
The following holds almost surely. Let $V \in \metregions$ and $x,y,z \in \ol{V} \cap \Upsilon_\Gamma$ such that $z$ separates $x$ from $y$ in $\ol{V} \cap \Upsilon_\Gamma$. Then
\[ \rmetres{V}{x}{y}{\Gamma} = \rmetres{V}{x}{z}{\Gamma}+\rmetres{V}{z}{y}{\Gamma} . \]
\end{lemma}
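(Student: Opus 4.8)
The plan is to reduce the statement to the series law for finite electrical networks, Lemma~\ref{le:reff_cut_point}, via the approximation of $\rformres{V}{\cdot}{\cdot}{\Gamma}$ by its traces onto finite subsets of $\ol V\cap\Upsilon_\Gamma$. Fix a countable set $\{z_m\}_{m\in\N}$ that is dense in $(\ol V\cap\Upsilon_\Gamma,\dpathY)$ — equivalently, by Lemma~\ref{le:rmet_continuous}, dense for $\rmetres{V}{\cdot}{\cdot}{\Gamma}$ — and that contains $x,y,z$; for each $m$ let $A_m=\{z_1,\dots,z_m\}$ and let $w_m\colon A_m\times A_m\to[0,\infty)$ be the conductances associated with the restriction $\rmetres{V}{\cdot}{\cdot}{\Gamma}|_{A_m}$, so that $(A_m,w_m)$ is a finite connected network whose effective resistance is exactly $\rmetres{V}{\cdot}{\cdot}{\Gamma}|_{A_m}$. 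The inequality ``$\le$'' is just the triangle inequality for the metric $\rmetres{V}{\cdot}{\cdot}{\Gamma}$, so it suffices to show, for each $m$ with $x,y,z\in A_m$, that $z$ is a cut vertex of $(A_m,w_m)$ separating $x$ from $y$: then Lemma~\ref{le:reff_cut_point} gives $R_{A_m}(x,y)=R_{A_m}(x,z)+R_{A_m}(z,y)$, which is exactly the claim.

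For the cut-vertex statement I would use locality. Since $(\rformres{V}{\cdot}{\cdot}{\Gamma},\rfdomainres{V}{\Gamma})$ is a local, hence strongly local, regular resistance form on the compact space $(\ol V\cap\Upsilon_\Gamma,\rmetres{V}{\cdot}{\cdot}{\Gamma})$, its associated Hunt process with respect to any finite Borel measure of full support (e.g.\ $\nu=\sum_m 2^{-m}\delta_{z_m}$, or the gasket measure $\meas{\cdot}{\Gamma}$) is a diffusion: it has continuous sample paths, infinite lifetime, and every point is regular. Recall that $(A_m,w_m)$ is precisely the electrical network of the trace of this process on $A_m$, i.e.\ of its jump chain watched at the successive visits to $A_m$. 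Let $A'$ (resp.\ $B'$) be the connected component of $(\ol V\cap\Upsilon_\Gamma)\setminus\{z\}$ containing $x$ (resp.\ $y$). Any path in $\ol V\cap\Upsilon_\Gamma$ joining a point of $A'$ to a point of $B'$ has connected image meeting both components and therefore passes through $z$; applying this to the continuous sample paths of the diffusion shows that the process started in $A_m\cap A'$ cannot reach $A_m\cap B'$ without first hitting $z\in A_m$. Hence the one-step transition probabilities of the trace chain — and so the conductances $w_m$ — vanish on every pair with one endpoint in $A_m\cap A'$ and the other in $A_m\cap B'$. In particular $z$ separates $x$ from $y$ in $(A_m,w_m)$, and, since this network is connected, the subnetworks it induces on $(A_m\cap A')\cup\{z\}$ and $(A_m\cap B')\cup\{z\}$ are connected and meet only at $z$; thus Lemma~\ref{le:reff_cut_point} applies and yields $R_{A_m}(x,y)=R_{A_m}(x,z)+R_{A_m}(z,y)$.

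I expect the only genuinely delicate point to be this passage from the topological separation of $x$ and $y$ by $z$ to the combinatorial separation in each finite network $(A_m,w_m)$; strong locality — in the form of the continuity of the diffusion's sample paths — is what makes it work. Without it, the ``$A'$-side'' and ``$B'$-side'' pieces of a function in $\rfdomainres{V}{\Gamma}$ could both have $z$ in their $\rmetres{V}{\cdot}{\cdot}{\Gamma}$-supports, so there would be no purely functional-analytic reason for the traces to be additive across $z$, and a naive application of the locality axiom would fail.
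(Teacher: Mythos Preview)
Your argument is correct but takes a different route from the paper's. The paper works directly with the structural axioms of a weak $\CLE_{\kappa'}$ resistance form: if $z\in\partial V$ it applies axiom~\eqref{it:weak_rform_cut_point} together with the compatibility property Proposition~\ref{pr:cle_rmet_char}\eqref{it:rmet_compatibility} and Lemma~\ref{le:rforms_cutpoint}; if $z$ is interior to $V$ it invokes the general additivity Lemma~\ref{le:rform_additivity} to split $V$ into subregions in $\metregions$ meeting only at $z$. You instead pass through the associated diffusion: locality (established just before this lemma) gives continuous sample paths, and the probabilistic representation~\eqref{eq:harm_ext} of harmonic extensions then forces the cut-vertex property in every finite trace. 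This is the same mechanism the paper later exploits in the proof of Proposition~\ref{prop:bm_defines_resistance}, so your approach is certainly in the paper's spirit, just deployed earlier. Your route has the advantage of not needing to verify that the two sides of $z$ lie in $\metregions$, at the cost of invoking the Hunt-process machinery; the paper's route stays purely at the level of forms and is shorter once Lemma~\ref{le:rform_additivity} is available. One small gap in your write-up: $(\ol V\cap\Upsilon_\Gamma)\setminus\{z\}$ may have more than the two components $A'$ and $B'$, so $A_m$ need not decompose as $(A_m\cap A')\cup(A_m\cap B')\cup\{z\}$; but your continuity argument equally shows that all conductances between distinct components vanish, so you can group the extra components with either side and Lemma~\ref{le:reff_cut_point} still applies.
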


\begin{proof}
In case $z \in \partial V$, this follows from~\eqref{it:weak_rform_cut_point} and Proposition~\ref{pr:cle_rmet_char}\eqref{it:rmet_compatibility} by Lemma~\ref{le:rforms_cutpoint}. In case $z$ is in the interior of $V$, we decompose $\ol{V} \cap \Upsilon_\Gamma$ via Lemma~\ref{le:rform_additivity}.
\end{proof}

\begin{lemma}[Generalized parallel law]\label{le:weak_rmet_gen_parallel_law}
The following holds almost surely. Let $V \in \metregions$ and let $x,y,z_1,\ldots,z_N \in \ol{V} \cap \Upsilon_\Gamma$ such that $x,y$ are separated in $\ol{V} \cap \Upsilon_\Gamma \setminus\{z_1,\ldots,z_N\}$. Then
\[ N\rmetres{V}{x}{y}{\Gamma} \ge \min_{i}\rmetres{V_x}{x}{z_i}{\Gamma} \]
where $V_x \in \metregions$ is such that $\ol{V_x}$ contains the connected component of $\ol{V} \cap \Upsilon_\Gamma \setminus\{z_1,\ldots,z_N\}$ containing $x$.
\end{lemma}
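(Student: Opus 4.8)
The plan is to deduce the inequality from the finite‑network version Lemma~\ref{le:gen_parallel_law_graph} by approximating $\rmetres{V}{\cdot}{\cdot}{\Gamma}$ with weighted graphs on increasingly dense finite point sets, the essential input being a decomposition of $\rformres{V}{\cdot}{\cdot}{\Gamma}$ along the separating points $z_1,\ldots,z_N$.

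Work on the full‑measure event where Definition~\ref{def:weak_cle_rform}, Lemma~\ref{le:rmet_continuous} and the lemmas above hold. Let $(C_j)_j$ be the $\dpathY$‑connected components of $\ol V\cap\Upsilon_\Gamma\setminus\{z_1,\ldots,z_N\}$, with $C_1\ni x$; since $x$ and $y$ are separated, $y\notin\ol{C_1}$. The main step is the \emph{component decomposition}: there are local resistance forms $\rformres{\ol{C_j}}{\cdot}{\cdot}{\Gamma}$ on $\ol{C_j}\cap\Upsilon_\Gamma$, with resistance metrics $\rmetres{\ol{C_j}}{\cdot}{\cdot}{\Gamma}\ge\rmet{\cdot}{\cdot}{\Gamma}$ on $\ol{C_j}\cap\Upsilon_\Gamma$ and, moreover, $\rmetres{\ol{C_1}}{\cdot}{\cdot}{\Gamma}\ge\rmetres{V_x}{\cdot}{\cdot}{\Gamma}$ on $\ol{C_1}\cap\Upsilon_\Gamma$ for any $V_x\in\metregions$ with $C_1\subseteq\ol{V_x}\cap\Upsilon_\Gamma$ (by monotonicity, as $\ol{C_1}\subseteq\ol{V_x}\subseteq\ol V$), such that $f\in\rfdomainres{V}{\Gamma}$ iff $f|_{\ol{C_j}}\in\rfdomainres{\ol{C_j}}{\Gamma}$ for every $j$ with $\sum_j\rformres{\ol{C_j}}{f}{f}{\Gamma}<\infty$, in which case $\rformres{V}{f}{f}{\Gamma}=\sum_j\rformres{\ol{C_j}}{f}{f}{\Gamma}$. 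This is the analogue of Lemma~\ref{le:rform_additivity} for the regions $\ol{C_j}$, which need not lie in $\metregions$ since the $z_i$ are arbitrary points; one proves it by exhausting each $C_j$ by an increasing sequence of $\metregions$‑regions, applying the cut‑point and cut‑loop additivity (Definition~\ref{def:weak_cle_rform}\eqref{it:weak_rform_cut_point},\eqref{it:weak_rform_cut_loop} and Lemmas~\ref{le:weak_rform_dead_ends},~\ref{le:rform_additivity}), and passing to the limit as in Lemma~\ref{le:weak_rmet_separability}. I expect the main obstacle to be precisely this step: making the exhaustion of arbitrary components $C_j$ rigorous and controlling the limiting forms relies on the topological structure of the CLE gasket and its $\metregions$‑decompositions from \cite{amy2025tightness,amy-cle-resampling}.

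Granting the decomposition, fix such a $V_x$ and let $(A_m)_{m\ge1}$ be an increasing sequence of finite subsets of $\ol V\cap\Upsilon_\Gamma$ with $\{x,y,z_1,\ldots,z_N\}\subseteq A_1$ and $\bigcup_m A_m$ dense in $\ol V\cap\Upsilon_\Gamma$. Since $\rmetres{V}{\cdot}{\cdot}{\Gamma}$ is a resistance metric, $\rmetres{V}{\cdot}{\cdot}{\Gamma}|_{A_m}$ is the effective resistance of a weighted graph; let $\graph_m$ be the subgraph on its positive‑weight edges, so $R_{\graph_m}(x,y)=\rmetres{V}{x}{y}{\Gamma}$. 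Writing $f^m_u\in\rfdomainres{V}{\Gamma}$ for the $\rformres{V}{\cdot}{\cdot}{\Gamma}$‑harmonic function with $f^m_u=\mathbf{1}_{\{u\}}$ on $A_m$, the decomposition shows that for $u$ in a component $C_j$ the function $f^m_u$ solves a harmonic problem with identically zero boundary data on each $\ol{C_k}$, $k\ne j$, hence vanishes there; thus $f^m_u$ is supported on $\ol{C_j}$, and in particular the weight between two vertices in distinct components is $0$. Consequently $\{z_1,\ldots,z_N\}$ separates $x$ from $y$ in $\graph_m$, the vertices strictly separated from $x$ are exactly $\bigcup_{j\ne1}(A_m\cap C_j)$, and the subgraph $K^m_x$ of Lemma~\ref{le:gen_parallel_law_graph} is the one induced on $(A_m\cap C_1)\cup\{z_1,\ldots,z_N\}$. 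On $A_m\cap\ol{C_1}$ the weights of $K^m_x$ agree with those associated with $\rmetres{\ol{C_1}}{\cdot}{\cdot}{\Gamma}|_{A_m\cap\ol{C_1}}$, while the remaining edges of $K^m_x$ (among the $z_i$, arising from components other than $C_1$) have conductances tending to $0$ as $m\to\infty$, because the corresponding harmonic functions $f^m_{z_i}$, being $0$ on the increasingly dense $A_m$ inside those components, concentrate near $z_i$ so that their pairwise energies vanish by locality. Hence, by Rayleigh monotonicity, $R_{K^m_x}(x,z_i)\le\rmetres{\ol{C_1}}{x}{z_i}{\Gamma}$ for each $m$, with $R_{K^m_x}(x,z_i)\to\rmetres{\ol{C_1}}{x}{z_i}{\Gamma}$ if $z_i\in\ol{C_1}$ and $R_{K^m_x}(x,z_i)\to\infty$ otherwise. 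Lemma~\ref{le:gen_parallel_law_graph} applied to $\graph_m$ gives $\rmetres{V}{x}{y}{\Gamma}^{-1}\le\sum_{i=1}^N R_{K^m_x}(x,z_i)^{-1}$ for every $m$, so letting $m\to\infty$,
\[
 \rmetres{V}{x}{y}{\Gamma}^{-1} \;\le\; \lim_{m\to\infty}\sum_{i=1}^N R_{K^m_x}(x,z_i)^{-1} \;=\; \sum_{i:\,z_i\in\ol{C_1}}\rmetres{\ol{C_1}}{x}{z_i}{\Gamma}^{-1} \;\le\; N\Bigl(\min_{1\le i\le N}\rmetres{V_x}{x}{z_i}{\Gamma}\Bigr)^{-1},
\]
where the last step uses $\rmetres{\ol{C_1}}{x}{z_i}{\Gamma}\ge\rmetres{V_x}{x}{z_i}{\Gamma}$ together with $\#\{i:z_i\in\ol{C_1}\}\le N$. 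This rearranges to $N\rmetres{V}{x}{y}{\Gamma}\ge\min_i\rmetres{V_x}{x}{z_i}{\Gamma}$, as required.
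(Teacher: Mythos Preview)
Your approach is correct in spirit and reaches the same conclusion via the same two ingredients the paper uses, namely the additivity Lemma~\ref{le:rform_additivity} and the graph inequality Lemma~\ref{le:gen_parallel_law_graph}. However, you take a considerably longer route than the paper does, and the extra machinery is not needed.

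The paper's proof is three lines: it works directly with the \emph{single} finite set $A=\{x,y,z_1,\ldots,z_N\}$, considers the edge weights $w_V$ (resp.\ $w_{V_x}$) of the trace $\rmetres{V}{\cdot}{\cdot}{\Gamma}|_A$ (resp.\ $\rmetres{V_x}{\cdot}{\cdot}{\Gamma}|_{\{x,z_1,\ldots,z_N\}}$), observes via Lemma~\ref{le:rform_additivity} that $w_V(x,y)=0$ and $w_V(x,z_i)=w_{V_x}(x,z_i)$, and then applies Lemma~\ref{le:gen_parallel_law_graph} once. There is no need for your sequence $(A_m)$ of dense finite sets and the $m\to\infty$ limit: the point is that the trace on $A$ itself is already a finite network in which $\{z_1,\ldots,z_N\}$ separates $x$ from $y$ (precisely because $w_V(x,y)=0$), so the graph lemma applies immediately to that network. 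Your argument that ``the remaining edges of $K^m_x$ among the $z_i$ have conductances tending to $0$'' is delicate and in any case superfluous.

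On the ``main obstacle'' you flag, namely that the components $C_j$ need not lie in $\metregions$: this is a fair concern, but the paper does not attempt to build resistance forms on $\ol{C_j}$ directly. The claim $w_V(x,y)=0$ only requires a $\metregions$-decomposition of $V$ that separates $x$ from $y$, and Lemma~\ref{le:rform_additivity} (together with Lemma~\ref{le:weights_gluing}) supplies this; one uses that the points of $\{z_1,\ldots,z_N\}$ that genuinely disconnect the gasket are loop intersection points, so the relevant pieces are in~$\metregions$. Your proposed exhaustion of arbitrary $C_j$ by $\metregions$-regions would work too, but it is more than what is needed here.
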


\begin{proof}
Let $w_V$ (resp.\ $w_{V_x}$) be the edge weight functions associated with the resistance metrics $\rmetres{V}{\cdot}{\cdot}{\Gamma}|_{\{x,y,z_1,\ldots,z_N\}}$ (resp.\ $\rmetres{V_x}{\cdot}{\cdot}{\Gamma}|_{\{x,z_1,\ldots,z_N\}}$). It follows from Lemma~\ref{le:rform_additivity} that $w_V(x,y) = 0$ and $w_V(x,z_i) = w_{V_x}(x,z_i)$ for each $i$. The result therefore follows by Lemma~\ref{le:gen_parallel_law_graph}.
\end{proof}

The following property will be used later.

\begin{lemma}\label{le:energy_small_nbhd}
 Suppose that $\rform{\cdot}{\cdot}{\Gamma}$ is a weak \clekp{} resistance form. Then almost surely the following holds. Let $V \in \metregions$ and let $f$ be a function that is $\rformres{V}{\cdot}{\cdot}{\Gamma}$-harmonic on $\ol{V} \cap \Upsilon_\Gamma \setminus A$ for some finite set $A$. Then, for each $x \in \ol{V} \cap \Upsilon_\Gamma$ and $\varepsilon > 0$, there exists $V_1 \in \metregions$ containing a $\dpathY[\ol{V}]$-neighborhood of $x$ such that $\rformres{V_1}{f}{f}{\Gamma} < \varepsilon$.
\end{lemma}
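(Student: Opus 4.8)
The plan is to realize $\rformres{V_1}{f}{f}{\Gamma}$, for a small region $V_1\in\metregions$ around $x$, as (oscillation of $f$ over $V_1$) times (net current flowing through $V_1$), and to let $V_1$ shrink to $\{x\}$. First I would record reductions. We may assume $f$ is non-constant. Since $f$ is $\rformres{V}{\cdot}{\cdot}{\Gamma}$-harmonic off the finite set $A$ we have $f\in\rfdomainres{V}{\Gamma}$, so $\rformres{V}{f}{f}{\Gamma}<\infty$ and, by~\eqref{eq:fin_energy_continuous} together with Lemma~\ref{le:rmet_continuous}, $f$ is continuous with respect to $\dpathY[\ol V]$. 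If $x$ lies in (the $\dpathY$-closure of) a component $O$ of $\ol V\cap\Upsilon_\Gamma$ that is separated from $A$ by a single point $z\in\ol V\cap\Upsilon_\Gamma$, then by~\eqref{it:weak_rform_cut_point} the restriction $f|_O$ is $\rformres{O}{\cdot}{\cdot}{\Gamma}$-harmonic off $\{z\}$, hence constant; choosing $V_1\in\metregions$ with $\ol{V_1}\cap\Upsilon_\Gamma\subseteq\ol O\cap\Upsilon_\Gamma$ and containing a $\dpathY[\ol V]$-neighborhood of $x$ (which exists by the structure of $\metregions$, see \cite[Section~1]{amy2025tightness}) gives $\rformres{V_1}{f}{f}{\Gamma}=0$ by~(RF\ref{it:rf_domain}). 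So the interesting case is a point $x$ not of this type.

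Next I would set up the approximation. Writing $\operatorname{osc}_W(f)=\sup_Wf-\inf_Wf$, and using that the regions in $\metregions$ (in particular those bounded between two intersecting $\CLE_{\kappa'}$ loops) generate the $\dpathY[\ol V]$-topology and that one may separate $x$ from the rest of $\ol V\cap\Upsilon_\Gamma$ by finitely many points at every scale \cite[Lemmas~5.12,~5.14]{amy-cle-resampling}, I would choose a decreasing sequence $V\supseteq W_1\supseteq W_2\supseteq\cdots$ in $\metregions$ with $x$ in the interior of $\ol{W_m}\cap\Upsilon_\Gamma$, with $\bigcap_m(\ol{W_m}\cap\Upsilon_\Gamma)=\{x\}$, with the (finite) set $\partial W_m$ of points through which $\ol{W_m}\cap\Upsilon_\Gamma$ connects to the rest of $\ol V\cap\Upsilon_\Gamma$ having cardinality $\le k$ for a fixed $k$, and (when $x\notin A$) with $A\cap\ol{W_m}=\emptyset$ for all large $m$. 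By Lemma~\ref{le:rform_additivity}, $\rformres{W_m}{f}{f}{\Gamma}$ is non-increasing in $m$ and $f|_{W_m}$ is $\rformres{W_m}{\cdot}{\cdot}{\Gamma}$-harmonic off $\partial W_m$ (when $A\cap\ol{W_m}=\emptyset$; if $x\in A$ one argues similarly with $\partial W_m\cup\{x\}$, the extra term being controlled by the global current out of $x$), so $\rformres{W_m}{f}{f}{\Gamma}$ equals the trace energy $\frac12\sum_{p,q\in\partial W_m}w^{W_m}(p,q)(f(p)-f(q))^2$, where $w^{W_m}$ are the weights of $\rmetres{W_m}{\cdot}{\cdot}{\Gamma}|_{\partial W_m}$. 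Setting $J_p=\sum_{q}w^{W_m}(p,q)(f(p)-f(q))$ (so $\sum_{p\in\partial W_m}J_p=0$), the discrete Green identity gives
\[
 \rformres{W_m}{f}{f}{\Gamma}=\sum_{p\in\partial W_m}f(p)J_p=\sum_{p\in\partial W_m}(f(p)-f(x))J_p\le\operatorname{osc}_{\ol{W_m}\cap\Upsilon_\Gamma}(f)\cdot\tfrac12\sum_{p\in\partial W_m}|J_p| .
\]
Since $f$ is $\dpathY[\ol V]$-continuous and $\ol{W_m}\cap\Upsilon_\Gamma\downarrow\{x\}$, the oscillation factor tends to $0$; hence it suffices to bound the total flux $\tfrac12\sum_{p\in\partial W_m}|J_p|$ uniformly in $m$, and then take $m$ large and $V_1=W_m$.

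The step I expect to be the main obstacle is precisely this uniform flux bound $\sup_m\tfrac12\sum_{p\in\partial W_m}|J_p|<\infty$. The purely resistance-theoretic estimates — Cauchy--Schwarz, $w^{W_m}(p,q)\le\rmetres{W_m}{p}{q}{\Gamma}^{-1}$, monotonicity (Lemma~\ref{le:weak_rmet_monotonicity}), and~\eqref{eq:fin_energy_continuous} — only yield $\rformres{W_m}{f}{f}{\Gamma}\lesssim\operatorname{osc}_{\ol{W_m}}(f)^2\cdot|\partial W_m|\cdot(\text{total }w^{W_m}\text{-conductance})$, which a priori need not tend to $0$ since the right-hand side is only bounded by the constant $\rformres{V}{f}{f}{\Gamma}$ (consistent with the fact that $\rformres{W_m}{f}{f}{\Gamma}\le\rformres{V}{f}{f}{\Gamma}$ always). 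Closing the gap forces one to choose the separating regions $W_m$ carefully, with a bounded number of boundary points and with those boundary points of bounded "throughput" in the ambient network; this is where the $\CLE_{\kappa'}$-specific input enters — the regularity of the $\CLE_{\kappa'}$ resistance metric (Lemma~\ref{le:resistance_ub_cle}) and geodesic metric (Proposition~\ref{pr:cle_metric_estimates}), together with the resampling and independence-across-scales tools of \cite{amy-cle-resampling} (e.g.\ Lemma~\ref{lem:break_loops}) — rather than abstract resistance-form theory alone. I expect the bulk of the work to lie in producing such a family $(W_m)$ and the accompanying flux bound.
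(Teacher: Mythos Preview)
Your reductions and the choice of nested regions $W_m$ around $x$ with a uniformly bounded number of boundary points are exactly right, and this is also where the only genuine $\CLE_{\kappa'}$ input (independence across scales) enters in the paper. The gap is in the final step: you set up the estimate via the Green identity and aim for a uniform bound on the total absolute flux $\sum_p |J_p|$, expecting this to require fine $\CLE_{\kappa'}$ regularity (Lemma~\ref{le:resistance_ub_cle}, Proposition~\ref{pr:cle_metric_estimates}, Lemma~\ref{lem:break_loops}). In fact this flux bound need not hold at the level of generality you are working at --- if two boundary points $v_i,v_j$ of $W_m$ have inside-resistance $r\to 0$ while $|f(v_i)-f(v_j)|\asymp\sqrt r$ (perfectly compatible with~\eqref{eq:fin_energy_continuous}), then $|J_{v_i}|\asymp 1/\sqrt r\to\infty$ --- and the paper does not attempt it.

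The paper's route is a competitor argument that needs only an elementary bound on the \emph{outside} conductances $w_\delta(y,v_i)$, i.e.\ the weights of the trace of $\rformres{V\setminus V_\delta}{\cdot}{\cdot}{\Gamma}$ on $(A\cup\{v_i\})\setminus\{x\}$. This comes from a single test function: the harmonic $g$ with $g(x)=1$ and $g=0$ on $A\setminus\{x\}$ has $g(v_i)>1/2$ for small $\delta$ by continuity, whence $w_\delta(y,v_i)\cdot(1/2)^2\le\rformres{V}{g}{g}{\Gamma}$ for every $y\in A\setminus\{x\}$. Now suppose $\rformres{V_\delta}{f}{f}{\Gamma}$ does not tend to $0$. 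Replace $f$ by $\tilde f$ equal to the constant $f(x)$ on $V_\delta$, equal to $f$ on $A$, and harmonic on $V\setminus V_\delta$. In the difference $\rformres{V\setminus V_\delta}{\tilde f}{\tilde f}{\Gamma}-\rformres{V\setminus V_\delta}{f}{f}{\Gamma}$ the $(y,v_i)$-terms are $O(\max_i|f(v_i)-f(x)|)\to 0$ by the weight bound, while the $(v_i,v_j)$-terms have the favorable sign (since $\tilde f(v_i)=\tilde f(v_j)$), so no bound on $w_\delta(v_i,v_j)$ is needed. By additivity (Lemma~\ref{le:rform_additivity}) this yields $\rformres{V}{\tilde f}{\tilde f}{\Gamma}<\rformres{V}{f}{f}{\Gamma}$ for small $\delta$, contradicting the harmonicity of $f$. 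No $\CLE_{\kappa'}$ estimates beyond the bounded number of separating points are used.
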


\begin{proof}
As a consequence of the independence across scales~\cite{amy-cle-resampling} (see e.g.\ Corollary~\ref{co:covering}), there exists a constant $M$ such that the following holds almost surely. For each $x \in \Upsilon_\Gamma$ and $\delta > 0$ there is $0<\delta_1<\delta$ and a set of at most $M$ points $\{v_i\} \subseteq A(x,\delta_1/2,\delta_1)$ that separate $\partial B(x,\delta_1/2)$ from $\partial B(x,\delta_1)$ in $\Upsilon_\Gamma$. Suppose that $\delta < \dist(x, A \setminus \{x\})$. Let $\delta_1$ and $\{v_i\}$ be as above, and let $V_\delta \in \metregions$ be the region bounded by the connected component of $\ol{V} \cap \Upsilon_\Gamma \setminus \{v_i\}$ containing $x$. Suppose for notational simplicity that $V \setminus V_\delta \in \metregions$ (i.e.\ it is simply connected), otherwise we split it into two simply connected regions and apply the same argument. Let $w_\delta(y,v_i)$, $y \in A \setminus \{x\}$, be the edge conductances associated with $\rformrestr{V \setminus V_\delta}{(A \cup \{v_i\}) \setminus \{x\}}{\cdot}{\cdot}{\Gamma}$. We claim that the $w_\delta(y,v_i)$ stay bounded as $\delta \to 0$. Indeed, let $g$ be the function with $g(x)=1$, $g(y)=0$ for $y \in A \setminus \{x\}$, and $\rformres{V}{\cdot}{\cdot}{\Gamma}$-harmonic otherwise. Since $g$ is continuous~\eqref{eq:fin_energy_continuous}, we have $g(v_i) > 1/2$ for each $i$ when $\delta$ is sufficiently small. Therefore
\begin{align*}
w_\delta(y,v_i) \cdot \left(\frac{1}{2}\right)^2
&\le \rformrestr{V \setminus V_\delta}{(A \cup \{v_i\}) \setminus \{x\}}{g}{g}{\Gamma} \quad\text{(energy across one edge $\leq$ total energy)}\\
&\le \rformres{V}{g}{g}{\Gamma} \quad\text{(by Lemma~\ref{le:rform_additivity})}.
\end{align*}

Now let $f$ be $\rformres{V}{\cdot}{\cdot}{\Gamma}$-harmonic on $\ol{V} \cap \Upsilon_\Gamma \setminus A$. Suppose that $\rformres{V_\delta}{f}{f}{\Gamma}$ does not converge to $0$ as $\delta \to 0$. Let $\wt{f}$ be constant $f(x)$ on $V_\delta$, have the same values as $f$ on $A$, and harmonically interpolated to the rest of $V \setminus V_\delta$. By the continuity of $f$~\eqref{eq:fin_energy_continuous} and using that $\wt{f}(v_i) = f(x)$ we have $\max_i \abs{f(v_i)-\wt{f}(v_i)} \to 0$ as $\delta \to 0$. Since the $w_\delta(y,v_i)$ stay bounded, we get $\abs{\rformres{V \setminus V_\delta}{\wt{f}}{\wt{f}}{\Gamma} - \rformres{V \setminus V_\delta}{f}{f}{\Gamma}} \to 0$, and hence $\rformres{V}{\wt{f}}{\wt{f}}{\Gamma} < \rformres{V}{f}{f}{\Gamma}$ for small $\delta$ due to Lemma~\ref{le:rform_additivity}. This contradicts the harmonicity of $f$.
\end{proof}

\subsection{General tightness statement}
\label{sec:tightness_general}

We review the general tightness result for metrics on the \clekp{} gasket proved in \cite{amy2025tightness}.

\subsubsection{Setup and Topology}
\label{subsec:tightness_setup}

Throughout, we will assume the following setup. Let $\Gamma_\D$ be a nested $\CLE_{\kappa'}$ on $\D$, let $\CL$ be the outermost loop of $\Gamma$ such that $0$ is inside $\CL$, and let $C$ be the regions inside $\CL$. We let $\Gamma_C$ be the loops of $\Gamma_\D$ contained in $\ol{C}$ and let $\Gamma = \{ \CL \} \cup \Gamma_C$. We equip the gasket $\Upsilon_\Gamma$ of $\Gamma_C$ with the metric $\dpathY$ defined in~\eqref{eq:dpath}.

We now describe the topology for which the tightness statement is proved (cf.~\cite[Section~1.3]{amy2025tightness}). As described in Section~\ref{subsec:main_results}, the gasket $\Upsilon_\Gamma$ is an abstract metric space $(\gasketspace{\Gamma},\dpathY)$ equipped with an embedding $\projmap{\Gamma}\colon \gasketspace{\Gamma} \to \C$. In addition, we specify the sets of points in the gasket that are ``separating''. These sets play an essential role in the definition of the \clekp{} metrics from \cite{amy2025tightness} which we will recall just below. Concretely, for each $n \in \N$, let
\[ A_n = \{ (x,y,z_1,\ldots,z_n) \in \gasketspace{\Gamma}^{2+n} : \text{The set $\{z_1,\ldots,z_n\}$ separates $x$ from $y$} \} . \]
We consider the space of sequences $((\gasketspace{\Gamma}^2, \dpathY, \projmap{\Gamma}, \FR), (\gasketspace{\Gamma}^{2+n}, \dpathY, \one_{A_n})_{n \in \N})$ where $(\gasketspace{\Gamma}^2, \dpathY)$ (resp.\ $(\gasketspace{\Gamma}^{2+n}, \dpathY)$) denotes the product metric space, $\FR\colon \gasketspace{\Gamma}^2 \to \R$, and we equip the space of $(\gasketspace{\Gamma}^2, \dpathY, \projmap{\Gamma}, \FR)$ (resp.\ $(\gasketspace{\Gamma}^{2+n}, \dpathY, \one_{A_n})$) with the Gromov-Hausdorff-function (GHf) topology defined in \cite[Appendix~A]{amy2025tightness}. Note that by \cite[Lemma~A.9]{amy2025tightness}, when taking limits of such sequences in the product topology, the spaces in the entries of the limiting sequence remain compatible.

\subsubsection{Good approximation schemes}
\label{se:approx_scheme_general_def}

We now give the definition of the class of approximation schemes for which tightness is proved in \cite{amy2025tightness}. We first give the definition of an approximate \clekp{} metric according to \cite[Definition~1.5]{amy2025tightness}.

Let $\epsilon \ge 0$ and $\cserial \ge 0$, $\cparallel(N) > 0$ for each $N \in \N$ be some fixed constants. Let $\metregions$ be as defined in Section~\ref{subsec:main_results}. Suppose that we have a family of random functions $\rmetapprox{\epsilon}{\cdot}{\cdot}{\Gamma} = (\rmetapproxres{\epsilon}{V}{\cdot}{\cdot}{\Gamma})_{V \in \metregions}$\footnote{This family is determined by a countable subfamily due to the separability assumption given below.} coupled with $\Gamma$ (but not necessarily determined by $\Gamma$). We say that $\rmetapprox{\epsilon}{\cdot}{\cdot}{\Gamma}$ is an \emph{approximate \clekp{} metric} (with respect to the parameters $\epsilon,\cserial,\cparallel(N)$) if the following properties are satisfied.
\begin{enumerate}[\textbullet]
\item For each $V \in \metregions$ we have
\[ \rmetapproxres{\epsilon}{V}{\cdot}{\cdot}{\Gamma}\colon (\ol{V} \cap \Upsilon_\Gamma) \times (\ol{V} \cap \Upsilon_\Gamma) \to [0,\infty] , \]
and
\[
 \rmetapproxres{\epsilon}{V}{x}{y}{\Gamma} = \rmetapproxres{\epsilon}{V}{y}{x}{\Gamma}
 \quad\text{and}\quad
 \rmetapproxres{\epsilon}{V}{x}{y}{\Gamma} \le \rmetapproxres{\epsilon}{V}{x}{z}{\Gamma}+\rmetapproxres{\epsilon}{V}{z}{y}{\Gamma} .
\]

\item In the case $\epsilon = 0$, we assume additionally that each $\rmetres{V}{\cdot}{\cdot}{\Gamma}$ is continuous with respect to $\dpathY[\ol{V}]$, and $\rmetres{V}{x}{x}{\Gamma} = 0$ for every $x \in \ol{V} \cap \Upsilon_\Gamma$.

\item \textbf{Separability:}
For every $V \in \metregions$,
\[
 \rmetapproxres{\epsilon}{V}{x}{y}{\Gamma} = \lim_{V' \searrow V} \rmetapproxres{\epsilon}{V'}{x}{y}{\Gamma} ,\quad x,y \in \ol{V} \cap \Upsilon_\Gamma .
\]
The limit is in the sense that convergence holds for any decreasing sequence of $V'_n \in \metregions$ with $V'_n \supseteq V$ and $\sup_{u \in \ol{V'_n} \cap \Upsilon_\Gamma} \dpathY[\ol{V'_n}](u,V) \to 0$.

\item \textbf{Markovian property:} Let $U \subseteq \C$ be open, simply connected. The conditional law of the collection $(\rmetapproxres{\epsilon}{V}{\cdot}{\cdot}{\Gamma})_{V \in \metregions[U]}$ given $\Gamma\setminus\Gamma_{U^*}$ and $(\rmetapproxres{\epsilon}{V'}{\cdot}{\cdot}{\Gamma})_{V' \in \metregions[\C\setminus\ol{U}]}$ is almost surely measurable with respect to~$U^*$.

\item \textbf{Translation invariance:} Let $U \subseteq \C$ be open, simply connected. There exists a probability kernel $\mu^{U^*}$ such that for each $z \in \C$, the conditional law of $(\rmetapproxres{\epsilon}{V}{\cdot}{\cdot}{\Gamma})_{V \in \metregions[U+z]}$ given $(U+z)^*$ is $(T_z)_* \mu^{U^*}(\cdot -z)$ where $T_z \FR(x,y) = \FR(x-z,y-z)$ denotes translation by $z$.

\item \textbf{Compatibility:} Let $V,V' \in \metregions$, $V \subseteq V'$, and $x,y \in \ol{V} \cap \Upsilon_\Gamma$ such that for every $u \in V' \setminus V$ there is a point $z \in \ol{V}$ with $\dpathY[\ol{V'}](z, V' \setminus V) \ge \cserial\epsilon$ that separates $u$ from $x,y$ in $\ol{V'} \cap \Upsilon_\Gamma$. Then $\rmetapproxres{\epsilon}{V}{x}{y}{\Gamma} = \rmetapproxres{\epsilon}{V'}{x}{y}{\Gamma}$.

\item \textbf{Monotonicity:}
Let $V,V' \in \metregions$, $V \subseteq V'$. Let $x,y \in \ol{V} \cap \Upsilon_\Gamma$ and suppose one of the following:
\begin{enumerate}[(i)]
 \item\label{it:mon_dead_ends} For every $u \in V' \setminus V$ there is a point $z \in \ol{V}$ that separates $u$ from $x,y$ in $\ol{V'} \cap \Upsilon_\Gamma$.
 \item\label{it:mon_large_loops} There exist $z_1,z_2,\ldots \in \ol{V}$ with $\abs{z_i-z_{i'}} \ge \cserial\epsilon$ for $i\neq i'$ such that no \emph{simple} admissible path in $\ol{V}$ from $x$ to $y$ intersects $\{z_1,z_2,\ldots\}$, and the set $\{z_1,z_2,\ldots\}$ separates $x,y$ from $V' \setminus V$ in $\ol{V'} \cap \Upsilon_\Gamma$.
\end{enumerate}
Then there are points $x',y' \in \ol{V} \cap \Upsilon_\Gamma$ with $\dpathY[\ol{V}](x',x) \le \epsilon$, $\dpathY[\ol{V}](y',y) \le \epsilon$ such that
\[
 \rmetapproxres{\epsilon}{V'}{x'}{y'}{\Gamma} \le \rmetapproxres{\epsilon}{V}{x}{y}{\Gamma} .
\]

\item \textbf{Series law:} Let $V \in \metregions$, and $x,y,z_1,z_2 \in \ol{V} \cap \Upsilon_\Gamma$ be such that $z_1$ separates $x$ from $y,z_2$ in $\ol{V} \cap \Upsilon_\Gamma$, and $z_2$ separates $y$ from $x,z_1$ in $\ol{V} \cap \Upsilon_\Gamma$, and such that $\distE(K_x,K_y) \ge \cserial\epsilon$ where $K_x$ (resp.\ $K_y$) are the connected components of $\ol{V} \cap \Upsilon_\Gamma \setminus \{z_1,z_2\}$ containing $x$ (resp.\ $y$). (Here, $\distE$ denotes the distance with respect to the Euclidean metric.) Then
\[ \rmetapproxres{\epsilon}{V}{x}{y}{\Gamma} \ge \rmetapproxres{\epsilon}{V}{x}{z_1}{\Gamma}+\rmetapproxres{\epsilon}{V}{z_2}{y}{\Gamma}.\]
Further, if $z$ separates $x$ from $y$ in $\ol{V} \cap \Upsilon_\Gamma$, then
\[ \rmetapproxres{\epsilon}{V}{x}{y}{\Gamma} \ge \rmetapproxres{\epsilon}{V}{x}{z}{\Gamma} . \]

\item \textbf{Generalized parallel law:} Let $V \in \metregions$, and let $x,y,z_1,\ldots,z_N \in \ol{V} \cap \Upsilon_\Gamma$ such that $\abs{z_i-z_{i'}} \ge \cserial\epsilon$ for $i \neq i'$, and $x,y$ are separated in $\ol{V} \cap \Upsilon_\Gamma \setminus\{z_1,\ldots,z_N\}$. Let $K_x$ be the connected component of $\ol{V} \cap \Upsilon_\Gamma \setminus\{z_1,\ldots,z_N\}$ containing $x$, and let $V_x \subseteq V$ such that $\ol{V_x} \supseteq \{u \in V : \dpathY[\ol{V}](u,K_x) \le \cserial\epsilon\}$. Then
\[ \cparallel(N)\rmetapproxres{\epsilon}{V}{x}{y}{\Gamma} \ge \min_{i}\rmetapproxres{\epsilon}{V_x}{x}{z_i}{\Gamma} . \]
\end{enumerate}

We now recall the definition of a good approximation scheme \cite[Definition~1.6]{amy2025tightness} which includes the additional assumption that asymptotically as $\epsilon \searrow 0$, the $\rmetapprox{\epsilon}{\cdot}{\cdot}{\Gamma}$-distances in regions smaller than $\epsilon$ are much smaller than the renormalization constants $\median{\epsilon}$ defined in~\cite[Section~3.1]{amy2025tightness} which we now recall.

Let
\begin{equation}\label{eq:angledouble}
 \angledouble = \frac{\pi(16/\kappa'-2)}{2-8/\kappa'} .
\end{equation}
Let $h$ be a GFF on $\D$ with boundary values so that if $\varphi\colon \h \to \D$ is a conformal map, then $h \circ \varphi - \chi\arg\varphi'$ has the boundary values $-\lambda-\angledouble\chi$ (resp.\ $+\lambda$) on $\R_-$ (resp.\ $\R_+$). Let $\eta_1$ (resp.\ $\eta_2$) be the angle $\angledouble$ (resp.\ $0$) flow line of $h$ from $-i$ to $i$. Let $U$ be the regions bounded between $\eta_1,\eta_2$. Given $\eta_1,\eta_2$, let $\Gamma$ be the conditionally independent collection of \clekp{}'s in the connected components of $U$. Let $I$ be the event that $\eta_1 \cap \eta_2 \cap B(0,1/2) \neq \emptyset$. Let $\median{\epsilon}$ be the median of the random variable
\begin{equation}\label{eq:median_def}
 \sup_{\substack{x=\eta_1(s)=\eta_2(s')\\ y=\eta_1(t)=\eta_2(t')\\ \eta_1[s,t], \eta_2[s',t'] \subseteq B(0,3/4)}} \rmetapproxres{\epsilon}{U}{x}{y}{\Gamma} \quad\text{conditioned on } I .
\end{equation}
As explained in~\cite[Section~3.1]{amy2025tightness}, the law of $(\eta_1,\eta_2)$, restricted a compact subset of $\D$, is absolutely continuous with respect to the law of the outer boundaries of two intersecting \clekp{} loops. Therefore we can regard $U$ as a region belonging to $\metregions$, and $\rmetapproxres{\epsilon}{U}{\cdot}{\cdot}{\Gamma}$ is well-defined.

\newcommand*{\epsexp}{a_0}

Suppose $\cserial \ge 0$, $\cparallel(N) > 0$ are fixed constants, and that for each $\epsilon \in (0,1]$, we have an approximate \clekp{} metric $\rmetapprox{\epsilon}{\cdot}{\cdot}{\Gamma}$ with respect to $\epsilon,\cserial,\cparallel(N)$. We say that $(\rmetapprox{\epsilon}{\cdot}{\cdot}{\Gamma})_{\epsilon \in {(0,1]}}$ is a \emph{good approximation scheme} if there exists a constant $\ac{\epsilon} > 0$ for each $\epsilon \in {(0,1]}$ and a constant $\epsexp > 0$ such that
\begin{equation}\label{eq:eps_bound_ass}
 \lim_{\epsilon \searrow 0} \frac{\ac{\epsilon}}{\epsilon^{\epsexp}\median{\epsilon}} = 0
\end{equation}
and for each $r>0$ we have
\begin{equation}\label{eq:approx_error_asymp}
 \lim_{\epsilon \searrow 0} \p\left[ \sup_{V \in \metregions} \sup_{\substack{\dpathY[\ol{V}](x,y) < \epsilon ,\\ \dpathY(x, \Upsilon_\Gamma \setminus \ol{V}) \ge r}} \rmetapproxres{\epsilon}{V}{x}{y}{\Gamma} \le \ac{\epsilon} \right] = 1 .
\end{equation}

\subsubsection{Tightness theorem}

We are now ready to recall the main result from \cite{amy2025tightness}.

\begin{theorem}[{\cite[Theorems~1.12,~1.13]{amy2025tightness}}]
\label{thm:tightness_theorem}
Suppose that we have the setup described in Section~\ref{subsec:tightness_setup}, let $(\rmetapprox{\epsilon}{\cdot}{\cdot}{\Gamma})_{\epsilon \in (0,1]}$ be a good approximation scheme as defined in Section~\ref{se:approx_scheme_general_def}, and let $\median{\epsilon} > 0$ be as defined just above. Let $(\epsilon_n)$ be a sequence with $\epsilon_n \to 0$.
\begin{enumerate}[(i)]
 \item\label{it:tightness} Let $V \in \metregions$ be chosen according to some probability distribution given $\Gamma$. Then laws of $\median{\epsilon_n}^{-1} \rmetapproxres{\epsilon_n}{V}{\cdot}{\cdot}{\Gamma}$ restricted to subsets with positive $\dpathY$-distance to $\Upsilon_\Gamma \setminus \ol{V}$ are tight with respect to the topology described just above.

 \item\label{it:limit_cle_met} Each subsequential limit of $\median{\epsilon_n}^{-1} \rmetapproxres{\epsilon_n}{C}{\cdot}{\cdot}{\Gamma}$ has the law of $\rmetres{C}{\cdot}{\cdot}{\Gamma}$ for a \clekp{} metric $\rmet{\cdot}{\cdot}{\Gamma}$ in the sense of Section~\ref{se:approx_scheme_general_def} with $\epsilon = 0$.
\end{enumerate}
\end{theorem}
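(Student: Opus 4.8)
The plan is to deduce the statement from the general precompactness criterion for the GHf topology recalled in \cite[Appendix~A]{amy2025tightness}. It suffices to establish, uniformly in $n$: (a) a deterministic-constant H\"older bound on $\median{\epsilon_n}^{-1}\rmetapproxres{\epsilon_n}{V}{x}{y}{\Gamma}$ in terms of $\dpathY(x,y)$ (for $x,y$ at positive $\dpathY$-distance from $\Upsilon_\Gamma\setminus\ol V$), which makes the identity embedding into $\C$ uniformly equicontinuous, bounds the diameter, and forces the normalized $\FR$ to be uniformly equicontinuous on pairs; (b) a matching lower bound $\rmetapproxres{\epsilon_n}{C}{x}{y}{\Gamma}\gtrsim\median{\epsilon_n}$ for $x,y$ at macroscopic $\dpathY$-distance, so that no distances collapse in the limit; and (c) convergence along subsequences of the auxiliary data, namely $\FR$ and the separating-set indicators $\one_{A_n}$. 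Item (c) is soft given (a)--(b): ``$\{z_1,\dots,z_n\}$ separates $x$ from $y$'' is a closed condition under $\dpathY$-convergence, so the $\one_{A_n}$ pass to the limit, while the diameter control rules out escape of mass.

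The core input behind (a) and (b) is a single uniform-in-$\epsilon$ estimate: for a region $U$ lying between two intersecting \clekp{} loops (as in the definition of $\median{\epsilon}$ in~\eqref{eq:median_def}), the across-region distance $\rmetapproxres{\epsilon}{U}{x}{y}{\Gamma}$ concentrates around $\median{\epsilon}$ with superpolynomial tails. I would obtain this via a scale-recursion across dyadic scales: by the series law, the distance across a scale-$2^{-j}$ cell bounded by intersecting loops is at most a finite sum of distances across scale-$2^{-(j+1)}$ sub-cells, and by the generalized parallel law it is at least $\cparallel(N)^{-1}$ times a minimum over such; by the Markovian property together with the independence-across-scales estimates of \cite{amy-cle-resampling} (e.g.\ Lemma~\ref{lem:break_loops}), the sub-cell distances are comparable to approximately independent copies of the scale-$2^{-(j+1)}$ object. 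Iterating from unit scale down to scale $\epsilon$ --- with base case the approximation-error bound~\eqref{eq:approx_error_asymp}, $\rmetapproxres{\epsilon}{V}{x}{y}{\Gamma}\le\ac{\epsilon}$ for sub-$\epsilon$ pairs, which by~\eqref{eq:eps_bound_ass} is $o(\epsilon^{\epsexp}\median{\epsilon})$ and hence negligible --- controls the law of the across-cell distance, and in particular shows $\median{2\delta}\asymp\median{\delta}$ (apply the same series/parallel comparison to a cell split into boundedly many half-scale cells), which then yields the lower bound in (b). The H\"older bound in (a) follows by chaining this concentration estimate over dyadic scales and over a net of base points --- a Kolmogorov-type continuity argument --- using that $\median{\cdot}$ is comparable at comparable resolutions; a preliminary crude a priori bound, bootstrapped to the sharp exponent, is convenient along the way.

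For part~\eqref{it:limit_cle_met}, I would pass to a subsequential limit $(\rmetres{V}{\cdot}{\cdot}{\Gamma})_{V\in\metregions}$ --- jointly in all $V$ --- and verify each axiom of a \clekp{} metric from Section~\ref{se:approx_scheme_general_def} with $\epsilon=0$. Continuity and $\rmetres{V}{x}{x}{\Gamma}=0$ follow from the uniform modulus of continuity in (a) and from~\eqref{eq:approx_error_asymp}. The Markovian property and translation invariance are conditional-law identities valid for every $n$ that survive the weak limit, since the GHf convergence can be taken jointly with the relevant conditioning $\sigma$-algebras, as arranged in \cite[Appendix~A]{amy2025tightness}. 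The compatibility, series, monotonicity, and generalized parallel laws each hold at resolution $\epsilon_n$ only up to a spatial buffer of size $O(\cserial\epsilon_n)$ (or with $x',y'$ within $\epsilon_n$ of $x,y$); since these buffers vanish as $\epsilon_n\to0$ and the limit metric is continuous, each passes to its clean $\epsilon=0$ form, the convergence of the $\one_{A_n}$ being used to preserve the required separation hypotheses in the limit.

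The step I expect to be the main obstacle is the uniform-in-$\epsilon$ concentration of the across-a-cell distance around $\median{\epsilon}$. Because no scale-covariance is assumed for the approximation scheme, the recursion cannot be written as an exact product of i.i.d.\ factors, so decoupling the dyadic scales rigorously --- and extracting tails strong enough to survive the union bound in the chaining argument --- requires careful bookkeeping with the resampling estimates of \cite{amy-cle-resampling}. The closely related sub-point that the renormalizing sequence $\median{\cdot}$ is itself sufficiently regular (in particular $\median{2\delta}\asymp\median{\delta}$, with its growth rate across scales controlled) is what ultimately fixes the H\"older exponent in (a) and, later in the paper, constrains $\alpha\in[\ddouble,\dsle]$.
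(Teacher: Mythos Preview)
This theorem is not proved in the present paper: it is stated with the citation \cite[Theorems~1.12,~1.13]{amy2025tightness} and is used as a black-box input recalled in Section~\ref{sec:tightness_general}. There is therefore no proof in the paper to compare your proposal against.

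That said, your outline is broadly in line with how the paper \emph{uses} the result and with the infrastructure it sets up (the axioms of a good approximation scheme, the median $\median{\epsilon}$, the independence-across-scales input from \cite{amy-cle-resampling}), and your identification of the main difficulty --- obtaining uniform-in-$\epsilon$ concentration of across-cell distances around $\median{\epsilon}$ without any a priori scale-covariance --- matches what the paper highlights when it invokes \cite[Proposition~5.17]{amy2025tightness} and related statements. One caution: your treatment of part~\eqref{it:limit_cle_met} glosses over a genuine subtlety that the present paper does address in Section~\ref{se:rmet_construction}, namely that one must construct the \emph{entire family} $(\rmetres{V}{\cdot}{\cdot}{\Gamma})_{V\in\metregions}$ from subsequential limits indexed by a countable dense collection of Jordan domains $\FQ$, and then verify compatibility and separability for this construction (cf.\ \cite[Section~6]{amy2025tightness}); simply taking a joint limit ``in all $V$'' is not directly meaningful since $\metregions$ is uncountable and random. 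Your proposal would benefit from making this reduction explicit.
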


\subsection{Tightness of resistance metrics}
\label{subsec:resistance_tightness}

In this section we will consider a certain graph approximation of the $\CLE_{\kappa'}$ gasket and then show that the associated resistance metrics are tight. We wish to apply Theorem~\ref{thm:tightness_theorem} to this approximation scheme, and for this we would like to show that it satisfies the definition of a good approximation scheme given in Section~\ref{se:approx_scheme_general_def}. However, as we will explain later, due to a technical complication we will modify our approximation scheme slightly so that the conditions in Section~\ref{se:approx_scheme_general_def} are not satisfied exactly as stated. We will explain that the proof of Theorem~\ref{thm:tightness_theorem} is not affected by our modification, and the theorem still holds for our approximation scheme.

\subsubsection{Approximation scheme}
\label{se:rmet_approx_scheme}

\newcommand*{\rmvdeadends}[1]{s(#1)}

To build an approximation scheme for the \clekp{} resistance metric, we will make use of the geodesic \clekp{} metric $\met{\cdot}{\cdot}{\Gamma}$ (see Section~\ref{se:geodesic_metric}). In fact, we only need a \emph{weak geodesic \clekp{} metric} (see~\cite{my2025geouniqueness} for the definition) which is proved to exist in~\cite[Theorem~1.17]{amy2025tightness}. (It is shown in~\cite{my2025geouniqueness} that the weak geodesic \clekp{} metric is unique up to a constant factor and is in fact a (strong) geodesic \clekp{} metric.) We note that the use of the geodesic \clekp{} metric is for convenience to simplify some technical issues, but there are other ways around them.

Consider the setup described just above in Section~\ref{subsec:tightness_setup}. In the following, we let $\met{\cdot}{\cdot}{\Gamma}$ denote the geodesic \clekp{} metric, and let $\lmet{\gamma}$ denote the length of a path $\gamma$ with respect to this metric.

Let $\dcle$ be as in~\eqref{eqn:cle_dim}.  Fix a small constant $\rateexp>0$, and let
\begin{equation}\label{eq:ppp_rate}
 \lambda_\epsilon = \epsilon^{-\dcle-\rateexp} ,\quad \epsilon > 0 .
\end{equation}
Let $\meas{\cdot}{\Gamma}$ denote the gasket measure (see Section~\ref{se:cle_measure}), and let $\approxvtcs{\epsilon}$ be a Poisson point process with intensity measure $\lambda_\epsilon \meas{\cdot}{\Gamma}$.  As we will explain just below, the concentration results from \cite{my2026cleminkowski} for $\meas{\cdot}{\Gamma}$ imply that every Euclidean ball of radius $\epsilon$ is likely to contain many points of $\approxvtcs{\epsilon}$ (i.e., at least a negative power of $\epsilon$).

We also fix a function $\rmvdeadends{\epsilon} > 0$ that converges to $0$ slowly as $\epsilon \to 0$. For instance, we choose
\begin{equation}\label{eq:rmvdeadends_size}
 \rmvdeadends{\epsilon} = \epsilon^{a_0}
\end{equation}
where $a_0>0$ is a small constant.

For each $V \in \metregions$, we now define a graph approximation of $\Upsilon_\Gamma \cap \ol{V}$. Let $\Gamma_{V,\rmvdeadends{\epsilon}} \subseteq \Gamma$ be the set of loops $\CL \subseteq \ol{V}$ with $\diamE(\CL) \ge \rmvdeadends{\epsilon}$. Let $V_{(\rmvdeadends{\epsilon})} \subseteq V$ be the set of points $u$ such that there is a loop $\CL \in \Gamma_{V,\rmvdeadends{\epsilon}}$ and a point $z \in \CL$ such that $u$ is contained in a connected component of $\ol{\C \setminus \CL} \setminus \{z\}$ (here the closure is taken with respect to $\dpathY$) of Euclidean diameter at most $\rmvdeadends{\epsilon}$. That is, $V_{(\rmvdeadends{\epsilon})}$ consists of the points in $V$ that are contained in a ``dead end'' of diameter at most $\rmvdeadends{\epsilon}$ formed by a loop of size at least $\rmvdeadends{\epsilon}$.  Let $\approxvtcs[V]{\epsilon} = \approxvtcs{\epsilon} \cap V \setminus V_{(\rmvdeadends{\epsilon})}$. Note that if $V \subseteq V'$, then $V_{(\rmvdeadends{\epsilon})} \subseteq V'_{(\rmvdeadends{\epsilon})}$. However, it is not always true that $V \setminus V_{(\rmvdeadends{\epsilon})} \subseteq V' \setminus V'_{(\rmvdeadends{\epsilon})}$ because some components of $V$ may be contained in $V'_{(\rmvdeadends{\epsilon})}$ (see Figure~\ref{fi:mod_not_monotone} below).

\begin{remark}
 We removed the dead end vertices in $V_{(\rmvdeadends{\epsilon})}$ in order to simplify the proof that the subsequential limits are non-trivial (Lemma~\ref{le:nondegeneracy}). Let us explain this briefly; see the paragraphs above Lemma~\ref{le:nondegeneracy} and the beginning of Section~\ref{se:rmet_construction} for further details. In order to show that the limit is non-zero, we observe that each pair of points is separated by a finite chain of intersecting \clekp{} loops. By the parallel law, it suffices to show that the resistances in the regions that lie between the intersection of each pair of loops is non-zero. Recall that the renormalization constant $\median{\epsilon}$ defined in Section~\ref{se:approx_scheme_general_def} and used in \cite{amy2025tightness} is given by the median resistance in a typical region bounded between the \emph{outer boundaries} of two intersecting \clekp{} loops. This region does not contain the vertices in the dead ends that are formed by each of the two loops. Note that by including the vertices in the dead ends, we also add edges between the other vertices and hence reduce the resistance (see Figure~\ref{fi:dead_ends_median} below). We therefore need to show that including the vertices in the dead ends does not change the median by more than a bounded factor. To simplify this problem, we decided to modify the construction so that only the dead ends of size at least $\rmvdeadends{\epsilon}$ are included, and their number is now easily bounded. (Note that we ultimately do need to include all dead ends in order to obtain a resistance metric on the entire gasket, but we can choose the rate $\rmvdeadends{\epsilon}$ to be as slow as we want.)

 We remark that we expect that the same is true without removing the vertices in $V_{(\rmvdeadends{\epsilon})}$, and by the uniqueness of the \clekp{} resistance form proved in this paper, the limits obtained from these approximations will agree up to a deterministic factor.
\end{remark}

\begin{figure}[ht]
\centering
\includegraphics[width=0.5\textwidth]{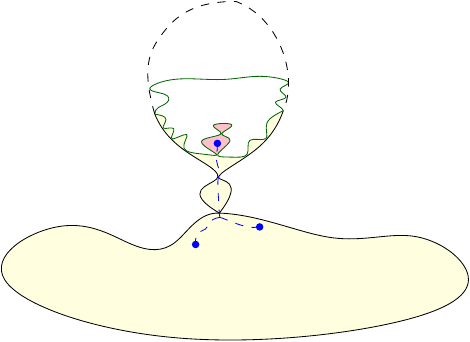}
\caption{Suppose $V$ consists of the regions shaded in yellow and pink. If $V' \supseteq V$ contains the green loop, then the pink region becomes part of $V'_{(\rmvdeadends{\epsilon})}$, so $V \setminus V_{(\rmvdeadends{\epsilon})} \nsubseteq V' \setminus V'_{(\rmvdeadends{\epsilon})}$. The dashed edges are also lost in $\approxgraph[V']{\epsilon}$.}
\label{fi:mod_not_monotone}
\end{figure}

We now define a \emph{cable graph approximation} $\approxgraph[V]{\epsilon}$ of the gasket $\ol{V} \cap \Upsilon_\Gamma$ and the associated resistance metric $\rmetapproxres{\epsilon}{V}{\cdot}{\cdot}{\Gamma}$ on $\approxgraph[V]{\epsilon}$. The precise construction seems a bit artificial, but is made to avoid technical complications as much as possible.

Let $\approxedges[V]{\epsilon} = \{ (v,w) \in \approxvtcs[V]{\epsilon} \times \approxvtcs[V]{\epsilon} : \dpathY[\ol{V}](v,w) < \epsilon \}$. For each $(v,w) \in \approxedges[V]{\epsilon}$ we choose $B_{v,w} \in \metregions$ such that $B_{v,w} \subseteq (\ol{B}_E(v,2\epsilon) \cup \ol{B}_E(w,2\epsilon)) \cap \ol{V}$ and such that every simple admissible path in $\ol{V}$ of Euclidean diameter at most $\epsilon$ from $v$ to $w$ is contained in $\ol{B}_{v,w}$. (The exact way of choosing $B_{v,w}$ does not matter as long as it is measurable with respect to the CLE configuration within $(\ol{B}_E(v,2\epsilon) \cup \ol{B}_E(w,2\epsilon)) \cap \ol{V}$ and is translation-invariant.)

For each $(v,w) \in \approxedges[V]{\epsilon}$ consider the shortest (w.r.t.\ the geodesic metric) path between $v,w$ contained in $\ol{B}_{v,w}$. (Such a path exists since $(\ol{B}_{v,w}, \metres{B_{v,w}}{\cdot}{\cdot}{\Gamma})$ is a compact geodesic space.) In case there is more than one shortest path between $v,w$ contained in $\ol{B}_{v,w}$, then we draw two paths, the left-most one seen from $v$ and from $w$. (We now explain that a left-most shortest path seen from $v$ exists. Note that almost surely, for each $v,w \in \approxvtcs{\epsilon}$ there are infinitely many loops in $\Gamma$ disconnecting $v$ from $\infty$, and likewise for $w$.\footnote{One can show that for each $r>0$, the set of points $x$ such that there are two disjoint admissible paths from $x$ to $\partial B(x,r)$ has dimension smaller than $\dcle$. The Proposition~\ref{pr:measure_ub} then implies that this set has $\meas{\cdot}{\Gamma}$-measure $0$.} These loops split $\ol{B}_{v,w}$ into a countable number of simply connected regions and annular regions. Each such annular region has two distinct boundary points through which every admissible path between $v,w$ needs to pass. Consider one such annular region, and let $v',w'$ be the distinguished boundary points. Consider its universal cover which is a periodic strip-like region, and fix a copy of $v'$ in the universal cover. Consider the set of shortest paths from $v'$ to one of the copies of $w'$ which is bounded since $\met{\cdot}{\cdot}{\Gamma}$ is a true metric. The left boundary of this set is again a shortest path which can be seen from the following two facts. 1.~If $\gamma_1,\gamma_2$ are two shortest paths, then the left boundary of $\gamma_1 \cup \gamma_2$ is also a shortest path. 2.~The limit of a sequence of shortest paths is again a shortest path.)

We let $\approxgraph[V]{\epsilon}$ denote the ``cable graph'' consisting of all of these paths which may in fact merge, and we consider the merged parts as a single edge.\footnote{This is in order to preserve the cut points of the gasket which is essential for the series / parallel laws / compatibility.}  We equip the edges with edge resistances given by their length with respect to $\lmet{\cdot}$ (this is important in order to be compatible with the merging). By the construction, given a pair $(v,w) \in \approxedges[V]{\epsilon}$, the associated cable(s) are determined by the gasket structure within the $2\epsilon$-neighborhood of $\{v,w\}$. For notational simplicity, we will use the same notation $\approxgraph[V]{\epsilon}$ to refer to the set consisting of its cables.

Consider the Brownian motion on the cable graph $\approxgraph[V]{\epsilon}$. We now explain that it is well-defined. If we know that the cables merge and separate only at finitely many points, then it is just a finite network. But as we have not proved this fact, we argue as follows. Suppose that $\gamma_1,\gamma_2$ are two intersecting cables where for $i=1,2$, $\gamma_i$ connects $(v_i,w_i) \in \approxedges[V]{\epsilon}$. Say that $\gamma_i$ is the left-most shortest path in $\ol{B}_{v_i,w_i}$ seen from $v_i$, and is oriented from $v_i$ to $w_i$. Note that by the definition of $\metregions$, the cable $\gamma_{3-i}$ can cross in and out of $\ol{B}_{v_i,w_i}$ only finitely many times. It follows that if $\gamma_1,\gamma_2$ intersect in the same orientation, they can merge and separate only finitely many times. If $\gamma_1,\gamma_2$ intersect in the opposite orientation, then they might (although we do not expect it) intersect and separate infinitely often, but each pair of intersecting segments within $\ol{B}_{v_1,w_1} \cap \ol{B}_{v_2,w_2}$ will have the same length. If a third cable intersects $\gamma_1 \cap \gamma_2$, then its orientation must be the same as $\gamma_1$ or $\gamma_2$, and will merge with one of them. Therefore the network can be reduced to a finite network.

Let $\rmetapproxres{\epsilon}{V}{\cdot}{\cdot}{\Gamma}$ be the associated effective resistance metric defined on $\approxgraph[V]{\epsilon}$. We extend it to $
\ol{V} \cap \Upsilon_\Gamma$ as follows. For $x \in \ol{V} \cap \Upsilon_\Gamma$ with $\dpathY[\ol{V}](x,\approxgraph[V]{\epsilon}) < \epsilon$, we interpolate $\rmetapproxres{\epsilon}{V}{\cdot}{\cdot}{\Gamma}$ by selecting the point on $\approxgraph[V]{\epsilon}$ with the minimal $\dpathY[\ol{V}]$-distance to $x$. In case $\dpathY[\ol{V}](x,\approxgraph[U]{\epsilon}) \ge \epsilon$ or if $x,y$ lie on different connected components of $\approxgraph[V]{\epsilon}$, we set $\rmetapproxres{\epsilon}{V}{x}{y}{\Gamma} = \infty$.
In the connected components of $V_{(\rmvdeadends{\epsilon})}$ we set $\rmetapproxres{\epsilon}{V}{\cdot}{\cdot}{\Gamma}$ to $0$.

\begin{remark}
For each $\epsilon > 0$ there are some exceptional $V \in \metregions$ that contain no or very few vertices (e.g.\ thin tubes). Such regions will not be needed in our proofs (there will be fewer and fewer of them as $\epsilon \to 0$), but in order to match the formulation in~\cite{amy2025tightness}, we nevertheless choose to define $\rmetapproxres{\epsilon}{V}{\cdot}{\cdot}{\Gamma}$ for \emph{each} $V \in \metregions$ in a way that is consistent with our axioms.
\end{remark}

\subsubsection{Statement and proof}
\label{se:rmet_approx_axioms}

We now explain that the aforementioned resistance metric approximations $(\rmetapprox{\epsilon}{\cdot}{\cdot}{\Gamma})_{\epsilon > 0}$ \emph{almost} satisfy the conditions of the tightness result in Theorem~\ref{thm:tightness_theorem}. The separability and the monotonicity assumptions considered in Section~\ref{se:approx_scheme_general_def} only hold under a small additional restriction. We will explain that this restriction does not affect the tightness of the approximation scheme and show the following result.

\begin{proposition}\label{pr:rmet_tightness}
Consider the setup described in Section~\ref{subsec:tightness_setup}, and for each $\epsilon \in (0,1]$ let $\rmetapprox{\epsilon}{\cdot}{\cdot}{\Gamma} = (\rmetapproxres{\epsilon}{V}{\cdot}{\cdot}{\Gamma})_{V \in \metregions}$ be as defined in Section~\ref{se:rmet_approx_scheme}. Let $\median{\epsilon} > 0$ be as defined Section~\ref{se:approx_scheme_general_def}.
Let $V \in \metregions$ be chosen according to some probability distribution given $\Gamma$, and for each $\epsilon$ let $V_\epsilon \in \metregions$ be such that $V_\epsilon \supseteq V$.

Then each sequence $(\epsilon_n)$ with $\epsilon_n \to 0$ contains a subsequence $(\epsilon_{n_m})$ along which we have the weak convergence of the laws of $\median{\epsilon}^{-1} \rmetapproxres{\epsilon}{V_{\epsilon}}{\cdot}{\cdot}{\Gamma}\big|_{(\ol{V} \cap \Upsilon_\Gamma) \times (\ol{V} \cap \Upsilon_\Gamma)}$ in the topology described in Section~\ref{subsec:tightness_setup}.
\end{proposition}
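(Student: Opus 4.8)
The plan is to verify that the family $(\rmetapprox{\epsilon}{\cdot}{\cdot}{\Gamma})_{\epsilon\in(0,1]}$ from Section~\ref{se:rmet_approx_scheme} satisfies the hypotheses of Theorem~\ref{thm:tightness_theorem} --- i.e.\ that it is a good approximation scheme in the sense of Section~\ref{se:approx_scheme_general_def} --- up to the caveat that separability and monotonicity hold only in restricted form, and then to check that the proof of Theorem~\ref{thm:tightness_theorem}(i) in~\cite{amy2025tightness} tolerates this restriction. Symmetry and the triangle inequality are immediate since $\rmetapproxres{\epsilon}{V}{\cdot}{\cdot}{\Gamma}$ is an effective resistance metric on $\approxgraph[V]{\epsilon}$, extended by nearest-point interpolation and by $\infty$ between distinct components. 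The Markovian property and translation invariance will follow by combining the corresponding properties of the gasket measure (Section~\ref{se:cle_measure}), of the Poisson point process $\approxvtcs{\epsilon}$ (conditionally independent given $\meas{\cdot}{\Gamma}$), and of the geodesic \clekp{} metric $\met{\cdot}{\cdot}{\Gamma}$ (Section~\ref{se:geodesic_metric}), using that the regions $B_{v,w}$ and the cables are chosen translation-invariantly and as a function of the configuration in a $2\epsilon$-neighborhood of $\{v,w\}$. The series law, the generalized parallel law and the compatibility property will be deduced from the finite-network identities of Lemmas~\ref{le:reff_cut_point}, \ref{le:gen_parallel_law_graph} and~\ref{le:reff_contraction}, together with the geometric fact that a gasket point $z$ separating two subsets at mutual $\dpathY$-distance $\ge\cserial\epsilon$ --- with $\cserial$ chosen larger than the constant $2$ in the definition of $B_{v,w}$ --- corresponds to a cut vertex of $\approxgraph[V]{\epsilon}$ with vertex-disjoint sides, so that effective resistances add by Lemma~\ref{le:reff_cut_point}.

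The delicate axioms are separability and monotonicity. Monotonicity under enlarging $V$ to $V'$ is Rayleigh monotonicity, \emph{except} that enlarging $V$ can also delete vertices, namely the components of $V$ that fall into $V'_{(\rmvdeadends{\epsilon})}$ (Figure~\ref{fi:mod_not_monotone}). The hypotheses built into the separability and monotonicity axioms (in particular cases~\eqref{it:mon_dead_ends} and~\eqref{it:mon_large_loops}) are precisely what ensures that any such deleted vertex lies beyond a separating point at $\dpathY$-distance $\ge\cserial\epsilon$ from $x,y$, so that deleting it changes $\rmetapproxres{\epsilon}{V'}{\cdot}{\cdot}{\Gamma}$ between $x$ and $y$ only after moving these by at most $\epsilon$; hence only the restricted versions of these two axioms hold. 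I would then argue, following~\cite{amy2025tightness}, that the proof of Theorem~\ref{thm:tightness_theorem} invokes separability and monotonicity only for pairs $V\subseteq V'$ obtained by cutting $C$ along finitely many gasket loops, in which case the components of $V'_{(\rmvdeadends{\epsilon})}$ that could cause trouble all have $\dpathY[\ol{V'}]$-diameter at most $\rmvdeadends{\epsilon}\ll\epsilon$ and the required separating points exist --- so the restricted axioms suffice and the tightness proof goes through unchanged.

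For the asymptotic conditions I would take $\ac{\epsilon}=\epsilon^{\geoexp-a}$ for small fixed $a>0$: condition~\eqref{eq:approx_error_asymp} holds because, when $\dpathY[\ol{V}](x,y)<\epsilon$ and $\dpathY(x,\Upsilon_\Gamma\setminus\ol{V})\ge r$, with probability $1-o^\infty(\epsilon)$ the relevant $\epsilon$-balls contain points of $\approxvtcs{\epsilon}$ (measure lower bound, Proposition~\ref{pr:measure_lb}), so $x$ and $y$ lie in the same component of $\approxgraph[V]{\epsilon}$, and $\rmetapproxres{\epsilon}{V}{x}{y}{\Gamma}$ is at most the $\lmet{\cdot}$-length of a short cable-graph path between the nearest graph points, which is $\le\epsilon^{\geoexp-a}$ with probability $1-o^\infty(\epsilon)$ by Proposition~\ref{pr:cle_metric_estimates}. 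Condition~\eqref{eq:eps_bound_ass} requires $\median{\epsilon}\gg\epsilon^{\geoexp-a-\epsexp}$ for some $\epsexp>0$, which I would obtain by a Nash--Williams cutset estimate for the resistance across the region $U$ in the definition of $\median{\epsilon}$: on each of $\asymp\epsilon^{-1}$ concentric circles, bound the total conductance of the crossing edges of $\approxgraph[U]{\epsilon}$ using Proposition~\ref{pr:measure_ub} for their number and the lower bound of Proposition~\ref{pr:cle_metric_estimates} for their lengths (dyadically grouping the edges by $\dpathY$-length so that the shortest edges contribute a summable series), to get $\median{\epsilon}\ge\epsilon^{\geoexp-c_1}$ with $c_1<\geoexp$ for small enough $\rateexp$ and $a$; then any $\epsexp\in(a,c_1-a)$ satisfies~\eqref{eq:eps_bound_ass}. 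With all axioms verified, Theorem~\ref{thm:tightness_theorem}(i) applied to the region $V_\epsilon$, with the conclusion restricted to $\ol{V}\cap\Upsilon_\Gamma$ (using the compatibility axiom away from a $\cserial\epsilon$-neighborhood of $\partial V$), yields the asserted weak convergence along a subsequence. I expect the two main obstacles to be exactly (i) verifying that the non-monotonicity introduced by the dead-end removal does not break the proof of Theorem~\ref{thm:tightness_theorem}, and (ii) establishing the lower bound on the renormalization constant $\median{\epsilon}$ needed for~\eqref{eq:eps_bound_ass}.
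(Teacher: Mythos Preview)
Your overall strategy matches the paper's: verify the approximate \clekp{} metric axioms with the caveat that monotonicity and compatibility hold only under an extra hypothesis, argue that this extra hypothesis is satisfied in all the situations where \cite{amy2025tightness} invokes these axioms, and establish the asymptotic conditions~\eqref{eq:eps_bound_ass},~\eqref{eq:approx_error_asymp}. The upper bound $\ac{\epsilon}=\epsilon^{\geoexp-a}$ is exactly what the paper uses (Lemma~\ref{le:approx_scale_ub}).

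There is, however, a concrete error in your treatment of the dead-end removal. You write that the troublesome components of $V'_{(\rmvdeadends{\epsilon})}$ have diameter at most $\rmvdeadends{\epsilon}\ll\epsilon$, but this inequality is backwards: by~\eqref{eq:rmvdeadends_size} we have $\rmvdeadends{\epsilon}=\epsilon^{a_0}\gg\epsilon$ for small $a_0$. The whole point of choosing $\rmvdeadends{\epsilon}$ to decay \emph{slowly} is that only polynomially many dead ends are removed (needed in Lemma~\ref{le:nondegeneracy}), so these dead ends are much \emph{larger} than the graph scale~$\epsilon$. Consequently your argument for why the restricted monotonicity suffices does not work. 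The paper's argument (paragraph above Lemma~\ref{le:vertex_density}) is structural rather than quantitative: the regions $V$ actually used in the tightness proof of \cite{amy2025tightness} are ``linear chains of bubbles'' bounded between two simple paths from $x$ to $y$; for such $V\subseteq V'$ with $x,y\notin V'_{(\rmvdeadends{\epsilon})}$ one always has $V\setminus V_{(\rmvdeadends{\epsilon})}\subseteq V'\setminus V'_{(\rmvdeadends{\epsilon})}$, because the problematic configuration of Figure~\ref{fi:mod_not_monotone} cannot occur. This is the correct justification for the extra hypothesis in Lemmas~\ref{le:approx_monotonicity} and~\ref{le:approx_compatibility}.

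Your Nash--Williams approach to the lower bound on $\median{\epsilon}$ is different from the paper's and the sketch is too thin to assess. The paper (Lemma~\ref{le:approx_scale_lb}) instead works at the scale $\epsilon'=\epsilon^{1+\rateexp/\dcle}$ where the PPP has unit density, finds $(\epsilon')^{-\ddouble+a}$ well-separated double points of $\eta_1\cap\eta_2$, and around each of them a region with a \emph{bounded} number of vertices and resistance $\gtrsim r^{\geoexp}$; the series and parallel laws then give $\median{\epsilon}\ge\epsilon^{\geoexp-\ddouble+c\rateexp+o(1)}$. Your concentric-circle cutset idea may yield a weaker but still sufficient bound of the form $\median{\epsilon}\ge\epsilon^{\geoexp-1+O(\rateexp)}$, but you would need to control carefully the short edges (pairs $v,w$ with $\dpathY(v,w)$ much smaller than $\epsilon$), whose resistances can be arbitrarily small; the dyadic grouping you mention is the right idea but the summability is not obvious without a count of how many such short edges cross a given circle.
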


We begin by verifying the axioms defining an approximate \clekp{} metric for each $\rmetapprox{\epsilon}{\cdot}{\cdot}{\Gamma} = (\rmetapproxres{\epsilon}{V}{\cdot}{\cdot}{\Gamma})_{V \in \metregions}$ (except for a slight additional restriction). The Markovian property and the translation invariance are hard-wired into our construction, since both $\meas{\cdot}{\Gamma}$ and $\met{\cdot}{\cdot}{\Gamma}$ satisfy these properties. We need to verify the remaining properties.

\begin{lemma}[Separability]
\label{le:approx_separability}
For every $V \in \metregions$,
\[
 \rmetapproxres{\epsilon}{V}{x}{y}{\Gamma} = \lim_{V' \searrow V} \rmetapproxres{\epsilon}{V'}{x}{y}{\Gamma} ,\quad x,y \in \ol{V} \cap \Upsilon_\Gamma .
\]
The limit is in the sense that convergence holds for any decreasing sequence of $V'_n \in \metregions$ with $V'_n \supseteq V$ and $\sup_{u \in \ol{V'_n} \cap \Upsilon_\Gamma} \dpathY[\ol{V'_n}](u,V) \to 0$.
\end{lemma}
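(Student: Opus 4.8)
The plan is to show that the effective resistance between any two points $x,y \in \ol{V} \cap \Upsilon_\Gamma$ stabilizes once $V'$ is a sufficiently small neighborhood of $V$, with the size of the neighborhood depending on $x,y$. The key observation is that the cable graph $\approxgraph[V']{\epsilon}$ differs from $\approxgraph[V]{\epsilon}$ only near $\partial V$, and the effective resistance between $x$ and $y$ is determined by the part of the cable graph that is ``reachable'' without crossing through cut points that separate $\{x,y\}$ from the added region.

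First I would fix $x,y \in \ol{V} \cap \Upsilon_\Gamma$ and recall from the construction that $V \setminus V_{(\rmvdeadends{\epsilon})} \nsubseteq V' \setminus V'_{(\rmvdeadends{\epsilon})}$ in general, so I need to be careful: as $V' \searrow V$, the set $V'_{(\rmvdeadends{\epsilon})}$ is monotone, and any component of $V$ that gets absorbed into $V'_{(\rmvdeadends{\epsilon})}$ must be a ``dead end'' of Euclidean diameter at most $\rmvdeadends{\epsilon}$ attached to a loop of $\Gamma$. However, once $V'$ is contained in a small enough neighborhood of $V$, no \emph{new} loops of diameter $\ge \rmvdeadends{\epsilon}$ are added that are contained in $\ol{V'}$ but not $\ol{V}$ (there are only finitely many such loops intersecting any compact region, and they all have definite size), so the sets $V_{(\rmvdeadends{\epsilon})}$ and $V'_{(\rmvdeadends{\epsilon})} \cap V$ agree except possibly on components separated from $\{x,y\}$ by a single point. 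Next I would use the defining property of $\metregions$: if $V' \supseteq V$ is contained in a sufficiently small neighborhood of $V$ (depending on $x,y$ only through the requirement that the relevant arc-distance exceeds the diameters of the finitely many loops involved), then for every $u \in V' \setminus V$ there is a point $z \in \ol{V}$ separating $u$ from $V$ in $\ol{V'}$. Combined with the fact that the edge $B_{v,w}$ associated to each pair $(v,w) \in \approxedges[V']{\epsilon}$ is chosen to lie in $(\ol{B}_E(v,2\epsilon) \cup \ol{B}_E(w,2\epsilon)) \cap \ol{V'}$ and is measurable with respect to the CLE configuration there, this shows that any vertex or cable of $\approxgraph[V']{\epsilon}$ that is not already in $\approxgraph[V]{\epsilon}$ lies in a part of the graph separated from $x$ and $y$ by a single cut point of the cable graph.

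Then I would invoke the fact that adding a network attached at a single cut point does not change effective resistances between points on the other side — this is exactly the content of Lemma~\ref{le:reff_cut_point} at the discrete level — so $\rmetapproxres{\epsilon}{V'}{x}{y}{\Gamma} = \rmetapproxres{\epsilon}{V}{x}{y}{\Gamma}$ for all such $V'$. One must also handle the interpolation extension of $\rmetapproxres{\epsilon}{V'}{\cdot}{\cdot}{\Gamma}$ to points not on the cable graph and the components of $V'_{(\rmvdeadends{\epsilon})}$, but since $x,y$ are held fixed and eventually lie in the common part of the two cable graphs (or are interpolated from the same nearest cable point), these extensions agree as well. The eventual constancy of the sequence $\rmetapproxres{\epsilon}{V'_n}{x}{y}{\Gamma}$ along any decreasing sequence $V'_n \searrow V$ gives the claimed limit trivially.

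The main obstacle I anticipate is the non-monotonicity of $V \mapsto V \setminus V_{(\rmvdeadends{\epsilon})}$ illustrated in Figure~\ref{fi:mod_not_monotone}: one has to argue carefully that the components of $V$ that get swallowed into $V'_{(\rmvdeadends{\epsilon})}$, together with the edges (like the dashed ones in the figure) that are consequently lost, are all separated from $\{x,y\}$ by a single point of $\ol{V}$, so that removing them does not affect the $x$–$y$ resistance. This requires using that a component swallowed into $V'_{(\rmvdeadends{\epsilon})}$ is a dead end of diameter at most $\rmvdeadends{\epsilon}$ hanging off a loop $\CL$, hence is attached to the rest of the gasket at the single point where $\CL$ meets $\partial V$ — so again Lemma~\ref{le:reff_cut_point} (equivalently, a dead end contributes nothing to effective resistances between points outside it) applies. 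A further minor point is that one should note there are only finitely many loops of $\Gamma$ of diameter $\ge \rmvdeadends{\epsilon}$ intersecting $\ol{V'_1}$ for the initial $V'_1$, so all the ``sufficiently small neighborhood'' conditions can be met simultaneously.
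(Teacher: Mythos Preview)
Your proposal is correct but substantially more elaborate than what the paper does. The paper's proof is three lines: since $\approxvtcs{\epsilon}$ is a.s.\ finite, eventually $\approxvtcs[V'_n]{\epsilon} = \approxvtcs[V]{\epsilon}$; and by the definition of $\metregions$, for large $n$ the simple admissible paths between any $v,w \in \approxvtcs[V]{\epsilon}$ in $\ol{V'_n}$ are the same as in $\ol{V}$; hence $\approxgraph[V'_n]{\epsilon} = \approxgraph[V]{\epsilon}$ outright, and the resistances agree for all $x,y$ simultaneously.

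The main difference is that you hedge against the non-monotonicity of $V \mapsto V \setminus V_{(\rmvdeadends{\epsilon})}$ (Figure~\ref{fi:mod_not_monotone}) by arguing that any discrepancy between the two cable graphs lies in dead ends attached at single cut points, and then invoking Lemma~\ref{le:reff_cut_point}. This is sound, but unnecessary here: there are only finitely many loops of diameter $\ge \rmvdeadends{\epsilon}$, so for large $n$ we have $\Gamma_{V'_n,\rmvdeadends{\epsilon}} = \Gamma_{V,\rmvdeadends{\epsilon}}$, which forces $(V'_n)_{(\rmvdeadends{\epsilon})} \cap V = V_{(\rmvdeadends{\epsilon})}$ and hence the vertex sets (not just the effective resistances) already match. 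Your approach would be the right one if you only knew $V' \supseteq V$ for a fixed $V'$---which is exactly the situation in Lemmas~\ref{le:approx_monotonicity} and~\ref{le:approx_compatibility}, where the paper does impose the extra hypothesis $V \setminus V_{(\rmvdeadends{\epsilon})} \subseteq V' \setminus V'_{(\rmvdeadends{\epsilon})}$---but in the present limiting regime the issue simply does not arise.
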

\begin{proof}
 Let $(V'_n)$ be a sequence as in the lemma statement. Since the set of vertices $\approxvtcs{\epsilon}$ is finite, we have $\approxvtcs[V'_n]{\epsilon} = \approxvtcs[V]{\epsilon}$ for large $n$. Also, the definition of $\metregions$ implies that for large $n$ the set of simple admissible paths between $v,w \in \approxvtcs[V]{\epsilon}$ is the same in $\ol{V'_n}$ as in $\ol{V}$. Therefore $\approxgraph[V'_n]{\epsilon} = \approxgraph[V]{\epsilon}$ and hence $\rmetapproxres{\epsilon}{V'_n}{x}{y}{\Gamma} = \rmetapproxres{\epsilon}{V}{x}{y}{\Gamma}$ for large $n$.
\end{proof}

\begin{lemma}[Monotonicity]
\label{le:approx_monotonicity}
Let $V,V' \in \metregions$, $V \subseteq V'$, and suppose that $V \setminus V_{(\rmvdeadends{\epsilon})} \subseteq V' \setminus V'_{(\rmvdeadends{\epsilon})}$. Let $x,y \in \ol{V} \cap \Upsilon_\Gamma$ and suppose one of the following:
\begin{enumerate}[(i)]
 \item\label{it:dead_ends} For every $u \in V' \setminus V$ there is a point $z \in \ol{V}$ that separates $u$ from $x,y$ in $\ol{V'} \cap \Upsilon_\Gamma$.
 \item\label{it:large_loops} There exist $z_1,z_2,\ldots \in \ol{V}$ with $\abs{z_i-z_{i'}} \ge 5\epsilon$ for $i\neq i'$ such that no \emph{simple} admissible path in $\ol{V}$ from $x$ to $y$ intersects $\{z_1,z_2,\ldots\}$, and the set $\{z_1,z_2,\ldots\}$ separates $x,y$ from $V' \setminus V$ in $\ol{V'} \cap \Upsilon_\Gamma$.
\end{enumerate}
Then there are points $x',y' \in \ol{V} \cap \Upsilon_\Gamma$ with $\dpathY[\ol{V}](x',x) \le \epsilon$, $\dpathY[\ol{V}](y',y) \le \epsilon$ such that
\[
 \rmetapproxres{\epsilon}{V'}{x'}{y'}{\Gamma} \le \rmetapproxres{\epsilon}{V}{x}{y}{\Gamma} .
\]
\end{lemma}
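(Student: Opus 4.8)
The plan is to realise $\approxgraph[V]{\epsilon}$ as a sub-network of $\approxgraph[V']{\epsilon}$ and to conclude by Rayleigh monotonicity of effective resistance. First I would dispose of the degenerate cases: if $\dpathY[\ol V](x,\approxgraph[V]{\epsilon})\ge\epsilon$ or $x,y$ lie in distinct components of $\approxgraph[V]{\epsilon}$, then $\rmetapproxres{\epsilon}{V}{x}{y}{\Gamma}=\infty$ and there is nothing to prove (take $x'=x$, $y'=y$). Otherwise let $\tilde x,\tilde y\in\approxgraph[V]{\epsilon}$ be the $\dpathY[\ol V]$-nearest points of $x,y$ on $\approxgraph[V]{\epsilon}$; by the definition of the interpolation of $\rmetapproxres{\epsilon}{V}{\cdot}{\cdot}{\Gamma}$ to $\ol V\cap\Upsilon_\Gamma$ one has $\rmetapproxres{\epsilon}{V}{x}{y}{\Gamma}=\rmetapproxres{\epsilon}{V}{\tilde x}{\tilde y}{\Gamma}$ and $\dpathY[\ol V](\tilde x,x),\dpathY[\ol V](\tilde y,y)<\epsilon$, so, taking $x'=\tilde x$, $y'=\tilde y$, it remains to show $\rmetapproxres{\epsilon}{V'}{\tilde x}{\tilde y}{\Gamma}\le\rmetapproxres{\epsilon}{V}{\tilde x}{\tilde y}{\Gamma}$, after checking that $\tilde x,\tilde y\in\approxgraph[V']{\epsilon}$ so that the left side is again an effective resistance between graph points.

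Since $\approxvtcs[V]{\epsilon}=\approxvtcs{\epsilon}\cap(V\setminus V_{(\rmvdeadends{\epsilon})})$, the extra hypothesis $V\setminus V_{(\rmvdeadends{\epsilon})}\subseteq V'\setminus V'_{(\rmvdeadends{\epsilon})}$ gives $\approxvtcs[V]{\epsilon}\subseteq\approxvtcs[V']{\epsilon}$, and $\ol V\subseteq\ol{V'}$ (hence $\dpathY[\ol{V'}]\le\dpathY[\ol V]$) gives $\approxedges[V]{\epsilon}\subseteq\approxedges[V']{\epsilon}$; thus no vertex or edge of $\approxgraph[V]{\epsilon}$ is lost — exactly the failure mode excluded by this hypothesis, cf.\ Figure~\ref{fi:mod_not_monotone} — and we may also choose the auxiliary regions $B_{v,w}$ consistently across $V,V'$ (as the construction permits) so that $B_{v,w}^{V'}\supseteq B_{v,w}^V$ for $(v,w)\in\approxedges[V]{\epsilon}$. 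Under hypothesis~\ref{it:dead_ends} every point of $V'\setminus V$ is cut off from the rest of $\ol{V'}\cap\Upsilon_\Gamma$ by a single point of $\ol V$, so a $\met{\cdot}{\cdot}{\Gamma}$-geodesic never enters $V'\setminus V$; hence for $(v,w)\in\approxedges[V]{\epsilon}$ the shortest admissible path(s) in $\ol B_{v,w}^{V'}$ are the same as in $\ol B_{v,w}^V$, and, since the left-most and merging conventions depend only on the loops of $\Gamma$ lying between $v$ and $w$, so is the resulting cable. Therefore $\approxgraph[V]{\epsilon}\subseteq\approxgraph[V']{\epsilon}$ as curves in the gasket (in particular $\tilde x,\tilde y\in\approxgraph[V']{\epsilon}$), and the only new vertices and edges of $\approxgraph[V']{\epsilon}$ form pendant sub-networks attached beyond single cut points; by the cut-point rule (Lemma~\ref{le:reff_cut_point}) these do not affect the resistance between $\tilde x$ and $\tilde y$, so $\rmetapproxres{\epsilon}{V'}{\tilde x}{\tilde y}{\Gamma}=\rmetapproxres{\epsilon}{V}{\tilde x}{\tilde y}{\Gamma}$.

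Under hypothesis~\ref{it:large_loops} the separating set $\{z_1,z_2,\dots\}$ has mutual Euclidean distances $\ge5\epsilon$ and no simple admissible path in $\ol V$ between $x$ and $y$ meets it. Since an edge spans Euclidean distance $<\epsilon$, any admissible route from $v$ to $w$ entering $V'\setminus V$ must leave the $x,y$-side through one $z_i$ and re-enter through another $z_j$: either $z_i=z_j$, a backtrack which is never on a $\met{\cdot}{\cdot}{\Gamma}$-geodesic, or $z_i\ne z_j$, in which case the route has Euclidean diameter $\ge5\epsilon$ and, by the H\"older regularity of $\met{\cdot}{\cdot}{\Gamma}$ relative to $\dpathY$ (Proposition~\ref{pr:cle_metric_estimates}) together with $\dpathY[\ol V](v,w)<\epsilon$, $\met{\cdot}{\cdot}{\Gamma}$-length far exceeding that of the direct cable. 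Consequently no cable of $\approxgraph[V]{\epsilon}$ on the component carrying current between $\tilde x$ and $\tilde y$ is lengthened or re-merged, and $\approxgraph[V']{\epsilon}$ dominates $\approxgraph[V]{\epsilon}$ there as an electrical network (at least as many vertices and edges, each shared edge of no larger resistance, the same or finer merging). Rayleigh monotonicity — restrict an energy-minimising unit potential of $\approxgraph[V']{\epsilon}$ to $\approxgraph[V]{\epsilon}$ — then gives $\rmetapproxres{\epsilon}{V'}{\tilde x}{\tilde y}{\Gamma}\le\rmetapproxres{\epsilon}{V}{\tilde x}{\tilde y}{\Gamma}$.

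The main obstacle is the verification sketched in the last two paragraphs: the cable graph construction is genuinely \emph{not} monotone in $V$ — this is the whole point of the Remark preceding the lemma and of Figure~\ref{fi:mod_not_monotone} — so one must check carefully that, under the two hypotheses together with the extra assumption, none of the non-monotone phenomena actually occurs, i.e.\ that the $\met{\cdot}{\cdot}{\Gamma}$-shortest admissible paths between neighbouring vertices, the choice of the left-most shortest path, and the merging pattern of the cables are all genuinely unaffected by the loops and strands of $V'\setminus V$ on the component carrying current between $\tilde x$ and $\tilde y$. This requires some delicate topological bookkeeping about how cables sit inside the gasket — using, as in the construction, that a cable crosses any given region boundary only finitely often and that a cable meeting $V'\setminus V$ is anchored near the $z_i$ — but no probabilistic input beyond the construction in Section~\ref{se:rmet_approx_scheme} and the regularity of $\met{\cdot}{\cdot}{\Gamma}$.
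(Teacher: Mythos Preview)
Your overall strategy — show that $\approxgraph[V]{\epsilon}$ embeds as a sub-network of $\approxgraph[V']{\epsilon}$ on the relevant component and conclude by Rayleigh monotonicity — is exactly the paper's approach, and your treatment of case~\eqref{it:dead_ends} is essentially correct.

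However, your argument for case~\eqref{it:large_loops} has a genuine gap. You invoke Proposition~\ref{pr:cle_metric_estimates} to rule out the possibility that a shortest cable from $v$ to $w$ detours through two distinct $z_i,z_j$ with $|z_i-z_j|\ge 5\epsilon$. This fails for two reasons. First, the lemma is a deterministic statement about the approximation scheme, whereas Proposition~\ref{pr:cle_metric_estimates} is a probabilistic tail bound. Second, and more seriously, the H\"older exponents go the wrong way: the upper bound on the direct cable length is of order $\epsilon^{\geoexp-a}$, while the lower bound you would extract for the detour is of order $\epsilon^{\geoexp+a}$, and for small $\epsilon$ the latter is \emph{smaller}, so no contradiction results.

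The paper avoids this entirely by a purely geometric observation you have overlooked: the cable for $(v,w)\in\approxedges[V]{\epsilon}$ is constructed inside $B_{v,w}\subseteq \ol B_E(v,2\epsilon)\cup\ol B_E(w,2\epsilon)$, which has Euclidean diameter at most $5\epsilon$. Since the $z_i$ are $5\epsilon$-separated, at most one of them can lie in this region. Hence $(V'\setminus V)\cap(B(v,2\epsilon)\cup B(w,2\epsilon))$, if nonempty, is attached to the component $K$ through a single cut point, and the shortest path ignores it — exactly as in case~\eqref{it:dead_ends}. This is why the constant in the hypothesis is $5\epsilon$; it is calibrated to the locality radius of the cable construction, not to any metric comparison. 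With this, one gets $\approxgraph[V]{\epsilon}\cap K\subseteq\approxgraph[V']{\epsilon}\cap K$ directly.

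One further point: the paper is more careful than you are about the interpolation points $\tilde x,\tilde y$. You assert $\tilde x,\tilde y\in\approxgraph[V']{\epsilon}$, but this only follows once you know they lie in $K$; the paper handles the case $\tilde x\notin K$ (or $\tilde y\notin K$) separately, arguing that then either $\tilde x=\tilde y$ or $\rmetapproxres{\epsilon}{V}{\tilde x}{\tilde y}{\Gamma}=\infty$.
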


We remark that in the setting of Lemma~\ref{le:approx_monotonicity} we have that $x'=x$ (resp.\ $y'=y$) in case $\dpathY(x, V' \setminus V) \ge 6\epsilon$ (resp.\ $\dpathY(y, V' \setminus V) \ge 6\epsilon$).

\begin{proof}[Proof of Lemma~\ref{le:approx_monotonicity}]
By our construction, to determine the graph structure locally near a point, one only needs to observe the vertices and the gasket within $\dpathY[\ol{V}]$-distance $5\epsilon$. Assuming $V \setminus V_{(\rmvdeadends{\epsilon})} \subseteq V' \setminus V'_{(\rmvdeadends{\epsilon})}$, we have $\approxvtcs[V]{\epsilon} \subseteq \approxvtcs[V']{\epsilon}$, and therefore $\approxgraph[V]{\epsilon} \cap A = \approxgraph[V']{\epsilon} \cap A$ for every $A \subseteq \ol{V}$ with $\dpathY[\ol{V'}](A, V'\setminus V) \ge 5\epsilon$. However, the graph might change in the $5\epsilon$-neighborhood of $V' \setminus V$. This is why the extra restrictions~\eqref{it:dead_ends}, \eqref{it:large_loops} are needed.

Consider the connected component $K$ of $\ol{V'} \cap \Upsilon_\Gamma \setminus \{z_1,z_2,\ldots\}$ containing $x,y$ in the case~\eqref{it:large_loops} (resp.\ the connected component after removing all the points $z(u)$ for $u \in V' \setminus V$ in the case~\eqref{it:dead_ends}). We compare the graphs $\approxgraph[V]{\epsilon}$ and $\approxgraph[V']{\epsilon}$ restricted to $K$. We argue that for every pair $(v,w) \in \approxedges[V]{\epsilon}$ with $v,w \in K$, the corresponding cables do not change from $V$ to $V'$. Recall that our construction of the cable depends only on the gasket structure within $B(v,2\epsilon) \cup B(w,2\epsilon)$. The assumption~\eqref{it:dead_ends} (resp.\ \eqref{it:large_loops}) implies that any extra point $u \in (V' \setminus V) \cap (B(v,2\epsilon) \cup B(w,2\epsilon))$ is separated from $K$ in $B(v,2\epsilon) \cup B(w,2\epsilon)$ by a single point $z$. This does not affect our construction of the cables. By similar considerations, for each $(v,w) \in \approxedges[V]{\epsilon}$ for which the cable intersects $K$, that part of the cable does not change from $\approxgraph[V]{\epsilon}$ to $\approxgraph[V']{\epsilon}$. We conclude that $\approxgraph[V]{\epsilon} \cap K \subseteq \approxgraph[V']{\epsilon} \cap K$.

Finally, in $\approxgraph[V]{\epsilon}$, we assumed that no simple admissible path between $x,y$ can exit $K$, so that $\rmetapproxres{\epsilon}{V}{x}{y}{\Gamma}$ agrees with the resistance metric on the subgraph $\approxgraph[V]{\epsilon} \cap K$. Therefore we have $\rmetapproxres{\epsilon}{V}{\cdot}{\cdot}{\Gamma} \ge \rmetapproxres{\epsilon}{V'}{\cdot}{\cdot}{\Gamma}$ on $\approxgraph[V]{\epsilon} \cap K$.

The only possible concern is that when $x,y \notin \approxgraph[V]{\epsilon}$, then possibly their nearest interpolation points $x'$ (resp.\ $y'$) on $\approxgraph[V]{\epsilon}$ change from $\approxgraph[V]{\epsilon}$ to $\approxgraph[V']{\epsilon}$. In case $x',y' \in K$, we can switch from $x,y$ to $x',y'$ in the lemma statement. If $x'$ or $y'$ is outside $K$, this can only be when $x'$ (resp.\ $y'$) is not connected to $K$ in $\approxgraph[V]{\epsilon}$, otherwise there would be a cable going through the cut point. In case $x',y'$ are separated from $K$ by the same point $z$, then $x'=y'$. Otherwise, if either $y' \in K$ or $y'$ is separated from $K$ by $z' \neq z$, since we have assumed that no simple admissible path in $\ol{V}$ connects $z,z'$ outside $K$, we have $\rmetapproxres{\epsilon}{V}{x'}{y'}{\Gamma} = \infty \ge \rmetapproxres{\epsilon}{V'}{x'}{y'}{\Gamma}$.
\end{proof}

\begin{lemma}[Compatibility]
\label{le:approx_compatibility}
Let $V,V' \in \metregions$, $V \subseteq V'$, and suppose that $V \setminus V_{(\rmvdeadends{\epsilon})} \subseteq V' \setminus V'_{(\rmvdeadends{\epsilon})}$. Let $x,y \in \ol{V} \cap \Upsilon_\Gamma$ such that for every $u \in V' \setminus V$ there is a point $z \in \ol{V}$ with $\dpathY[\ol{V'}](z, V' \setminus V) \ge 6\epsilon$ that separates $u$ from $x,y$ in $\ol{V'} \cap \Upsilon_\Gamma$. Then $\rmetapproxres{\epsilon}{V}{x}{y}{\Gamma} = \rmetapproxres{\epsilon}{V'}{x}{y}{\Gamma}$.
\end{lemma}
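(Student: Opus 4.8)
The plan is to reduce the claim to the elementary fact that attaching a sub-network to an electrical network at a single cut point leaves the effective resistance between two points on the other side of that cut point unchanged (this is the content of Lemma~\ref{le:reff_cut_point}, applied after reducing the cable graph to a finite network as explained in Section~\ref{se:rmet_approx_scheme}). Let $Z = \{ z(u) : u \in V' \setminus V \} \subseteq \ol{V}$ be the finite set of separating points provided by the hypothesis, and let $G$ denote the union of $Z$ with those $\dpathY[\ol{V'}]$-connected components of $(\ol{V'} \cap \Upsilon_\Gamma) \setminus Z$ that contain $x$ or $y$; note $G \subseteq \ol{V} \cap \Upsilon_\Gamma$ and, as in the proof of Lemma~\ref{le:approx_monotonicity}, any \emph{simple} admissible path between $x$ and $y$ stays inside $G$ (leaving $G$ through some $z(u)$ and returning would force a revisit of $z(u)$). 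Consequently $\rmetapproxres{\epsilon}{V}{x}{y}{\Gamma}$, resp.\ $\rmetapproxres{\epsilon}{V'}{x}{y}{\Gamma}$, equals the effective resistance between (the interpolation points of) $x$ and $y$ in the subnetwork $\approxgraph[V]{\epsilon} \cap G$, resp.\ $\approxgraph[V']{\epsilon} \cap G$, because the remainder of each network hangs off $G$ only at the cut points $Z$.

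The first step is to turn the separation hypothesis into a quantitative gap. If $p \in G$ and $u' \in V' \setminus V$, then every admissible path from $p$ to $u'$ must cross some $z(u) \in Z$, so its Euclidean diameter is at least $\dpathY[\ol{V'}](z(u),u') \ge \dpathY[\ol{V'}](z(u), V' \setminus V) \ge 6\epsilon$; hence $\dpathY[\ol{V'}](G, V' \setminus V) \ge 6\epsilon$, and in particular $\dpathY[\ol{V'}](\{x,y\}, V'\setminus V) \ge 6\epsilon$. Since the vertices, edges and cables of the approximation near a given set are determined by the CLE configuration and the points of $\approxvtcs{\epsilon}$ within $\dpathY$-distance $5\epsilon$ of that set, and since $V_{(\rmvdeadends{\epsilon})} \subseteq V'_{(\rmvdeadends{\epsilon})}$ together with the assumption $V \setminus V_{(\rmvdeadends{\epsilon})} \subseteq V' \setminus V'_{(\rmvdeadends{\epsilon})}$ forces $\approxvtcs[V]{\epsilon} \subseteq \approxvtcs[V']{\epsilon}$ with all the extra vertices lying in $V' \setminus V$, the restrictions of $\approxgraph[V]{\epsilon}$ and $\approxgraph[V']{\epsilon}$ to $G$ coincide, with the same edge lengths (lengths are measured with the geodesic metric $\met{\cdot}{\cdot}{\Gamma}$, which does not change). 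The same $6\epsilon$ gap shows that the nearest interpolation points of $x$ and of $y$ on the cable graph are unchanged from $\approxgraph[V]{\epsilon}$ to $\approxgraph[V']{\epsilon}$ and lie in $G$ (or that $x$, resp.\ $y$, is isolated in both graphs, which is handled by the convention that the distance is then $\infty$ on both sides).

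Finally, every cable or dead-end piece of $\approxgraph[V']{\epsilon}$ that is not part of $\approxgraph[V']{\epsilon} \cap G$ meets $G$ only at points of $Z$: a cable of $\approxgraph[V']{\epsilon}$ with an endpoint outside $G$ and some segment inside $G$ must pass through $Z$, since a cable joins two vertices at $\dpathY[\ol{V'}]$-distance less than $\epsilon$ and is contained in the union of the $2\epsilon$-Euclidean balls about them, while $\dpathY[\ol{V'}](G, V'\setminus V) \ge 6\epsilon$; the analogous statement holds for $\approxgraph[V]{\epsilon}$. Treating each $z(u)$ as a (possibly unnamed) cut point of the finite cable network, Lemma~\ref{le:reff_cut_point} and the dead-end invariance of effective resistance give
\[
\rmetapproxres{\epsilon}{V'}{x}{y}{\Gamma} = R_{\approxgraph[V']{\epsilon} \cap G}(x,y) = R_{\approxgraph[V]{\epsilon} \cap G}(x,y) = \rmetapproxres{\epsilon}{V}{x}{y}{\Gamma},
\]
the same chain of identities covering the case where the common value is $\infty$. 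I expect the main difficulty to be the verification in the second step that enlarging $V$ to $V'$ neither introduces a new cable with an endpoint or an intermediate segment strictly inside $G$ (other than the transit through $Z$) nor perturbs an existing one or the interpolation points of $x,y$; this is exactly where the quantitative $5\epsilon$-locality of the cable-graph construction is combined with the $6\epsilon$ separation, running the argument two-sidedly so as to obtain the equality of resistances rather than just the inequality of Lemma~\ref{le:approx_monotonicity}.
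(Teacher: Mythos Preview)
Your proof is essentially the same as the paper's: both isolate the connected component $K$ (your $G$) of $\ol{V'}\cap\Upsilon_\Gamma$ containing $x,y$ after removing the separating points $Z$, use the $6\epsilon$ gap together with the $5\epsilon$-locality of the cable construction to show $\approxgraph[V]{\epsilon}\cap K=\approxgraph[V']{\epsilon}\cap K$, observe that the remainder is attached only through $Z$ so contributes nothing to the effective resistance, and handle the interpolation points separately. Your write-up is slightly more explicit about invoking Lemma~\ref{le:reff_cut_point} and about allowing $x,y$ to lie in distinct components of the gasket minus $Z$, but the substance is identical.
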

\begin{proof}
Consider the connected component $K$ of $V' \cap \Upsilon_\Gamma$ containing $x,y$ after removing all the points $z(u)$ for $u \in V' \setminus V$ as in the proof of Lemma~\ref{le:approx_monotonicity}. Assuming $V \setminus V_{(\rmvdeadends{\epsilon})} \subseteq V' \setminus V'_{(\rmvdeadends{\epsilon})}$, we have $\approxvtcs[V]{\epsilon} \subseteq \approxvtcs[V']{\epsilon}$. Note that $\dpathY[\ol{V'}](K, V' \setminus V) \ge 3\epsilon$. Therefore, for each $(v,w) \in \approxedges[V]{\epsilon}$ for which the cable intersects $K$, the connected component of $(B(v,2\epsilon) \cup B(w,2\epsilon)) \cap \ol{V} \cap \Upsilon_\Gamma$ containing $v,w$ is the same as the connected component of $(B(v,2\epsilon) \cup B(w,2\epsilon)) \cap \ol{V'} \cap \Upsilon_\Gamma$ containing $v,w$. Therefore the cable is unchanged from $\approxgraph[V]{\epsilon}$ to $\approxgraph[V']{\epsilon}$, i.e.\ we have $\approxgraph[V]{\epsilon} \cap K = \approxgraph[V']{\epsilon} \cap K$. Since $V'$ only adds dead ends to $K$, the effective resistance on $K$ for $\approxgraph[V']{\epsilon}$ is the same as for $\approxgraph[V']{\epsilon} \cap K$.

We might again be concerned about the case when $x,y \notin \approxgraph[V]{\epsilon}$ and their interpolation points $x',y' \in \approxgraph[V]{\epsilon}$ are outside $K$. As we explained in the previous proof, this can only happen when $x'$ (resp.\ $y'$) is not connected to $K$ in $\approxgraph[V]{\epsilon}$. Since $\dpathY[\ol{V'}](K, V' \setminus V) \ge 6\epsilon$, the interpolation points $x',y'$ do not change from $\approxgraph[V]{\epsilon}$ to $\approxgraph[V']{\epsilon}$, so we either have $x'=y'$ or $\rmetapproxres{\epsilon}{V}{x}{y}{\Gamma} = \rmetapproxres{\epsilon}{V'}{x}{y}{\Gamma} = \infty$.
\end{proof}

\begin{lemma}[Series law]
\label{le:approx_series_law}
Let $V \in \metregions$, and $x,y,z \in \ol{V} \cap \Upsilon_\Gamma$ are such that $z$ separates $x$ from $y$ in $\ol{V} \cap \Upsilon_\Gamma$. Then
\[ \rmetapproxres{\epsilon}{V}{x}{y}{\Gamma} \ge \rmetapproxres{\epsilon}{V}{x}{z}{\Gamma}+\rmetapproxres{\epsilon}{V}{z}{y}{\Gamma} . \]
\end{lemma}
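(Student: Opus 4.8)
The plan is to reduce the statement to a purely graph-theoretic fact about effective resistances, namely that a cut vertex makes the effective resistance additive (Lemma~\ref{le:reff_cut_point}). The key point is that the hypothesis ``$z$ separates $x$ from $y$ in $\ol{V} \cap \Upsilon_\Gamma$'' should translate, at the level of the cable graph $\approxgraph[V]{\epsilon}$, into the statement that every path in $\approxgraph[V]{\epsilon}$ from (the interpolation point of) $x$ to (the interpolation point of) $y$ passes through a common vertex or cable point lying ``over'' $z$. Concretely, I would first observe that because each cable associated with an edge $(v,w) \in \approxedges[V]{\epsilon}$ is a subset of $\ol{B}_{v,w} \subseteq \ol{V}$ consisting of admissible paths, and admissible paths cannot cross loops of $\Gamma$, the separation of $x$ from $y$ by $z$ in $\ol{V} \cap \Upsilon_\Gamma$ forces any cable path in $\approxgraph[V]{\epsilon}$ joining the side of $x$ to the side of $y$ to pass through $z$ itself (if $z$ is a gasket cut point realized as a vertex or a point on a cable).

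The main subtlety is that $z$ need not be a vertex of $\approxvtcs[V]{\epsilon}$, nor need it lie exactly on a cable of $\approxgraph[V]{\epsilon}$, and $x,y$ themselves may not be vertices but are defined via interpolation to the nearest point of $\approxgraph[V]{\epsilon}$. So the first real step is to identify the correct ``cut point'' of the cable graph. I would argue that the set of cables of $\approxgraph[V]{\epsilon}$ that meet the $x$-side of $\ol{V}\cap\Upsilon_\Gamma\setminus\{z\}$ and the set that meet the $y$-side are disjoint except possibly at cables passing through $z$; moreover, since each cable is a continuous admissible path, a cable that meets both sides must actually contain $z$. Hence, after subdividing that cable at $z$ (which is allowed since effective resistance on a cable graph is unchanged by subdivision, and edge resistances are additive along length by construction), the point $z$ becomes a genuine cut vertex of $\approxgraph[V]{\epsilon}$ separating the $x$-component from the $y$-component.

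Once $z$ is realized as a cut vertex, I would apply Lemma~\ref{le:reff_cut_point} (or rather its immediate consequence, that effective resistance through a cut vertex is additive) to the decomposition $\approxgraph[V]{\epsilon} = G_x \cup_z G_y$, where $G_x$ (resp.\ $G_y$) is the union of cables on the $x$-side (resp.\ $y$-side), to obtain
\[
 \rmetapproxres{\epsilon}{V}{x'}{y'}{\Gamma} = R_{G_x}(x',z) + R_{G_y}(z,y') \ge R_{G_x}(x',z) + R_{G_y}(z,y'),
\]
where $x',y'$ are the interpolation base points of $x,y$; by monotonicity of effective resistance under removing edges, $R_{G_x}(x',z) \ge R_{\approxgraph[V]{\epsilon}}(x',z) = \rmetapproxres{\epsilon}{V}{x'}{z'}{\Gamma}$ where $z'$ is $z$ viewed in $\approxgraph[V]{\epsilon}$ — actually equality holds here since $z$ is a cut vertex. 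Then I would transfer this back to $x,y,z$ via the interpolation definition of $\rmetapproxres{\epsilon}{V}{\cdot}{\cdot}{\Gamma}$ on $\ol{V}\cap\Upsilon_\Gamma$, checking that the nearest-cable-point interpolation is consistent with the additive decomposition (the interpolation point of $z$ is $z$ itself once it lies on a cable). Two degenerate cases need a separate line each: if $x$ (or $y$) is not connected to $z$ in $\approxgraph[V]{\epsilon}$, then the left-hand side is $+\infty$ and the inequality is trivial; and if $\dpathY[\ol V](z,\approxgraph[V]{\epsilon}) \ge \epsilon$ or $z \in V_{(\rmvdeadends{\epsilon})}$, one argues directly that $z$ still separates the relevant cable components, or the inequality degenerates.

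I expect the main obstacle to be the bookkeeping around the interpolation extension and the possibility that a single cable crosses between the $x$-side and the $y$-side multiple times (because cables are not simple relative to the separation by $z$): I need the fact that such a cable, being a path in $\ol{V}$ that is admissible, must pass through $z$ each time it crosses, so subdividing at all such occurrences of $z$ still leaves $z$ as a cut vertex (the cable becomes a concatenation of loops at $z$ together with through-segments, all of which only meet the rest of the graph through $z$). This is where the ``merged cables counted as a single edge'' convention and the finiteness argument from the construction of $\approxgraph[V]{\epsilon}$ in Section~\ref{se:rmet_approx_scheme} will be invoked to ensure everything reduces to a finite network so that Lemma~\ref{le:reff_cut_point} literally applies.
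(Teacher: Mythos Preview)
Your proposal is correct and follows essentially the same approach as the paper: reduce to the cable graph, observe that the topological separation by $z$ forces $z$ to be a cut point of $\approxgraph[V]{\epsilon}$, and invoke the series law for effective resistance on graphs. The paper's proof is far terser---it simply notes that one may assume $\rmetapproxres{\epsilon}{V}{x}{y}{\Gamma} < \infty$, in which case $z \in \approxgraph[V]{\epsilon}$ automatically (since any connecting cable path must pass through $z$), and then cites the graph series law; your additional bookkeeping about interpolation points and multiple crossings is valid but not needed at the level of detail the paper adopts.
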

\begin{proof}
We only need to consider the case when $\rmetapproxres{\epsilon}{V}{x}{y}{\Gamma} < \infty$, in which case $z \in \approxgraph[V]{\epsilon}$. Therefore the result follows immediately from the series law for graphs.
\end{proof}

\begin{lemma}[Generalized parallel law]
\label{le:approx_parallel_law}
Let $V \in \metregions$, and let $x,y,z_1,\ldots,z_N \in \ol{V} \cap \Upsilon_\Gamma$ such that $x,y$ are separated in $\ol{V} \cap \Upsilon_\Gamma \setminus\{z_1,\ldots,z_N\}$. Then
\[ N\,\rmetapproxres{\epsilon}{V}{x}{y}{\Gamma} \ge \min_{i}\rmetapproxres{\epsilon}{V}{x}{z_i}{\Gamma}.\]
Further, if $K_x$ is the connected component of $\ol{V} \cap \Upsilon_\Gamma \setminus\{z_1,\ldots,z_N\}$ containing $x$, and $V_x \subseteq V$ is such that $\ol{V_x} \supseteq \{u \in V \setminus V_{(\rmvdeadends{\epsilon})} : \dpathY[\ol{V}](u,K_x) \le 5\epsilon\}$, then
\[ N\,\rmetapproxres{\epsilon}{V}{x}{y}{\Gamma} \ge \min_{i}\rmetapproxres{\epsilon}{V_x}{x}{z_i}{\Gamma}.\]
\end{lemma}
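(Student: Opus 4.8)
The plan is to derive both inequalities from the corresponding statement for finite electrical networks (Lemma~\ref{le:gen_parallel_law_graph}), applied to the finite cable graph $\approxgraph[V]{\epsilon}$ after reducing it to a genuine finite network as explained in Section~\ref{se:rmet_approx_scheme}. First I would dispose of trivial cases: if $\rmetapproxres{\epsilon}{V}{x}{y}{\Gamma} = \infty$ there is nothing to prove, so assume it is finite; then $x$ and $y$ lie in the same connected component of $\approxgraph[V]{\epsilon}$, and every simple admissible path in $\ol{V}$ from $x$ to $y$ that could carry current between them passes through $\{z_1,\ldots,z_N\}$, so in particular each $z_i$ that is actually on such a path lies on $\approxgraph[V]{\epsilon}$. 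The key point, which parallels the proof of Lemma~\ref{le:weak_rmet_gen_parallel_law}, is that in the finite network $\approxgraph[V]{\epsilon}$ the set $\{z_1,\ldots,z_N\}$ separates $x$ from $y$: any path of cables from $x$ to $y$ in $\approxgraph[V]{\epsilon}$ is (a concatenation of) an admissible path in $\ol{V}$, hence must meet $\{z_1,\ldots,z_N\}$. Applying Lemma~\ref{le:gen_parallel_law_graph} with this separating set gives
\[
 \rmetapproxres{\epsilon}{V}{x}{y}{\Gamma}^{-1} \le \sum_i \rmetapproxres{\epsilon}{V}{x}{z_i}{\Gamma}^{-1} \le N \max_i \rmetapproxres{\epsilon}{V}{x}{z_i}{\Gamma}^{-1} = N \bigl(\min_i \rmetapproxres{\epsilon}{V}{x}{z_i}{\Gamma}\bigr)^{-1} ,
\]
which rearranges to the first claimed inequality $N\,\rmetapproxres{\epsilon}{V}{x}{y}{\Gamma} \ge \min_i \rmetapproxres{\epsilon}{V}{x}{z_i}{\Gamma}$. (Here I use that the effective resistance from $x$ to $z_i$ within the $x$-side subnetwork $K_x$ of Lemma~\ref{le:gen_parallel_law_graph} is at least the full-network resistance $\rmetapproxres{\epsilon}{V}{x}{z_i}{\Gamma}$ by monotonicity, so in fact the bound with $K_x$ is the stronger one and implies the first displayed bound.)

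For the second, sharper inequality the work is to identify the subnetwork $K_x$ appearing in Lemma~\ref{le:gen_parallel_law_graph} — obtained by deleting from $\approxgraph[V]{\epsilon}$ all vertices strictly separated from $x$ by $\{z_1,\ldots,z_N\}$ — with (a subgraph of) $\approxgraph[V_x]{\epsilon}$, where $V_x \subseteq V$ satisfies $\ol{V_x} \supseteq \{u \in V \setminus V_{(\rmvdeadends{\epsilon})} : \dpathY[\ol{V}](u,K_x) \le 5\epsilon\}$. The cable-graph locality built into the construction (each cable attached to $(v,w) \in \approxedges[V]{\epsilon}$ depends only on the gasket within the $2\epsilon$-neighborhood of $\{v,w\}$, and $\approxvtcs[V]{\epsilon}$ only removes dead-end points of loops $\subseteq \ol{V}$) means that every vertex $v$ of $\approxgraph[V]{\epsilon}$ lying in $K_x$, together with every cable of $\approxgraph[V]{\epsilon}$ incident to such a vertex, lies within $\dpathY[\ol{V}]$-distance $5\epsilon$ of $K_x$, hence inside $\ol{V_x}$, and is built from the same local gasket data; so $\approxvtcs[V]{\epsilon} \cap K_x \subseteq \approxvtcs[V_x]{\epsilon}$ and the corresponding cables coincide. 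This is exactly the kind of comparison carried out in the proofs of Lemmas~\ref{le:approx_monotonicity} and~\ref{le:approx_compatibility}, and I would invoke the same reasoning. Consequently the $x$-side subnetwork $K_x$ from Lemma~\ref{le:gen_parallel_law_graph} is a subgraph of $\approxgraph[V_x]{\epsilon}$, so by monotonicity of effective resistance (passing to a subgraph only increases resistances) $R_{K_x}(x,z_i) \ge \rmetapproxres{\epsilon}{V_x}{x}{z_i}{\Gamma}$, and Lemma~\ref{le:gen_parallel_law_graph} yields
\[
 N\,\rmetapproxres{\epsilon}{V}{x}{y}{\Gamma} \ge \rmetapproxres{\epsilon}{V}{x}{y}{\Gamma} \cdot \sum_i R_{K_x}(x,z_i)^{-1} \Big/ \max_i R_{K_x}(x,z_i)^{-1} \ge \min_i R_{K_x}(x,z_i) \ge \min_i \rmetapproxres{\epsilon}{V_x}{x}{z_i}{\Gamma} ,
\]
which is the second claimed inequality. (One must also handle the interpolation when $x$ or some $z_i$ is not a vertex of $\approxgraph[V]{\epsilon}$ but lies within $\dpathY[\ol{V}]$-distance $\epsilon$ of it; the nearest interpolation point is then the same relative to $\approxgraph[V]{\epsilon}$ and $\approxgraph[V_x]{\epsilon}$ by the $5\epsilon$-locality, so the estimate transfers verbatim, and if $x$ or $z_i$ is at distance $\ge \epsilon$ the relevant resistance is $\infty$ and the bound is trivial.)

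The main obstacle I anticipate is not in the network algebra — Lemma~\ref{le:gen_parallel_law_graph} does the real work — but in the bookkeeping for the second inequality: verifying cleanly that the deletion of vertices ``strictly separated from $x$'' in the sense of the graph $\approxgraph[V]{\epsilon}$ produces a subnetwork whose vertex set and cable set are genuinely contained in those of $\approxgraph[V_x]{\epsilon}$, given that $V_x$ is only required to contain a $5\epsilon$-neighborhood of $K_x$ intersected with $V \setminus V_{(\rmvdeadends{\epsilon})}$ and may fail to be simply connected (in which case, as in Lemma~\ref{le:approx_monotonicity}, one splits it). The $5\epsilon$ radius is chosen precisely because a cable attached to $(v,w)$ can reach a $\dpathY[\ol{V}]$-distance of up to roughly $2\epsilon$ from each endpoint and edges are only drawn when $\dpathY[\ol{V}](v,w) < \epsilon$; I would spell this budget out once and then point to the identical argument already used for monotonicity and compatibility rather than repeat it in full.
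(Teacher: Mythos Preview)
Your proposal is correct and follows essentially the same approach as the paper: both reduce to Lemma~\ref{le:gen_parallel_law_graph} applied to the cable graph $\approxgraph[V]{\epsilon}$ with separating set $\{z_1,\ldots,z_N\} \cap \approxgraph[V]{\epsilon}$, and for the second inequality both establish the inclusion $\approxgraph[V]{\epsilon} \cap K_x \subseteq \approxgraph[V_x]{\epsilon}$ via the $5\epsilon$-locality of the cable construction. The paper's proof is terser---it spells out explicitly why the cables intersecting $K_x$ agree (the only discrepancy between the local gasket data in $\ol{V}$ and $\ol{V_x}$ comes from dead ends in $V_{(\rmvdeadends{\epsilon})}$, which do not affect shortest paths) rather than pointing to Lemmas~\ref{le:approx_monotonicity}--\ref{le:approx_compatibility}, and it does not separately discuss the interpolation issue you flag at the end---but the substance is the same.
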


\begin{proof}
Suppose that $V_x$ is as in the lemma statement. Then every vertex $v \in \approxvtcs[V]{\epsilon}$ with $\dpathY[\ol{V}](v,K_x) \le 2\epsilon$ is also contained in $\approxvtcs[V_x]{\epsilon}$. Moreover, for each $(v,w) \in \approxedges[V]{\epsilon}$ for which the cable intersects $K_x$, the connected component of $(B(v,2\epsilon) \cup B(w,2\epsilon)) \cap \ol{V} \cap \Upsilon_\Gamma$ containing $v,w$ is the same as the connected component of $(B(v,2\epsilon) \cup B(w,2\epsilon)) \cap \ol{V_x} \cap \Upsilon_\Gamma$ containing $v,w$ except for possibly removing some dead ends in $V_{(\rmvdeadends{\epsilon})}$. However, the latter do not affect the shortest paths in $(B(v,2\epsilon) \cup B(w,2\epsilon)) \cap \ol{V} \cap \Upsilon_\Gamma$. Therefore $\approxgraph[V]{\epsilon} \cap K_x \subseteq \approxgraph[V_x]{\epsilon} \cap K_x$. Now the result follows by the generalized parallel law for graphs Lemma~\ref{le:gen_parallel_law_graph} applied to the set $\{z_1,\ldots,z_N\} \cap \approxgraph[V]{\epsilon}$ which separates $x$ from $y$ in $\approxgraph[V]{\epsilon}$.
\end{proof}

The family $(\rmetapproxres{\epsilon}{V}{\cdot}{\cdot}{\Gamma})_{V \in \metregions}$ does quite satisfy the axioms in Section~\ref{se:approx_scheme_general_def} due to the extra condition $V \setminus V_{(\rmvdeadends{\epsilon})} \subseteq V' \setminus V'_{(\rmvdeadends{\epsilon})}$ in the monotonicity (Lemma~\ref{le:approx_monotonicity}) and compatibility (Lemma~\ref{le:approx_compatibility}). However, this does not affect the proof of the tightness statement in Theorem~\ref{thm:tightness_theorem}\eqref{it:tightness} carried out in \cite{amy2025tightness} as there we only consider regions $V$ that consist of ``linear chains of bubbles'' in the sense that there are $x,y \in \partial V$ such that $V$ is the region bounded between two simple paths from $x$ to $y$. Moreover, we only need to consider the situation when $V' \supseteq V$ and $x,y \notin V'_{(\rmvdeadends{\epsilon})}$ (we have set $\rmetapproxres{\epsilon}{V'}{\cdot}{\cdot}{\Gamma}$ to $0$ in the components of $V'_{(\rmvdeadends{\epsilon})}$ anyway), in which case we always have $V \setminus V_{(\rmvdeadends{\epsilon})} \subseteq V' \setminus V'_{(\rmvdeadends{\epsilon})}$.

Note also that the condition for $V_x$ in Lemma~\ref{le:approx_parallel_law} is slightly relaxed, so that the statement of Lemma~\ref{le:approx_parallel_law} is slightly stronger than the generalized parallel law in Section~\ref{se:approx_scheme_general_def}. This formulation of Lemma~\ref{le:approx_parallel_law} will be important in Section~\ref{se:rmet_construction}.

We now explain that the approximation scheme $(\rmetapprox{\epsilon}{\cdot}{\cdot}{\Gamma})_{\epsilon > 0}$ satisfies the condition~\eqref{eq:eps_bound_ass}, \eqref{eq:approx_error_asymp} for a good approximation scheme as $\epsilon \to 0$. We start by giving in Lemma~\ref{le:approx_scale_ub} the asymptotic upper bound~\eqref{eq:approx_error_asymp} on the resistances between points with distance $\epsilon$.

In the statements below, we let $\metregions[(\epsilon)] \subseteq \metregions$ be the collection of $V \in \metregions$ such that $V \subseteq B(0,1-\epsilon)$ and $V$ is a union of connected components of $C \setminus (\CL_1 \cup \dots \cup \CL_n)$ where $\diamE(\CL_i) \ge \epsilon$ for each $i$.

\begin{lemma}
\label{le:vertex_density}
Fix $\rateexp > 0$ and suppose that the rate in~\eqref{eq:ppp_rate} is $\lambda_\epsilon = \epsilon^{-\dcle-\rateexp}$. Fix $a>0$ and let $E_\epsilon$ be the event that for each $V \in \metregions[(\epsilon)]$ with $\diamE(V) \ge \epsilon$ and $x \in \Upsilon_\Gamma \cap \ol{V} \setminus V_{(\rmvdeadends{\epsilon})}$ we have
\[ \epsilon^{-\rateexp+a} < \abs{\approxvtcs[V]{\epsilon} \cap \Bpath(x,\epsilon)} < \epsilon^{-\rateexp-a} . \]
Then $\p[E_\epsilon^c] = o^\infty(\epsilon)$ as $\epsilon \searrow 0$.
\end{lemma}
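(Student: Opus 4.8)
The plan is to deduce this from the concentration estimates for the gasket measure $\meas{\cdot}{\Gamma}$ established in Propositions~\ref{pr:measure_ub} and~\ref{pr:measure_lb}, combined with standard Poisson tail bounds. The key observation is that $\Bpath(x,\epsilon)$ contains a deterministic piece of mass between two powers of $\epsilon$ on an event of probability $1-o^\infty(\epsilon)$, and that a Poisson random variable with mean $\lambda_\epsilon$ times such a mass concentrates around its mean.

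First I would bound the number of vertices from above. By Proposition~\ref{pr:measure_ub}, on an event of probability $1-o^\infty(\epsilon)$ we have $\sup_{z} \meas{B_E(z,r)}{\Gamma} \le r^{\dcle-a'}$ for all relevant $r$ (taking $a'$ a small multiple of $a$); applying this with $r = \epsilon$ and noting $\Bpath(x,\epsilon) \subseteq B_E(x,\epsilon)$, we get $\meas{\Bpath(x,\epsilon)}{\Gamma} \le \epsilon^{\dcle-a'}$. Hence $\approxvtcs{\epsilon} \cap \Bpath(x,\epsilon)$ is stochastically dominated by a Poisson variable with mean $\lambda_\epsilon \epsilon^{\dcle-a'} = \epsilon^{-\rateexp - a'}$. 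Since $\abs{\approxvtcs[V]{\epsilon} \cap \Bpath(x,\epsilon)} \le \abs{\approxvtcs{\epsilon} \cap \Bpath(x,\epsilon)}$, a Poisson upper tail bound (Chernoff) gives that the probability this exceeds $\epsilon^{-\rateexp-a}$ is at most $\exp(-\epsilon^{-c})$ for some $c>0$, provided $a > a'$; this is $o^\infty(\epsilon)$. To make this uniform over all $V$ and $x$ simultaneously, I would cover $B(0,1)$ by $O(\epsilon^{-2})$ balls of radius $\epsilon$ and take a union bound, which only costs a polynomial factor and is absorbed into the stretched-exponential bound.

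For the lower bound I would use Proposition~\ref{pr:measure_lb}: on an event of probability $1-o^\infty(\epsilon)$, for each $x \in \Upsilon_\Gamma \cap K$ and $w \in \partial\Bpath(x,\epsilon)$, the simply connected set $B_{x,w}$ containing all simple admissible paths within $\ol{B_E}(x,\epsilon)$ from $x$ to $w$ satisfies $\meas{B_{x,w}}{\Gamma} \ge \epsilon^{\dcle+a'}$. The point is that when $x \notin V_{(\rmvdeadends{\epsilon})}$ and $V \in \metregions[(\epsilon)]$ with $\diamE(V) \ge \epsilon$, the portion of $\ol{V} \cap \Upsilon_\Gamma$ inside $\Bpath(x,\epsilon)$ is not contained in a small dead end, so it contains such a set $B_{x,w}$ for some $w$ — more precisely, $x$ is connected within $\ol{V} \cap \Upsilon_\Gamma$ to $\partial \Bpath(x,\epsilon)$ by a path that does not leave $\ol{B_E}(x,\epsilon)$, using that $\diamE(V) \ge \epsilon$ and that $x$ avoids the removed dead ends of size $\le \rmvdeadends{\epsilon} = \epsilon^{a_0} \gg \epsilon$. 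Moreover $B_{x,w} \setminus V_{(\rmvdeadends{\epsilon})}$ still carries most of the mass (the removed dead ends have Euclidean diameter $\le \rmvdeadends{\epsilon}$, but one must check they do not eat the bulk of $B_{x,w}$ — alternatively use that $B_{x,w}$ itself, being built from admissible paths, avoids large-loop dead ends of the relevant region). Thus $\approxvtcs[V]{\epsilon} \cap \Bpath(x,\epsilon)$ stochastically dominates a Poisson variable with mean $\lambda_\epsilon \epsilon^{\dcle+a'} = \epsilon^{-\rateexp+a'}$, and the Poisson lower tail (again Chernoff) shows it is at least $\epsilon^{-\rateexp+a}$ except with probability $o^\infty(\epsilon)$, again made uniform by a covering and union bound.

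I expect the main obstacle to be the geometric bookkeeping in the lower bound: one has to argue carefully that for every $V \in \metregions[(\epsilon)]$ with $\diamE(V) \ge \epsilon$ and every $x \in \ol{V} \cap \Upsilon_\Gamma \setminus V_{(\rmvdeadends{\epsilon})}$, the set $\Bpath(x,\epsilon) \cap \ol{V} \cap \Upsilon_\Gamma \setminus V_{(\rmvdeadends{\epsilon})}$ contains a Euclidean region of $\meas{\cdot}{\Gamma}$-mass at least a fixed power of $\epsilon$, uniformly in these choices, and that this region is the one controlled by Proposition~\ref{pr:measure_lb}. The definition of $\metregions[(\epsilon)]$ (only loops of diameter $\ge \epsilon$ are used to cut out $V$, and $V \subseteq B(0,1-\epsilon)$) together with the condition $\rmvdeadends{\epsilon} = \epsilon^{a_0}$ with $a_0$ small but fixed is exactly what makes this work: the boundary of $V$ near $x$ consists of loops of macroscopic size, so within distance $\epsilon$ of $x$ one sees the ``full'' gasket and not an artifact of the cutting or of the dead-end removal. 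Once this geometric claim is in place, the probabilistic part is routine Chernoff bounds for Poisson variables plus a union bound over an $\epsilon$-net, all of which produce $o^\infty(\epsilon)$ error terms.
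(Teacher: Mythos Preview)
Your approach is essentially the same as the paper's: both deduce the result from Propositions~\ref{pr:measure_ub} and~\ref{pr:measure_lb} together with Poisson tail bounds, and both identify the geometric bookkeeping for the lower bound as the only nontrivial step. The paper's argument for that step is slightly sharper than your first alternative: rather than arguing that $B_{x,w}\setminus V_{(\rmvdeadends{\epsilon})}$ carries ``most'' of the mass, it shows $B_{x,w}$ is \emph{entirely} disjoint from $V_{(\rmvdeadends{\epsilon})}$, by noting that simple admissible paths from $x$ to $w$ cannot enter a dead end (they would have to return through the same cut point), and that since $\rmvdeadends{\epsilon}>\epsilon$ the interior of $B_{x,w}$ is too small to contain any loop of $\Gamma_{V,\rmvdeadends{\epsilon}}$ and hence any component of $V_{(\rmvdeadends{\epsilon})}$ --- this is your second alternative made precise.
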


\begin{proof}
This follows from the Propositions~\ref{pr:measure_ub},~\ref{pr:measure_lb}, and the Poisson tails. Indeed, in order to have the lower bound, we need to argue that the set $\Bpath(x,\epsilon) \cap V \setminus V_{(\rmvdeadends{\epsilon})}$ satisfies the condition in Proposition~\ref{pr:measure_lb}. Since $\diamE(V) \ge \epsilon$, there is a point $w \in \partial\Bpath(x,\epsilon) \cap \ol{V} \setminus V_{(\rmvdeadends{\epsilon})}$. The set $\ol{V} \cap \Bpath(x,\epsilon)$ must contain the set $B_{x,w}$ in Proposition~\ref{pr:measure_lb} due to the definition of $\metregions[(\epsilon)]$. Further, since $x,w \notin V_{(\rmvdeadends{\epsilon})}$, each simple admissible path from $x$ to $w$ is also disjoint from $V_{(\rmvdeadends{\epsilon})}$, and since $\rmvdeadends{\epsilon} > \epsilon$, this implies that the interior of $B_{x,w}$ cannot contain any loop in $\Gamma_{V,\rmvdeadends{\epsilon}}$ and any region in $V_{(\rmvdeadends{\epsilon})}$ either.
\end{proof}

Recall that $\geoexp$ is defined in Section~\ref{se:geodesic_metric}.

\begin{lemma}\label{le:approx_scale_ub}
 Fix $\rateexp > 0$ and suppose that the rate in~\eqref{eq:ppp_rate} is $\lambda_\epsilon = \epsilon^{-\dcle-\rateexp}$. Fix $a>0$ and let $E_\epsilon$ be the event that for each $V \in \metregions[(\epsilon)]$ with $\diamE(V) \ge \epsilon$ we have
 \[ \sup_{\dpathY[\ol{V}](x,y) < \epsilon} \rmetapproxres{\epsilon}{V}{x}{y}{\Gamma} \le \epsilon^{\geoexp-a}.\]
 Then $\p[E_\epsilon^c] = o^\infty(\epsilon)$ as $\epsilon \searrow 0$.
\end{lemma}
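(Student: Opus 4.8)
The plan is to bound $\rmetapproxres{\epsilon}{V}{x}{y}{\Gamma}$ from above by the $\lmet{\cdot}$-length of a single connecting path in the cable graph $\approxgraph[V]{\epsilon}$, built by discretising a short geodesic of the CLE metric; since the edge resistances are the cable lengths, this suffices. First I would reduce to the case $x,y\notin V_{(\rmvdeadends{\epsilon})}$: if $x$ lies in a component of $V_{(\rmvdeadends{\epsilon})}$ then either $y$ lies in the same component, so $\rmetapproxres{\epsilon}{V}{x}{y}{\Gamma}=0$, or the single point $z\in\Upsilon_\Gamma$ separating that component from the rest of $\ol V\cap\Upsilon_\Gamma$ satisfies $\dpathY[\ol V](x,z)\le\dpathY[\ol V](x,y)<\epsilon$ (every admissible path from $x$ to $y$ passes through $z$), and one replaces $x$ by $z\notin V_{(\rmvdeadends{\epsilon})}$ without increasing the distance; similarly for $y$.

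I would then work on $E^1_\epsilon\cap E^2_\epsilon$, where $E^1_\epsilon$ is the good event of Proposition~\ref{pr:cle_metric_estimates} in its form internal to $\ol V$ (with parameter $a/4$), and $E^2_\epsilon$ is the good event of a scale-$(\epsilon/100)$ analogue of Lemma~\ref{le:vertex_density}; the latter holds with $o^\infty(\epsilon)$ error by the same argument via Propositions~\ref{pr:measure_ub} and~\ref{pr:measure_lb} and Poisson concentration, using $\lambda_\epsilon\meas{\Bpath(q,\epsilon/100)}{\Gamma}\asymp\epsilon^{-\rateexp+o(1)}\to\infty$. On $E^2_\epsilon$ every $\Bpath(q,\epsilon/100)$ with $q\in\Upsilon_\Gamma\setminus V_{(\rmvdeadends{\epsilon})}$ contains a point of $\approxvtcs[V]{\epsilon}$; combined with connectedness and compactness of each component of $\ol V\cap\Upsilon_\Gamma$ and the fact that every pair of vertices at $\dpathY[\ol V]$-distance below $\epsilon$ is joined by a cable, this shows that $\approxgraph[V]{\epsilon}$ restricted to each component of $\ol V\cap\Upsilon_\Gamma$ is connected, so the interpolation points $\hat x,\hat y$ lie in the same component and in particular $\rmetapproxres{\epsilon}{V}{x}{y}{\Gamma}<\infty$.

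For the quantitative bound, let $\eta$ be the $\metres{V}{\cdot}{\cdot}{\Gamma}$-geodesic from $x$ to $y$; on $E^1_\epsilon$ one has $\lmet{\eta}\le\epsilon^{\geoexp-a/4}$, and $\eta$ meets no component of $V_{(\rmvdeadends{\epsilon})}$ since each such component is attached at a single point and a geodesic makes no detour into a dead end. Discretise $\eta$ by geodesic arc-length into $x=q_0,\dots,q_m=y$ with consecutive $\lmet{\cdot}$-spacing at most $\ell:=(\epsilon/K_0)^{\geoexp+a/4}$ for a large constant $K_0$, so that $m\lesssim\lmet{\eta}/\ell\lesssim\epsilon^{-a/2}$ and, by the converse estimate in Proposition~\ref{pr:cle_metric_estimates}, $\dpathY[\ol V](q_i,q_{i+1})<\epsilon/K_0$. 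Using $E^2_\epsilon$ pick $v_i\in\approxvtcs[V]{\epsilon}$ with $\dpathY[\ol V](v_i,q_i)<\epsilon/100$; then $\dpathY[\ol V](v_i,v_{i+1})<\epsilon/7$, so $(v_i,v_{i+1})$ is an edge of $\approxgraph[V]{\epsilon}$, and its cable has $\lmet{\cdot}$-length at most $(\epsilon/7)^{\geoexp-a/4}\le\epsilon^{\geoexp-a/4}$ because $\ol B_{v_i,v_{i+1}}$ contains every simple admissible path in $\ol V$ of Euclidean diameter $\le\epsilon$ from $v_i$ to $v_{i+1}$, and on $E^1_\epsilon$ the $\lmet{\cdot}$-geodesic inside $B(v_i,3\epsilon/7)$ is such a path. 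Concatenating these $m$ cables, together with a bounded number of further cables near $x$ and $y$ to link $v_0,v_m$ to $\hat x,\hat y$ along the cables they lie on, produces a path in $\approxgraph[V]{\epsilon}$ from $\hat x$ to $\hat y$ of total $\lmet{\cdot}$-length $\le (m+O(1))\,\epsilon^{\geoexp-a/4}\lesssim\epsilon^{\geoexp-3a/4}\le\epsilon^{\geoexp-a}$ once $\epsilon$ is small; hence $\rmetapproxres{\epsilon}{V}{x}{y}{\Gamma}\le\epsilon^{\geoexp-a}$. Since everything is uniform over $V\in\metregions[(\epsilon)]$ with $\diamE(V)\ge\epsilon$ and over $x,y$, we get $E_\epsilon\supseteq E^1_\epsilon\cap E^2_\epsilon$, and so $\p[E_\epsilon^c]=o^\infty(\epsilon)$. (It suffices to treat $a<\geoexp$; larger $a$ follows a fortiori.)

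The step I expect to be the main obstacle is the bookkeeping forced by the removed dead ends $V_{(\rmvdeadends{\epsilon})}$ (which break monotonicity of the construction in $V$) together with verifying that the globally stated estimates—Propositions~\ref{pr:cle_metric_estimates},~\ref{pr:measure_ub},~\ref{pr:measure_lb} and Lemma~\ref{le:vertex_density}—descend to their versions internal to $\ol V$ uniformly down to scale $\epsilon$; the enabling structural input is that $V\in\metregions[(\epsilon)]$ has only boundary loops of Euclidean diameter $\ge\epsilon$, so that by the footnote following the definition of $\metregions$ the local geometry of $\ol V$ near $x,y$ coincides with that of the ambient gasket at scales below $\epsilon$. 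A secondary technical point is that the nearest-point interpolation $\hat x$ need not be incident to a vertex close to $x$, which is why the short correcting chains along $\hat x$'s and $\hat y$'s own cables are required.
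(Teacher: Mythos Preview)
Your proof is correct and follows the paper's strategy: reduce to $x,y\notin V_{(\rmvdeadends{\epsilon})}$, work on the intersection of the vertex-density event (Lemma~\ref{le:vertex_density}) and the geodesic-metric regularity event (Proposition~\ref{pr:cle_metric_estimates}), and bound the approximate resistance by the $\lmet{\cdot}$-length of a short cable path. The paper's version is considerably terser: it only records the single-edge bound $\rmetapproxres{\epsilon}{V}{v}{w}{\Gamma}\le\metres{B(v,2\epsilon)\cup B(w,2\epsilon)}{v}{w}{\Gamma}<\epsilon^{\geoexp-a}$ for $(v,w)\in\approxedges[V]{\epsilon}$, notes that every non-dead-end point lies within $\epsilon$ of a vertex (hence the cable graph is connected), and closes with ``combining everything implies the result'', leaving the interpolation-point chaining implicit. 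Your geodesic discretization into $m\lesssim\epsilon^{-a/2}$ points via the two-sided H\"older estimate of Proposition~\ref{pr:cle_metric_estimates} is sound but heavier than necessary: since $\dpathY[\ol V](x,y)<\epsilon$ and you invoke vertex density at scale $\epsilon/100$, a bounded chain of adjacent vertices already suffices, so $m=O(1)$ is achievable and the extra $\epsilon^{-a/2}$ factor is an artefact of discretizing by $\lmet{\cdot}$-arc-length rather than by $\dpathY$. The concerns you flag as the main obstacle (dead-end bookkeeping and descent of the global estimates to $\ol V$) are genuine but minor, and the paper treats them exactly as you anticipate: the first by the one-line remark that the approximate metric is set to zero on dead-end components, and the second implicitly via the footnote that $\dpathY[\ol V]=\dpathY$ below scale $\min_i\diamE(\CL_i)\ge\epsilon$ for $V\in\metregions[(\epsilon)]$.
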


\begin{proof}
 Since we have set $\rmetapproxres{\epsilon}{V}{\cdot}{\cdot}{\Gamma}$ to $0$ in the connected components of $V_{(\rmvdeadends{\epsilon})}$, we can assume $x,y \notin V_{(\rmvdeadends{\epsilon})}$.

 Let $E_{\epsilon,1}$ be the event from Lemma~\ref{le:vertex_density}. Then $\p[E_{\epsilon,1}^c] = o^\infty(\epsilon)$. Let $E_{\epsilon,2}$ be the event that $\metres{B(x,2\epsilon) \cup B(y,2\epsilon)}{x}{y}{\Gamma} \le \epsilon^{\geoexp-a}$ for each $x,y \in \Upsilon_\Gamma$ with $\dpathY(x,y) \le \epsilon$. By Proposition~\ref{pr:cle_metric_estimates} we have $\p[E_{\epsilon,2}^c] = o^\infty(\epsilon)$.

 Suppose that we are on the event $E_{\epsilon,1} \cap E_{\epsilon,2}$. Then we have $\dpathY[\ol{V}](x,\approxgraph[V]{\epsilon}) \le \dpathY[\ol{V}](x,\approxvtcs[V]{\epsilon}) < \epsilon$ for each $x \in \Upsilon_\Gamma \cap \ol{V} \setminus V_{(\rmvdeadends{\epsilon})}$. In particular, the graph $\approxgraph[V]{\epsilon}$ is connected and we have $\rmetapproxres{\epsilon}{V}{x}{y}{\Gamma} \le \metres{B(x,2\epsilon) \cup B(y,2\epsilon)}{x}{y}{\Gamma} < \epsilon^{\geoexp-a}$ for each $(x,y) \in \approxedges[V]{\epsilon}$. Combining everything implies the result.
\end{proof}

For the next lemmas, we consider the setup described in the paragraph above~\eqref{eq:median_def}. The following lemma gives~\eqref{eq:eps_bound_ass}. In the notation of Section~\ref{se:approx_scheme_general_def}, if we set $\ac{\epsilon} = \epsilon^{\geoexp-o(1)}$, then $\ac{\epsilon} = \epsilon^{\ddouble-\rateexp-o(1)}\median{\epsilon}$ as $\epsilon \searrow 0$.

\begin{lemma}\label{le:approx_scale_lb}
 There is a constant $c>0$ such that the following holds. Fix $\rateexp > 0$ and suppose that the rate in~\eqref{eq:ppp_rate} is $\lambda_\epsilon = \epsilon^{-\dcle-\rateexp}$. Consider the setup described in the paragraph above. Fix $a>0$. Then
 \[
  \p\left[ \sup_{\substack{x=\eta_1(s)=\eta_2(s')\\ y=\eta_1(t)=\eta_2(t')\\ \eta_1[s,t], \eta_2[s',t'] \subseteq B(0,3/4)}} \rmetapproxres{\epsilon}{U}{x}{y}{\Gamma} < \epsilon^{\geoexp-\ddouble+c\rateexp+a} ,\ I \right] \to 0
  \quad\text{as } \epsilon \searrow 0 .
 \]
 In particular, we have $\median{\epsilon} \ge \epsilon^{\geoexp-\ddouble+c\rateexp+o(1)}$ as $\epsilon \searrow 0$.
\end{lemma}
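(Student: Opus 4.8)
The plan is to bound from below the resistance $\rmetapproxres{\epsilon}{U}{x}{y}{\Gamma}$ over the relevant double points $x,y$ of $\eta_1,\eta_2$ by using the generalized parallel law (Lemma~\ref{le:approx_parallel_law}) together with the vertex-density estimate (Lemma~\ref{le:vertex_density}), and then matching this against the lower bound on CLE metric distances from Proposition~\ref{pr:cle_metric_estimates}. The rough heuristic is: the region $U$ between the outer boundaries of two intersecting \clekp{} loops is ``thin'' near a double point $x$, so it is crossed by roughly $\epsilon^{-\dsle+o(1)}$ vertices of $\approxvtcs[U]{\epsilon}$ near any given scale. Along the crossing the resistance, built out of cable lengths which are at least of order $\met{\cdot}{\cdot}{\Gamma}$-distances, must be at least a geometric quantity of order $\epsilon^{\geoexp+o(1)}$ divided by the number of parallel crossings, i.e.\ at least $\epsilon^{\geoexp+\dsle+o(1)} = \epsilon^{\geoexp-\ddouble+c\rateexp+o(1)}$ after accounting for the $\dsle = \ddouble$ relation up to the rate error.

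More concretely, first I would set up a dyadic covering: on the event $I$ there is a double point $x_0 = \eta_1(s_0) = \eta_2(s_0') \in B(0,1/2)$, and by the dimension/separation structure of the boundaries of two intersecting CLE loops (absolute continuity to the two-chord setup as used in \cite{amy2025tightness}, together with the partial-exploration and resampling tools from \cite{amy-cle-resampling}) one can find, with probability tending to $1$, a scale $2^{-j} \asymp \epsilon^{1-o(1)}$ and a set of at most $N \le \epsilon^{-\dsle-a/2}$ points $z_1,\ldots,z_N \in \ol U$ in an annulus around $x_0$ which separate $x_0$ (hence $x$) from $y$ in $\ol{U}\cap\Upsilon_\Gamma$ and which are pairwise $\ge 5\epsilon$-separated. (This is exactly the kind of ``parallel crossing count'' bounded above by the outer boundary dimension $\dsle$ that underlies the definition of $\median{\epsilon}$ and $\ddouble$ in \cite{amy2025tightness}.) Applying Lemma~\ref{le:approx_parallel_law} gives $N\,\rmetapproxres{\epsilon}{U}{x}{y}{\Gamma} \ge \min_i \rmetapproxres{\epsilon}{V_x}{x}{z_i}{\Gamma}$ for an appropriate $V_x \in \metregions$. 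Then I would lower bound $\rmetapproxres{\epsilon}{V_x}{x}{z_i}{\Gamma}$: any path in the cable graph from $x$ to $z_i$ has resistance at least the shortest such path, whose cable lengths are $\met{\cdot}{\cdot}{\Gamma}$-lengths, hence at least $\met{x}{z_i}{\Gamma}$, and since $x,z_i$ are at $\dpathY$-distance $\gtrsim \epsilon^{1-o(1)}$ in a fixed compact subset, Proposition~\ref{pr:cle_metric_estimates} gives $\met{x}{z_i}{\Gamma} \ge \epsilon^{\geoexp+o(1)}$ off an event of probability $o^\infty(\epsilon)$. (Care must be taken that $x$ lies on the cable graph or within $\epsilon$ of it, which is handled on the high-probability event of Lemma~\ref{le:vertex_density} — this is where one uses $\lambda_\epsilon = \epsilon^{-\dcle-\rateexp}$ to guarantee enough vertices.) Combining, $\rmetapproxres{\epsilon}{U}{x}{y}{\Gamma} \ge N^{-1}\epsilon^{\geoexp+o(1)} \ge \epsilon^{\geoexp+\dsle+a/2+o(1)}$, and since $\dsle = \ddouble$ up to the change of $\rateexp$-dependent factors coming from how $\ddouble$ is related to the outer boundary dimension in \cite{amy2025tightness}, this is $\ge \epsilon^{\geoexp-\ddouble+c\rateexp+a}$ for a suitable constant $c$, as desired. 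Taking $a \searrow 0$ then yields the ``in particular'' statement $\median{\epsilon} \ge \epsilon^{\geoexp-\ddouble+c\rateexp+o(1)}$, since this lower bound holds with probability tending to $1$ conditioned on $I$.

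I expect the main obstacle to be the geometric input, namely producing (with probability $\to 1$ conditioned on $I$) a separating set of $z_i$'s whose cardinality is controlled by $\epsilon^{-\dsle-o(1)}$ and which are well-separated and lie on the gasket so that the parallel law applies cleanly. This requires quantitative control of the number of ``crossings'' of the thin region $U$ between the two loop boundaries across a dyadic annulus around a double point; the natural tool is the absolute continuity of $(\eta_1,\eta_2)$ near a compact subset of $\D$ with respect to the outer boundaries of two intersecting \clekp{} loops (as invoked in \cite[Section~3.1]{amy2025tightness}), combined with an SLE-type estimate that the outer boundary has dimension $\dsle$ and satisfies up-to-constants crossing bounds. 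A secondary technical point is that the ``$\min_i$'' in the parallel law is over $\rmetapproxres{\epsilon}{V_x}{x}{z_i}{\Gamma}$ with $V_x$ possibly much smaller than $U$, so I must ensure $x$ is not in a removed dead end and that $V_x$ still contains enough of the graph near $x$; this is exactly why Lemma~\ref{le:approx_parallel_law} was stated with the relaxed condition on $V_x$ and why the density event from Lemma~\ref{le:vertex_density} is invoked. Finally, one must keep careful track of the relationship between $\dsle$, $\ddouble$, and the power of $\epsilon$ appearing in $\median{\epsilon}$'s definition so that the $c\rateexp$ error is genuinely of the claimed sign; this is bookkeeping rather than a real difficulty.
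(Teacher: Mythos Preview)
Your proposal has a fundamental structural error and two supporting errors that make the argument fail.

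\textbf{The main structural issue.} You try to get the lower bound by a \emph{single} application of the parallel law with $N \sim \epsilon^{-\dsle}$ separating points, yielding $\rmetapproxres{\epsilon}{U}{x}{y}{\Gamma} \ge N^{-1}\epsilon^{\geoexp+o(1)} \ge \epsilon^{\geoexp+\dsle+o(1)}$. But the target exponent is $\geoexp - \ddouble$, and since $\dsle>0$ and $\ddouble>0$ the inequality $\geoexp+\dsle \le \geoexp-\ddouble$ is impossible. The paper's proof instead exploits the \emph{series} law: on $I$ there are of order $(\epsilon')^{-\ddouble}$ well-separated double points of $\eta_1\cap\eta_2$ in $B(0,1/2)$ (this is the content of \cite[Theorem~1.5]{mw2017intersections}), and around each one a bounded-cardinality parallel law gives a contribution $\gtrsim (\epsilon')^{\geoexp}$. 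Summing in series over the $(\epsilon')^{-\ddouble}$ double points yields $\rmetapproxres{\epsilon}{U}{x}{y}{\Gamma} \gtrsim (\epsilon')^{\geoexp-\ddouble}$, which is the correct exponent. So the essential mechanism is ``many double points in series, each with $O(1)$ parallel crossings'', not ``one double point with many parallel crossings''.

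\textbf{Two supporting errors.} First, your step ``resistance from $x$ to $z_i$ is at least the geodesic distance $\met{x}{z_i}{\Gamma}$'' has the inequality backwards: effective resistance is \emph{at most} the length of any single path, and in a graph with many vertices can be much smaller. The paper avoids this by working at a scale $\epsilon' = \epsilon^{1+\rateexp/\dcle}$ just below $\epsilon$, so that each small region $V_i$ between two consecutive intersection points contains only a \emph{bounded} number $M$ of vertices of $\approxvtcs{\epsilon}$; with $O(1)$ vertices and edge resistances $\asymp r^{\geoexp}$, the effective resistance across $V_i$ is genuinely $\gtrsim M^{-1}r^{\geoexp}$. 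Second, the assertion ``$\dsle = \ddouble$ up to $\rateexp$-dependent factors'' is simply false: for $\kappa'=6$ one has $\ddouble=3/4$ and $\dsle=4/3$. The $\rateexp$-dependence in the statement comes only from the choice of scale $\epsilon' = \epsilon^{1+\rateexp/\dcle}$, which is what produces the $c\rateexp$ correction in the exponent.
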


\begin{proof}
Let $\epsilon' = \epsilon^{1+\rateexp/\dcle} < \epsilon$. Note that if we rescale by $\epsilon'$, the intensity measure $\epsilon^{-\dcle-\rateexp}\meas{\cdot}{\epsilon'\Gamma}$ is the pushforward of $\meas{\cdot}{\Gamma}$. For $\epsilon'' < \epsilon'$, the intensity measure $\epsilon^{-\dcle-\rateexp}\meas{\cdot}{\epsilon''\Gamma}$ is smaller than the pushforward of $\meas{\cdot}{\Gamma}$.

Let $M>1$ and let $E_{\epsilon,1}$ be the event that the following holds. For each $z \in (\epsilon')^{1+a}\Z^2 \cap B(0,3/4)$ there exists $r \in [(\epsilon')^{1+a},\epsilon']$ such that the following holds.
\begin{itemize}
 \item There is a collection of at most $M$ points $\{u_i\}$ in $A(z,r,2r)$ that separate $\partial B(z,r)$ from $\partial B(z,2r)$ in $\Upsilon_\Gamma$.
 \item For each $u_i$, the two strands intersecting at $u_i$ intersect at a point $v_i$ near $u_i$ so that if $V_i$ is the region bounded between the two strands from $u_i$ to $v_i$, then $\abs{\approxvtcs{\epsilon} \cap V_i} \le M$ and $\rmetapprox{\epsilon}{u_i}{v_i}{\Gamma} \ge M^{-1}r^{\geoexp}$.
\end{itemize}
The observation in the first paragraph and the independence across scales (see e.g.\ the proof of Proposition~\ref{pr:measure_lb}) imply that if $M$ is sufficiently large (depending on $a$), then $\p[E_{\epsilon,1}^c] \le \epsilon^{100}$.

Let $E_{\epsilon,2}$ be the event that there are at least $(\epsilon')^{-\ddouble+a}$ intersection points in $\eta_1 \cap \eta_2 \cap B(0,1/2)$ with Euclidean distance at least $\epsilon'$ from each other. By~\cite[Theorem~1.5]{mw2017intersections} we have $\p[E_{\epsilon,2}^c \cap I] \to 0$ as $\epsilon \searrow 0$.

Suppose that we are on the event $E_{\epsilon,1} \cap E_{\epsilon,2}$. Then each of the intersection points in $\eta_1 \cap \eta_2 \cap B(0,1/2)$ is contained in an annulus $A(z',r,2r)$ as in the event $E_{\epsilon,1}$. If $x$ (resp.\ $y$) is the first (resp.\ last) intersection point, then by the parallel law and the serial law for effective resistances
\[
 \rmetapprox{\epsilon}{x}{y}{\Gamma} \ge (\epsilon')^{-\ddouble+a} M^{-2}r^{\geoexp} \ge M^{-2} \epsilon^{\geoexp-\ddouble+c\rateexp+ca} .
\]
\end{proof}

\begin{proof}[Proof of Proposition~\ref{pr:rmet_tightness}]
 As we have discussed above Lemma~\ref{le:vertex_density}, the proof of~\cite[Proposition~5.17]{amy2025tightness} works verbatim in our setup. Since $V \in \metregions$ is chosen from the same probability law for each $\epsilon$, we a.s.\ have $V \in \metregions[(\epsilon)]$ for sufficiently small $\epsilon$. Therefore Lemma~\ref{le:approx_scale_ub} applies and this gives the analogue of~\cite[Corollary~5.19]{amy2025tightness} which implies Proposition~\ref{pr:rmet_tightness}.
\end{proof}

\begin{figure}[ht]
\centering
\includegraphics[width=0.4\textwidth,page=1]{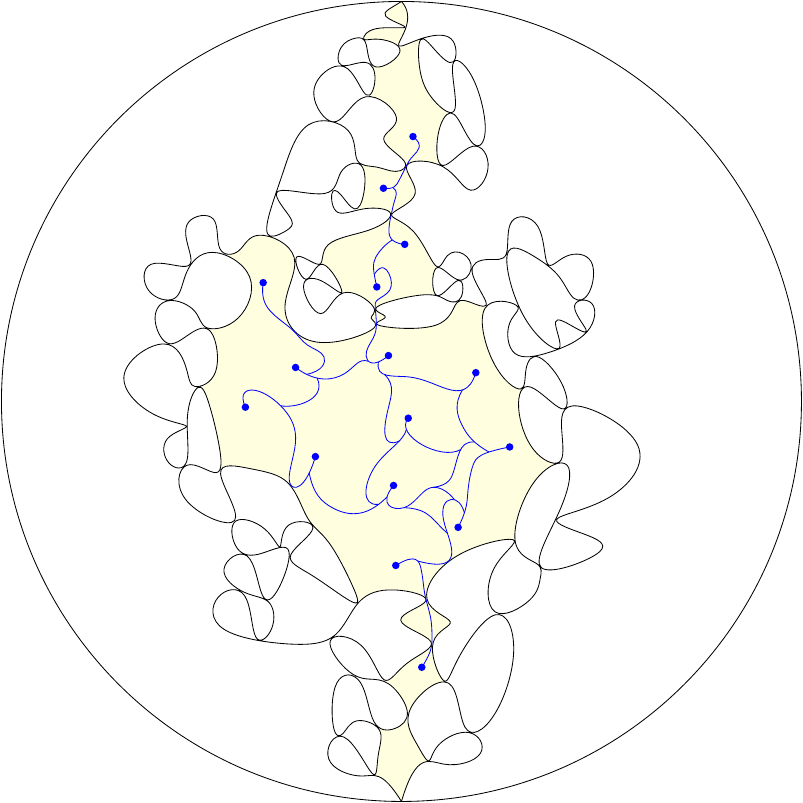}\hspace{0.03\textwidth}\includegraphics[width=0.4\textwidth,page=2]{dead_ends_median.pdf}\hspace{0.03\textwidth}\includegraphics[width=0.4\textwidth,page=3]{dead_ends_median.pdf}
\caption{\textbf{Top left:} The region $U$ between the outer boundaries of $\eta'_1$ and $\eta'_2$ is shown in yellow. This region is used to define the renormalization constant $\median{\epsilon}$. \textbf{Top right:} The additional dead ends in the region $U'$ between $\eta'_1$ and $\eta'_2$ are shown in grey. Note that the cables emanating from the vertices in $U' \setminus U$ create additional edges between the vertices in $U$. \textbf{Bottom:} The dead ends of size at most $\rmvdeadends{\epsilon}$ are shown in pink. Removing them decreases the number of additional edges.}
\label{fi:dead_ends_median}
\end{figure}

We have now shown that the family $(\median{\epsilon}^{-1}\rmetapprox{\epsilon}{\cdot}{\cdot}{\Gamma})_{\epsilon > 0}$ has subsequential limits as $\epsilon \searrow 0$, and by Theorem~\ref{thm:tightness_theorem}\eqref{it:limit_cle_met} (again with some slight modifications which we will explain in Section~\ref{se:rmet_construction} below), each subsequential limit gives rise to a \clekp{} metric. By~\cite[Theorem~1.14]{amy2025tightness}, each subsequential limit is either identically zero or a family of true metrics. However, it is not guaranteed that it is not identically zero. The reason is that the renormalization factors $\median{\epsilon}$ (see Section~\ref{se:approx_scheme_general_def}) are defined via $\rmetapproxres{\epsilon}{U}{\cdot}{\cdot}{\Gamma}$ where $U$ is the region between the \emph{outer boundaries} of two intersecting \clekp{} loops. However, we need that the limits of $\median{\epsilon}^{-1}\rmetapproxres{\epsilon}{U'}{\cdot}{\cdot}{\Gamma}$ to be non-trivial where $U' \supseteq U$ contains additional dead ends that are connected to $U$ in the gasket. In general $\rmetapproxres{\epsilon}{U'}{\cdot}{\cdot}{\Gamma}$ is smaller than $\rmetapproxres{\epsilon}{U}{\cdot}{\cdot}{\Gamma}$ since $\approxgraph[U']{\epsilon}$ contains additional cables emanating from $U' \setminus U$, thus adding extra connections between distinct vertices in $U$ (see Figure~\ref{fi:dead_ends_median}). We need an additional argument to rule out that it becomes significantly smaller.

Consider the setup described in the paragraph above~\eqref{eq:median_def}. Let $\eta'_1$ (resp.\ $\eta'_2$) be the counterflow line with angle $\angledouble+\pi/2$ (resp.\ $-\pi/2$) of $h$ from $i$ to $-i$. Then the right boundary of $\eta'_1$ agrees with $\eta_1$, and the left boundary of $\eta'_2$ agrees with $\eta_2$. Let $U' \supseteq U$ be the collection of regions in each component to the right of the segments of $\eta'_1$ and to the left of the segments of $\eta'_2$. Let $\rmvdeadends{\epsilon}$ be as in~\eqref{eq:rmvdeadends_size}, and let $\wt{U}'_{\rmvdeadends{\epsilon}} \subseteq U'$ be obtained by removing the dead ends of Euclidean diameter at most $\rmvdeadends{\epsilon}$, i.e.\ the points $z$ such that every path from $z$ not crossing $\eta'_1 \cup \eta_1$ (resp.\ $\eta'_2 \cup \eta_2$) has diameter at most $\rmvdeadends{\epsilon}$.

\begin{lemma}\label{le:nondegeneracy}
Consider the setup in the paragraph above. If $\rateexp, a_0 > 0$ in~\eqref{eq:ppp_rate},~\eqref{eq:rmvdeadends_size} are chosen small enough, then
 \[
  \p\left[ \sup_{\substack{x=\eta_1(s)=\eta_2(s')\\ y=\eta_1(t)=\eta_2(t')\\ \eta_1[s,t], \eta_2[s',t'] \subseteq B(0,3/4)}} \rmetapproxres{\epsilon}{\wt{U}'_{\rmvdeadends{\epsilon}}}{x}{y}{\Gamma} < \frac{99}{100} \sup_{\substack{x=\eta_1(s)=\eta_2(s')\\ y=\eta_1(t)=\eta_2(t')\\ \eta_1[s,t], \eta_2[s',t'] \subseteq B(0,3/4)}} \rmetapproxres{\epsilon}{U}{x}{y}{\Gamma} ,\ I \right] \to 0
  \quad\text{as } \epsilon \searrow 0 .
 \]
\end{lemma}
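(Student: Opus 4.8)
The plan is to show that the resistance between the first and last intersection points, measured in the larger region $\wt U'_{\rmvdeadends{\epsilon}}$ (which includes the dead ends of size at least $\rmvdeadends{\epsilon}$), is at least a constant multiple of the resistance measured in $U$ (between the outer boundaries only). The mechanism: although adding the dead ends can only decrease the resistance by creating extra cables in the region $U'\setminus U$, the number of such extra cables can be controlled, and by a Nash--Williams / flow-energy argument a bounded number of extra parallel channels cannot reduce a resistance by more than a bounded factor. Concretely, the intersection points $x=\eta_1(s)=\eta_2(s')$, $y=\eta_1(t)=\eta_2(t')$ with the arcs in $B(0,3/4)$ separate the region into a (random but typically large) chain of sub-bubbles along $\eta_1\cup\eta_2$, and the chain structure is preserved: each intersection point $z_i$ of $\eta_1$ and $\eta_2$ remains a cut point of the \emph{combined} geometry (the dead ends hang off the loops $\eta_1,\eta_2$ themselves, so they lie in the bubbles between consecutive $z_i$'s and do not short-circuit across the $z_i$'s). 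Hence by the series law it suffices to compare, bubble by bubble, the resistance $\rmetapproxres{\epsilon}{U}{z_i}{z_{i+1}}{\Gamma}$ of a single outer-boundary bubble with the resistance $\rmetapproxres{\epsilon}{\wt U'_{\rmvdeadends{\epsilon}}}{z_i}{z_{i+1}}{\Gamma}$ of the same bubble with its $\rmvdeadends{\epsilon}$-dead ends attached, and show the ratio is bounded below by a universal constant with high probability along the chain.

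First I would set up the event decomposition exactly as in Lemma~\ref{le:approx_scale_lb}: using~\cite[Theorem~1.5]{mw2017intersections} there are at least $(\epsilon')^{-\ddouble+a}$ intersection points at Euclidean separation $\ge\epsilon'$, so the chain is long; using the independence across scales and the moment bounds (Propositions~\ref{pr:measure_ub},~\ref{pr:measure_lb}, Lemma~\ref{le:vertex_density}) as in that proof, in most of the sub-annuli $A(z,r,2r)$ there is a ``good'' sub-bubble of definite conductance and with at most $M$ vertices, controlled uniformly in the scale. Next, I would prove the key \emph{bounded-dead-end} estimate: for a single bubble $W$ between two intersecting strands and its enlargement $\wt W$ obtained by attaching dead ends of diameter $\ge\rmvdeadends{\epsilon}$, we have $\rmetapproxres{\epsilon}{\wt W}{a}{b}{\Gamma}\ge c_0\,\rmetapproxres{\epsilon}{W}{a}{b}{\Gamma}$ for a universal $c_0>0$, with the failure probability decaying (superpolynomially in $\rmvdeadends{\epsilon}$, say). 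The proof of this estimate is the combinatorial/probabilistic heart: a dead end of diameter $\delta_*\ge\rmvdeadends{\epsilon}$ attached to $W$ can, in the cable graph $\approxgraph[\wt W]{\epsilon}$, create a new $\epsilon$-cable between two vertices $v,w\in W$ only if $v$ and $w$ each lie within $\dpathY$-distance $\epsilon$ of a common point on the boundary strand where the dead end is attached, and moreover each such new cable has $\lmet{\cdot}$-length comparable to an honest cable (it is a shortest path in some $B_{v,w}$). Thus the extra cables across the cut between $z_i$ and $z_{i+1}$ are localized near the finitely many attachment points of the $\rmvdeadends{\epsilon}$-dead ends, whose number in a unit-size region is a.s.\ finite (bounded by $\rmvdeadends{\epsilon}^{-\dcle-o(1)}$ via Proposition~\ref{pr:measure_ub}, but more usefully $O(1)$ at the scale of a good bubble since a bubble found at scale $r$ has $O(1)$ dead ends of relative size $\ge\rmvdeadends{\epsilon}/r$ with high probability by independence across scales). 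Given that the extra cables are $O(1)$ in number and of comparable length, one invokes the standard fact that adding $k$ parallel edges, each of conductance at most a constant times the conductance of the existing network, multiplies the effective conductance by at most $k+1$; equivalently, I would run a unit current flow achieving $\rmetapproxres{\epsilon}{W}{a}{b}{\Gamma}$, transport it through $\wt W$ ignoring the dead ends, and argue the detour through a dead end costs at least a constant fraction of the bubble resistance (using Proposition~\ref{pr:cle_metric_estimates} that the geodesic metric on a $\rmvdeadends{\epsilon}$-bubble is $\asymp\rmvdeadends{\epsilon}^{\geoexp}$, which is not negligible compared to $r^{\geoexp}$ when $r\asymp\epsilon$).

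I would then assemble: on the intersection of the good events, at least $(1-o(1))(\epsilon')^{-\ddouble+a}$ of the bubbles in the chain satisfy the bounded-dead-end estimate, so by the series law $\rmetapproxres{\epsilon}{\wt U'_{\rmvdeadends{\epsilon}}}{x}{y}{\Gamma}\ge c_0(1-o(1))\,\rmetapproxres{\epsilon}{U}{x}{y}{\Gamma}$, and choosing the good events with probability tending to $1$ on $I$ (which forces $\rateexp$ and $a_0$ small so the error exponents in Lemmas~\ref{le:vertex_density},~\ref{le:approx_scale_ub} and~\ref{le:approx_scale_lb} stay on the right side) gives the claim with $99/100$ in place of $c_0$ after noting $c_0(1-o(1))>99/100$ once we restrict to bubbles where the ratio is in fact close to $1$ rather than merely bounded below. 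The main obstacle I expect is the bounded-dead-end estimate at the level of a single bubble: one must rule out that many long dead ends pile up and collectively short-circuit the bubble, and make precise that each newly created cable is not only finite in number but also long enough (comparable to the bubble scale) so that it cannot carry a non-negligible share of the current at low resistance cost. This is exactly the content sketched around Figure~\ref{fi:dead_ends_median}, and I would expect to handle it by combining the independence-across-scales covering (Corollary~\ref{co:covering}) to bound the number and relative sizes of dead ends in a good bubble, with the lower bound on $\met{\cdot}{\cdot}{\Gamma}$-distances from Proposition~\ref{pr:cle_metric_estimates} to lower-bound the length of any cable routed through a dead end.
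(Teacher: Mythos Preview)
Your proposal takes a fundamentally different route from the paper's, and the gap is in the final step where you try to upgrade a bounded multiplicative ratio $c_0$ to the constant $99/100$.

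The paper does \emph{not} argue bubble by bubble. Instead it proves an \emph{additive} bound: on a high-probability event,
\[
\rmetapproxres{\epsilon}{U}{\cdot}{\cdot}{\Gamma} - \rmetapproxres{\epsilon}{\wt{U}'_{\rmvdeadends{\epsilon}}}{\cdot}{\cdot}{\Gamma} \le \epsilon^{\geoexp-2\rateexp-2a_0-o(1)},
\]
and then invokes Lemma~\ref{le:approx_scale_lb} to say the resistance itself is at least $\epsilon^{\geoexp-\ddouble+c\rateexp+o(1)}$, so the difference is negligible when $\rateexp,a_0$ are small (since $\ddouble>0$). The mechanism for the additive bound is short: there are at most $\rmvdeadends{\epsilon}^{-2-a}=\epsilon^{-2a_0-a}$ dead ends (this is exactly why the cutoff $\rmvdeadends{\epsilon}$ was introduced); each dead end is attached at a single point $z$ and only affects cables inside $\Bpath(z,2\epsilon)$; by Lemma~\ref{le:reff_contraction} (contracting that ball), the resistance drop from one dead end is at most the total cable length in $\Bpath(z,2\epsilon)$, which is $\le \epsilon^{\geoexp-2\rateexp-a}$ by Lemmas~\ref{le:vertex_density} and~\ref{le:approx_scale_ub}. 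Summing gives the bound.

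Your multiplicative scheme cannot reach $99/100$: if each bubble only satisfies $\rmetapproxres{\epsilon}{\wt W}{z_i}{z_{i+1}}{\Gamma}\ge c_0\,\rmetapproxres{\epsilon}{W}{z_i}{z_{i+1}}{\Gamma}$, the series law gives only $c_0$ globally, and your remark about ``restricting to bubbles where the ratio is close to $1$'' is circular --- that is precisely what needs proving. To rescue the approach you would have to show that the bubbles carrying dead ends contribute a negligible fraction of the \emph{total} resistance in $U$, which is again an additive statement, and at that point the paper's contraction argument via Lemma~\ref{le:reff_contraction} is both simpler and sharper than your flow-energy comparison. Note also that your ``$O(1)$ dead ends per good bubble'' heuristic is off scale: the good bubbles from Lemma~\ref{le:approx_scale_lb} live at scale $r\le\epsilon'\ll\rmvdeadends{\epsilon}$, so a single dead end is macroscopic relative to them and your per-bubble count is not the right quantity to control.
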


\begin{proof}
We are going to show that
\begin{equation}\label{eq:dead_ends_influence}
 \rmetapproxres{\epsilon}{U}{\cdot}{\cdot}{\Gamma} - \rmetapproxres{\epsilon}{\wt{U}'_{\rmvdeadends{\epsilon}}}{\cdot}{\cdot}{\Gamma}
 \le \epsilon^{\geoexp-2\rateexp-2a_0-o(1)}
\end{equation}
on an event whose probability tends to $1$ as $\epsilon \searrow 0$. This will imply the result due to Lemma~\ref{le:approx_scale_lb}.

Let $a>0$ be a small constant. Note that $\wt{U}'_{\rmvdeadends{\epsilon}} \setminus U$ consists of dead ends of Euclidean diameter at least $\rmvdeadends{\epsilon}$. By \cite[Lemma~2.17]{amy2025tightness}, the probability that there are more than $\rmvdeadends{\epsilon}^{-2-a} = \epsilon^{-2a_0-a}$ of them is $o^\infty(\epsilon)$. Let $V$ be one of them and let $z \in \partial U$ be the point where it is attached to $U$. By our construction of $\approxgraph[\wt{U}'_{\rmvdeadends{\epsilon}}]{\epsilon}$, adding $V$ only adds some cables within $\Bpath(z,2\epsilon)$. Then we can lower bound $\rmetapproxres{\epsilon}{U \cup V}{\cdot}{\cdot}{\Gamma}$ by the effective resistance of the network obtained by contracting $\Bpath(z,2\epsilon)$ to a single vertex. Suppose further that we are on the events from Lemma~\ref{le:vertex_density} and~\ref{le:approx_scale_ub}. Then the total length of all the contracted edges is at most $\epsilon^{\geoexp-2\rateexp-a}$. Therefore, by Lemma~\ref{le:reff_contraction},
\[
 \rmetapproxres{\epsilon}{U}{\cdot}{\cdot}{\Gamma} - \rmetapproxres{\epsilon}{U \cup V}{\cdot}{\cdot}{\Gamma}
 \le \epsilon^{\geoexp-2\rateexp-a} .
\]
Repeating this for every dead end $V$ in $\wt{U}'_{\rmvdeadends{\epsilon}} \setminus U$ gives~\eqref{eq:dead_ends_influence}.
\end{proof}

\subsection{Construction of weak \clekp{} resistance forms}
\label{se:rmet_construction}

Our next goal is to show that each converging subsequence of the $(\rmetapprox{\epsilon}{\cdot}{\cdot}{\Gamma})_{\epsilon > 0}$ gives rise to a weak \clekp{} resistance form. By Theorem~\ref{thm:tightness_theorem}\eqref{it:limit_cle_met} each subsequential limit is associated with a \clekp{} metric in the sense defined in Section~\ref{se:approx_scheme_general_def}. We need to review the construction of the \clekp{} metric from the subsequential limits given in~\cite[Section~6]{amy2025tightness}.

\begin{figure}[ht]
\centering
\includegraphics[width=0.5\textwidth]{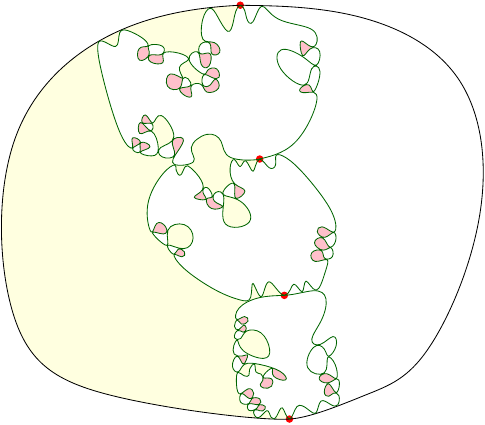}
\caption{Shown is a region $V$ and some of the loops in $\ol{V}$. The small dead ends in $V_{(\rmvdeadends{\epsilon})}$ cut off by these loops are shown in pink. When we divide $V$ into subcomponents (one is shown in yellow), we need to remove the dead ends also from the subcomponents in order to make their graph approximations match.}
\label{fi:rmet_constr_dead_ends}
\end{figure}

We will however need a slight modification of the construction. Recall that we have proved the non-degeneracy in Lemma~\ref{le:nondegeneracy} only when the dead ends of size at most $\rmvdeadends{\epsilon}$ are removed in the approximation, therefore we need to make this modification in the construction of $\rmet{\cdot}{\cdot}{\Gamma}$ below. This is also the reason why need to remove the dead ends $V_{(\rmvdeadends{\epsilon})}$ in the approximations given in Section~\ref{se:rmet_approx_scheme}. Indeed, if we split up a component of $V$ into multiple components $V_i \in \metregions$, we need to remove the same dead ends in the approximations of $\rmetres{V}{\cdot}{\cdot}{\Gamma}$ as in $\rmetres{V_i}{\cdot}{\cdot}{\Gamma}$ in order to make them compatible (see Figure~\ref{fi:rmet_constr_dead_ends}).

Consider a countable ``dense'' collection $\FQ$ of Jordan domains (e.g.\ the polygonal regions with rational vertices). For each $Q \in \FQ$ and $\epsilon > 0$, define $\wt{Q}_\epsilon$ such that
\begin{itemize}
 \item $\wt{Q}_\epsilon$ is a disjoint union of regions in $\metregions[Q]$,
 \item $\wt{Q}_\epsilon$ is separated from $\Upsilon_\Gamma \setminus \ol{\wt{Q}_\epsilon}$ by a set of points $\{z_i\}$ with $\abs{z_i-z_{i'}} \ge 5\epsilon$ for each $i \neq i'$,
 \item for each $Q_1 \Subset Q$, the set $Q_1 \cap \Upsilon_\Gamma$ is contained in $\ol{\wt{Q}_\epsilon}$ for small enough $\epsilon$.
\end{itemize}

We now define the following modification. Let $\rmvdeadends{\epsilon}$ be as in~\eqref{eq:rmvdeadends_size}. Let $\wt{Q}_{\epsilon,\rmvdeadends{\epsilon}} \subseteq \wt{Q}_\epsilon$ be obtained by removing the points $u \in \wt{Q}_\epsilon$ for which there exists $z \in \ol{\wt{Q}_\epsilon}$ such that $u$ is in a component of $\ol{\wt{Q}_\epsilon} \setminus \{z\}$ with Euclidean diameter at most $\rmvdeadends{\epsilon}$. The construction implies the following important property.

\begin{lemma}\label{le:modified_regions_monotone}
 If $Q,Q' \in \FQ$ and $\wt{Q}_\epsilon \subseteq \wt{Q}'_\epsilon$, then $\wt{Q}_{\epsilon,\rmvdeadends{\epsilon}} \subseteq \wt{Q}'_{\epsilon,\rmvdeadends{\epsilon}}$ and $\wt{Q}_{\epsilon,\rmvdeadends{\epsilon}} \setminus (\wt{Q}_{\epsilon,\rmvdeadends{\epsilon}})_{(\rmvdeadends{\epsilon})} \subseteq \wt{Q}'_{\epsilon,\rmvdeadends{\epsilon}} \setminus (\wt{Q}'_{\epsilon,\rmvdeadends{\epsilon}})_{(\rmvdeadends{\epsilon})}$.
\end{lemma}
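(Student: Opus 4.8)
The plan is to unwind the definitions and track how the operations $\wt{Q} \mapsto \wt{Q}_{\rmvdeadends{\epsilon}}$ (removing the dead ends of Euclidean diameter at most $\rmvdeadends{\epsilon}$ from the collection $\wt{Q}_\epsilon$) and $V \mapsto V \setminus V_{(\rmvdeadends{\epsilon})}$ (removing, within a region $V$, the points lying in a dead end of Euclidean diameter at most $\rmvdeadends{\epsilon}$ formed by a loop of size at least $\rmvdeadends{\epsilon}$) interact with the inclusion $\wt{Q}_\epsilon \subseteq \wt{Q}'_\epsilon$. The first containment, $\wt{Q}_{\epsilon,\rmvdeadends{\epsilon}} \subseteq \wt{Q}'_{\epsilon,\rmvdeadends{\epsilon}}$, should follow from the monotonicity of the dead-end-removal operation for $\wt{Q}_\epsilon$: if $u \in \wt{Q}_{\epsilon,\rmvdeadends{\epsilon}}$, then $u \in \wt{Q}_\epsilon \subseteq \wt{Q}'_\epsilon$, and we must check that $u$ is not removed when passing to $\wt{Q}'_{\epsilon,\rmvdeadends{\epsilon}}$. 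If $u$ were in a component of $\ol{\wt{Q}'_\epsilon} \setminus \{z\}$ of Euclidean diameter at most $\rmvdeadends{\epsilon}$ for some $z \in \ol{\wt{Q}'_\epsilon}$, then intersecting with $\ol{\wt{Q}_\epsilon}$ and using that $\ol{\wt{Q}_\epsilon} \subseteq \ol{\wt{Q}'_\epsilon}$, the $\dpathY$-connected component of $u$ in $\ol{\wt{Q}_\epsilon} \setminus \{z\}$ (if $z \in \ol{\wt{Q}_\epsilon}$) or, when $z \notin \ol{\wt{Q}_\epsilon}$, the whole relevant piece of $\ol{\wt{Q}_\epsilon}$ near $u$, is contained in that small component and hence also has diameter at most $\rmvdeadends{\epsilon}$. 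The one point requiring care is the case $z \notin \ol{\wt{Q}_\epsilon}$: then one argues that $u$ still lies in a small-diameter component of $\ol{\wt{Q}_\epsilon}$ cut off by a point of $\partial\wt{Q}_\epsilon$, using that $\wt{Q}_\epsilon$ is separated from its complement by points that are well-separated, so one of those separating points plays the role of $z$ inside $\ol{\wt{Q}_\epsilon}$.

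For the second containment, $\wt{Q}_{\epsilon,\rmvdeadends{\epsilon}} \setminus (\wt{Q}_{\epsilon,\rmvdeadends{\epsilon}})_{(\rmvdeadends{\epsilon})} \subseteq \wt{Q}'_{\epsilon,\rmvdeadends{\epsilon}} \setminus (\wt{Q}'_{\epsilon,\rmvdeadends{\epsilon}})_{(\rmvdeadends{\epsilon})}$, I would take a point $u$ in the left-hand side and show it survives both prunings on the right. By the first part it lies in $\wt{Q}'_{\epsilon,\rmvdeadends{\epsilon}}$, so it remains only to show $u \notin (\wt{Q}'_{\epsilon,\rmvdeadends{\epsilon}})_{(\rmvdeadends{\epsilon})}$, i.e.\ that $u$ is not contained in a dead end of diameter at most $\rmvdeadends{\epsilon}$ formed within $\wt{Q}'_{\epsilon,\rmvdeadends{\epsilon}}$ by a loop $\CL$ of diameter at least $\rmvdeadends{\epsilon}$. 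The key observation is that the dead end structure is \emph{local}: whether $u$ lies in a dead end of diameter at most $\rmvdeadends{\epsilon}$ cut off by a loop $\CL$ and a point $z \in \CL$ depends only on the configuration within a $\rmvdeadends{\epsilon}$-neighborhood of $u$. Since $\wt{Q}_{\epsilon,\rmvdeadends{\epsilon}}$ and $\wt{Q}'_{\epsilon,\rmvdeadends{\epsilon}}$ agree near $u$ away from the separating points (which, by the definition of $\wt{Q}_\epsilon$, are $5\epsilon$-separated and, crucially, $\epsilon < \rmvdeadends{\epsilon}$ so these are at macroscopic distance compared to nothing — rather one uses that the extra regions in $\wt{Q}'_\epsilon \setminus \wt{Q}_\epsilon$ are attached through the separating points $z_i$), the local dead-end structure at $u$ is the same in both, so $u \notin (\wt{Q}_{\epsilon,\rmvdeadends{\epsilon}})_{(\rmvdeadends{\epsilon})}$ implies $u \notin (\wt{Q}'_{\epsilon,\rmvdeadends{\epsilon}})_{(\rmvdeadends{\epsilon})}$. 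One subtlety: a loop $\CL$ that is not contained in $\ol{\wt{Q}_\epsilon}$ but is contained in $\ol{\wt{Q}'_\epsilon}$ could in principle create a new dead end; but such a loop, being of diameter at least $\rmvdeadends{\epsilon}$ and attached via the small-diameter pruning, would also have to pass near $u$, and one checks this situation is consistent with $u$ already being pruned on the left-hand side (or cannot occur because $u$ is at $\dpathY$-distance more than $\rmvdeadends{\epsilon}$ from $\partial\wt{Q}_\epsilon$, so the relevant loop lies entirely inside $\ol{\wt{Q}_\epsilon}$).

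I expect the main obstacle to be the bookkeeping around loops and dead ends straddling the boundary $\partial\wt{Q}_\epsilon$ — precisely the situation illustrated in Figure~\ref{fi:mod_not_monotone}, where a component of $V$ can become part of $V'_{(\rmvdeadends{\epsilon})}$. The whole point of introducing the intermediate pruning $\wt{Q}_{\epsilon,\rmvdeadends{\epsilon}}$ is to remove these problematic small regions \emph{before} they can be reclassified, so the proof is really a verification that this intermediate pruning is ``large enough'': once one has removed all dead ends of diameter at most $\rmvdeadends{\epsilon}$ (whether cut off by a loop or by a separating point), any point that survives is at macroscopic distance from the relevant boundary features and its classification is stable under enlargement of the region. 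I would make this precise by noting that a surviving point $u \in \wt{Q}_{\epsilon,\rmvdeadends{\epsilon}}$ has a $\dpathY$-neighborhood of radius comparable to $\rmvdeadends{\epsilon}$ inside $\wt{Q}_{\epsilon,\rmvdeadends{\epsilon}}$ that is not separated off by any single point, and hence this neighborhood, together with the loops of $\Gamma$ meeting it, is unaffected by passing from $\wt{Q}_\epsilon$ to $\wt{Q}'_\epsilon$; since $\rmvdeadends{\epsilon}$-sized dead ends are detected within exactly such a neighborhood, both prunings act identically there, which is what is needed. Combining the two containments completes the proof. $\qed$
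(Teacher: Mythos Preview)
The paper does not actually give a proof of this lemma --- it is stated immediately after the construction of $\wt{Q}_{\epsilon,\rmvdeadends{\epsilon}}$ with the remark ``The construction implies the following important property,'' and no further justification is provided. So there is nothing to compare against; your proposal is an attempt to supply the omitted verification.

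Your argument for the first containment is essentially correct and handles both cases ($z \in \ol{\wt{Q}_\epsilon}$ and $z \notin \ol{\wt{Q}_\epsilon}$) properly. For the second containment, however, the ``locality'' framing obscures the actual mechanism and leaves the key case vague. The cleaner observation is this: the pruning $\wt{Q}_\epsilon \to \wt{Q}_{\epsilon,\rmvdeadends{\epsilon}}$ removes every point $u$ lying in a component of $\ol{\wt{Q}_\epsilon} \setminus \{z\}$ of diameter at most $\rmvdeadends{\epsilon}$ for \emph{any} $z \in \ol{\wt{Q}_\epsilon}$, whereas membership in $(\wt{Q}'_{\epsilon,\rmvdeadends{\epsilon}})_{(\rmvdeadends{\epsilon})}$ only requires such a $z$ lying on a large loop $\CL \subseteq \ol{\wt{Q}'_{\epsilon,\rmvdeadends{\epsilon}}}$. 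If $u \in (\wt{Q}'_{\epsilon,\rmvdeadends{\epsilon}})_{(\rmvdeadends{\epsilon})}$ is witnessed by such a pair $(\CL,z)$ with dead end $K$ of diameter $\le \rmvdeadends{\epsilon}$, then any admissible path from $u$ in $\ol{\wt{Q}_\epsilon}$ must pass through $z$ to leave $K$; hence either $z \in \ol{\wt{Q}_\epsilon}$ and $u$ is cut off by $z$ in $\ol{\wt{Q}_\epsilon}$, or $z \notin \ol{\wt{Q}_\epsilon}$ and the entire component of $u$ in $\ol{\wt{Q}_\epsilon}$ lies inside $K$. Either way $u \notin \wt{Q}_{\epsilon,\rmvdeadends{\epsilon}}$, a contradiction. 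Note that this does not require the hypothesis $u \notin (\wt{Q}_{\epsilon,\rmvdeadends{\epsilon}})_{(\rmvdeadends{\epsilon})}$ at all --- you never actually used it, and your discussion of ``new loops passing near $u$'' and comparing neighborhoods is unnecessary once you argue directly from the cut-point structure.
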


We will also need the following.

\begin{lemma}\label{le:neighborhood_compatible}
 Let $V \in \metregions$ and $Q,Q' \in \FQ$ such that $V \Subset Q \Subset Q'$, and suppose that for each $u \in Q' \cap \Upsilon_\Gamma \setminus \ol{V}$ there is $z \in \partial V$ that separates $u$ from $V$ in $Q' \cap \Upsilon_\Gamma$. Let $r < \distE(V,\partial Q)$ and let $V_r$ be the Euclidean $r$-neighborhood of $V$. Then $\wt{Q}_{\epsilon,\rmvdeadends{\epsilon}} \cap V_r = \wt{Q}'_{\epsilon,\rmvdeadends{\epsilon}} \cap V_r$ for sufficiently small $\epsilon$.
\end{lemma}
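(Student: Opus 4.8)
The plan is to show that the two modified region families agree on a neighborhood of $V$ once $\epsilon$ is small enough, by tracking how the construction of $\wt{Q}_\epsilon$ and of the dead-end removal depends on the local geometry near $V$. First I would recall the three defining properties of $\wt{Q}_\epsilon$: it is a disjoint union of regions in $\metregions[Q]$, it is separated from $\Upsilon_\Gamma \setminus \ol{\wt{Q}_\epsilon}$ by a $5\epsilon$-separated set of points, and for each $Q_1 \Subset Q$ the set $Q_1 \cap \Upsilon_\Gamma$ is eventually contained in $\ol{\wt{Q}_\epsilon}$. The idea is that since $V \Subset Q$ and $r < \distE(V,\partial Q)$, for all small $\epsilon$ the set $V_r \cap \Upsilon_\Gamma$ is contained in $\ol{\wt{Q}_\epsilon}$ (using the third property with a fixed $Q_1$ satisfying $V_r \Subset Q_1 \Subset Q$). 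The same holds for $\wt{Q}'_\epsilon$ since $Q \Subset Q'$, so $V_r \cap \Upsilon_\Gamma \subseteq \ol{\wt{Q}_\epsilon} \cap \ol{\wt{Q}'_\epsilon}$.

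The next step is to argue that within $V_r$, the region $\wt{Q}_\epsilon$ and $\wt{Q}'_\epsilon$ actually coincide, not just both contain $V_r \cap \Upsilon_\Gamma$. Here I would use the separation hypothesis: for each $u \in Q' \cap \Upsilon_\Gamma \setminus \ol{V}$ there is $z \in \partial V$ separating $u$ from $V$ in $Q' \cap \Upsilon_\Gamma$. Combined with the definition of $\metregions$ (regions are unions of complementary components of finitely many loops, with simply connected completion), this means that the portion of $\wt{Q}_\epsilon$ and $\wt{Q}'_\epsilon$ reaching into $V_r$ is determined by the loops of $\Gamma$ of diameter at least a fixed size that intersect a bounded neighborhood of $V$, and these loops are the same for both. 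More precisely, the boundary points $\{z_i\}$ that cut off $\wt{Q}_\epsilon$ (resp.\ $\wt{Q}'_\epsilon$) from the rest of the gasket and that are relevant to $V_r$ lie outside $V_r$ for small $\epsilon$ — this is where I would invoke that $V$ is compactly contained, so its neighborhood $V_r$ has positive $\dpathY$-distance to $\partial Q$, and the cut points separating $V$ from the exterior can be taken near $\partial V \cup \partial Q$ rather than inside $V_r$. Thus $\wt{Q}_\epsilon \cap V_r = \wt{Q}'_\epsilon \cap V_r$ for small $\epsilon$.

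Finally I would transfer this equality to the modified regions $\wt{Q}_{\epsilon,\rmvdeadends{\epsilon}}$ and $\wt{Q}'_{\epsilon,\rmvdeadends{\epsilon}}$. The dead-end removal that produces $\wt{Q}_{\epsilon,\rmvdeadends{\epsilon}}$ from $\wt{Q}_\epsilon$ deletes points $u$ lying in a component of $\ol{\wt{Q}_\epsilon} \setminus \{z\}$ of Euclidean diameter at most $\rmvdeadends{\epsilon}$. For $u \in V_r \cap \Upsilon_\Gamma$ with $\dpathY$-distance at least $\rmvdeadends{\epsilon}$ from $\partial V_r$, whether $u$ is removed depends only on $\wt{Q}_\epsilon$ restricted to a $\rmvdeadends{\epsilon}$-neighborhood of $u$, which for small $\epsilon$ (since $\rmvdeadends{\epsilon} = \epsilon^{a_0} \to 0$) lies inside $V_r$ where the two regions already agree. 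For $u$ near $\partial V_r$ the argument is a bit more delicate, but one can enlarge $r$ slightly: apply the previous step with some $r' \in (r, \distE(V,\partial Q))$ to get $\wt{Q}_\epsilon \cap V_{r'} = \wt{Q}'_\epsilon \cap V_{r'}$, and then for $\epsilon$ small enough that $\rmvdeadends{\epsilon} < r' - r$, the dead-end removal decision for every $u \in V_r$ is determined inside $V_{r'}$. Hence $\wt{Q}_{\epsilon,\rmvdeadends{\epsilon}} \cap V_r = \wt{Q}'_{\epsilon,\rmvdeadends{\epsilon}} \cap V_r$ for all sufficiently small $\epsilon$, which is the claim.

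The main obstacle I expect is making rigorous the claim that the cut points $\{z_i\}$ defining $\wt{Q}_\epsilon$ (and those separating $u$ from $V$) can be chosen to avoid $V_r$ for small $\epsilon$, i.e.\ that the ``boundary effects'' of the construction do not intrude into a fixed neighborhood of a compactly contained region. This requires carefully combining the $5\epsilon$-separation property of the $\{z_i\}$ with the fact that $V \Subset Q$ and with the structure of $\metregions$; in particular one has to rule out that, as $\epsilon \to 0$, the separating points cluster near some point of $V_r$. This should follow from the same independence-across-scales / loop-counting estimates used elsewhere in Section~\ref{sec:weak_cle_rform} (e.g.\ the bounds on the number of large dead ends and macroscopic loops crossing a fixed annulus), but it is the step that needs the most care.
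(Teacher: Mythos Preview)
Your overall structure is right: first show $\wt{Q}_\epsilon \cap V_r = \wt{Q}'_\epsilon \cap V_r$, then transfer to the dead-end-removed versions. Your third step is essentially the paper's argument: a dead end of diameter at most $\rmvdeadends{\epsilon}$ containing a point of $V_r$ lies in $V_{r+\rmvdeadends{\epsilon}}$, hence cannot meet $\wt{Q}'_\epsilon \setminus \wt{Q}_\epsilon$ once $\epsilon$ is small enough that $V_{r+\rmvdeadends{\epsilon}} \cap \Upsilon_\Gamma \subseteq \ol{\wt{Q}_\epsilon}$. The paper phrases it as two inclusions (dead ends in $\wt{Q}'_\epsilon$ are automatically dead ends in the subset $\wt{Q}_\epsilon$, and the converse via the diameter bound), but your enlargement $r \mapsto r'$ accomplishes the same thing.

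Where you go astray is your second step and your assessment of the ``main obstacle.'' The paper's argument for $\wt{Q}_\epsilon \cap V_r = \wt{Q}'_\epsilon \cap V_r$ is one line and entirely deterministic: the construction of the $\wt{Q}_\epsilon$ is monotone in $Q$, so $\wt{Q}_\epsilon \subseteq \wt{Q}'_\epsilon$ for small $\epsilon$ whenever $Q \Subset Q'$, and then the separation hypothesis (every $u \in Q' \cap \Upsilon_\Gamma \setminus \ol{V}$ is cut off from $V$ by a single point of $\partial V$) forces equality on $V_r$. You never invoke this monotonicity, and instead try to argue that the construction is ``local near $V$,'' worrying about where the separating points $\{z_i\}$ sit and whether they cluster in $V_r$. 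That locality route could perhaps be made to work, but it is unnecessarily indirect, and your proposed remedy---independence-across-scales and loop-counting estimates---is simply not needed. This lemma has no probabilistic content; it is a bookkeeping statement about the specific construction of $\wt{Q}_\epsilon$. Once you use $\wt{Q}_\epsilon \subseteq \wt{Q}'_\epsilon$, the step you flagged as delicate becomes immediate.
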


\begin{proof}
 We clearly have $\wt{Q}_\epsilon \subseteq \wt{Q}'_\epsilon$ for sufficiently small $\epsilon$, and the additional condition implies that $\wt{Q}_\epsilon \cap V_r = \wt{Q}'_\epsilon \cap V_r$. We need to argue that $V_r \cap \wt{Q}_\epsilon \setminus \wt{Q}_{\epsilon,\rmvdeadends{\epsilon}} = V_r \cap \wt{Q}'_\epsilon \setminus \wt{Q}'_{\epsilon,\rmvdeadends{\epsilon}}$. Clearly, we have $V_r \cap \wt{Q}'_\epsilon \setminus \wt{Q}'_{\epsilon,\rmvdeadends{\epsilon}} \subseteq V_r \cap \wt{Q}_\epsilon \setminus \wt{Q}_{\epsilon,\rmvdeadends{\epsilon}}$ since dead ends in $\wt{Q}'_\epsilon$ are also dead ends in $\wt{Q}_\epsilon \subseteq \wt{Q}'_\epsilon$. Conversely, if $u \in V_r \cap \wt{Q}_\epsilon \setminus \wt{Q}_{\epsilon,\rmvdeadends{\epsilon}}$, it is contained in a dead end contained in $V_{r+\rmvdeadends{\epsilon}}$. Therefore, if $\epsilon$ is sufficiently small so that $V_{r+\rmvdeadends{\epsilon}} \cap \Upsilon_\Gamma \subseteq \ol{\wt{Q}_\epsilon}$, the dead end does not touch $\wt{Q}'_\epsilon \setminus \wt{Q}_\epsilon$, hence is also contained in $\wt{Q}'_\epsilon \setminus \wt{Q}'_{\epsilon,\rmvdeadends{\epsilon}}$.
\end{proof}

Let $R^Q_\epsilon = \median{\epsilon}^{-1}\rmetapproxres{\epsilon}{\wt{Q}_{\epsilon,\rmvdeadends{\epsilon}}}{\cdot}{\cdot}{\Gamma}$. By Proposition~\ref{pr:rmet_tightness}, every sequence of $\epsilon \searrow 0$ contains a subsequence $(\epsilon_n)$ such that the joint law of $(\Upsilon_\Gamma, (R^Q_{\epsilon_n})_{Q \in \FQ})$ converges (where the convergence of $R^Q_{\epsilon_n}$ is in the sense that its restriction to each fixed $\wt{Q}_{\epsilon_0,\rmvdeadends{\epsilon_0}}$ converges in the topology defined in Section~\ref{subsec:tightness_setup}). Since $\lim_{\epsilon \to 0} \rmvdeadends{\epsilon} = 0$, the limit $R^Q$ is a random function defined on $Q \cap \Upsilon_\Gamma$.

Suppose that $(\Upsilon_\Gamma, (R^Q)_{Q \in \FQ})$ has the limiting law. We now explain carefully that the construction in \cite[Section~6]{amy2025tightness} still gives us a \clekp{} metric, despite the modified construction and the additional condition required in the Lemmas~\ref{le:approx_monotonicity} and~\ref{le:approx_compatibility}. First, thanks to Lemma~\ref{le:modified_regions_monotone}, the analogue of \cite[Lemma~6.1(ii)]{amy2025tightness} still holds. That is, if $Q \Subset Q'$, then $R^{Q'} \le R^Q$ on $Q \cap \Upsilon_\Gamma$. Therefore we can define
\[
 \rmetres{V}{x}{y}{\Gamma} = \max_{Q \in \FQ,\, Q \Supset V} R^Q(x,y) ,\quad x,y \in \ol{V} \cap \Upsilon_\Gamma
\]
for each $V \in \metregions$. Moreover, thanks to Lemma~\ref{le:neighborhood_compatible}, the maximum above is attained and we have $\rmetres{V}{x}{y}{\Gamma} = R^Q(x,y)$ on $\ol{V} \cap \Upsilon_\Gamma$ when $Q \Supset V$ is contained in a sufficiently small neighborhood of $\ol{V}$.

We now explain that \cite[Theorem~6.2]{amy2025tightness} also holds in the present case, i.e.\ $(\rmetres{V}{\cdot}{\cdot}{\Gamma})_{V \in \metregions}$ satisfies the definition of a \clekp{} metric given in Section~\ref{se:approx_scheme_general_def} with $\epsilon=0$. The proof of the compatibility \cite[Proposition~6.4]{amy2025tightness} goes through thanks to Lemma~\ref{le:neighborhood_compatible}. The remaining proofs stay unchanged, only the proof of the generalized parallel law \cite[Proposition~6.13]{amy2025tightness} requires an additional argument. Let $x,y,z_1,\ldots,z_N \in \ol{V} \cap \Upsilon_\Gamma$ be such that $x,y$ are separated in $\ol{V} \cap \Upsilon_\Gamma \setminus\{z_1,\ldots,z_N\}$, and let $V_x \in \metregions$ be the minimal region such that $\ol{V_x}$ contains the connected component of $\ol{V} \cap \Upsilon_\Gamma \setminus \{z_1,\ldots,z_N\}$ containing $x$. Let $Q \in \FQ$, $Q \Supset V$, and let $\ol{\wt{V}_x}$ contain a small neighborhood of the connected component of $Q \cap \Upsilon_\Gamma \setminus \{z_1,\ldots,z_N\}$ containing $x$. Let $Q_x \in \FQ$ be such that $Q_x \Supset \wt{V}_x$. In general, $(\wt{Q}_x)_{\epsilon,\rmvdeadends{\epsilon}}$ will not contain $\wt{Q}_{\epsilon,\rmvdeadends{\epsilon}} \cap \wt{V}_x$ since there are components of $(\wt{Q}_x)_\epsilon \setminus (\wt{Q}_x)_{\epsilon,\rmvdeadends{\epsilon}}$ bounded by the loops $\CL_1,\ldots,\CL_m \subseteq \ol{V}$ intersecting $\{z_1,\ldots,z_N\}$. However, if $\rmvdeadends{\epsilon} < \min_i \diamE(\CL_i)$, then $\CL_1,\ldots,\CL_m \in \Gamma_{V,\rmvdeadends{\epsilon}}$. Therefore the components of $(\wt{Q}_x)_\epsilon \setminus (\wt{Q}_x)_{\epsilon,\rmvdeadends{\epsilon}}$ bounded by each $\CL_i$ are contained in $V_{(\rmvdeadends{\epsilon})}$, hence Lemma~\ref{le:approx_parallel_law} still applies to the approximations to $\rmetres{V}{\cdot}{\cdot}{\Gamma}$ and $\rmetres{V_x}{\cdot}{\cdot}{\Gamma}$.

Now, we conclude by~\cite[Proposition~6.14]{amy2025tightness} that each $\rmetres{V}{\cdot}{\cdot}{\Gamma}$ is a continuous (with respect to $\dpathY[\ol{V}]$) metric on $\ol{V} \cap \Upsilon_\Gamma$ provided they are not all identically zero (which we have verified in Lemma~\ref{le:nondegeneracy}, noting that the setup there is absolutely continuous with respect to the setup in Section~\ref{subsec:tightness_setup} as explained in~\cite[Section~3.1]{amy2025tightness}).

The remainder of the subsection is dedicated to showing the following result.

\begin{proposition}\label{pr:lim_weak_rform}
 Let $(\rmetres{V}{\cdot}{\cdot}{\Gamma})_{V \in \metregions}$ be the collection of metrics constructed just above (for some subsequence $(\epsilon_n)$). Then we have for each $V \in \metregions$ that $\rmetres{V}{\cdot}{\cdot}{\Gamma}$ is a resistance metric on~$\ol{V} \cap \Upsilon_\Gamma$. The associated family of resistance forms $(\rformres{V}{\cdot}{\cdot}{\Gamma})_{V \in \metregions}$ is a weak \clekp{} resistance form.
\end{proposition}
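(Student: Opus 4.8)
The plan is to verify that the limiting family $(\rmetres{V}{\cdot}{\cdot}{\Gamma})_{V \in \metregions}$ satisfies the equivalent characterization in Proposition \ref{pr:cle_rmet_char}, and simultaneously that each $\rmetres{V}{\cdot}{\cdot}{\Gamma}$ is a genuine resistance metric. The key observation is that each approximation $\rmetapproxres{\epsilon}{V}{\cdot}{\cdot}{\Gamma}$ is by construction the effective resistance of a cable graph (hence a resistance metric on a subset, with the convention that it is $+\infty$ off the graph and extended by the nearest-point interpolation), and resistance metrics are stable under the limits we are taking. Concretely, for a fixed finite set $B \subseteq \ol{V} \cap \Upsilon_\Gamma$, the restriction $\rmetapproxres{\epsilon}{V}{\cdot}{\cdot}{\Gamma}|_{B \times B}$ (after throwing away $\epsilon$ small enough that every point of $B$ is within $\epsilon$ of the graph, which happens a.s.\ on the non-degeneracy event of Lemma \ref{le:vertex_density}) is the effective resistance of a finite electrical network on $B$ obtained by tracing. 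Since $\median{\epsilon_n}^{-1}\rmetapproxres{\epsilon_n}{\wt{Q}_{\epsilon_n,\rmvdeadends{\epsilon_n}}}{\cdot}{\cdot}{\Gamma}$ converges to $R^Q$ restricted to $B \times B$ in the GHf topology, which for a fixed finite point set is just pointwise convergence, and since the limit $\rmetres{V}{x}{y}{\Gamma}$ is strictly positive for $x \neq y$ (the metric is a true metric by \cite[Proposition~6.14]{amy2025tightness}, using Lemma \ref{le:nondegeneracy}), Lemma \ref{le:reff_lim} applies: the limiting edge conductances on $B$ exist and $\rmetres{V}{\cdot}{\cdot}{\Gamma}|_{B \times B}$ is the effective resistance associated with them. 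As $B$ was an arbitrary finite subset, this is exactly Definition \ref{def:resistance_metric}, so $\rmetres{V}{\cdot}{\cdot}{\Gamma}$ is a resistance metric on $\ol{V} \cap \Upsilon_\Gamma$. Its associated resistance form $(\rformres{V}{\cdot}{\cdot}{\Gamma}, \rfdomainres{V}{\Gamma})$ then exists by the general correspondence recalled in Section \ref{se:rmet_rform} (using that $(\ol{V} \cap \Upsilon_\Gamma, \dpathY[\ol{V}])$, hence $(\ol{V} \cap \Upsilon_\Gamma, \rmetres{V}{\cdot}{\cdot}{\Gamma})$, is separable/compact).

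It remains to verify properties \eqref{it:rmet_cont}--\eqref{it:rmet_cut_loop} of Proposition \ref{pr:cle_rmet_char}. Properties \eqref{it:rmet_cont}, \eqref{it:rmet_markov}, \eqref{it:rmet_translation} are exactly the continuity, Markovian property, and translation invariance of a \clekp{} metric established by the construction (the Markovian property and translation invariance are ``hard-wired'' as noted after Proposition \ref{pr:rmet_tightness}, being inherited from $\meas{\cdot}{\Gamma}$ and $\met{\cdot}{\cdot}{\Gamma}$ and preserved under weak limits, and continuity is \cite[Proposition~6.14]{amy2025tightness}). Property \eqref{it:rmet_compatibility} is the compatibility with $\epsilon = 0$, i.e.\ \cite[Proposition~6.4]{amy2025tightness} in the modified construction, which was argued above to still hold thanks to Lemma \ref{le:neighborhood_compatible}. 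For \eqref{it:rmet_cut_point}, one passes the series law to the limit: in the approximation with $z \in \partial V_1 \cap \partial V_2$, the point $z$ is a cut point of $\approxgraph[V_1 \cup V_2]{\epsilon}$ separating $\approxgraph[V_1]{\epsilon}$ from $\approxgraph[V_2]{\epsilon}$ (once $\epsilon$ is small compared to the separation of the other loops), so the finite-network effective resistance is additive by Lemma \ref{le:reff_cut_point}; taking $\epsilon_n \to 0$ and normalizing gives $\rmetres{V_1 \cup V_2}{x}{y}{\Gamma} = \rmetres{V_1}{x}{z}{\Gamma} + \rmetres{V_2}{z}{y}{\Gamma}$. (Alternatively one invokes the series law for \clekp{} metrics with $\epsilon = 0$ together with \eqref{it:rmet_compatibility}, exactly as in Lemma \ref{le:weak_rmet_series_law}.)

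The main work is property \eqref{it:rmet_cut_loop}, the ``cut-loop'' gluing at the level of edge conductances. First one fixes the countable dense set $\{z_m\}$: take it to contain a dense subset of $\ell \cap \ell'$ for every non-overlapping pair of rational-time loop segments $\ell, \ell'$ of loops in $\Gamma$, exactly as in the proof of Proposition \ref{pr:cle_rmet_char}; for a loop $\CL$ contained in $\ol{V}$ and intersecting $\partial V$, the intersection $\CL \cap \partial V$ is then dense in $\{z_m\}$, and one picks any two distinct $a, b \in \CL \cap \partial V \cap \{z_m\}$. Now the point is that in the cable-graph approximation $\approxgraph[V]{\epsilon}$, once $\epsilon < \rmvdeadends{\epsilon}$ is small enough that $\CL \in \Gamma_{V,\rmvdeadends{\epsilon}}$, the loop $\CL$ genuinely separates $\approxgraph[V_{a,b}]{\epsilon}$ from $\approxgraph[V_{b,a}]{\epsilon}$ inside $\approxgraph[V]{\epsilon}$ except for cables crossing $\CL$ near $a$ or $b$ — but a cable has Euclidean diameter at most $\epsilon$, so for $m$ large (so that $a$ and $b$ are $\epsilon$-separated from all the other points of $A = \{z_1,\dots,z_m\}$ used, which one can arrange since $a \neq b$ and $\CL \cap \partial V$ accumulates only on $\CL$) the only vertices of $A$ reachable across $\CL$ are $a$ and $b$ themselves. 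Thus $\{a, b\}$ is a cut set of the finite network $\approxgraph[V]{\epsilon}$ traced on $A$, separating it into the traced networks on $A_{a,b}$ and $A_{b,a}$, and Lemma \ref{le:weights_gluing} gives the claimed conductance identity at the approximate level. Passing to the limit $\epsilon_n \to 0$: the traced conductances on $A$ for $\rmetres{V}{\cdot}{\cdot}{\Gamma}$, $\rmetres{V_{a,b}}{\cdot}{\cdot}{\Gamma}$, $\rmetres{V_{b,a}}{\cdot}{\cdot}{\Gamma}$ are the limits of the corresponding approximate conductances by Lemma \ref{le:reff_lim} applied on the finite sets $A$, $A_{a,b}$, $A_{b,a}$ (using again that the limiting resistance metrics are true metrics so no conductance blows up), and the additive relation is preserved under the limit. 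This establishes \eqref{it:rmet_cut_loop}, completing the verification via Proposition \ref{pr:cle_rmet_char} that $(\rformres{V}{\cdot}{\cdot}{\Gamma})_{V \in \metregions}$ is a weak \clekp{} resistance form.

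The step I expect to be the main obstacle is \eqref{it:rmet_cut_loop}: one must control the behavior of the cables near the two special points $a, b$ and rule out that, in the limit, extra conductance leaks across $\CL$ at points of $\CL \cap \partial V$ other than $a,b$. This requires the careful choice of $\{z_m\}$ as intersection points of rational loop segments together with the bound $\diamE(\text{cable}) \le \epsilon$, and is precisely why $\{z_m\}$ cannot be taken to be an arbitrary dense set; the rest of the properties are either inherited from the construction verbatim or follow from the already-established series/compatibility laws for \clekp{} metrics with $\epsilon = 0$.
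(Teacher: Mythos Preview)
Your overall strategy is exactly the paper's: verify that each limit is a resistance metric via Lemma~\ref{le:reff_lim} on finite sets, and then check the conditions of Proposition~\ref{pr:cle_rmet_char}, with items \eqref{it:rmet_cont}--\eqref{it:rmet_cut_point} coming for free from the construction in \cite{amy2025tightness} and the real work being \eqref{it:rmet_cut_loop}. However, there is a genuine gap in how you pass to the limit.

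You argue as though the approximations $\median{\epsilon_n}^{-1}\rmetapproxres{\epsilon_n}{\cdot}{\cdot}{\cdot}$ live on the \emph{same} gasket $\Upsilon_\Gamma$ as the limit, so that ``GHf convergence on a fixed finite set is just pointwise convergence'' and so that statements about cables in $\approxgraph[V]{\epsilon}$ (built from $\Gamma$) directly control the limiting traced network. But the subsequential limit is only a limit in law; after Skorokhod coupling the approximations live on gaskets $\Upsilon_{\Gamma_n}$ with $\Gamma_n \neq \Gamma$, and one only has GHf convergence $(\Upsilon_{\Gamma_n}, R^Q_{\epsilon_n}) \to (\Upsilon_\Gamma, R^Q)$ between different spaces. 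In particular, a point $x \in \ol{V} \cap \Upsilon_\Gamma$ must be approximated by a sequence $x_n \in \Upsilon_{\Gamma_n}$, and your key claim that ``$\{a,b\}$ is a cut set of the traced network'' cannot be read off from the cable graph on $\Gamma$: the cable graphs at level $\epsilon_n$ are built on $\Gamma_n$, where the loop $\CL$ and the separation structure are not literally present.

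This is precisely why the topology of Section~\ref{subsec:tightness_setup} carries, in addition to $(\gasketspace{\Gamma},\dpathY,\projmap{\Gamma},\FR)$, the indicator functions $\one_{A_n}$ of the sets of separating tuples. The paper (Lemma~\ref{le:internal_rmetrics}) uses this extra information to choose the approximating points $Z_{\epsilon_n} \subseteq \Upsilon_{\Gamma_n}$ so that they \emph{still separate} the two sides in the approximating gasket; only then does one know that the corresponding traced conductances vanish at level~$\epsilon_n$, and hence in the limit. Your cable-diameter argument would be fine if the convergence were along a fixed $\Gamma$, but that is not what has been proved. The fix is to invoke the $A_n$-component of the GHf topology exactly as the paper does; without it, the cut-loop step does not go through.
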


In the remainder of this subsection, we assume that the random vector $(\Upsilon_\Gamma, (R^Q)_{Q \in \FQ})$ is coupled with the sequence $(\Upsilon_{\Gamma_n}, (R^Q_{\epsilon_n})_{Q \in \FQ})$ so that it converges almost surely. Whenever we consider the GHf convergence $R^Q_{\epsilon_n} \to R^Q$, we implicitly assume that they are isometrically embedded in a common compact metric space (cf.\ \cite[Lemma~A.2]{amy2025tightness}). To ease notation, we will just write $\epsilon$ instead of $\epsilon_n$.

\begin{lemma}\label{le:lim_resistance_metric}
 Almost surely, for each $V \in \metregions$ we have that $\rmetres{V}{\cdot}{\cdot}{\Gamma}$ is a (non-degenerate) resistance metric on~$\ol{V} \cap \Upsilon_\Gamma$.
\end{lemma}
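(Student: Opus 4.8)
The plan is to show that $\rmetres{V}{\cdot}{\cdot}{\Gamma}$ is a resistance metric by exhibiting, for each finite $B \subseteq \ol{V} \cap \Upsilon_\Gamma$, a weight function $w_B\colon B \times B \to [0,\infty)$ whose effective resistance equals $\rmetres{V}{\cdot}{\cdot}{\Gamma}|_{B \times B}$. The natural candidate is obtained as a limit of the discrete weights coming from the approximating cable graphs. Concretely, fix $Q \in \FQ$ with $Q \Supset V$ contained in a small enough neighborhood of $\ol{V}$ so that $\rmetres{V}{\cdot}{\cdot}{\Gamma} = R^Q$ on $\ol{V}\cap\Upsilon_\Gamma$ (using Lemma~\ref{le:neighborhood_compatible}). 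For each $\epsilon = \epsilon_n$, the function $\rmetapproxres{\epsilon}{\wt{Q}_{\epsilon,\rmvdeadends{\epsilon}}}{\cdot}{\cdot}{\Gamma}$ is, by construction, the effective resistance of a finite electrical network (the reduced cable graph $\approxgraph[\wt{Q}_{\epsilon,\rmvdeadends{\epsilon}}]{\epsilon}$), so its restriction to any finite subset is automatically an effective resistance metric on that subset with some nonnegative weights. After rescaling by $\median{\epsilon}^{-1}$, this remains an effective resistance metric (scaling the weights by $\median{\epsilon}$). The first step is therefore to pick a sequence of finite vertex sets $B_\epsilon \subseteq \approxgraph[\wt{Q}_{\epsilon,\rmvdeadends{\epsilon}}]{\epsilon}$ converging (in the GHf embedding) to a given finite $B \subseteq \ol{V}\cap\Upsilon_\Gamma$; this is possible since the density estimate Lemma~\ref{le:vertex_density} guarantees that every point of $\Upsilon_\Gamma$ is within $\dpathY$-distance $o(1)$ of a vertex.

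The second step is to pass to the limit using Lemma~\ref{le:reff_lim}. By the GHf convergence $R^Q_\epsilon \to R^Q$, the effective resistance metrics $\median{\epsilon}^{-1}\rmetapproxres{\epsilon}{\wt{Q}_{\epsilon,\rmvdeadends{\epsilon}}}{\cdot}{\cdot}{\Gamma}|_{B_\epsilon \times B_\epsilon}$ converge to $R^Q|_{B \times B} = \rmetres{V}{\cdot}{\cdot}{\Gamma}|_{B \times B}$. Since we have already established (via Lemma~\ref{le:nondegeneracy} and the subsequent discussion, invoked through the construction preceding Proposition~\ref{pr:lim_weak_rform}) that the limiting metric is non-degenerate, i.e.\ $\rmetres{V}{x}{y}{\Gamma} > 0$ for $x \neq y$, the hypothesis of Lemma~\ref{le:reff_lim} is satisfied: the limit $R$ there has strictly positive off-diagonal entries. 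Hence Lemma~\ref{le:reff_lim} produces weights $w_B$ on $B$ whose effective resistance is exactly $\rmetres{V}{\cdot}{\cdot}{\Gamma}|_{B \times B}$. This holds for every finite $B$, which is precisely Definition~\ref{def:resistance_metric}. It remains to note that $\rmetres{V}{\cdot}{\cdot}{\Gamma}$ is genuinely a metric (not merely a pseudometric): symmetry and the triangle inequality pass to the limit from the approximations, and non-degeneracy is the non-triviality we have assumed.

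One technical point to address carefully is the identification of the limiting vertex set with the abstract space: the vertices $\approxvtcs[\wt{Q}_{\epsilon,\rmvdeadends{\epsilon}}]{\epsilon}$ live in $\C$ via the embedding $\projmap{\Gamma}$, and under the GHf topology (which records the embedding $\projmap{\Gamma}$ and the separating-set indicators) one must check that a point of $\ol{V}\cap\Upsilon_\Gamma$ can be approximated by vertices whose images converge to it \emph{in the abstract metric $\dpathY$}, not just in Euclidean distance. This follows because the GHf limit keeps the metric space $(\gasketspace{\Gamma},\dpathY)$ and the embedding compatible (as recorded in Section~\ref{subsec:tightness_setup} via \cite[Lemma~A.9]{amy2025tightness}), together with the vertex density of Lemma~\ref{le:vertex_density} which holds on an event of probability $1 - o^\infty(\epsilon)$, hence almost surely along the subsequence by Borel–Cantelli. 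I expect this bookkeeping — matching the finite sets $B_\epsilon$ in the approximating graphs with a finite set $B$ in the limit within the GHf framework, and ensuring the effective resistances converge entrywise — to be the main (though routine) obstacle; the algebraic content is entirely supplied by Lemmas~\ref{le:reff_lim} and~\ref{le:vertex_density} and the already-established non-degeneracy.
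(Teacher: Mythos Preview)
Your proposal is correct and follows essentially the same approach as the paper: choose $Q \Supset V$ with $\rmetres{V}{\cdot}{\cdot}{\Gamma} = R^Q$, approximate a given finite set $B$ by vertices $B_\epsilon$ in the cable graph using Lemma~\ref{le:vertex_density}, note that $R^Q_\epsilon|_{B_\epsilon \times B_\epsilon}$ is a genuine effective resistance, and pass to the limit via Lemma~\ref{le:reff_lim} using the GHf convergence and the non-degeneracy from Lemma~\ref{le:nondegeneracy}. The paper makes the two-step approximation (first $x_n \in \Upsilon_{\Gamma_n}$ via GHf, then nearest point $(x_n)_\epsilon$ on the cable graph via the interpolation rule) slightly more explicit, but the content is the same.
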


\begin{proof}
By Lemma~\ref{le:nondegeneracy} and~\cite[Theorem~1.14]{amy2025tightness}, each $\rmetres{V}{\cdot}{\cdot}{\Gamma}$ is a true metric. We need to argue that for each finite subset $A \subseteq \ol{V} \cap \Upsilon_\Gamma$ there is a weight function $w\colon A \times A \to [0,\infty)$ such that $\rmetres{V}{\cdot}{\cdot}{\Gamma}|_{A \times A}$ is the effective resistance metric associated with $(A,w)$.

Let $Q \Supset V$ be such that $\rmetres{V}{\cdot}{\cdot}{\Gamma} = R^Q$ on $\ol{V} \cap \Upsilon_\Gamma$. By the GHf convergence $R^Q_\epsilon \to R^Q$ and the continuity of $R^Q$, there is for each $x \in A$ a sequence $(x_n)$ in $\Upsilon_{\Gamma_n}$ with $x_n \to x$ such that $R^Q_\epsilon(x_n,y_n) \to R^Q(x,y)$ for each $x,y \in \ol{V} \cap \Upsilon_\Gamma$. Let $\approxgraph[\wt{Q}_{\epsilon,\rmvdeadends{\epsilon}}]{\epsilon}$ be the cable graph from the construction of the approximation scheme. Recall from Lemma~\ref{le:vertex_density} that for each sufficiently small $\epsilon$, for each $x \in \ol{V} \cap \Upsilon_{\Gamma_n}$ there is some $x_\epsilon \in \approxgraph[\wt{Q}_{\epsilon,\rmvdeadends{\epsilon}}]{\epsilon}$ with $\dpathY[\ol{\wt{Q}}_\epsilon](x,x_\epsilon) < \epsilon$. We have $\rmetapproxres{\epsilon}{\wt{Q}_{\epsilon,\rmvdeadends{\epsilon}}}{x}{y}{\Gamma} = \rmetapproxres{\epsilon}{\wt{Q}_{\epsilon,\rmvdeadends{\epsilon}}}{x_\epsilon}{y_\epsilon}{\Gamma}$ by our construction, and the latter is a resistance metric on $\approxgraph[\wt{Q}_{\epsilon,\rmvdeadends{\epsilon}}]{\epsilon}$. Let $w_{A_\epsilon}$ be the weight function associated with $R^Q_\epsilon|_{A_\epsilon \times A_\epsilon}$ where $A_\epsilon = \{ (x_n)_\epsilon \mid x \in A \}$. The convergence $R^Q_\epsilon \to R^Q$ implies that $R^Q_\epsilon|_{A_\epsilon \times A_\epsilon} \to R^Q|_{A \times A}$. By Lemma~\ref{le:reff_lim}, we have that $w_{A_\epsilon}$ converge to a weight function $w$ and its associated effective resistance metric is $R^Q|_{A \times A}$.
\end{proof}

Note that since $\rmetres{V}{\cdot}{\cdot}{\Gamma}$ is continuous with respect to $\dpathY[\ol{V}]$, by \cite[Lemma~1.10]{amy2025tightness}, the resistance metric space $(\ol{V} \cap \Upsilon_\Gamma, \rmetres{V}{\cdot}{\cdot}{\Gamma})$ is compact. Let $(\rformres{V}{\cdot}{\cdot}{\Gamma}, \rfdomainres{V}{\Gamma})$ be the associated resistance form. We now show that the family $(\rformres{V}{\cdot}{\cdot}{\Gamma})_{V \in \metregions}$ satisfies the axioms in the definition of a weak \clekp{} resistance form by verifying the conditions in Proposition~\ref{pr:cle_rmet_char}. We have already verified~\eqref{it:rmet_cont}--\eqref{it:rmet_compatibility}. Property~\eqref{it:rmet_cut_point} follows by \cite[Proposition~6.4 and~6.12]{amy2025tightness} which we have explained above to hold also in our construction. It remains to verify~\eqref{it:rmet_cut_loop}.

\begin{lemma}\label{le:internal_rmetrics}
 Almost surely, there exists a countable set $\{z_m\}_{m \in \N} \subseteq \Upsilon_\Gamma$ that is dense with respect to $\dpathY$ and such that the following holds. Suppose that $V \in \metregions$ is separated from $\Upsilon_\Gamma \setminus \ol{V}$ by a finite set of points $Z$ contained in $\{z_m\}_{m \in \N}$. Then the following holds for sufficiently large $m \in \N$. Let $A = \{z_1,\ldots,z_m\}$ and $A_V = A \cap \ol{V}$, let $w$ (resp.\ $w_V$) be the weight function associated with $\rmet{\cdot}{\cdot}{\Gamma}|_{A}$ (resp.\ $\rmetres{V}{\cdot}{\cdot}{\Gamma}|_{A_V}$). Then
 \begin{itemize}
  \item $w(x,y) = w_V(x,y)$ for each $x,y \in A_V$,
  \item $w(x,y) = 0$ for each $x \in A_V \setminus Z$, $y \in A \setminus \ol{V}$,
  \item $w(x,y) = 0$ for $x,y \in Z$.
 \end{itemize}
\end{lemma}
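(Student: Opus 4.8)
The plan is to deduce Lemma~\ref{le:internal_rmetrics} from the corresponding finite-graph identity for the approximations (Lemma~\ref{le:reff_cut_point}, or more precisely Lemma~\ref{le:gen_parallel_law_graph} in the form used in~\cite[Section~6]{amy2025tightness}) together with the convergence $R^Q_\epsilon \to R^Q$ and Lemma~\ref{le:reff_lim}. First I would fix the countable set $\{z_m\}$: it should be dense with respect to $\dpathY$, should include a dense subset of $\CL\cap\partial V$ for every loop $\CL$ and every $V\in\metregions$ (as in the proof of Proposition~\ref{pr:cle_rmet_char}\eqref{it:rmet_cut_loop}, using rational-time loop segments), and should be chosen so that $\{z_m\}$ meets the separating sets used to define the regions $\wt Q_\epsilon$; the point is that with probability one every $V\in\metregions$ is separated from $\Upsilon_\Gamma\setminus\ol V$ by finitely many points, and we may take these points to lie in $\{z_m\}$.

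Next I would set up the approximation. Fix $Q\in\FQ$ with $Q\Supset V$ small enough that $\rmet{\cdot}{\cdot}{\Gamma}=R^Q$ on $\ol V\cap\Upsilon_\Gamma$ (and also on a neighborhood of $\ol V$), so that $\rmetres{V}{\cdot}{\cdot}{\Gamma}=R^Q|_{\ol V\cap\Upsilon_\Gamma}$ by the construction and Lemma~\ref{le:neighborhood_compatible}. For each $\epsilon=\epsilon_n$ I would choose, for every $z\in A=\{z_1,\dots,z_m\}$, an approximating vertex $z^{(\epsilon)}\in\approxgraph[\wt Q_{\epsilon,\rmvdeadends\epsilon}]{\epsilon}$ with $\dpathY$-distance $<\epsilon$ to $z$, using Lemma~\ref{le:vertex_density}; by the GHf convergence and continuity of $R^Q$ we have $R^Q_\epsilon(x^{(\epsilon)},y^{(\epsilon)})\to R^Q(x,y)$ for $x,y\in A$. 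The key structural input is that on the cable graph $\approxgraph[\wt Q_{\epsilon,\rmvdeadends\epsilon}]{\epsilon}$, the set $Z^{(\epsilon)}$ of approximating separating vertices is (for $\epsilon$ small, because the separation points have pairwise $\dpathY$-distance bounded below and $\rmvdeadends\epsilon<\min$ of the relevant loop diameters) a genuine cut set separating $A_V^{(\epsilon)}$ from the rest; moreover no cable joins a vertex of $\approxvtcs[\wt V]{\epsilon}$ strictly inside $V$ to one strictly outside $\ol V$, and no cable joins two points of $Z$ directly without passing through the interior of one side. Hence the finite-network identity of Lemma~\ref{le:reff_cut_point} (applied with the cut set $Z^{(\epsilon)}$, possibly iterated if $|Z|>1$) gives that the weight function $w_\epsilon$ on $A^{(\epsilon)}$ associated with $R^Q_\epsilon|_{A^{(\epsilon)}}$ satisfies: $w_\epsilon(x,y)=w_{V,\epsilon}(x,y)$ for $x,y\in A_V^{(\epsilon)}$; $w_\epsilon(x,y)=0$ for $x\in A_V^{(\epsilon)}\setminus Z^{(\epsilon)}$ and $y\notin\ol{\wt V}$; and $w_\epsilon(x,y)=0$ for $x,y\in Z^{(\epsilon)}$.

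I would then pass to the limit. By Lemma~\ref{le:reff_lim} the weights $w_\epsilon$ on $A^{(\epsilon)}$ converge to a weight function $w$ on $A$ whose effective resistance is $R^Q|_{A\times A}=\rmet{\cdot}{\cdot}{\Gamma}|_A$, and likewise $w_{V,\epsilon}$ converges to the weight function $w_V$ on $A_V$ realizing $\rmetres{V}{\cdot}{\cdot}{\Gamma}|_{A_V}$ (here I use Lemma~\ref{le:lim_resistance_metric} to know these limiting weight functions exist and are the ones associated with the limiting resistance metrics). The three displayed identities then pass to the limit termwise, giving the claim. The main obstacle — and the step requiring the most care — is verifying that for all sufficiently small $\epsilon$ the approximating set $Z^{(\epsilon)}$ really is an exact cut set in the cable graph and that no cable ``shortcuts'' across it: this relies on the fact that the cables from $(v,w)\in\approxedges[\wt V]{\epsilon}$ are supported in the $2\epsilon$-neighborhood of $\{v,w\}$ together with the definition of $\metregions$ (so that loops and maximal strands behave well near the separating points), and on the choice of $\rmvdeadends\epsilon$ being smaller than the diameters of the loops whose intersection points furnish $Z$, exactly as in the generalized-parallel-law argument of Section~\ref{se:rmet_construction}. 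Once this combinatorial fact about the approximating graphs is established, the rest is the routine limiting argument via Lemmas~\ref{le:reff_cut_point}, \ref{le:reff_lim}, and~\ref{le:lim_resistance_metric}.
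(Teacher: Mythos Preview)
Your overall strategy matches the paper's: fix the dense set via rational-time loop segments, approximate the points of $A$ in the cable graphs, use the cut structure at the discrete level, and pass to the limit via Lemma~\ref{le:reff_lim}. However, there is a genuine gap in the step you flag as ``the main obstacle''. Recall that in the coupling of Section~\ref{se:rmet_construction} the approximating gaskets $\Upsilon_{\Gamma_n}$ come from \emph{different} CLE samples $\Gamma_n$ converging to $\Upsilon_\Gamma$ in the GHf topology. Thus a point $z^{(\epsilon)}$ that is merely $\dpathY$-close to a separating point $z\in Z\subseteq\Upsilon_\Gamma$ need not separate anything in $\Upsilon_{\Gamma_n}$; your cable-locality argument (``cables are supported in $2\epsilon$-neighbourhoods'') presupposes that $z^{(\epsilon)}$ already lies on every cable crossing from inside to outside, which is exactly the separation property you are trying to establish. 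The paper resolves this by exploiting that the topology of Section~\ref{subsec:tightness_setup} explicitly carries the indicator functions $\one_{A_n}$ of separating configurations: this lets one \emph{choose} the approximants $(x_n)$ for $x\in Z$ so that $Z_\epsilon$ genuinely separates $A_{V,\epsilon}$ from $A_\epsilon\setminus A_{V,\epsilon}$ in $\Upsilon_{\Gamma_n}$, after which the cable-graph argument you sketch does go through.

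A second, smaller omission: to obtain the third bullet $w(x,y)=0$ for $x,y\in Z$, one must also take $m$ large enough that $A$ contains further points separating each pair in $Z$ (and choose their approximants to separate the corresponding pairs in $Z_\epsilon$). Without this, two points of $Z_\epsilon$ could be joined through the complement of $\ol{V}$ in the trace network on $A_\epsilon$, and the corresponding weight need not vanish. The paper makes this explicit; once you add both of these ingredients your argument coincides with the paper's.
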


\begin{proof}
 Consider the countable collection $\CK$ of loop segments $\ell \subseteq \CL$ starting and ending at rational times of all $\CL \in \Gamma$. For each non-overlapping, intersecting pair $\ell,\ell' \in \CK$ the intersection set $\ell \cap \ell'$ is compact. Let $\{z_m\}$ be any countable dense set containing a dense set of $\ell \cap \ell'$ for each such pair $\ell,\ell' \in \CK$. We argue that such a set satisfies the properties stated in the lemma.

 Suppose that $V$ is as given in the lemma statement, and $m$ is large enough so that the set $Z$ is contained in $A = \{z_1,\ldots,z_m\}$. We now use the same setup and notation as in the proof of Lemma~\ref{le:lim_resistance_metric}. Let $Q \Supset V$ be such that $\rmetres{V}{\cdot}{\cdot}{\Gamma} = R^Q$ on $\ol{V} \cap \Upsilon_\Gamma$. We can assume that distinct points of $Z$ are not connected in $\Upsilon_\Gamma \cap Q$ outside of $\ol{V}$. For each $x \in A$, let $(x_n)$ in $\Upsilon_{\Gamma_n}$ be as in the proof of Lemma~\ref{le:lim_resistance_metric}. Let $A_\epsilon = \{ (x_n)_\epsilon \mid x \in A \}$, $A_{V,\epsilon} = \{ (x_n)_\epsilon \mid x \in A_V \}$, and $Z_\epsilon = \{ x_n \mid x \in Z \}$. Recalling that the topology of convergence (see Section~\ref{subsec:tightness_setup}) keeps track of the finite separation sets, we can choose the points $(x_n)$ for $x \in Z$ such that for each $\epsilon$, the set $Z_\epsilon$ separates $A_{V,\epsilon}$ from $A_\epsilon \setminus A_{V,\epsilon}$ in $\Upsilon_{\Gamma_n}$. By our construction of the cable graph $\approxgraph[\wt{Q}_{\epsilon,\rmvdeadends{\epsilon}}]{\epsilon}$ and Lemma~\ref{le:vertex_density}, when $\epsilon$ is sufficiently small, the set $Z_\epsilon$ lies on $\approxgraph[\wt{Q}_{\epsilon,\rmvdeadends{\epsilon}}]{\epsilon}$. The set $Z_\epsilon$ also separates $A_{V,\epsilon}$ from $A_\epsilon \setminus A_{V,\epsilon}$ in $\approxgraph{\epsilon}$. Assuming $\rmvdeadends{\epsilon}$ is smaller than the diameters of the loops bounding $V$ so that they are in $\Gamma_{C,\rmvdeadends{\epsilon}}$, the component of $\approxgraph{\epsilon} \setminus Z_\epsilon$ containing $A_{V,\epsilon}$ agrees with the component of $\approxgraph[\wt{Q}_{\epsilon,\rmvdeadends{\epsilon}}]{\epsilon} \setminus Z_\epsilon$ containing $A_{V,\epsilon}$. We can further suppose that $m$ is large enough so that $A$ contains further points that separate each pair of points of $Z$ in $\Upsilon_\Gamma$, and that the corresponding $(x_n)$ in $A_\epsilon$ also separate each pair of points of $Z_\epsilon$ in $\Upsilon_{\Gamma_n}$.

 Let $w_{A_\epsilon}$ (resp.\ $w_{A_{V,\epsilon}}$) be the weight function associated with $R_\epsilon|_{A_\epsilon \times A_\epsilon}$ (resp.\ $R^Q_\epsilon|_{A_{V,\epsilon} \times A_{V,\epsilon}}$). Recall from the proof of Lemma~\ref{le:lim_resistance_metric} that $w_{A_\epsilon} \to w$ and $w_{A_{V,\epsilon}} \to w_V$. By the paragraph above we have $w_{A_\epsilon}(x,y) = w_{A_{V,\epsilon}}(x,y)$ for $x,y \in A_{V,\epsilon}$, and $w_{A_\epsilon}(x,y) = 0$ for $x \in A_{V,\epsilon} \setminus Z_\epsilon$, $y \in A_\epsilon \setminus A_{V,\epsilon}$, and $w_{A_\epsilon}(x,y) = 0$ for $x,y \in Z_\epsilon$. Taking the limit, we conclude that these properties remain true for $w$ and $w_V$.
\end{proof}

\begin{proof}[Proof of Proposition~\ref{pr:lim_weak_rform}]
This now follows from Proposition~\ref{pr:cle_rmet_char} and Lemma~\ref{le:internal_rmetrics} by choosing the countable dense set $\{z_m\}_{m \in \N} \subseteq \Upsilon_\Gamma$ as in the proof of Lemma~\ref{le:internal_rmetrics}.
\end{proof}

\section{Bi-Lipschitz equivalence}
\label{sec:bilipschitz}

The goal of this section is to establish that any two weak \clekp{} resistance forms as defined in Definition~\ref{def:weak_cle_rform} are bi-Lipschitz equivalent with deterministic constants. This is the first step in proving that weak \clekp{} resistance forms are unique (up to constants) and are in fact \clekp{} resistance forms as defined in Definition~\ref{def:cle_rform}.

\subsection{Setup and main statement}
\label{se:bilipschitz_setup}

In this section, we will assume that we have the setup described above Theorem~\ref{thm:unique_metric}.  Let $\Gamma_\D$ be a nested $\CLE_{\kappa'}$ on $\D$, let $\CL$ be the outermost loop of $\Gamma$ such that $0$ is inside $\CL$, and let $C$ be the regions inside $\CL$. We let $\Gamma_C$ be the loops of $\Gamma_\D$ contained in $\ol{C}$ and let $\Gamma = \{ \CL \} \cup \Gamma_C$. We equip the gasket $\Upsilon_\Gamma$ of $\Gamma_C$ with the metric $\dpathY$ defined in~\eqref{eq:dpath}.

Throughout, we suppose that $(\rformres{V}{\cdot}{\cdot}{\Gamma}_{V \in \metregions}$ and $(\trformres{V}{\cdot}{\cdot}{\Gamma})_{V \in \metregions}$ are two weak $\CLE_{\kappa'}$ resistance forms that are conditionally independent given $\Gamma$. We let $(\rmetres{V}{\cdot}{\cdot}{\Gamma})_{V \in \metregions}$ and $(\rmettres{V}{\cdot}{\cdot}{\Gamma})_{V \in \metregions}$ be the associated resistance metrics. In addition, we assume that the two metrics have comparable scaling constants in the sense described below. We note that the results of this section still hold if the \clekp{} resistance forms are defined only on the event that $\CL \cap \partial\D = \emptyset$.

For $\delta \in (0,1]$, let $h^\delta$ be a GFF on $\delta\D$ with boundary values so that if $\varphi\colon \h \to \delta\D$ is a conformal map, then $h^\delta \circ \varphi - \chi\arg\varphi'$ has the boundary values $-\lambda-\angledouble\chi$ (resp.\ $+\lambda$) on $\R_-$ (resp.\ $\R_+$) where $\angledouble$ is defined in~\eqref{eq:angledouble}. Let $\eta^\delta_1$ (resp.\ $\eta^\delta_2$) be the angle $\angledouble$ (resp.\ $0$) flow line of $h^\delta$ from $-i$ to $i$. Let $U^\delta$ be the regions bounded between $\eta^\delta_1,\eta^\delta_2$. Given $\eta^\delta_1,\eta^\delta_2$, let $\Gamma^\delta$ be the conditionally independent collection of \clekp{} in the connected components of $U^\delta$. Let $I^\delta$ be the event that $\eta^\delta_1 \cap \eta^\delta_2 \cap B(0,\delta/2) \neq \emptyset$. Let $\median[\delta]{}$ be the median of the random variable
\begin{equation*}
 \sup_{\substack{x=\eta^\delta_1(s)=\eta^\delta_2(s')\\ y=\eta^\delta_1(t)=\eta^\delta_2(t')\\ \eta^\delta_1[s,t], \eta^\delta_2[s',t'] \subseteq B(0,3\delta/4)}} \rmetres{U^\delta}{x}{y}{\Gamma^\delta} \quad\text{conditioned on } I^\delta .
\end{equation*}
It is explained in~\cite[Section~3.1]{amy2025tightness} that $\rmetres{U^\delta}{\cdot}{\cdot}{\Gamma}$ is well-defined on compact subsets of $\delta\D$ by absolute continuity. Let $\mediant[\delta]{}$ be defined analogously for $\rmett{\cdot}{\cdot}{\Gamma}$.

Assume that there is a constant $c>1$ such that
\begin{equation}\label{eq:ass_medians_comparable}
 c^{-1} \le \frac{\median[\delta]{}}{\mediant[\delta]{}} \le c
 \quad\text{for every } \delta \in (0,1] .
\end{equation}

We state the main result of this section.

\begin{proposition}
\label{prop:weak_bi_lipschitz}
Consider the setup described just above, and assume~\eqref{eq:ass_medians_comparable}. Then there exist deterministic constants $0 < c_1 < c_2 < \infty$ so that almost surely $\rfdomainres{V}{\Gamma} = \trfdomainres{V}{\Gamma}$ and
\[ c_1 \rformres{V}{f}{f}{\Gamma} \leq  \trformres{V}{f}{f}{\Gamma} \leq c_2 \rformres{V}{f}{f}{\Gamma}  \quad\text{for all } V \in \metregions \text{ and } f \in \rfdomainres{V}{\Gamma} .\]
\end{proposition}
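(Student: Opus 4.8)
The plan is to bootstrap from a ``scale-by-scale'' comparison using the independence-across-scales structure of the CLE. The key idea is that both resistance forms, properly normalized by the renormalization constants $\median[\delta]{}$ and $\mediant[\delta]{}$, behave like averages over small i.i.d.-like regions bounded between pairs of intersecting $\CLE_{\kappa'}$ loops; assumption~\eqref{eq:ass_medians_comparable} then forces the two normalizations to agree up to a bounded factor, and a covering argument upgrades this to a uniform bi-Lipschitz comparison of the two forms on every $V \in \metregions$.

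\textbf{Step 1: Reduce to a comparison of resistance metrics on chains of bubbles.} Since the resistance forms are in bijection with the resistance metrics $\rmetres{V}{\cdot}{\cdot}{\Gamma}$ and $\rmettres{V}{\cdot}{\cdot}{\Gamma}$, and bi-Lipschitz equivalence of forms is equivalent to bi-Lipschitz equivalence of the associated metrics (by the variational characterization~\eqref{eq:rform_to_met10}), it suffices to show that there are deterministic $0 < c_1' < c_2' < \infty$ with $c_1' \rmetres{V}{x}{y}{\Gamma} \le \rmettres{V}{x}{y}{\Gamma} \le c_2' \rmetres{V}{x}{y}{\Gamma}$ for all $V$ and all $x,y \in \ol{V} \cap \Upsilon_\Gamma$. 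By the series and generalized parallel laws (Lemmas~\ref{le:weak_rmet_series_law} and~\ref{le:weak_rmet_gen_parallel_law}), and since every pair of points in the gasket is separated by a finite chain of intersecting $\CLE_{\kappa'}$ loops, it is enough to compare the two metrics on the ``building block'' regions $U$ lying between the outer boundaries of two intersecting loops. Concretely: an upper bound $\rmettres{V}{x}{y}{\Gamma} \le c_2' \rmetres{V}{x}{y}{\Gamma}$ for all such building blocks $U$ (with uniform $c_2'$) plus the parallel law yields the upper bound for $x,y$ separated by a chain; the lower bound follows by symmetry (swapping the two forms).

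\textbf{Step 2: Scale-by-scale comparison on building blocks.} For a building block $U$ at scale $\delta$ (the region between outer boundaries of two intersecting loops of diameter of order $\delta$), the law of $\rmetres{U}{\cdot}{\cdot}{\Gamma}/\median[\delta]{}$ restricted to a compact subset is, by the absolute continuity discussed above~\eqref{eq:median_def}, comparable to a fixed law (the normalized metric in the $(\eta^\delta_1,\eta^\delta_2)$ setup), and similarly for $\rmettres{U}{\cdot}{\cdot}{\Gamma}/\mediant[\delta]{}$. By~\eqref{eq:ass_medians_comparable}, $\median[\delta]{} \asymp \mediant[\delta]{}$ uniformly in $\delta$. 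The plan is to show that there is a deterministic $\Lambda > 0$ so that, with probability at least $1/2$ (say), the building block $U$ satisfies $\rmettres{U}{x}{y}{\Gamma} \le \Lambda \rmetres{U}{x}{y}{\Gamma}$ for all relevant $x,y$. This will come from: (a) a one-sided comparison of a single building-block resistance to its ``typical'' value using the concentration estimates for the geodesic metric (Proposition~\ref{pr:cle_metric_estimates}) and the measure (Propositions~\ref{pr:measure_ub}, \ref{pr:measure_lb}); and (b) the fact that both normalized metrics have the same scaling exponent $\alpha$ (enforced again by~\eqref{eq:ass_medians_comparable} together with scale covariance, property~\ref{it:rmet_translation} of Proposition~\ref{pr:cle_rmet_char} plus the scale-covariance in the strong definition), so no polynomial-in-$\delta$ discrepancy can accumulate.

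\textbf{Step 3: Covering argument and concentration over scales.} Now use the independence across scales from~\cite{amy-cle-resampling}: for a given $V$ and a given pair $x,y$ separated by a chain of loops $\CL_1,\dots,\CL_n$, each link of the chain corresponds to a building block at some scale $\delta_i$. By the ``break loops'' machinery (Lemma~\ref{lem:break_loops}) and the covering corollary (referenced as Corollary~\ref{co:covering}/Corollary~\ref{co:covering} in Lemma~\ref{le:energy_small_nbhd}), one can, at a positive density of scales, find a separating set of $O(1)$ points and a building block below it in which both metrics are comparable up to the deterministic factor $\Lambda$ with probability bounded below. Applying the generalized parallel law (Lemma~\ref{le:weak_rmet_gen_parallel_law}) and the series law (Lemma~\ref{le:weak_rmet_series_law}) across the chain, a Borel--Cantelli / large-deviations argument over the independent-across-scales events (with exponential tails as in Lemma~\ref{lem:break_loops}) shows that, almost surely simultaneously for all $V$ and all pairs $x,y$, the comparison $\rmettres{V}{x}{y}{\Gamma} \le c_2' \rmetres{V}{x}{y}{\Gamma}$ holds with a deterministic $c_2'$. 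Swapping the roles of the two forms gives the matching lower bound, hence Proposition~\ref{prop:weak_bi_lipschitz} with $c_1 = 1/c_2'$ and $c_2 = c_2'$ (after translating back to forms). Finally, $\rfdomainres{V}{\Gamma} = \trfdomainres{V}{\Gamma}$ is immediate once the energies are comparable, since the domains are exactly the sets of continuous functions of finite energy (recall~\eqref{eq:rmet_to_form_sep} and that the two resistance metrics are topologically equivalent, both being continuous w.r.t.\ $\dpathY$ by Lemma~\ref{le:rmet_continuous}).

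\textbf{Main obstacle.} The hard part will be Step 3: making the covering-plus-concentration argument uniform over \emph{all} regions $V \in \metregions$ and \emph{all} pairs of points simultaneously, rather than for a fixed region and fixed pair. This requires carefully combining the independence-across-scales estimates (with their exponential tails) with the locality and compatibility properties~\eqref{it:weak_rform_compatibility}--\eqref{it:weak_rform_cut_loop} so that the comparison at one scale is not destroyed by the geometry at finer scales, and handling the ``bad'' scales where a link of the chain is atypically resistive or atypically conductive. The exponential concentration in Lemma~\ref{lem:break_loops} together with the superpolynomial tails in Propositions~\ref{pr:cle_metric_estimates}, \ref{pr:measure_ub}, \ref{pr:measure_lb} should be exactly what is needed to push a union bound over a countable dense family of regions through, but the bookkeeping is delicate.
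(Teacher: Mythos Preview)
Your Step~1 contains a genuine gap: bi-Lipschitz equivalence of resistance \emph{metrics} does not imply bi-Lipschitz equivalence of resistance \emph{forms}. The variational characterization~\eqref{eq:rform_to_met10} gives only one direction (if $c_1\CE \le \wt{\CE} \le c_2\CE$ then $c_2^{-1}R \le \wt{R} \le c_1^{-1}R$); the converse fails because the map from pairwise resistances to edge weights on finite sets is nonlinear, and two-point comparisons need not control energies of arbitrary functions. Even if you granted this reduction, Steps~2--3 would not deliver bi-Lipschitz metrics on $V$: from $\rmettres{U}{x}{y}{\Gamma} \le \Lambda\,\rmetres{U}{x}{y}{\Gamma}$ on a building block $U \subseteq V$ and monotonicity you get $\rmettres{V}{x}{y}{\Gamma} \le \Lambda\,\rmetres{U}{x}{y}{\Gamma}$, but you need comparison to $\rmetres{V}{x}{y}{\Gamma}$, which can be much smaller. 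The series and parallel laws do not close this gap.

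The paper's proof avoids the reduction to metrics entirely and compares energies directly. It covers the gasket by ``good regions'' $U_l$ (Corollary~\ref{co:covering}) on which every harmonic function satisfies $\median[2^{-j_l}]{}\rformres{U_l}{f}{f}{\Gamma} \asymp (\sup_{K_l} f - \inf_{K_l} f)^2$ for \emph{both} forms (the event $\Gregion_{z,j}$), so the oscillation of $f$ is a common currency. Given a $\rformres{V}{\cdot}{\cdot}{\Gamma}$-harmonic $f$, the proof then \emph{explicitly constructs} a test function $\wt{g}$ by an inductive region-by-region procedure: roughly, set $\wt{g}$ constant equal to $\sup_{K_l} f$ on the inner part of $U_l$ and $\wt{\CE}$-harmonically interpolate in the extremity regions $V_{l,i}$, with a careful case analysis where regions overlap. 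Additivity (Lemma~\ref{le:rform_additivity}) and bounded overlap of the cover (Lemma~\ref{le:cover_overlap}) then give $\trformres{V}{\wt{g}}{\wt{g}}{\Gamma} \lesssim \sum_l (\sup_{K_l} f - \inf_{K_l} f)^2/\median[2^{-j_l}]{} \lesssim \sum_l \rformres{U_l}{f}{f}{\Gamma} \lesssim \rformres{V}{f}{f}{\Gamma}$. This construction of $\wt{g}$ is the missing ingredient in your outline.
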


The proof of Proposition~\ref{prop:weak_bi_lipschitz} is based on constructing a covering of the space by ``good'' regions inside each of which the behavior of harmonic functions with respect to $\rformres{V}{\cdot}{\cdot}{\Gamma}$, $\trformres{V}{\cdot}{\cdot}{\Gamma}$ is comparable.  In order to carry this out, we will first collect some general estimates on the resistances in Section~\ref{se:resistance_bounds}. We then define a quality of annuli in Section~\ref{subsec:good_annuli}, and use them to find the good regions in Section~\ref{se:good_regions}. Finally, we complete the proof of Proposition~\ref{prop:weak_bi_lipschitz} in Section~\ref{subsec:weak_bilipschitz_proof}.

\newcommand*{\Esep}{E^{\mathrm{sep}}}
\newcommand*{\dsep}{r_{\mathrm{sep}}}

\newcommand*{\Ebreak}{E^{\mathrm{break}}}
\newcommand*{\Fbreak}{F^{\mathrm{break}}}
\newcommand*{\pbreak}{p_{\mathrm{break}}}

The proofs in this and the next section make use of the independence across scales argument established in~\cite{amy-cle-resampling}. For this, we set some notation.

For $z \in \D$, $j\in\N$ we write
\[ A_{z,j} = A(z,2^{-j-1},2^{-j}) . \]
Let $\Gamma_\outside^{*,B(z,3\cdot 2^{-j}),B(z,2\cdot 2^{-j})}$ be the partial exploration of $\Gamma$ and $B(z,3\cdot 2^{-j})^{*,B(z,2\cdot 2^{-j})}$ the unexplored region as defined in Section~\ref{se:mcle}. Let $\Gamma_\inside^{*,B(z,3\cdot 2^{-j}),B(z,2\cdot 2^{-j})}$ be the unexplored part, and let $\alpha^*_{z,j}$ be the interior link pattern in $B(z,3\cdot 2^{-j})^{*,B(z,2\cdot 2^{-j})}$ induced by $\Gamma_\inside^{*,B(z,3\cdot 2^{-j}),B(z,2\cdot 2^{-j})}$. We let
\begin{itemize}
 \item $\wt{\CF}_{z,j}$ be the $\sigma$-algebra generated by $\Gamma_\outside^{*,B(z,3\cdot 2^{-j}),B(z,2\cdot 2^{-j})}$.
 \item $\CF_{z,j}$ be the $\sigma$-algebra generated by $\Gamma_\outside^{*,B(z,3\cdot 2^{-j}),B(z,2\cdot 2^{-j})}$, $\alpha^*_{z,j}$, and $(\rformres{V}{\cdot}{\cdot}{\Gamma})_{V \in \metregions[\C \setminus \ol{B}(z,3\cdot 2^{-j})]}$, $(\trformres{V}{\cdot}{\cdot}{\Gamma})_{V \in \metregions[\C \setminus \ol{B}(z,3\cdot 2^{-j})]}$.
\end{itemize}

The following lemma is a consequence of Theorem~\ref{thm:cle_partially_explored}, the Markovian property of the \clekp{} resistance form, and the conditional independence of $(\rformres{V}{\cdot}{\cdot}{\Gamma}_{V \in \metregions}$, $(\trformres{V}{\cdot}{\cdot}{\Gamma})_{V \in \metregions}$. It will be crucial for establishing independence across scales for the internal metrics.

\begin{lemma}
Let $U \subseteq B(z,2^{-j})$. The conditional law of $(\rformres{V}{\cdot}{\cdot}{\Gamma})_{V \in \metregions[U]}$ given $\CF_{z,j}$ is given by sampling $U^*$ according to the law of a multichordal \clekp{} in $B(z,3\cdot 2^{-j})^{*,B(z,2\cdot 2^{-j})}$ conditionally on $\alpha^*_{z,j}$ and then sampling $(\rformres{V}{\cdot}{\cdot}{\Gamma})_{V \in \metregions[U]}$ via the Markov property given $U^*$.
\end{lemma}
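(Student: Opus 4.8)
The plan is to unwind the definitions and assemble three facts that are all already available in the excerpt: (1) Theorem~\ref{thm:cle_partially_explored}, which identifies the conditional law of $\Gamma_\inside^{*,B(z,3\cdot 2^{-j}),B(z,2\cdot 2^{-j})}$ given $\Gamma_\outside^{*,B(z,3\cdot 2^{-j}),B(z,2\cdot 2^{-j})}$ as a multichordal $\CLE_{\kappa'}$ in $B(z,3\cdot 2^{-j})^{*,B(z,2\cdot 2^{-j})}$; (2) the Markovian property~\eqref{it:weak_rform_markov} of a weak $\CLE_{\kappa'}$ resistance form, which says that the conditional law of $(\rformres{V}{\cdot}{\cdot}{\Gamma})_{V \in \metregions[U]}$ given $\Gamma \setminus \Gamma_{U^*}$ and $(\rformres{V'}{\cdot}{\cdot}{\Gamma})_{V' \in \metregions[\C \setminus \ol{U}]}$ is measurable with respect to $U^*$; and (3) the conditional independence of the two families $(\rformres{V}{\cdot}{\cdot}{\Gamma})_{V\in\metregions}$ and $(\trformres{V}{\cdot}{\cdot}{\Gamma})_{V\in\metregions}$ given $\Gamma$, so that conditioning on the $\trform{}{}{}$-family adds nothing beyond conditioning on $\Gamma$ when we ask about the $\rform{}{}{}$-family. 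The statement to prove is essentially a ``disintegration'' of these facts for the $\sigma$-algebra $\CF_{z,j}$.

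Concretely, first I would note that $U \subseteq B(z,2^{-j}) \subseteq B(z,2\cdot 2^{-j})$, so every loop of $\Gamma$ that is \emph{not} entirely contained in $U^*$ either intersects $B(z,3\cdot 2^{-j}) \setminus B(z,2\cdot 2^{-j})$ — hence is (partly) recorded in $\Gamma_\outside^{*,B(z,3\cdot 2^{-j}),B(z,2\cdot 2^{-j})}$ together with the induced exterior link pattern — or is a loop of the remainder $\Gamma_\inside^{*,\cdots}$ lying in the complement of $U^*$ inside $B(z,3\cdot2^{-j})^{*,B(z,2\cdot2^{-j})}$. Thus $U^*$ together with the partial exploration data and the interior link pattern $\alpha^*_{z,j}$ carries exactly the information $\Gamma \setminus \Gamma_{U^*}$ (up to the conditionally independent CLE sampled inside $U^*$, which is precisely $\Gamma_{U^*}$). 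Moreover $\metregions[\C \setminus \ol{B}(z,3\cdot2^{-j})] \subseteq \metregions[\C \setminus \ol{U}]$, so the resistance-form data in $\CF_{z,j}$ is a sub-collection of the data $(\rformres{V'}{\cdot}{\cdot}{\Gamma})_{V' \in \metregions[\C \setminus \ol{U}]}$, $(\trformres{V'}{\cdot}{\cdot}{\Gamma})_{V' \in \metregions[\C \setminus \ol{U}]}$ appearing in~\eqref{it:weak_rform_markov} (applied with $U$ replaced by the simply connected region containing $U$). Combining~\eqref{it:weak_rform_markov} with the tower property of conditional expectation, and using that conditioning on the $\trform{}{}{}$-family is, by the conditional independence~(3), redundant once one conditions on $\Gamma$ (equivalently on the gasket data that determines which $V$ lie in $\metregions[U]$), one gets that conditionally on $\CF_{z,j}$ the law of $(\rformres{V}{\cdot}{\cdot}{\Gamma})_{V \in \metregions[U]}$ depends only on $U^*$, and is obtained by first sampling $U^*$ from its conditional law given $\CF_{z,j}$ and then applying the Markov kernel of~\eqref{it:weak_rform_markov} given $U^*$.

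Finally, I would identify the conditional law of $U^*$ given $\CF_{z,j}$: by Theorem~\ref{thm:cle_partially_explored} the conditional law of $\Gamma_\inside^{*,B(z,3\cdot2^{-j}),B(z,2\cdot2^{-j})}$ given $\Gamma_\outside^{*,\cdots}$ is a multichordal $\CLE_{\kappa'}$ in $B(z,3\cdot2^{-j})^{*,B(z,2\cdot2^{-j})}$ with exterior link pattern $\beta$ induced by the arcs of $\Gamma_\outside^{*,\cdots}$; further conditioning on the interior link pattern $\alpha^*_{z,j}$ (which is a measurable function of $\Gamma_\inside^{*,\cdots}$ and is part of $\CF_{z,j}$) gives the multichordal $\CLE_{\kappa'}$ conditioned on $\alpha^*_{z,j}$; and since $U \subseteq B(z,2^{-j})$ lies inside the explored region and the two resistance-form families are conditionally independent of $\Gamma_\inside^{*,\cdots}$ given $\Gamma_\outside^{*,\cdots}$ (they depend on the CLE only through the gasket, and~\eqref{it:weak_rform_markov} is Markov), conditioning additionally on $(\rformres{V}{\cdot}{\cdot}{\Gamma})_{V \in \metregions[\C \setminus \ol{B}(z,3\cdot2^{-j})]}$ and $(\trformres{V}{\cdot}{\cdot}{\Gamma})_{V \in \metregions[\C \setminus \ol{B}(z,3\cdot2^{-j})]}$ does not alter this. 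Restricting the multichordal $\CLE_{\kappa'}$ to its gasket structure inside $B(z,2^{-j})$ yields exactly $U^*$ with the claimed conditional law, and composing with the Markov kernel completes the argument. The one point requiring a little care — and the main obstacle — is the book-keeping that conditioning on the \emph{interior} link pattern $\alpha^*_{z,j}$ and on the exterior resistance-form data does not secretly reveal information about the CLE inside $U^*$; this is handled by the conditional independence structure (the resistance forms depend on the CLE only locally, and the CLE inside $U^*$ is conditionally independent of everything outside given $U^*$), together with Theorem~\ref{thm:cle_partially_explored} identifying the relevant conditional law exactly, so no new estimates are needed.
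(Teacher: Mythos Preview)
Your proposal is correct and follows essentially the same approach as the paper, which simply asserts that the lemma is a consequence of Theorem~\ref{thm:cle_partially_explored}, the Markovian property of the weak \clekp{} resistance form, and the conditional independence of the two families given $\Gamma$. You have expanded these three ingredients into a more detailed disintegration argument, which is more than the paper itself provides.
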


The results in~\cite{amy-cle-resampling} tell us that the conditional probabilities given $\CF_{z,j}$ can be uniformly controlled when the strands are ``separated''. We now give the definition of separation and recall the key lemma giving us the independence across scales property.

Fix $\dsep > 0$. For each $z,j$, let $\Esep_{z,j} = \Esep_{z,j}(\dsep)$ denote the event that $\abs{y_i-y_{i'}} \ge \dsep 2^{-j}$ for each distinct pair $y_i,y_{i'}$ of the marked points of $\Gamma_\outside^{*,B(z,3\cdot 2^{-j}),B(z,2\cdot 2^{-j})}$.

\begin{lemma}[{\cite[Lemma~4.4]{amy-cle-resampling}}]\label{le:separation_event}
For any $b>1$ there exists $\dsep>0$ and $c>0$ such that the following is true. Let $z\in\D$, $j_0 \in \N$ such that $B(z,2^{-j_0}) \subseteq \D$. Then for each $k \in \N$ the probability that more than $1/10$ fraction of the events $(\Esep_{z,j})^c$ where $j = j_0+1,\ldots,j_0+k$ occur is at most $ce^{-bk}$.
\end{lemma}

Finally, we recall the following.

\begin{proposition}[{\cite[Corollary~5.22]{amy2025tightness}}]\label{pr:quantiles_metric}
Let $\median[\delta]{}$ be as defined above. We have
\[
\lambda^{\dsle+o(1)}\median[\delta]{} \le \median[\lambda\delta]{} \le \lambda^{\ddouble+o(1)}\median[\delta]{}
\quad\text{for } \lambda,\delta \in (0,1]
\]
where $\ddouble$, $\dsle$ are defined in~\eqref{eq:ddouble},\eqref{eq:dsle}.
\end{proposition}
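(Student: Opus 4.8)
The statement to prove is Proposition~\ref{pr:quantiles_metric}, the two-sided scaling bound for the medians $\median[\delta]{}$ defined via the resistance metric between the tips of two intersecting flow lines. The plan is to reduce the scaling estimate to the corresponding estimate for the geodesic \clekp{} metric combined with the approximation scheme, and ultimately to the double point and outer boundary dimension exponents $\ddouble$ and $\dsle$.

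First I would set up the comparison. Recall that $\median[\delta]{}$ is (after renormalization) the scaling limit of $\median{\epsilon}^{-1}\rmetapproxres{\epsilon}{U^\delta}{\cdot}{\cdot}{\Gamma}$, so it suffices to get two-sided bounds on $\median{\epsilon}$ at different scales, or rather on the median resistance in a region bounded between two intersecting loops at scale $\delta$ versus scale $\lambda\delta$. By the scale invariance built into the approximation (the gasket measure is conformally covariant with exponent $\dcle$, the geodesic metric is scale covariant with exponent $\geoexp$, and the edge resistances are lengths with respect to the geodesic metric), rescaling the region $U^{\lambda\delta}$ by $\lambda^{-1}$ turns it into $U^\delta$ while the resistance rescales by $\lambda^{\geoexp}$ times a correction coming from the fact that the Poisson intensity $\lambda_\epsilon\meas{\cdot}{\Gamma}$ does not rescale exactly (this is the source of the $o(1)$ in the exponent). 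So the content is really: how does the median resistance between the first and last intersection points of $\eta_1,\eta_2$ behave as we look at a region of scale $\lambda\delta$?

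The key mechanism for the upper bound $\median[\lambda\delta]{} \le \lambda^{\ddouble+o(1)}\median[\delta]{}$ is the same as in Lemma~\ref{le:approx_scale_lb} in reverse: in a region of scale $\lambda\delta$, we can find order $\lambda^{-\ddouble+o(1)}$ intersection points of $\eta_1,\eta_2$ spaced at distance $\gtrsim \delta$ apart (by the double point dimension estimate of \cite{mw2017intersections}, \cite[Theorem~1.5]{mw2017intersections}), and around each such point there is (on a good event of overwhelming probability via independence across scales) a separating set of $O(1)$ points and a sub-bubble where the resistance is at least a constant times the scale-$\delta$ median. The parallel law and series law then give that the total resistance across the chain of $\approx\lambda^{-\ddouble}$ bubbles is $\gtrsim \lambda^{-\ddouble}$ times the scale-$\delta$ resistance; reading this as a statement about medians and recalling how $\median[\lambda\delta]{}$ compares to the scale-$\lambda\delta$ resistance, after rescaling one obtains the claimed bound. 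For the lower bound $\lambda^{\dsle+o(1)}\median[\delta]{} \le \median[\lambda\delta]{}$ one argues the reverse: the number of bubbles in the chain at scale $\lambda\delta$ is at most $\lambda^{-\dsle+o(1)}$ times something governed by the outer boundary dimension (\cite{b2008dimension}, \cite[equation~(1.1)]{amy-cle-resampling}), because the outer boundary of the region between the flow lines has dimension $\dsle$, and using the series law together with the upper bound from Lemma~\ref{le:approx_scale_ub} on the resistance of each individual small bubble, the total is $\lesssim \lambda^{-\dsle}$ times the scale-$\delta$ resistance.

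The main obstacle, I expect, is making the passage between medians and typical resistances rigorous, i.e.\ handling the fact that the median of a supremum of resistances over a scale-$\lambda\delta$ region is not literally a sum of $\lambda^{-\ddouble}$ independent copies of the scale-$\delta$ median. One has to work on the high-probability good event (provided by Lemma~\ref{le:separation_event} and the independence across scales in \cite{amy-cle-resampling}) where a definite fraction of the sub-bubbles behave like independent samples whose resistances are comparable to their medians, and then use concentration (a law-of-large-numbers type argument over these $\approx\lambda^{-\ddouble}$ approximately i.i.d.\ pieces, exactly the ``average over small regions'' principle described in the outline) to conclude that the typical total resistance, hence the median, has the right order. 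A secondary technical point is keeping track of all the $\epsilon\to0$ limits uniformly in $\lambda$ and $\delta$, but since everything is phrased with $o(1)$ exponents this should only require that the various $o^\infty(\epsilon)$ error events from Lemmas~\ref{le:vertex_density}, \ref{le:approx_scale_ub}, \ref{le:approx_scale_lb} are summable at the relevant scales.
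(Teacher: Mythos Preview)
The paper does not prove this proposition at all: it is quoted verbatim as \cite[Corollary~5.22]{amy2025tightness}, so there is no proof in the present article to compare against. The result is established in the companion paper for the general class of \clekp{} metrics satisfying the axioms recalled in Section~\ref{se:approx_scheme_general_def}, and is then applied here to the resistance metric (which is shown to be such a metric).

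Your sketch contains some real confusion that is worth flagging. First, you conflate two different objects: $\median[\delta]{}$ is the median of the \emph{limiting} resistance metric $\rmetres{U^\delta}{\cdot}{\cdot}{\Gamma^\delta}$ at scale $\delta$, whereas $\median{\epsilon}$ is the normalization constant for the approximation scheme; the proposition concerns the former only, and no passage through the approximations or through $\epsilon \to 0$ limits is needed. Second, the reduction ``to the corresponding estimate for the geodesic \clekp{} metric'' is not the route: the result in \cite{amy2025tightness} is proved once for any object satisfying the \clekp{} metric axioms (series law, generalized parallel law, monotonicity, Markov property), and the geodesic metric plays no role. Third, your upper and lower bounds are swapped in the mechanism you describe: having $\asymp \lambda^{-\ddouble}$ well-separated intersection points and applying the series law gives a \emph{lower} bound on the resistance across the chain (many resistors in series), which after scaling translates into the \emph{upper} bound $\median[\lambda\delta]{} \le \lambda^{\ddouble+o(1)}\median[\delta]{}$; you have this backwards. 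The actual argument in \cite{amy2025tightness} runs directly from the axioms: one chains together sub-bubbles at the smaller scale using the series law for one direction, and uses the generalized parallel law together with a covering argument for the other, with the exponents $\ddouble$ and $\dsle$ entering via the number of double points and the covering number of the outer boundary respectively.
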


\subsection{Resistance bounds}
\label{se:resistance_bounds}

As an intermediate step in the proofs, we consider the following setup. Let $D \subseteq \D$ be a simply connected domain, and let $\Gamma_D$ be a \clekp{} in $D$, and $\p_D$ be its law. Let $\Upsilon_D$ be the gasket of $\Gamma_D$, i.e.\ the set of points connected to $\partial D$ by a path not crossing any loop, equipped with the metric $\dpathY$ as defined in Section~\ref{subsec:main_results}. For every $U \Subset D$, the internal metrics $(\rmetres{V}{\cdot}{\cdot}{\Gamma_D})_{V \in \metregions[U]}$ are defined due to absolute continuity. (In fact, our domains of interest will be complementary connected components of \clekp{} loops, so in that case the metric on all $\Upsilon_D$ is defined.)

In the statement of the lemma below, we use the following notation. For $U \subseteq \C$ and $r>0$, we let $\metregions[U,r] \subseteq \metregions[U]$ be the collection of regions $V \in \metregions[U]$ such that $V$ is a union of connected components of $D \setminus (\CL_1 \cup \cdots \cup \CL_n)$ for some $\CL_1,\ldots,\CL_n \in \Gamma_D$ with $\diamE(\CL_i) \ge r$ for each $i$.

\begin{lemma}\label{le:resistance_ub_cle}
There exists $\zeta>0$ such that for each $r>0$, $b>1$ there exists a constant $c>0$ such that the following is true. Let $D \subseteq \D$ be open, simply connected, and $z \in \D$, $j \in \N$ such that $\distE(z,\partial D) \in [2^{-j+1},2^{-j+2}]$. Let $\Gamma_D$ be a \clekp{} in $D$. Given $\Gamma_D$, sample the internal metrics $(\rmetres{V}{\cdot}{\cdot}{\Gamma_D})_{V \in \metregions[B(z,2^{-j})]}$. Let $G$ denote the event that for each $V \in \metregions[B(z,2^{-j}),r2^{-j}]$ and $x,y \in \ol{V} \cap \Upsilon_D$ we have
\[ \rmetres{V}{x}{y}{\Gamma_D} \le M\median[2^{-j}]{}(\dpathY[\ol{V}](x,y)/2^{-j})^\zeta . \]
Then
\[ \p[G^c] \le cM^{-b} . \]
\end{lemma}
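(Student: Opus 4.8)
The plan is to establish the resistance upper bound by combining a one-scale comparison with the independence-across-scales machinery from \cite{amy-cle-resampling}. The key observation is that the statement has the same structure as the analogous estimate for the geodesic \clekp{} metric (Proposition~\ref{pr:cle_metric_estimates}) and for the measure (Propositions~\ref{pr:measure_ub},~\ref{pr:measure_lb}), namely it asserts H\"older continuity of the internal resistance metric with respect to $\dpathY$ with superpolynomial tails, after normalizing by the appropriate scaling constant $\median[2^{-j}]{}$.

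First I would reduce to a single-scale estimate. For $x,y \in \ol{V} \cap \Upsilon_D$ with $\dpathY[\ol{V}](x,y) \in [2^{-k-1}2^{-j}, 2^{-k}2^{-j}]$ for some $k \ge 0$, I want to find a chain of annuli $A_{z_i,j+i}$ (roughly $k$ of them, centered near a path from $x$ to $y$) such that in each annulus the resistance across it is controlled. Concretely, using Lemma~\ref{lem:break_loops} (break loops) together with the independence-across-scales property encoded by the $\sigma$-algebras $\CF_{z,j}$ and the separation events $\Esep_{z,j}$ of Lemma~\ref{le:separation_event}, I would show that for a definite fraction of scales $i = 1,\ldots,k$ there is a collection of at most $M_0$ points (with $M_0$ a universal constant) separating the inner from the outer boundary of the relevant annulus, and such that the internal resistance between consecutive separating points is bounded above by $C\median[2^{-j-i}]{}$ with probability at least $p_0>0$ conditionally on $\CF$. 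The key scale-comparison input here is Proposition~\ref{pr:quantiles_metric}, which gives $\median[2^{-j-i}]{} \le 2^{-i(\ddouble-o(1))}\median[2^{-j}]{}$, so the geometric sum $\sum_i \median[2^{-j-i}]{}$ over the good scales is bounded by a constant times $\median[2^{-j}]{}$ times a geometric factor; the exponent $\zeta$ in the statement will come out of this geometric decay (taking $\zeta$ slightly smaller than $\ddouble$, or whatever the worst exponent in the large-deviation estimate permits).

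The mechanism to turn ``probability $\ge p_0$ at each good scale'' into ``holds at \emph{all} good scales simultaneously with superpolynomially small failure probability'' is the standard geometric/Borel--Cantelli-across-scales argument: conditionally on $\CF_{z_i,j+i}$, the event ``the resistance across $A_{z_i,j+i}$ is at most $C\median[2^{-j-i}]{}$'' has conditional probability bounded below by a constant on the separation event, so the number of scales among the first $k$ on which it \emph{fails} is stochastically dominated by a binomial-type variable; combining with Lemma~\ref{le:separation_event} (separation holds at most scales) and Lemma~\ref{lem:break_loops} (loops can be broken at most scales), the probability that more than an $\epsilon_0$-fraction of the first $k$ scales is ``bad'' is at most $ce^{-bk}$. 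Summing over dyadic $k$ (and over the finitely many relevant choices of $V$ and $x,y$ at each scale, of which there are at most polynomially many after a net argument, using the compactness of $(\ol{V} \cap \Upsilon_D, \dpathY[\ol V])$ from \cite[Lemma~1.10]{amy2025tightness}) and over $M$ dyadic, one converts $e^{-bk}$ tails into the desired bound $\p[G^c] \le cM^{-b}$, by choosing the number of scales $k \asymp \log M$.

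The main obstacle I expect is the use of the parallel and series laws to actually assemble the per-annulus resistance bounds into a bound on $\rmetres{V}{x}{y}{\Gamma_D}$: one has to be careful that the internal resistance $\rmetres{V}{x}{y}{\Gamma_D}$ is controlled by resistances \emph{inside} small sub-regions $V' \in \metregions$ near the path, which requires the monotonicity (Lemma~\ref{le:weak_rmet_monotonicity}), the generalized parallel law (Lemma~\ref{le:weak_rmet_gen_parallel_law}), and the series law (Lemma~\ref{le:weak_rmet_series_law}) of weak \clekp{} metrics, applied along a concrete chain of separating point-sets produced by the broken loops. A secondary technical point is ensuring all the sub-regions one works with lie in $\metregions[B(z,2^{-j}),r2^{-j}]$ (so that the loops bounding them have Euclidean diameter at least $r2^{-j}$, matching the hypothesis of the lemma) — this is where the restriction to $\metregions[U,r]$ in the statement is used, and it forces one to run the annulus argument only down to a bounded depth relative to $r$, or equivalently to absorb the finitely many coarsest scales into the constant $c$. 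Finally, one must check that the ``resistance across a good annulus is $\le C\median[2^{-j-i}]{}$ with conditional probability $\ge p_0$'' statement is genuinely available from \cite{amy-cle-resampling,amy2025tightness}; I would extract it from the definition of $\median[\delta]{}$ together with Proposition~\ref{pr:link_probability} (positive probability of prescribed link patterns) and the resampling tools (Lemma~\ref{lem:break_loops}), much as the analogous lower bound is obtained in the proof of Lemma~\ref{le:approx_scale_lb}.
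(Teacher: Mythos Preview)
Your proposal sketches a from-scratch multi-scale argument, but the paper's proof is essentially a one-line citation: after rescaling (via \cite[Lemma~1.7]{amy2025tightness}) to reduce to $j=1$, the statement is exactly \cite[Proposition~5.17]{amy2025tightness}, which already proves the H\"older-type superpolynomial tail for general \clekp{} metrics in the sense of Section~\ref{se:approx_scheme_general_def}. The only remark needed is that the proof there is written for the specific setup of Section~\ref{subsec:tightness_setup} but goes through uniformly for all domains $D \subseteq \D$ when one restricts to regions away from $\partial D$. So you are reinventing a wheel that the paper simply imports.

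That said, your sketch is broadly in the spirit of what \cite[Proposition~5.17]{amy2025tightness} actually does (multi-scale chaining combined with the scaling of $\median[\delta]{}$ from Proposition~\ref{pr:quantiles_metric} and the independence across scales from \cite{amy-cle-resampling}). One point of confusion worth flagging: you invoke the generalized parallel law (Lemma~\ref{le:weak_rmet_gen_parallel_law}) and separating point-sets, but those are tools for \emph{lower} bounds on resistance. For the \emph{upper} bound needed here, the relevant mechanism is the triangle inequality together with monotonicity (Lemma~\ref{le:weak_rmet_monotonicity}): one builds a chain of intermediate points $x=x_0,\ldots,x_n=y$ through successive annuli and bounds $\rmetres{V}{x}{y}{\Gamma_D} \le \sum_i \rmetres{V}{x_{i-1}}{x_i}{\Gamma_D} \le \sum_i \rmetres{V_i}{x_{i-1}}{x_i}{\Gamma_D}$ for suitable small $V_i$, then compares each summand to $\median[\cdot]{}$ at the appropriate scale. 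The break-loops resampling (Lemma~\ref{lem:break_loops}) and separating sets are not the drivers of the upper bound; they enter the companion lower bound (Lemma~\ref{le:resistance_lb_cle}) instead.
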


\begin{proof}
 By considering the rescaled metric $\rmet{2^{-j}\cdot}{2^{-j}\cdot}{\Gamma_D}$ \cite[Lemma~1.7]{amy2025tightness}, it suffices to consider the case when $j=1$. The statement is essentially \cite[Proposition~5.17]{amy2025tightness}, with the difference that \cite[Proposition~5.17]{amy2025tightness} is stated for the setup described at the beginning of Section~\ref{se:bilipschitz_setup}. But the proof gives the same estimate uniformly on each cluster and for all domains $D \subseteq \D$ when we restrict to regions away from the boundary of $D$.
\end{proof}

\begin{lemma}\label{le:resistance_lb_cle}
 For every $r>0$, $q > 0$ there exists $m > 0$ such that the following is true. Let $D \subseteq \D$ be open, simply connected, and $z \in \D$, $j \in \N$ such that $\distE(z,\partial D) \in [2^{-j+1},2^{-j+2}]$. Let $\Gamma_D$ be a \clekp{} in $D$. Given $\Gamma_D$, sample the internal metrics $(\rmetres{V}{\cdot}{\cdot}{\Gamma_D})_{V \in \metregions[B(z,2^{-j})]}$. Let $G$ be the event that for each $V \in \metregions[B(z,2^{-j})]$ and $x,y \in \ol{V} \cap \Upsilon_D$ with $\abs{x-y} \ge r2^{-j}$ we have
 \[ \rmetres{V}{x}{y}{\Gamma_D} \ge m\,\median[2^{-j}]{} . \]
 Then
 \[ \p[G^c] \le q . \]
\end{lemma}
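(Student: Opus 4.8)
The plan is to use the generalized parallel law together with the independence-across-scales argument to reduce the statement to a single-scale estimate, namely that with high probability a typical annulus at a fixed scale is crossed by only boundedly many strands and contains a region (bounded between two intersecting $\CLE_{\kappa'}$ loops) in which the internal resistance is comparable to $\median[2^{-j}]{}$. After rescaling by $2^{-j}$ via \cite[Lemma~1.7]{amy2025tightness}, we may assume $j=1$, so that $\distE(z,\partial D) \in [2,4]$ and $B(z,1/2) \Subset D$; we work entirely within $B(z,1/2)$.

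First I would fix $x,y \in \ol{V} \cap \Upsilon_D$ with $\abs{x-y} \ge r$. The idea is to find, with probability at least $1-q$, a single scale $2^{-\ell}$ (with $\ell$ bounded in terms of $r$) and a point $w$ on a geodesic (or simple admissible path) between $x$ and $y$ such that the annulus $A(w, 2^{-\ell-1}, 2^{-\ell})$ is separated by a bounded number $N$ of points $z_1,\ldots,z_N$ (with mutual Euclidean distances bounded below), and such that at one of these points $z_i$ the two strands meeting there reconnect quickly, bounding a region $V_i \in \metregions$ with $\rmetres{V_i}{u_i}{v_i}{\Gamma_D} \ge M^{-1} 2^{-\ell\geoexp}$ for the two marked points $u_i,v_i$ of that region. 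This is exactly the kind of event furnished by the separation Lemma~\ref{le:separation_event} combined with the positive-probability-of-link-patterns statement Proposition~\ref{pr:link_probability} and the non-degeneracy of the resistance metric established in Lemma~\ref{le:nondegeneracy}: conditionally on $\CF_{w,\ell}$, on the separation event the law of the unexplored region is a multichordal $\CLE_{\kappa'}$ whose law is comparable to the two-intersecting-loops setup used to define $\median[\cdot]{}$, so there is a uniform positive probability $p_0$ that such a good region exists with $\rmetres{V_i}{u_i}{v_i}{\Gamma_D} \ge c_0 \median[2^{-\ell}]{}$. Since a path from $x$ to $y$ of Euclidean diameter $\ge r$ must traverse many disjoint annuli, iterating over $k$ scales and using the exponential decay in Lemma~\ref{le:separation_event} shows that the probability of finding no good annulus decays like $e^{-bk}$; taking $k$ large enough in terms of $q$ gives probability $\ge 1-q$ of at least one good annulus. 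Then by the series law (Lemma~\ref{le:weak_rmet_series_law}) applied at $w$ together with the generalized parallel law (Lemma~\ref{le:weak_rmet_gen_parallel_law}) applied at the $N$ separating points, $\rmetres{V}{x}{y}{\Gamma_D} \ge \rmetres{V}{w'}{w''}{\Gamma_D} \ge N^{-1} \min_i \rmetres{V_i}{u_i}{v_i}{\Gamma_D} \ge N^{-1} c_0 \median[2^{-\ell}]{}$, and finally $\median[2^{-\ell}]{} \gtrsim \median[2^{-1}]{} = \median[1]{}$ (up to a constant depending on $\ell$, hence on $r,q$) by Proposition~\ref{pr:quantiles_metric}. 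Absorbing all the $r,q$-dependent constants into $m$ gives the claim for fixed $x,y$.

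To upgrade from fixed $x,y$ to the uniform statement over all $V \in \metregions[B(z,2^{-j})]$ and all $x,y$ with $\abs{x-y}\ge r2^{-j}$, I would note that by the compatibility and monotonicity properties (Proposition~\ref{pr:cle_rmet_char}\eqref{it:rmet_compatibility}, Lemma~\ref{le:weak_rmet_monotonicity}) it suffices to control $\rmet{x}{y}{\Gamma_D}$ for the largest region, and then use a standard covering/union-bound argument: cover $B(z,2^{-j})$ by $O(r^{-2})$ balls of radius $r2^{-j}/10$, observe that any admissible $x,y$ with $\abs{x-y}\ge r2^{-j}$ has a geodesic passing through some annulus nested inside the argument above with its center in one of these balls, and apply the per-point estimate with $q$ replaced by $q/(\text{number of balls})$. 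Since the whole estimate is really about finding \emph{one} good annulus somewhere along any long path, the union bound costs only a factor polynomial in $r^{-1}$, which is absorbed into $m$ by again choosing $k$ slightly larger.

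The main obstacle I anticipate is the transfer of the $\median[2^{-\ell}]{}$-lower bound from the idealized setup (two intersecting loops defining $\median[\cdot]{}$) to the region $V_i$ actually appearing inside the annulus, since $V_i$ sits inside a multichordal $\CLE_{\kappa'}$ with a conditioned link pattern rather than in the clean configuration of \eqref{eq:median_def}. This is handled by the absolute continuity explained in \cite[Section~3.1]{amy2025tightness} between the law of $(\eta_1,\eta_2)$ restricted to a compact subset and the law of the outer boundaries of two intersecting $\CLE_{\kappa'}$ loops, together with Theorem~\ref{thm:cle_partially_explored} and Proposition~\ref{pr:link_probability}, which give a uniform positive-probability lower bound on the relevant link patterns; the non-degeneracy of the limiting resistance metric (Lemma~\ref{le:nondegeneracy}, via \cite[Theorem~1.14]{amy2025tightness}) then ensures the resistance in $V_i$ is bounded below by a positive constant times $\median[\cdot]{}$ rather than being zero. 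Keeping the dependence of all these constants uniform over $D$ and over the cluster is exactly the point where one invokes that the proof of \cite[Proposition~5.17]{amy2025tightness} (and hence of Lemma~\ref{le:resistance_ub_cle}) is uniform away from $\partial D$.
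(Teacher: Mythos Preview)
Your approach shares the paper's core mechanism---a grid of annuli plus the generalized parallel law---but you overcomplicate it. The paper does not iterate over scales: it fixes a \emph{single} scale $(r/4)2^{-j}$, covers $B(z,2^{-j})$ by the annuli $A(w,(r/4)2^{-j},(r/2)2^{-j})$ for $w\in (r/4)2^{-j}\Z^2$, and argues (following \cite[Lemma~4.3]{my2025geouniqueness}) that on an event of probability $\ge 1-q$ every such annulus is separated by at most $m^{-1}$ bubble regions, each with internal resistance $\ge m\,\median[2^{-j}]{}$. Any $x,y$ with $|x-y|\ge r2^{-j}$ are then separated by one of these annuli, and Lemmas~\ref{le:weak_rmet_gen_parallel_law} and~\ref{le:weak_rmet_monotonicity} finish the job. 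Your multi-scale iteration and the appeal to Proposition~\ref{pr:quantiles_metric} are unnecessary detours; in particular, there is no need to first fix $x,y$ and then locate a random $w$ on a geodesic (which would make the independence-across-scales lemma awkward to apply, since $w$ depends on $\Gamma_D$).

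Two further issues in the write-up: first, your invocation of the series law ``at $w$'' is imprecise, since $w$ is not a cut point---the series law enters only through the fact that the cut points $v_i,v'_i$ of the bubble region lie on any admissible path from $x$ to the separating point $u_i$, so that $\rmetres{V_x}{x}{u_i}{\Gamma_D}\ge\rmetres{V_i}{v_i}{v'_i}{\Gamma_D}$. Second, Lemma~\ref{le:nondegeneracy} is not the right reference for the lower bound on the resistance in $V_i$: that lemma compares two versions of the \emph{approximate} metric and does not give a lower bound for the limiting resistance. The lower bound you need comes directly from the definition of $\median[\cdot]{}$ together with the absolute continuity in \cite[Section~3.1]{amy2025tightness} (which you correctly identify), packaged in \cite[Lemma~4.3]{my2025geouniqueness}.
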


\begin{proof}
 This follows from the same argument as the analogue for the geodesic \clekp{} metric in~\cite[Lemma~4.3]{my2025geouniqueness}. The only difference is that we apply the argument in the proof of~\cite[Lemma~4.3]{my2025geouniqueness} to each annulus $A(w,(r/4)2^{-j},(r/2)2^{-j})$ with $w \in (r/4)2^{-j}\Z^2 \cap B(z,2^{-j})$. As a result, we can make $m>0$ small enough so that on an event with probability at least $1-q$, each such annulus contains a bounded number of at most $m^{-1}$ regions, each bounded between a pair of intersecting strands, separating $\bIn A(w,(r/4)2^{-j},(r/2)2^{-j})$ from $\bOut A(w,(r/4)2^{-j},(r/2)2^{-j})$ in $\Upsilon_D$, and such that the resistance across each region is at least $m\median[2^{-j}]{}$. By the generalized parallel law Lemma~\ref{le:weak_rmet_gen_parallel_law} and the monotonicity Lemma~\ref{le:weak_rmet_monotonicity}, we get that $\rmetres{V}{x}{y}{\Gamma_D} \ge m^2\median[2^{-j}]{}$ for each $x,y$ with $\abs{x-y} \ge r2^{-j}$.
\end{proof}

\subsection{Good annulus event}
\label{subsec:good_annuli}

\newcommand*{\rextremitylb}{r_1}
\newcommand*{\rextremityub}{r_2}
\newcommand*{\Eann}{E^{1}}
\newcommand*{\Eannharm}{E^{2}}

\begin{figure}[ht]
\centering
\includegraphics[width=0.49\textwidth,page=1]{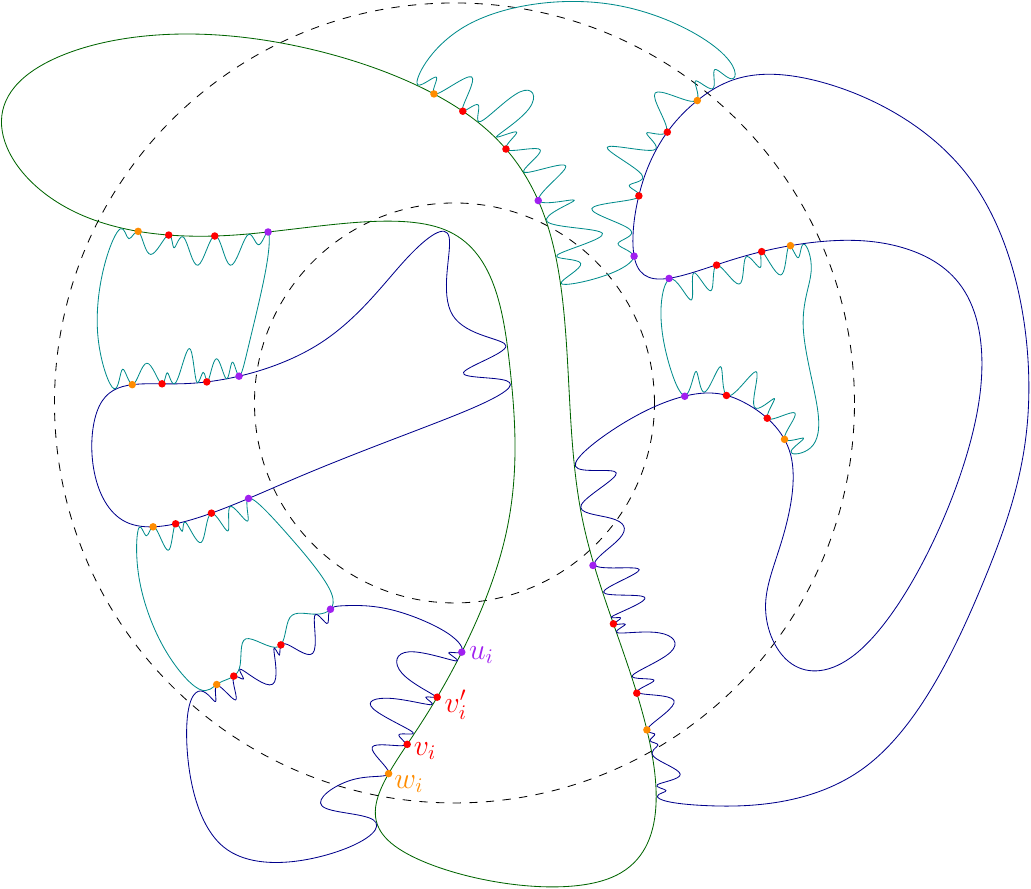}\hspace{0.01\textwidth}\includegraphics[width=0.49\textwidth,page=2]{good_region.pdf}
\caption{Illustration of the event $\Eann_{z,j}$, the points $w_i,v_i,v'_i,u_i$, and the regions $V_i,V'_i$.}
\label{fi:good_annulus}
\end{figure}

We consider now again the setup in Section~\ref{se:bilipschitz_setup}. We are now going to define a ``good'' event that occurs on an annulus~$A_{z,j}$. Let $M>1$ and $\rextremityub > \rextremitylb > 0$ be parameters, and $z \in \D$, $j \in \N$ such that $B(z,2^{-j}) \subseteq \D$. We let $\Eann_{z,j} = \Eann_{z,j}(M,\rextremitylb,\rextremityub)$ be the following event.
\begin{enumerate}[(i)]
\item There is a collection of at most $M$ strands $\gamma_1^L,\gamma_1^R,\ldots,\gamma_m^L,\gamma_m^R$ of loops of $\Gamma$ that do not overlap each other, are contained in $A_{z,j}$, and such that the following holds. We orient the strands so that the left side of $\gamma_i^L$ and the right side of $\gamma_i^R$ are in the gasket of $\Gamma$, and suppose that the left side of $\gamma_i^L$ intersects the right side of $\gamma_i^R$.
\item For each $i$ there are points $w_i,v_i,v'_i,u_i \in \gamma_i^L \cap \gamma_i^R$ in the same order as visited by the strands with $\abs{w_i-v_i}, \abs{v_i-v'_i}, \abs{v'_i-u_i} \in [\rextremitylb 2^{-j}, \rextremityub 2^{-j}]$, and so that $\{u_1,\ldots,u_m\}$ separates $\bIn A_{z,j}$ from $\bOut A_{z,j}$ in $\Upsilon_\Gamma$. We further suppose that $\{u_1,\ldots,u_m\}$ separates $\{v_1,\ldots,v_m\}$ from $\bIn A_{z,j}$ in $\Upsilon_\Gamma$ (i.e.\ the points $w_i,v_i,v'_i,u_i$ are ordered from the outside to the inside).
\item We suppose that $w_i,v_i$ are two consecutive intersection points, i.e.\ there is no further intersection point of $\gamma_i^L \cap \gamma_i^R$ between them.
\end{enumerate}

We let $\Eannharm_{z,j}$ be the event that $\Eann_{z,j}$ occurs and we can select the points so that additionally the following holds. For each $i=1,\ldots,m$, let $V_i$ (resp.\ $V'_i$) be the region bounded between the segments of $\gamma_i^L,\gamma_i^R$ from $v_i$ to $v'_i$ (resp.\ $v'_i$ to $u_i$). Then
\begin{equation}\label{eq:extremities_energy}
 \rmetres{V_i}{v_i}{v'_i}{\Gamma},\ \rmetres{V'_i}{v'_i}{u_i}{\Gamma} \in [M^{-1}\median[2^{-j}]{}, M\median[2^{-j}]{}] ,
\end{equation}
and likewise for $\rmett{\cdot}{\cdot}{\Gamma}$.

See Figure~\ref{fi:good_annulus} for an illustration of the setup and the notation which we will make use of later in the article.

Recall the $\sigma$-algebras $\CF_{z,j}$, $\wt{\CF}_{z,j}$ and the events $
\Esep_{z,j}$ (depending on the parameter $\dsep>0$) defined in Section~\ref{se:bilipschitz_setup}.

\begin{lemma}
\label{le:good_ann_event}
For each $p \in (0,1)$, $\dsep>0$ there exist~$M,\rextremitylb,\rextremityub$ so that
\[ \p[ \Eann_{z,j} \giv \CF_{z,j}] \one_{\Esep_{z,j}} \geq p \one_{\Esep_{z,j}} \]
for each $z,j$ with $B(z,2^{-j}) \subseteq \D$.
\end{lemma}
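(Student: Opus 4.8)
The plan is to condition on $\CF_{z,j}$ and argue throughout on the event $\Esep_{z,j}=\Esep_{z,j}(\dsep)$. By Theorem~\ref{thm:cle_partially_explored}, together with the conditional independence of the resistance-form data recorded in $\CF_{z,j}$ from $\Gamma$ restricted to $B(z,2\cdot 2^{-j})$ given the partial exploration and $\alpha^*_{z,j}$ (this data is irrelevant here, since $\Eann_{z,j}$ is an event for $\Gamma$ alone), the conditional law of $\Gamma|_{B(z,2\cdot 2^{-j})}$ given $\CF_{z,j}$ is that of a multichordal $\CLE_{\kappa'}$ in the unexplored domain $D \defeq B(z,3\cdot 2^{-j})^{*,B(z,2\cdot 2^{-j})}$, with exterior link pattern $\beta$ induced by the exploration, conditioned on its interior link pattern equalling $\alpha^*_{z,j}$. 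Rescaling by $2^{j}$ and translating so that $z\mapsto 0$, the annulus $A_{z,j}$ becomes $A(0,1/2,1)$, which is compactly contained in $D$ because $D\supseteq B(0,2)$; on $\Esep_{z,j}$ the $2N$ marked points lie on $\partial B(0,2)$ and are $\dsep$-separated, so $N$ is bounded in terms of $\dsep$, the link patterns $\beta$ and $\alpha^*_{z,j}$ range over finite sets, and — by the Carathéodory kernel theorem — the marked domains $(D;\ul x)$ arising this way lie in a precompact family, with $A(0,1/2,1)$ staying compactly contained in every Carathéodory limit.

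Fix a permissible interior link pattern $\alpha_0$. The event $\Eann_{z,j}(M,\rextremitylb,\rextremityub)$ only involves loop segments in a fixed compact subset of $B(z,2\cdot 2^{-j})$ containing $\ol{A_{z,j}}$, so it is measurable with respect to $\Gamma_\inside^{*,V,U}$ for suitable $(U,V)\in\domainpair{D}$ with $U\supseteq \ol{A_{z,j}}$ and $V\Subset D$; hence, by Proposition~\ref{prop:mccle_tv_convergence_int}, the map $(D;\ul x)\mapsto \mcclelaw{D;\ul x;\beta}[\Eann(M,\rextremitylb,\rextremityub)]$ is continuous on the precompact family above. By Proposition~\ref{pr:link_probability}, $\mcclelaw{D;\ul x;\beta}[\alpha=\alpha_0]$ is continuous and strictly positive there, hence bounded below by some $c(\dsep)>0$ uniformly (using compactness and the finite range of $\alpha_0$); therefore $\p[\Eann_{z,j}^c\mid\CF_{z,j}]\,\one_{\Esep_{z,j}}\le c(\dsep)^{-1}\,\mcclelaw{D;\ul x;\beta}[\Eann(M,\rextremitylb,\rextremityub)^c]$. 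For a fixed value of $\rextremityub$ the probability $\mcclelaw{D;\ul x;\beta}[\Eann(M,\rextremitylb,\rextremityub)]$ is nondecreasing in $M$ and nonincreasing in $\rextremitylb$; granting that it tends to $1$ pointwise on the precompact family as $M\to\infty$ and $\rextremitylb\to 0$, a compactness argument (with the continuity just noted and the monotonicity) upgrades this to uniform convergence. Choosing $M$ large and $\rextremitylb$ small so that $\mcclelaw{D;\ul x;\beta}[\Eann(M,\rextremitylb,\rextremityub)^c]\le c(\dsep)(1-p)$ uniformly, and undoing the rescaling, gives $\p[\Eann_{z,j}\mid\CF_{z,j}]\,\one_{\Esep_{z,j}}\ge p\,\one_{\Esep_{z,j}}$ for every admissible $z,j$. (We only need $\rextremityub$ small enough that three consecutive intervals of length $\le\rextremityub 2^{-j}$ fit along a strand inside the annulus; any fixed small constant works, and $\rextremitylb<\rextremityub$.)

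The crux — and the main obstacle — is the pointwise convergence $\mcclelaw{D;\ul x;\beta}[\Eann(M,\rextremitylb,\rextremityub)]\to 1$. This relies on the structural description of the non-simple $\CLE_{\kappa'}$ gasket inside an annulus developed in \cite{amy2025tightness,amy-cle-resampling}: almost surely there is a finite chain of pairs $(\gamma_i^L,\gamma_i^R)$ of mutually intersecting loop strands contained in $A(0,1/2,1)$, oriented so that the gasket lies on the left of $\gamma_i^L$ and on the right of $\gamma_i^R$ and the left side of $\gamma_i^L$ meets the right side of $\gamma_i^R$, such that their innermost common points $u_i$ separate the inner from the outer boundary of the annulus in the gasket. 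On each such strand pair the intersection set is a proper closed subset of the two strands, of macroscopic diameter, accumulating at arbitrarily small scales and with a complementary interval of macroscopic size; once $\rextremitylb$ is sufficiently small and $\rextremityub$ is the fixed constant above, this lets one select, on the side of $u_i$ towards $\bOut A_{z,j}$, further intersection points $v_i',v_i,w_i$ so that $w_i,v_i,v_i',u_i$ appear in this order along the strands with consecutive Euclidean distances in $[\rextremitylb 2^{-j},\rextremityub 2^{-j}]$ and with $w_i,v_i$ consecutive intersection points. Since the number of links in some such crossing chain is almost surely finite (conditionally on any interior link pattern and any domain in the precompact family), taking $M$ large makes $\Eann(M,\rextremitylb,\rextremityub)$ eventually contain this almost sure event, which gives the pointwise limit.
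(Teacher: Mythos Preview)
Your proof is correct and follows essentially the same approach as the paper's: reduce to a multichordal $\CLE_{\kappa'}$ in a precompact family of $\dsep$-separated marked domains via Theorem~\ref{thm:cle_partially_explored} and scaling, use the almost-sure existence of a finite separating set together with the fact that intersecting strands intersect infinitely often to get the pointwise statement, apply Proposition~\ref{prop:mccle_tv_convergence_int} plus compactness for uniformity, and use Proposition~\ref{pr:link_probability} to pass from $\wt{\CF}_{z,j}$ to $\CF_{z,j}$. The only cosmetic difference is that the paper first proves the bound for $\wt{\CF}_{z,j}$ and then transfers, whereas you fold the link-pattern lower bound directly into the inequality $\p[\Eann_{z,j}^c\mid\CF_{z,j}]\le c(\dsep)^{-1}\mcclelaw{D;\ul x;\beta}[\Eann^c]$; this is equivalent.
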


\begin{proof}
Let us first show the statement with $\wt{\CF}_{z,j}$ in place of $\CF_{z,j}$. Recall (cf.\ \cite[Lemma~5.14]{amy-cle-resampling}) that for the multichordal \clekp{} in a fixed marked domain containing $B(z,2^{-j})$, there almost surely exists a finite set of points that separate $\bIn A_{z,j}$ from $\bOut A_{z,j}$. Since two intersecting strands intersect infinitely many times, we can select the parameters $M,\rextremitylb,\rextremityub$ so that the probability of $\Eann_{z,j}$ is as close to $1$ as we want for the multichordal \clekp{} in a fixed marked domain. By the local continuity of the multichordal \clekp{} law in Proposition~\ref{prop:mccle_tv_convergence_int}, we can make the probability close to $1$ uniformly among all $\dsep$-separated marked domains. By scaling, the statement does not depend on~$j$.

This proves the statement with $\wt{\CF}_{z,j}$ in place of $\CF_{z,j}$. The conditional law under $\CF_{z,j}$ is given by additionally conditioning on the interior link pattern. By Proposition~\ref{pr:link_probability} the probability of each interior link pattern is uniformly bounded from below among $\dsep$-separated marked point configurations. This implies that the same result holds for $\CF_{z,j}$.
\end{proof}

\begin{lemma}
\label{le:good_ann_event_harm}
For each $p \in (0,1)$, $\dsep>0$ there exist~$M,\rextremitylb,\rextremityub$ so that
\[ \p[ \Eannharm_{z,j} \giv \CF_{z,j}] \one_{\Esep_{z,j}} \geq p \one_{\Esep_{z,j}} \]
for each $z,j$ with $B(z,2^{-j}) \subseteq \D$.
\end{lemma}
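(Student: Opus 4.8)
The plan is to deduce Lemma~\ref{le:good_ann_event_harm} from Lemma~\ref{le:good_ann_event} by first establishing the extra ``harmonic'' requirement~\eqref{eq:extremities_energy} separately, and then combining the two via the independence-across-scales machinery. The point is that $\Eannharm_{z,j}$ differs from $\Eann_{z,j}$ only in that, for each of the bounded number of chains, the two designated subregions $V_i,V_i'$ between consecutive intersection points must have resistance (with respect to both $\rform{\cdot}{\cdot}{\Gamma}$ and $\trform{\cdot}{\cdot}{\Gamma}$) comparable to $\median[2^{-j}]{}$, up to the factor $M$.

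\textbf{Step 1: the resistance of a typical bubble is of order $\median[2^{-j}]{}$.} I would first show that if $V$ is a region of the form ``two intersecting strands from a point $v$ to a point $v'$ at Euclidean distance in $[\rextremitylb 2^{-j},\rextremityub 2^{-j}]$ and with no intersection point strictly between them'', then with probability bounded below (uniformly in $z,j$ and in $\dsep$-separated exterior data) one has $\rmetres{V}{v}{v'}{\Gamma} \in [M^{-1}\median[2^{-j}]{},M\median[2^{-j}]{}]$, and similarly for $\rmett{\cdot}{\cdot}{\Gamma}$. The upper bound is Lemma~\ref{le:resistance_ub_cle} applied on the scale $2^{-j}$ (noting $\dpathY[\ol{V}](v,v') \lesssim 2^{-j}$ there, and that on $\Eann_{z,j}$ the loops bounding $V$ have Euclidean diameter $\gtrsim \rextremitylb 2^{-j}$, so $V$ lies in the relevant $\metregions[\cdot,r2^{-j}]$). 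The lower bound comes from Lemma~\ref{le:resistance_lb_cle}: since $\abs{v-v'}\ge \rextremitylb 2^{-j}$, the resistance across $V$ is at least $m\,\median[2^{-j}]{}$ with probability $\ge 1-q$. Here I use that, by the definition of $\median[\delta]{}$ at the start of Section~\ref{se:bilipschitz_setup} (via absolute continuity to the law of two intersecting \clekp{} loops, as in \cite[Section~3.1]{amy2025tightness}), $\median[2^{-j}]{}$ really is comparable to the typical resistance across such a bubble, and crucially the assumption~\eqref{eq:ass_medians_comparable} gives $\median[\delta]{}\asymp\mediant[\delta]{}$ so the \emph{same} normalization $\median[2^{-j}]{}$ works for both forms.

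\textbf{Step 2: upgrade via a positive-probability intersection with $\Eann_{z,j}$.} Both $\Eann_{z,j}$ and the event ``\eqref{eq:extremities_energy} holds for the chosen $V_i,V_i'$'' are events of conditional probability bounded below given $\CF_{z,j}$ on $\Esep_{z,j}$ — the former by Lemma~\ref{le:good_ann_event}, the latter by Step~1 applied to each of the at most $M$ chains (using that given $\Eann_{z,j}$ and the exterior data, the internal \clekp{} and the two resistance forms in each subregion are sampled from laws absolutely continuous, with controlled densities, to the setup of Step~1, via Theorem~\ref{thm:cle_partially_explored}, the Markovian properties~\eqref{it:weak_rform_markov}, and the conditional independence of the two forms). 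Since we may enlarge $M$ and shrink $\rextremitylb$ / enlarge $\rextremityub$ freely, and since on $\Eann_{z,j}$ the admissible choices of points already allow freedom in placing $v_i,v_i',u_i$, I would argue as in the proof of Lemma~\ref{le:good_ann_event}: fix a marked domain, observe that $\Eannharm_{z,j}$ has probability as close to $1$ as desired for the multichordal \clekp{} in that fixed domain (because, by Step~1 and a Borel–Cantelli/averaging argument over the infinitely many intersection points of each pair of strands, one can find \emph{some} triple of consecutive-ish intersection points realizing~\eqref{eq:extremities_energy} with overwhelming probability), then transfer uniformly to all $\dsep$-separated marked domains by the local total-variation continuity of Proposition~\ref{prop:mccle_tv_convergence_int}, and finally pass from $\wt{\CF}_{z,j}$ to $\CF_{z,j}$ using the uniform lower bound on interior link-pattern probabilities in Proposition~\ref{pr:link_probability}.

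\textbf{Main obstacle.} The delicate point is Step~1's lower bound combined with the need for a \emph{common} normalization: I must ensure that conditioning on $\Eann_{z,j}$ (hence on the existence of the whole separating chain structure) does not distort the conditional law of the resistance across an individual bubble $V_i$ away from its typical scale $\median[2^{-j}]{}$, and that the two independent forms $\rform{\cdot}{\cdot}{\Gamma}$, $\trform{\cdot}{\cdot}{\Gamma}$ can be simultaneously controlled. This is handled by the conditional independence assumption (so one can intersect the two events), by~\eqref{eq:ass_medians_comparable} (so the same $\median[2^{-j}]{}$ normalizes both, up to constants absorbed into $M$), and by the fact that all conditioning is on exterior data $\CF_{z,j}$, under which the relevant subregions are genuinely i.i.d.-like (Theorem~\ref{thm:cle_partially_explored}); the self-improvement in $M$ — i.e.\ that a bound holding with positive probability for each bubble can be made to hold simultaneously for all $\le M$ bubbles with high conditional probability after enlarging $M$ — is the same kind of argument already used for Lemma~\ref{le:good_ann_event}, so no genuinely new input is needed beyond Lemmas~\ref{le:resistance_ub_cle} and~\ref{le:resistance_lb_cle}.
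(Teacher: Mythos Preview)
Your high-level plan is right: the extra condition in $\Eannharm_{z,j}$ over $\Eann_{z,j}$ is controlled by Lemmas~\ref{le:resistance_ub_cle} and~\ref{le:resistance_lb_cle}, and then one combines with Lemma~\ref{le:good_ann_event}. But your transfer from those lemmas (which are for \emph{ordinary} \clekp{} in a simply connected domain) to the multichordal setting (which is what you have after conditioning on $\CF_{z,j}$) has a gap. You invoke ``absolute continuity with controlled densities'' via Theorem~\ref{thm:cle_partially_explored}, but that theorem only identifies the conditional law as a multichordal \clekp{}; it gives no Radon--Nikodym bound against ordinary \clekp{}. Your fallback to Proposition~\ref{prop:mccle_tv_convergence_int} is also problematic here: that argument works for $\Eann_{z,j}$ in Lemma~\ref{le:good_ann_event} because $\Eann_{z,j}$ is a conformally invariant event of the \clekp{}, so one can map all $\dsep$-separated configurations to a single compact family. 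The event $\Eannharm_{z,j}$ involves the resistance metric, which is not conformally invariant, and at this point in the paper is not even known to be scale-covariant; so it is not clear how to get uniformity over both the domain shape and the scale~$j$ from total-variation continuity alone. Your Borel--Cantelli idea over the infinitely many intersection points would require decorrelation between the resistances of different bubbles that you have not established.

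The paper bypasses all of this with a resampling argument you do not mention. One argues by contradiction: if $\p[(\Eann_{z,j}\setminus\Eannharm_{z,j})\mid\wt\CF_{z,j}]\one_{\Esep_{z,j}}>q$ for arbitrarily large $M$, pick such a configuration and apply the loop-breaking resampling of Lemma~\ref{lem:break_loops} in the outer annulus $A_{z,j-1}$. With probability at least some $\pbreak>0$ (uniform over $\dsep$-separated data) this disconnects all crossing strands, so the remainder is an \emph{ordinary} \clekp{} in some subdomain $\wt D$ with $\distE(z,\partial\wt D)\in[2^{-j+1},2^{-j+2}]$. The resampling leaves the configuration inside $A_{z,j}$ untouched, and by the Markovian property~\eqref{it:weak_rform_markov} the metric there is unaffected as well; since resampling preserves the law, one gets $\p_{\wt D}[\Eann_{z,j}\setminus\Eannharm_{z,j}]\ge\pbreak q/2$. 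For large $M$ this contradicts Lemma~\ref{le:resistance_ub_cle} or~\ref{le:resistance_lb_cle} directly. This is the missing step: a concrete device to reduce the multichordal conditioning to the ordinary-\clekp{} setting where your Step~1 lemmas actually apply.
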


\begin{proof}
The proof is the same as for Lemma~\ref{le:good_ann_event}, but we additionally need to argue that the resistances can be controlled uniformly in $j$. As before, it suffices to prove the statement with $\wt{\CF}_{z,j}$ in place of $\CF_{z,j}$. We are therefore going to show that for each $\dsep>0$, $q>0$, we can make $M$ large enough so that
\begin{equation}\label{eq:good_intersections_region}
 \p[ (\Eann_{z,j} \setminus \Eannharm_{z,j}) \giv \wt{\CF}_{z,j}] \one_{\Esep_{z,j}} \le q .
\end{equation}
We use a resampling argument to deduce it from the analogous statements for ordinary \clekp{} Lemmas~\ref{le:resistance_ub_cle},~\ref{le:resistance_lb_cle}.

Suppose this were not true. Consider the resampling procedure within $A_{z,j-1}$ in Lemma~\ref{lem:break_loops}, and let $\Fbreak_{z,j}$ be the event that all crossings of $A_{z,j-1}$ in the resampled \clekp{} $\Gamma^\resampled_{z,j-1}$ are broken up. For some $\pbreak > 0$ let $\Ebreak_{z,j}$ be the event (for $\Gamma$) that $\p[\Fbreak_{z,j} \mid \Gamma] \ge \pbreak$. By the proof of Lemma~\ref{lem:break_loops} given in \cite{amy-cle-resampling} we can find for each $\dsep>0$, $q>0$ some $\pbreak>0$ so that for a suitable resampling procedure we have
\[ \mcclelaw{D;\ul{x};\beta}[ (\Ebreak_{z,j})^c ] < q/2 \]
uniformly for each $(D;\ul{x};\beta)$ where $\distE(z,\partial D) \in [2^{-j+1},2^{-j+2}]$ and $\ul{x}$ is $\dsep$-separated (note that this statement does not depend on $j$ due to scale-invariance). If~\eqref{eq:good_intersections_region} were false, there would exist some $(D;\ul{x};\beta)$ and a set of boundary arcs corresponding to the gasket so that (suppressing the set of boundary arcs in the notation below)
\[ \mcclelaw{D;\ul{x};\beta}[ (\Eann_{z,j} \setminus \Eannharm_{z,j}) \cap \Ebreak_{z,j} ] \ge q/2 . \]
Since the resampling procedure is a measure-preserving transformation, we then have
\[ \mcclelaw{D;\ul{x};\beta}[ (\Eann_{z,j} \setminus \Eannharm_{z,j}) \cap \Fbreak_{z,j} ] \ge \pbreak q/2 . \]
If we condition on the chords of a multichordal \clekp{}, the remainder has the conditional law of an ordinary \clekp{}. Hence, there exists some $\wt{D}$ with $\distE(z,\partial \wt{D}) \in [2^{-j},2^{-j+1}]$ such that
\[ \p_{\wt{D}}[ \Eann_{z,j} \setminus \Eannharm_{z,j} ] \ge \pbreak q/2 \]
where $\p_{\wt{D}}$ denotes the law of \clekp{} in $\wt{D}$. For sufficiently large $M$, this contradicts Lemma~\ref{le:resistance_ub_cle} or~\ref{le:resistance_lb_cle}.
\end{proof}

\subsection{Covering by good regions}
\label{se:good_regions}

\newcommand*{\Gregion}{G^1}
\newcommand*{\Gresbdball}{G^{\FR}}

The main goal of this subsection is to prove the following result which implies that with probability $1$ we can cover the space with a finite number of ``good'' regions in which the internal resistance forms for both $\rform{\cdot}{\cdot}{\Gamma}$ and $\trform{\cdot}{\cdot}{\Gamma}$ are comparable to their typical behavior.

\begin{figure}[ht]
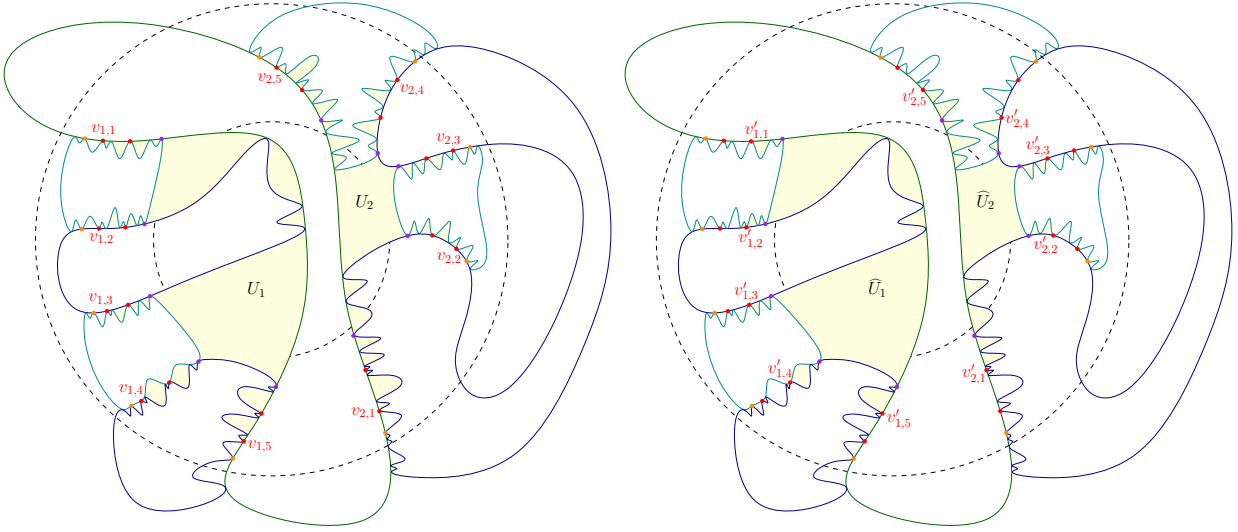

\centering
\includegraphics[width=0.49\textwidth,page=3]{good_region.pdf}\hspace{0.01\textwidth}\includegraphics[width=0.49\textwidth,page=4]{good_region.pdf}
\caption{\label{fi:good_region} Continuation of Figure~\ref{fi:good_annulus}. The loops involved in the event $\Eann_{z,j}$ may split $B(z,2^{-j-1})$ into several components. In the figure, there are two components. We let $U_l$ be the regions bounded by the $\{v_i\}$, and let $\wh{U}_l$ be the regions bounded by the $\{v'_i\}$. We relabel the points $v_i,v'_i$, etc.\ according to the components they are adjacent to.}
\end{figure}

To state the result, we define a ``good'' event for each ball $B(z,2^{-j}) \subseteq \D$. Suppose that we are on the event $\Eann_{z,j}$ defined in Section~\ref{subsec:good_annuli}. We keep using the notation for the points $w_i,v_i,v'_i,u_i \in \gamma_i^L \cap \gamma_i^R$ and the regions $V_i, V'_i$ from the definition of $\Eann_{z,j}$ and $\Eannharm_{z,j}$. Further, let $K_1,\ldots,K_L$ be the connected components of $\Upsilon_\Gamma \setminus \{v_i\}$ that intersect $B(z,2^{-j-1})$. For each $l=1,\ldots,L$, we let $U_l \in \metregions$ be such that $\ol{U}_l = \Fill(\ol{K_l})$. We let $\{u_{l,1},u_{l,2},\ldots\}$ be the subset of $\{u_i\}$ that are adjacent to $U_l$. We define $w_{l,i},v_{l,i},v'_{l,i},u_{l,i},V_{l,i},V'_{l,i}$ analogously. We further write $\wh{U}_l = U_l \setminus \bigcup_i V_{l,i}$ (i.e.\ $\{\wh{U}_l\}$ are the filled connected components of $\Upsilon_\Gamma \setminus \{v'_i\}$ that intersect $B(z,2^{-j-1})$). See Figure~\ref{fi:good_region} for an illustration.

We will refer to the set $\{v_{l,1},v_{l,2},\ldots\}$ as the boundary of $U_l$, and the set $\{v'_{l,1},v'_{l,2},\ldots\}$ as the boundary of $\wh{U}_l$. We say that a function is harmonic in $U_l$ (resp.\ $\wh{U}_l$) if it is harmonic except at its boundary points (i.e.\ the $\rformres{U_l}{\cdot}{\cdot}{\Gamma}$-energy minimizers among the functions with prescribed boundary values, see Section~\ref{se:rmet_rform}). Here we have slightly abused notation and identified $U_l$ with $\Upsilon_\Gamma \cap \ol{U}_l$. Note that the additivity of energies (Lemma~\ref{le:rform_additivity}) implies that a harmonic function in a larger region is also harmonic in $U_l$. Recall that harmonic functions attain their maximum and minimum values on the boundary, due to~(RF\ref{it:rf_markov}).

We will sometimes write $K^j_l$ (resp.\ $U^j_l$) to emphasize the dependence on $j$.

Let $M>1$ and $\rextremityub > \rextremitylb > 0$. For each $z \in \D$, $j \in \N$ with $B(z,2^{-j}) \subseteq \D$, we let $\Gregion_{z,j} = \Gregion_{z,j}(M,\rextremitylb,\rextremityub)$ be the event that the event $\Eannharm_{z,r}$ from Section~\ref{subsec:good_annuli} occurs and for each $l$, for any non-constant $\rformres{U_l}{\cdot}{\cdot}{\Gamma}$-harmonic function $f$ we have that
\begin{equation}\label{eq:good_region_energy}
M^{-1} \le \frac{\median[2^{-j}]{} \rformres{U_l}{f}{f}{\Gamma}}{(\sup_{K_l} f - \inf_{K_l} f)^2} \le M .
\end{equation}

\begin{proposition}\label{pr:good_region}
For each $b>0$ there exist $M>1$, $\rextremityub > \rextremitylb > 0$, and $c>0$ such that the following is true. Let $z \in \D$, $j \in \N$ with $B(z,2^{-j}) \subseteq \D$. Then for each $k \in \N$ we have
\[
 \p\left[ \text{$(\Gregion_{z,j'})^c$ occurs for more than a $1/5$ fraction of $j' = j,\ldots,j+k$} \right] \le ce^{-bk} .
\]
\end{proposition}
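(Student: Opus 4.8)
The plan is to establish Proposition~\ref{pr:good_region} by an independence-across-scales argument built on the good annulus event $\Eannharm_{z,j}$ from Lemma~\ref{le:good_ann_event_harm} together with the resistance bounds of Section~\ref{se:resistance_bounds}. The key point is that the event $\Gregion_{z,j}$ consists of $\Eannharm_{z,j}$ plus the energy comparison~\eqref{eq:good_region_energy} for harmonic functions in the regions $U_l \subseteq B(z,2^{-j-1})$, and both of these, once the strands crossing $A_{z,j}$ are ``separated'' in the sense of Section~\ref{se:bilipschitz_setup}, happen with probability bounded below conditionally on $\CF_{z,j}$ uniformly in $j$. First I would record that, by Lemma~\ref{le:separation_event}, for any $b>1$ we can choose $\dsep>0$ so that the events $\Esep_{z,j}$ fail for at most a $1/10$ fraction of scales $j'=j,\ldots,j+k$ except on an event of probability $\le ce^{-bk}$. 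On the remaining scales we will show $\p[\Gregion_{z,j'} \mid \CF_{z,j'}]\one_{\Esep_{z,j'}} \ge p\one_{\Esep_{z,j'}}$ for a fixed $p>0$; then a standard Chernoff/martingale argument over the conditionally-independent scales (exactly as in the proofs in \cite{amy-cle-resampling}, using that $\Gregion_{z,j'}$ is $\CF_{z,j'+1}$-measurable while the conditioning is on $\CF_{z,j'}$) gives that $\Gregion_{z,j'}$ fails for more than a $1/10$ fraction of the separated scales with probability $\le ce^{-bk}$, and combining the two $1/10$ fractions yields the $1/5$ fraction in the statement.

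The core estimate is therefore the conditional lower bound $\p[\Gregion_{z,j} \mid \CF_{z,j}]\one_{\Esep_{z,j}} \ge p\one_{\Esep_{z,j}}$. Here I would proceed as in Lemma~\ref{le:good_ann_event_harm}: it suffices to prove it with $\wt{\CF}_{z,j}$ in place of $\CF_{z,j}$ (the interior link pattern has probability bounded below by Proposition~\ref{pr:link_probability}), and then, by the local continuity of multichordal \clekp{} (Proposition~\ref{prop:mccle_tv_convergence_int}) together with scale-invariance, it suffices to prove it for a multichordal \clekp{} in a fixed $\dsep$-separated marked domain containing $B(z,2^{-j})$, with $j$ fixed (say $j=0$ after rescaling by $2^{-j}$ using \cite[Lemma~1.7]{amy2025tightness}). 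On $\Eannharm_{z,j}$ we already have the extremity resistance bounds~\eqref{eq:extremities_energy}; we need in addition the harmonic energy comparison~\eqref{eq:good_region_energy} in each $U_l$. Since $\median[2^{-j}]{}$ and $\mediant[2^{-j}]{}$ are comparable by~\eqref{eq:ass_medians_comparable}, and since by Lemmas~\ref{le:resistance_ub_cle} and~\ref{le:resistance_lb_cle} (applied in the complementary components of the strands $\gamma^L_i,\gamma^R_i$, which are ordinary \clekp{} domains once we condition on the chords) the diameter of $(\ol{U}_l \cap \Upsilon_\Gamma, \rmetres{U_l}{\cdot}{\cdot}{\Gamma})$ is, on a positive-probability event, both $\lesssim M\median[2^{-j}]{}$ and $\gtrsim m\median[2^{-j}]{}$ for fixed $m$, the claim~\eqref{eq:good_region_energy} reduces to the deterministic fact that for a harmonic function $f$ on a resistance metric space one has $(\sup f - \inf f)^2 \le \CE(f,f)\cdot\operatorname{diam}_R \le (\sup f-\inf f)^2 \cdot(\operatorname{diam}_R)/R(x_*,y_*)$ where $x_*,y_*$ attain the oscillation of $f$; using the maximum principle (RF\ref{it:rf_markov}) and~\eqref{eq:fin_energy_continuous} this pins $\median[2^{-j}]{}\rformres{U_l}{f}{f}{\Gamma}/(\sup_{K_l}f-\inf_{K_l}f)^2$ between two positive constants once the resistance diameter of $U_l$ is comparable to $\median[2^{-j}]{}$. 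I would carry out this resampling-to-ordinary-\clekp{} step exactly as in the proof of Lemma~\ref{le:good_ann_event_harm}, using the break-loops device (Lemma~\ref{lem:break_loops}) to transfer the required bound from the multichordal to the monochordal/ordinary setting while only losing a multiplicative constant in the probability.

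The main obstacle I anticipate is the passage from the resistance bounds of Lemmas~\ref{le:resistance_ub_cle} and~\ref{le:resistance_lb_cle}, which control $\rmetres{V}{x}{y}{\Gamma}$ for points $x,y$ in a region $V$ of macroscopic diameter, to the energy of harmonic functions on the possibly-complicated region $U_l = \Fill(\ol{K}_l)$: the region $U_l$ is bounded by finitely many ($\le M$) strands but can have nontrivial topology, and $\rformres{U_l}{\cdot}{\cdot}{\Gamma}$-harmonic functions are defined by energy minimization rather than by a pointwise Laplacian, so I need to argue carefully that the oscillation of such a harmonic function is controlled by the resistance between the (finitely many) boundary points $\{v_{l,i}\}$ and that its energy is comparable to this oscillation squared divided by an effective resistance of the same order as $\median[2^{-j}]{}$. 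This requires combining the monotonicity and series/parallel laws for weak \clekp{} metrics (Lemmas~\ref{le:weak_rmet_monotonicity},~\ref{le:weak_rmet_series_law},~\ref{le:weak_rmet_gen_parallel_law}) with the additivity of energies (Lemma~\ref{le:rform_additivity}), and being careful that the constants in~\eqref{eq:good_region_energy} can be chosen uniformly over $j$ — which is exactly why we need the two-sided uniform-in-$j$ bounds of Lemmas~\ref{le:resistance_ub_cle} and~\ref{le:resistance_lb_cle} rather than just tightness. A secondary technical point is to make sure the decomposition of $B(z,2^{-j-1})$ into the components $K_l$ and the associated relabelling of points (as in Figure~\ref{fi:good_region}) is done in a way that is measurable with respect to the CLE within $A_{z,j}$, so that $\Gregion_{z,j}$ is genuinely $\CF_{z,j+1}$-measurable and the Chernoff argument applies; this is routine but needs to be stated.
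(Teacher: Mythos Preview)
Your proposal has a genuine gap at the step you flag as ``routine'': the claim that $\Gregion_{z,j'}$ is $\CF_{z,j'+1}$-measurable is false, and this breaks the Chernoff/martingale argument. The condition~\eqref{eq:good_region_energy} involves the energy of $\rformres{U_l}{\cdot}{\cdot}{\Gamma}$-harmonic functions, and the regions $U_l$ reach all the way into $B(z,2^{-j-1})$; evaluating~\eqref{eq:good_region_energy} therefore requires the CLE configuration and the resistance metric on all of $B(z,2^{-j-1})$, not just on the annulus $A_{z,j}$. Consequently $\Gregion_{z,j}$ is not measurable with respect to any $\CF_{z,j''}$ for finite $j'' > j$, and the events $\Gregion_{z,j}, \Gregion_{z,j+1}, \ldots$ are genuinely correlated through their common dependence on what lies deep inside the ball. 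A uniform conditional lower bound $\p[\Gregion_{z,j}\mid\CF_{z,j}]\ge p$ --- even if you could establish it --- does not by itself yield an exponential bound on the fraction of failures without this measurability.

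The paper handles this by separating $\Gregion_{z,j}$ into two pieces. The upper bound in~\eqref{eq:good_region_energy} is shown to hold \emph{deterministically} on $\Eannharm_{z,j}$ (compare $f$ to a function constant on $\wh{U}_l$ and harmonic in each $V_{l,i}$), so for that part your independence-across-scales strategy via Lemma~\ref{le:good_ann_event_harm} is fine. The lower bound in~\eqref{eq:good_region_energy} is equivalent to the resistance-diameter bound $\sup_{x,y\in K_l}\rmetres{U_l}{x}{y}{\Gamma}\le M\median[2^{-j}]{}$, and this is the non-annular part. Here the paper does \emph{not} attempt conditional independence; instead it invokes Lemma~\ref{le:good_resistance_bound_ball}, whose proof (parallel to \cite[Proposition~3.1]{my2025geouniqueness}) exploits the superpolynomial upper tails from Lemma~\ref{le:resistance_ub_cle} together with the polynomial scaling of $\median[\cdot]{}$ (Proposition~\ref{pr:quantiles_metric}): one telescopes the region $U^j_l$ into annular shells between consecutive scales where $\Eannharm$ holds, bounds the resistance across each shell with a superpolynomial tail, and sums. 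The superpolynomial decay is what compensates for the lack of independence and allows a union bound over scales to still give exponential control on the fraction of failures. Your proposal is missing this mechanism entirely.
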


In contrast to the event $\Eannharm_{z,j}$, the condition~\eqref{eq:good_region_energy} involves looking into the regions inside $B(z,2^{-j-1})$ and not just $A_{z,j}$. Therefore we do not have the independence between subsequent scales any more. This lack of independence can be compensated by using the superpolynomial tail for the resistances proved in \cite{amy2025tightness}. An analogous result for the geodesic \clekp{} metric has been proved in \cite{my2025geouniqueness}. We state the resistance metric analogue of \cite[Proposition~3.1]{my2025geouniqueness}.

\begin{lemma}\label{le:good_resistance_bound_ball}
 For every $b>0$ there exist $M>1$, $\rextremityub > \rextremitylb > 0$, and $c>0$ such that the following is true. Let $z \in \D$, $j \in \N$ with $B(z,2^{-j}) \subseteq \D$. Let $\Gresbdball_{z,j}$ be the event that $\Eannharm_{z,j}$ occurs and additionally
 \[ \max_l \sup_{x,y \in K^j_l} \rmetres{U^j_l}{x}{y}{\Gamma} \le M\median[2^{-j}]{} . \]
 Then for each $k \in \N$ we have
 \[ \p\left[ \text{$\Eannharm_{z,j'} \cap (\Gresbdball_{z,j'})^c$ occurs for more than a $1/10$ fraction of $j' = j,\ldots,j+k$} \right] \le ce^{-bk} . \]
\end{lemma}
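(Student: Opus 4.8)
The plan is to reduce Lemma~\ref{le:good_resistance_bound_ball} to a statement about genuinely independent events by combining the independence-across-scales machinery with the superpolynomial resistance bound of Lemma~\ref{le:resistance_ub_cle}. First, recall that the event $\Eannharm_{z,j}$ together with the separation event $\Esep_{z,j}$ has conditional probability bounded below given $\CF_{z,j}$ (Lemma~\ref{le:good_ann_event_harm}), and that the $\sigma$-algebras $\CF_{z,j}$ are nested in a way that makes the events at widely separated scales conditionally independent once we also control the separation events via Lemma~\ref{le:separation_event}. The obstacle that distinguishes this lemma from Lemma~\ref{le:good_ann_event_harm} is that the bound $\max_l \sup_{x,y \in K^j_l} \rmetres{U^j_l}{x}{y}{\Gamma} \le M\median[2^{-j}]{}$ concerns the internal resistance metric in the regions $U^j_l$ which lie \emph{inside} $B(z,2^{-j-1})$, not merely in the annulus $A_{z,j}$; hence the event at scale $j$ is not $\CF_{z,j}$-measurable and the clean independence across scales is lost.

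The way around this is exactly the argument used for the geodesic metric in \cite[Proposition~3.1]{my2025geouniqueness}, which I would transcribe with the geodesic metric replaced by the resistance metric. Namely: by Lemma~\ref{le:resistance_ub_cle} (applied to the \clekp{} in the complementary components, which is legitimate by absolute continuity), the probability that $\max_l \sup_{x,y \in K^j_l} \rmetres{U^j_l}{x}{y}{\Gamma} > M\median[2^{-j}]{}$ is at most $cM^{-b'}$ for any $b'$, provided we restrict to regions $V \in \metregions$ of diameter at least $r2^{-j}$ for a fixed $r$ — and the constraints $\abs{w_i-v_i},\abs{v_i-v'_i},\abs{v'_i-u_i} \in [\rextremitylb 2^{-j}, \rextremityub 2^{-j}]$ built into $\Eannharm_{z,j}$ guarantee the $U^j_l$ and the loops bounding them are of comparable size, so the relevant $\metregions[\cdot, r2^{-j}]$ restriction is in force once $\rextremitylb$ is fixed. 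Therefore, conditionally on $\Eannharm_{z,j}$ (which is $\CF_{z,j}$-measurable up to the interior link pattern), the conditional probability of $(\Gresbdball_{z,j})^c$ is at most $cM^{-b'}$. This is the key quantitative input.

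With this in hand, I would run the standard large-deviations-across-scales estimate. Split the scales $j' = j, \ldots, j+k$ into blocks spaced far enough apart that, on the intersection of the separation events from Lemma~\ref{le:separation_event} (which fail on at most a $1/10$ fraction of scales with probability $\le ce^{-bk}$), the events $\{\Eannharm_{z,j'} \cap (\Gresbdball_{z,j'})^c\}$ become conditionally independent given the appropriate filtration, each with conditional probability $\le cM^{-b'}$. Choosing $M$ large (so $cM^{-b'}$ is tiny) and applying a Chernoff bound to the number of scales on which this event occurs yields that it happens for more than a $1/10$ fraction of the $k$ scales with probability $\le ce^{-bk}$, after absorbing the $\le ce^{-bk}$ cost of the separation events. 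One subtlety to handle carefully: because the event at scale $j'$ peers into $B(z,2^{-j'-1})$, it is not measurable with respect to the exterior exploration, so rather than conditioning forward one must condition on the exterior data $\CF_{z,j'}$ and use that the conditional probability bound from the previous paragraph holds \emph{uniformly} in that conditioning; this is precisely how \cite[Proposition~3.1]{my2025geouniqueness} is structured, and the resistance case is identical once Lemma~\ref{le:resistance_ub_cle} is substituted for the geodesic analogue.

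The main obstacle, then, is not conceptual but bookkeeping: verifying that the uniform-in-$j$ upper bound from Lemma~\ref{le:resistance_ub_cle} applies to precisely the regions $U^j_l$ appearing in the definition of $\Gresbdball_{z,j}$ (one must check the diameter lower bound $\diamE(U^j_l) \gtrsim 2^{-j}$ and the $\metregions[\cdot,r2^{-j}]$ membership, both of which follow from the $[\rextremitylb 2^{-j},\rextremityub 2^{-j}]$ constraints in $\Eannharm_{z,j}$), and then faithfully reproducing the block-decomposition and Chernoff argument of \cite[Proposition~3.1]{my2025geouniqueness}. I expect no new difficulties beyond these.
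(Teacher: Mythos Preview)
Your overall strategy --- reduce to \cite[Proposition~3.1]{my2025geouniqueness} and replace the geodesic tail bound by Lemma~\ref{le:resistance_ub_cle} via a resampling argument --- matches the paper's. But you miss one structural ingredient that the paper singles out as the only real adaptation needed, and this omission makes your sketch incomplete.

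You propose to apply Lemma~\ref{le:resistance_ub_cle} directly to the full region $U^j_l$. The paper does not do this. Instead it decomposes $\wh{U}^j_l$ into \emph{annular} pieces $U^{j',+}_{l'}$ sitting between consecutive good scales $j'$ and $j'_+$ (the components of $\Upsilon_\Gamma \setminus (\{v_i\} \cup \{v^+_i\})$ that intersect $A(z,2^{-j'_+},2^{-j'-1})$), applies the superpolynomial tail of \cite[Proposition~5.17]{amy2025tightness} to each such piece at its own scale, and then sums the contributions over $j' \ge j$ using the polynomial scaling of the normalizing constants $\median[\lambda]{}$ (Proposition~\ref{pr:quantiles_metric}), together with monotonicity (Lemma~\ref{le:weak_rmet_monotonicity}). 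This summation step is precisely how the argument in \cite[Proposition~3.1]{my2025geouniqueness} is organized, and Proposition~\ref{pr:quantiles_metric} is listed by the paper as one of the three ``main inputs''; you do not mention it at all. Since the paper explicitly says that the choice of regions is the one difference from the geodesic case, your plan to ``faithfully reproduce'' \cite[Proposition~3.1]{my2025geouniqueness} would not automatically yield the right regions here.

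Two smaller points. First, your claim that $\Eannharm_{z,j}$ is ``$\CF_{z,j}$-measurable up to the interior link pattern'' is not right: $\CF_{z,j}$ only sees the partial exploration stopped at $\partial B(z,2\cdot 2^{-j})$ plus the link pattern, whereas $\Eannharm_{z,j}$ concerns strands and resistances inside the deeper annulus $A_{z,j}$. Second, your check that $U^j_l \in \metregions[B(z,2^{-j}),r2^{-j}]$ from the $[\rextremitylb 2^{-j},\rextremityub 2^{-j}]$ constraints is not quite enough to invoke Lemma~\ref{le:resistance_ub_cle} directly, since that lemma is stated for an ordinary $\CLE_{\kappa'}$ in a domain $D$ with $\distE(z,\partial D) \asymp 2^{-j}$; reducing to that setting requires the resampling argument (as in the proof of Lemma~\ref{le:good_ann_event_harm}), and the paper channels this through the annular-piece decomposition rather than attempting it on $U^j_l$ as a whole.
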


\begin{proof}
The proof of Lemma~\ref{le:good_resistance_bound_ball} is completely analogous to that of \cite[Proposition~3.1]{my2025geouniqueness}. The main inputs are the polynomial scaling of $(\median[\lambda]{})$ stated in Proposition~\ref{pr:quantiles_metric}, the superpolynomial tail for the resistance metric stated in Lemma~\ref{le:resistance_ub_cle}, which is utilized through a resampling argument, and the Markovian property of the internal metrics. Since all these properties are proved in \cite{amy2025tightness} for a general class of \clekp{} metrics including both the geodesic and the resistance metric, the same proof as for \cite[Proposition~3.1]{my2025geouniqueness} carries over to the case of the resistance metric. The only difference to \cite[Proposition~3.1]{my2025geouniqueness} is the regions for which we estimate the internal metrics. Here, we consider the following regions. Suppose that $j \in \N$ is a scale where $\Eannharm_{z,j}$ occurs, and let $j_+ > j$ be the next scale where $\Eannharm_{z,j_+}$ occurs. Let $\{v_i\}$ be the points in the definition of $\Eann_{z,j}$, and let $\{v^+_i\}$ be the points in the definition of $\Eann_{z,j_+}$. Let $K^{j,+}_1,K^{j,+}_2,\ldots$ be the connected components of $\Upsilon_\Gamma \setminus (\{v_i\} \cup \{v^+_i\})$ that intersect $A(z,2^{-j_+},2^{-j-1})$. Letting $U^{j,+}_l \in \metregions$ be such that $\ol{U}^{j,+}_l = \Fill(\ol{K}^{j,+}_l)$ for each $l$, the proof of \cite[Proposition~5.17]{amy2025tightness} shows superpolynomial upper tails for the internal metrics $(\median[2^{-j}]{})^{-1}\rmetres{U^{j,+}_l}{\cdot}{\cdot}{\Gamma}$. Since each $\wh{U}^j_l$ decomposes into a collection of $U^{j',+}_{l'}$ for some $j' \ge j$ and $l'$, by the monotonicity Lemma~\ref{le:weak_rmet_monotonicity} we obtain the desired upper bound on $\rmetres{\wh{U}^j_l}{\cdot}{\cdot}{\Gamma}$ by summing up the resistances across the subregions as in the proof of \cite[Proposition~3.1]{my2025geouniqueness}.
\end{proof}

\begin{proof}[Proof of Proposition~\ref{pr:good_region}]
The upper bound in~\eqref{eq:good_region_energy} holds automatically on the event $\Eannharm_{z,r}$. Indeed, suppose that $f$ is a $\rformres{U_l}{\cdot}{\cdot}{\Gamma}$-harmonic function and $\sup_{U_l} u - \inf_{U_l} u = 1$. The energy of $f$ is bounded from above by the energy of a function $f'$ that is constant on $\wh{U}_l$ and harmonically interpolates in each $V'_{l,i}$. On the event $\Eann_{z,r}$, the number of $V'_{l,i}$ is bounded by $M$, and on the event $\Eannharm_{z,r}$, we have $\rformres{V'_{l,i}}{f'}{f'}{\Gamma} \le \rmetres{V'_{l,i}}{v'_{l,i}}{v_{l,i}}{\Gamma}^{-1} \le M(\median[2^{-j}]{})^{-1}$ for each $i$. By Lemma~\ref{le:rform_additivity}, we then have
\[ \rformres{U_l}{f'}{f'}{\Gamma} \le \sum_i \rformres{V'_{l,i}}{f'}{f'}{\Gamma} \le M^2(\median[2^{-j}]{})^{-1} \]
as required.

The lower bound in~\eqref{eq:good_region_energy} holds on the event $\Gresbdball_{z,j}$ from Lemma~\ref{le:good_resistance_bound_ball}. Indeed, suppose that $f \in \rfdomainres{U_l}{\Gamma}$ for some $l$ and $x,y \in K_l$. Recall that
\[ \rmetres{U_l}{x}{y}{\Gamma} = \sup\left\{ \frac{(f(x)-f(y))^2}{\rformres{U_l}{f}{f}{\Gamma}} \mmiddle| f \in \rfdomainres{U_l}{\Gamma},\ \rformres{U_l}{f}{f}{\Gamma} > 0 \right\} . \]
Therefore, if $\rmetres{U_l}{x}{y}{\Gamma} \le M\median[2^{-j}]{}$, then
\[
\median[2^{-j}]{} \rformres{U_l}{f}{f}{\Gamma} \ge M^{-1}(f(x)-f(y))^2 
\]
for each $x,y \in K_l$ as required.
\end{proof}

\begin{corollary}
\label{co:covering}
There exist $M,\rextremitylb,\rextremityub$ such that the following holds almost surely. For each $\delta > 0$ there exists a finite collection of balls $\{ B(z_k,2^{-j_k}) \}$ with $2^{-j_k} < \delta$ such that $B(0,1-\delta) \subseteq \bigcup_k B(z_k,2^{-j_k-1})$ and $\Gregion_{z_k,j_k}$ occurs for each $k$.
\end{corollary}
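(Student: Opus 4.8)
The plan is to extract Corollary~\ref{co:covering} from Proposition~\ref{pr:good_region} by a standard covering-and-Borel--Cantelli argument over a dyadically structured family of balls. First I would fix the parameters $M, \rextremitylb, \rextremityub$ (and the constant $c$) once and for all by applying Proposition~\ref{pr:good_region} with some fixed large $b$, say $b > 100$; this makes all subsequent tail bounds summable. For a fixed $\delta > 0$, I want to find, for each point $x \in B(0,1-\delta)$, a ball $B(z,2^{-j}) \subseteq \D$ with $2^{-j} < \delta$, $x \in B(z, 2^{-j-1})$, and $\Gregion_{z,j}$ occurring. The natural thing is to work with a dyadic grid: for each scale $j$ large enough that $2^{-j} < \delta/10$, let $\CZ_j$ be the set of points $z$ in $2^{-j-2}\Z^2$ with $B(z, 2^{-j}) \subseteq \D$, so that the balls $\{B(z, 2^{-j-1}) : z \in \CZ_j\}$ already cover a slight shrinking of $\D$, in particular all of $B(0,1-\delta)$ once $j$ is large. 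The cardinality of $\CZ_j$ grows like $4^{j}$.

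The core estimate is as follows. For a fixed base scale $j_0$ (with $2^{-j_0} < \delta/10$) and $x \in B(0,1-\delta)$, pick $z = z(x,j_0) \in \CZ_{j_0}$ with $x \in B(z, 2^{-j_0-1})$ (such $z$ exists by the grid spacing, up to adjusting constants). For this $z$, the nested balls $B(z, 2^{-j})$ for $j = j_0, \ldots, j_0 + k$ are all contained in $\D$, and Proposition~\ref{pr:good_region} (applied with the fixed $b$) gives that the probability that $(\Gregion_{z,j})^c$ occurs for more than a $1/5$ fraction of $j \in \{j_0,\ldots,j_0+k\}$ is at most $c e^{-bk}$. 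On the complementary event, there is at least one $j \in \{j_0, \ldots, j_0+k\}$ with $\Gregion_{z,j}$ occurring; moreover $x \in B(z, 2^{-j_0-1}) \subseteq B(z, 2^{-j-1})$ since $j \ge j_0$, and $2^{-j} \le 2^{-j_0} < \delta$. So on this event the desired good ball exists for the point $x$. Taking a union bound over $z \in \CZ_{j_0}$ (there are $O(4^{j_0})$ of them, and since every $x \in B(0,1-\delta)$ lies in $B(z,2^{-j_0-1})$ for some such $z$, covering all $z \in \CZ_{j_0}$ covers all $x$), the probability that the good ball fails to exist for \emph{some} $x$ is at most $O(4^{j_0}) c e^{-bk}$. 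Now I fix $k = k(j_0)$ growing linearly in $j_0$, say $k = 10 j_0$; then $4^{j_0} e^{-bk} = e^{j_0(2\log 2 - 10 b)} \to 0$ summably in $j_0$ once $b$ is large. By Borel--Cantelli, almost surely for all sufficiently large $j_0$ the good covering exists with balls of radius $< \delta$.

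To finish, I would note that $\delta$ can be taken from a countable sequence $\delta_n \downarrow 0$, and the almost-sure statement for each $\delta_n$ combines into a single almost-sure event (countable intersection); then for general $\delta$ one uses the statement for the largest $\delta_n \le \delta$. Also the finiteness of the collection $\{B(z_k, 2^{-j_k})\}$ is automatic since $\CZ_{j_0}$ is finite for each $j_0$ (as $\D$ is bounded), so the covering produced is a finite subfamily of $\bigcup_{j_0 \le j \le j_0 + k} \{B(z,2^{-j}) : z \in \CZ_{j_0}\}$, extracting one good ball per point and then passing to a finite subcover by compactness of $\ol{B}(0,1-\delta)$. The main obstacle, and the only place requiring a little care, is the bookkeeping that ties the linear growth rate $k \asymp j_0$ to the choice of $b$ in Proposition~\ref{pr:good_region}: one must choose $b$ large enough (depending only on the dimension $2$ of the ambient space, i.e.\ on the base-$4$ growth of $|\CZ_{j_0}|$) \emph{before} invoking the proposition, so that the union bound closes. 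This is harmless because Proposition~\ref{pr:good_region} is quantified over all $b > 0$ with the parameters $M, \rextremitylb, \rextremityub$ allowed to depend on $b$; we simply fix such a $b$ first and carry the resulting parameters through. Everything else is routine: nested balls share centers so the good scale automatically lies below $\delta$ and the point stays inside the half-radius ball, and the grid spacing $2^{-j_0-2}$ versus half-radius $2^{-j_0-1}$ guarantees the covering property with room to spare.
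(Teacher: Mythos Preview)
There is a genuine gap in your argument: the inclusion $B(z, 2^{-j_0-1}) \subseteq B(z, 2^{-j-1})$ is reversed. Since $j \ge j_0$ you have $2^{-j-1} \le 2^{-j_0-1}$, so the ball $B(z, 2^{-j-1})$ is \emph{smaller} than $B(z, 2^{-j_0-1})$, not larger. Proposition~\ref{pr:good_region} hands you a good scale $j$ somewhere in $\{j_0,\ldots,j_0+k\}$, but this may well be a deep scale $j \approx j_0+k = 11 j_0$, in which case the ball $B(z, 2^{-j-1})$ has radius $\sim 2^{-11 j_0}$. Your grid $\CZ_{j_0}$ has spacing $2^{-j_0-2}$, so these tiny balls cannot possibly cover the grid cells, and the covering of $B(0,1-\delta)$ fails.

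The fix, which is exactly what the paper does, is to let the grid spacing match the \emph{finest} scale you might land on rather than the coarsest. The paper takes $k = j$ (so scales run over $\{j,\ldots,2j\}$) and places grid points in $2^{-2j}\Z^2 \cap B(0,1-\delta)$. Then even the smallest balls (at scale $2j$) have radius comparable to the grid spacing, so they cover. This of course makes the union bound more expensive (there are $O(2^{4j})$ grid points rather than $O(2^{2j})$), but with $b = 5$ one still has $2^{4j} e^{-5j} = O(2^{-j})$, which is summable. Your choice $b > 100$ gives plenty of room for the same adjustment; the missing ingredient is simply to refine the grid from $2^{-j_0-2}\Z^2$ to $2^{-(j_0+k)-2}\Z^2$ (or equivalently relabel so that the grid is taken at the deepest scale and you look upward for a good scale).
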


\begin{proof}
Applying Proposition~\ref{pr:good_region} with $b=5$ and $k=j$ (say) for each $j \ge \log_2(\delta^{-1})$ and each $z \in 2^{-2j}\Z^2 \cap B(0,1-\delta)$, the union bound yields that the probability that $B(0,1-\delta)$ is not covered by a suitable collection of balls of scales in $\{j,\ldots,2j\}$ is $O(2^{-j})$. Sending $j \to \infty$ yields the claim.
\end{proof}

Consider the cover in Corollary~\ref{co:covering}. For each $B(z_k,2^{-j_k})$, we consider the regions $U_l$ (resp.\ $\wh{U}_l$) defined at the beginning of Section~\ref{se:good_regions}. Let us enumerate these regions associated to all $B(z_k,2^{-j_k})$, and (with a slight abuse of notation) we again denote them by $\{ U_l \}$ (resp.\ $\{ \wh{U}_l \}$) where distinct $U_l,U_{l'}$ may arise from the same or from different $z_k,j_k$. We let $w_{l,i},v_{l,i},v'_{l,i},u_{l,i},V_{l,i},V'_{l,i}$ be defined in the same way as before. The fact that $B(0,1-\delta)$ is covered by $\{ B(z_k,2^{-j_k-1}) \}$ means that $\Upsilon_\Gamma \cap B(0,1-\delta) \subseteq \bigcup_l U_l$.

We conclude this section with a few properties of the cover defined above. Some of the properties will be used in Section~\ref{sec:uniqueness}. The next lemma says that the overlap between the sets in the cover is bounded by a fixed constant.

\begin{lemma}\label{le:cover_overlap}
 For each $M,\rextremitylb$ there is a constant $c$ such that the following is true. Let $\{ U_l \}$ be a cover as above, and assume that it is minimal (i.e.\ $\{ U_l \mid l \neq l_0 \}$ is not a cover for any $l_0$). Then for each $l$, the number of $l'$ with $U_l \cap U_{l'} \neq \emptyset$ is at most $c$.
\end{lemma}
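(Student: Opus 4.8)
The plan is to bound the overlaps by a volume/counting argument based on the scale structure of the cover. First I would recall that each region $U_l$ in the cover arises from some ball $B(z_k,2^{-j_k})$ on which the event $\Gregion_{z_k,j_k}$ occurs, and from the construction at the beginning of Section~\ref{se:good_regions} we have $U_l \subseteq \Fill(\ol{K}_l) \subseteq B(z_k,2^{-j_k-1})$, while $U_l$ also contains a nontrivial portion of the gasket near $B(z_k,2^{-j_k-1})$ — in particular $U_l$ contains points at Euclidean distance $\gtrsim \rextremitylb 2^{-j_k}$ inside $B(z_k,2^{-j_k})$ (via the intersection points $v_{l,i}$, etc.). So each $U_l$ has Euclidean diameter comparable to $2^{-j_k}$ up to a factor depending only on $\rextremitylb$.

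The key step is a minimality argument to control how many balls of comparable scale can give rise to overlapping regions through the same point, combined with a scale-separation argument. I would first argue that for a fixed $l$ (coming from $B(z_k, 2^{-j_k})$), any $U_{l'}$ with $U_l \cap U_{l'} \neq \emptyset$ must come from a ball $B(z_{k'}, 2^{-j_{k'}})$ with $|j_k - j_{k'}| \le c_0$ for a constant $c_0 = c_0(M,\rextremitylb)$: indeed if $j_{k'}$ were much larger than $j_k$, then $U_{l'} \subseteq B(z_{k'}, 2^{-j_{k'}-1})$ would be a very small set deep inside; one checks using the separation structure (the points $u_i$ separating $\bIn A_{z_k,j_k}$ from $\bOut A_{z_k,j_k}$) that $B(z_{k'},2^{-j_{k'}-1})$ would then be contained in a single $\dpathY$-connected component cut off by the $\{v_i\}$ or by a single loop, which by the removal of redundant balls in the minimal cover (and the fact that $U_l$ already covers that component) would contradict minimality — so $U_{l'}$ would be redundant. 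The symmetric argument rules out $j_{k'}$ much smaller than $j_k$. Having confined the scales to a bounded window $|j_k - j_{k'}| \le c_0$, it remains to bound, for each scale $j' \in \{j_k - c_0, \ldots, j_k + c_0\}$, the number of centers $z_{k'}$ at scale $2^{-j'}$ with $B(z_{k'}, 2^{-j'}) \cap B(z_k, 2^{-j_k}) \neq \emptyset$ and such that the associated region $U_{l'}$ is not redundant; this is where minimality again does the work — two such balls at the same scale whose associated regions both meet $U_l$ must, by the structure of the $K_l$'s as Fill of connected components, have one contained in a single component of the other's complement, forcing redundancy unless they are well-separated, which bounds their number by a dimensional constant $c_1(\rextremitylb)$. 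Multiplying $c_1$ by the $O(c_0)$ scales and by the (at most $M$) number of components $U_l$ per ball gives the claimed constant $c = c(M,\rextremitylb)$.

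The main obstacle I expect is making precise the claim that overlapping regions of very different scales force redundancy in a minimal cover: one has to carefully use the topology of the gasket — specifically that the points $u_i$ in the definition of $\Eann_{z_k,j_k}$ separate the inner from the outer boundary of the annulus $A_{z_k,j_k}$, so that any set meeting $U_l$ and contained in a much smaller ball is ``trapped'' in a single piece $U_l$ already covers — and to reconcile this with the fact that the $U_{l'}$ were selected greedily from balls on which $\Gregion$ holds rather than optimally. A clean way around this is to not literally invoke the greedy/minimal selection but instead to prune: given the cover from Corollary~\ref{co:covering}, pass to a minimal subcover (possible since it is finite), and observe that in a minimal subcover the argument above applies verbatim. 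I would also need the elementary geometric fact that balls $B(z_{k'}, 2^{-j'})$ at a fixed scale $2^{-j'}$, pairwise ``well-separated'' in the sense that no one's associated region is contained in a single complementary component of another, have centers forming a packing and hence are $O(1)$ in number within any fixed ball — this is routine but should be stated explicitly. Once these two ingredients are in place the bound follows by the counting described above.
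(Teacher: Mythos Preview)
Your central claim---that any two overlapping regions in a minimal cover must have scales within a bounded factor $|j_k-j_{k'}|\le c_0$---is not correct, and this is where the argument breaks. A small region $U_{l'}$ can overlap a much larger $U_l$ without being redundant: if $U_{l'}$ straddles one of the boundary cut points $v_{l,i}$ of $U_l$, then part of $U_{l'}$ lies inside $K_l$ and part lies in the region between $v_{l,i}$ and the next intersection point $w_{l,i}$ outside $U_l$. That outer part is not covered by $U_l$, so $U_{l'}$ need not be contained in $U_l$, and nothing forces it to be redundant. Your ``trapping'' argument via the separating points $u_i$ does not apply here precisely because $v_{l,i}$ is a pinch point that $U_{l'}$ can sit across. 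The symmetric case fails for the same reason.

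The paper's proof avoids this by proving a weaker but sufficient claim. First, any $U_{l'}$ overlapping $U_l$ must contain one of the at most $M$ boundary points $v_{l,i}$ (else $U_{l'}\subseteq U_l$, redundant). Second---and this is the key step you are missing---it shows that no \emph{three} regions $U_{l_1},U_{l_2},U_{l_3}$ with pairwise very different scales can share a common point. This uses the condition $|w_{l,i}-v_{l,i}|\ge r_1 2^{-j}$ from the definition of $\Eann_{z,j}$: if two much smaller regions $U_{l_2},U_{l_3}$ both contain $v_{l_1,i}$, the gap to $w_{l_1,i}$ forces $v_{l_1,i}$ to land in the interior $\wh U_{l_2},\wh U_{l_3}$ (not in their extremities $V_{l_2,\cdot}$, $V_{l_3,\cdot}$), and then the one of $U_{l_2},U_{l_3}$ with much smaller scale is contained in the other, contradicting minimality. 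Thus at each $v_{l,i}$ the overlapping regions fall into boundedly many scale classes, each of bounded cardinality. Your volume/packing argument is fine once restricted to a single scale class, but the reduction to that situation requires this triple-intersection argument rather than the pairwise scale comparison you propose.
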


As a consequence of Lemma~\ref{le:cover_overlap} and Lemma~\ref{le:rform_additivity}, we have
\begin{equation}\label{eq:energy_sum_cover}
c^{-1}\sum_l \rformres{U_l}{f}{f}{\Gamma} \le \rform{f}{f}{\Gamma} \le \sum_l \rformres{U_l}{f}{f}{\Gamma}
\end{equation}
for each minimal cover $\{ U_l \}$ as above.

\begin{proof}[Proof of Lemma~\ref{le:cover_overlap}]
 By the definition of the event $\Eann_{z,j}$, the number of components $K_l$ for each given $z,j$ is bounded by $M$. For each $B(z,2^{-j})$, the number of $B(z',2^{-j'})$ with comparable size that intersect it is bounded by a fixed constant, and therefore for each $U_l$ the number of associated $U_{l'}$ with comparable size as $U_l$ is also bounded by a constant depending on $M$. We therefore only need to consider the intersections of $U_l$ with $U_{l'}$ with very different sizes.
 
 Recall that we have $\rextremitylb 2^{-j} \le \abs{v_i-v'_i} \le \rextremityub 2^{-j}$ for each pair $v'_i,v_i$ in the definition of $\Eann_{z,j}$, and that the next intersection point $w_i$ of the same strands outside $U_l$ also satisfies $\abs{w_i-v_i} \ge \rextremitylb 2^{-j}$. We argue below that if $U_{l_1},U_{l_2},U_{l_3}$ have sizes all differing by at least some large factor of $M$, then $U_{l_1} \cap U_{l_2} \cap U_{l_3} = \emptyset$. To conclude the proof of the lemma, observe that if $U_{l'}$ intersects $U_l$, it must intersect one of the $v_{l,i}$ since otherwise it would be contained in $U_l$, contradicting the minimality of the cover. By the claim above, the number of $U_{l'}$ that intersect a given $v_{l,i}$ is bounded by a constant depending only on $M$. Since there are at most $M$ many $v_{l,i}$, we conclude.
 
 It remains to justify the claim above. Suppose that $U_{l_1} \cap U_{l_2} \cap U_{l_3} \neq \emptyset$ and their sizes all differ by more than some large factor of $M$, and say $U_{l_1}$ is the largest of them. Then $U_{l_2},U_{l_3}$ cannot intersect $\wh{U}_{l_1}$, otherwise they would be contained in $U_{l_1}$, contradicting the minimality of the cover. Therefore one of the points $v_{l,i_1}$ must be contained in both $U_{l_2},U_{l_3}$. But the condition on $w_{l,i_1}$ implies that in fact $v_{l,i_1} \in \wh{U}_{l_2},\wh{U}_{l_3}$ since in case $v_{l,i_1} \in \ol{V}_{l',i_2}$ (resp.\ $\ol{V}_{l',i_3}$) the $V_{l',i_2}$ (resp.\ $V_{l',i_3}$) must terminate at $v_{l,i_1}$ as it is too short to reach $w_{l,i_1}$. Therefore $U_{l_2},U_{l_3}$ must have comparable sizes, otherwise one would be contained in the other, again contradicting the minimality of the cover.
\end{proof}

\begin{lemma}\label{le:disjoint_subcover}
 Let $c$ be the constant in Lemma~\ref{le:cover_overlap}. Suppose that $\{ U_l \}$ is a minimal cover as in Lemma~\ref{le:cover_overlap}. Then for each $f \in \rfdomain{\Gamma}$ there is a subcollection $(l_k)$ such that the regions $\{ U_{l_k} \}$ are mutually disjoint and
 \[ \sum_k \rformres{U_{l_k}}{f}{f}{\Gamma} \le \rform{f}{f}{\Gamma} \le c\sum_k \rformres{U_{l_k}}{f}{f}{\Gamma} . \]
\end{lemma}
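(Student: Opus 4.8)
The plan is to extract a disjoint subcover by a greedy argument that exploits the bounded-overlap property from Lemma~\ref{le:cover_overlap}, and then combine this with the additivity of energies from Lemma~\ref{le:rform_additivity} and the inclusion $\Upsilon_\Gamma \cap B(0,1-\delta) \subseteq \bigcup_l U_l$ (so that $\bigcup_l \ol{U_l} \cap \Upsilon_\Gamma = \Upsilon_\Gamma$ after possibly enlarging the cover near $\partial B(0,1-\delta)$; here we may assume, as in Corollary~\ref{co:covering}, that the $U_l$ together cover all of $\Upsilon_\Gamma$, or work on the region $B(0,1-\delta)$ and let $\delta \searrow 0$).

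\begin{proof}
Fix $f \in \rfdomain{\Gamma}$. We first explain the upper bound $\sum_k \rformres{U_{l_k}}{f}{f}{\Gamma} \le \rform{f}{f}{\Gamma}$ for any subcollection $(l_k)$ with the $U_{l_k}$ mutually disjoint. By Lemma~\ref{le:rform_additivity} (applied after extending the disjoint family $\{U_{l_k}\}$ to a finite partition of $\Upsilon_\Gamma$ into regions in $\metregions$, which is possible since each $\ol{U_{l_k}} \cap \Upsilon_\Gamma$ is a union of $\dpathY$-components and its complement decomposes into regions of $\metregions$), we have $\rform{f}{f}{\Gamma} = \sum_k \rformres{U_{l_k}}{f}{f}{\Gamma} + (\text{energy in the remaining regions}) \ge \sum_k \rformres{U_{l_k}}{f}{f}{\Gamma}$, using non-negativity of the energy forms.

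For the lower bound $\rform{f}{f}{\Gamma} \le c\sum_k \rformres{U_{l_k}}{f}{f}{\Gamma}$, we select $(l_k)$ greedily. By Lemma~\ref{le:cover_overlap}, for each $l$ the set $N(l) = \{ l' : U_{l'} \cap U_l \neq \emptyset \}$ has at most $c$ elements. Order the indices $l$ so that $\rformres{U_l}{f}{f}{\Gamma}$ is non-increasing (the cover is finite, so this is well-defined), and run the standard greedy selection: go through the indices in this order, adding $l$ to the selected set $S$ if and only if $U_l$ is disjoint from all $U_{l'}$ with $l' \in S$ already chosen. Then the family $\{U_{l_k}\}_{l_k \in S}$ is mutually disjoint by construction. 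Moreover, every index $l$ is ``dominated'' by some $l_k \in S \cap N(l)$ with $\rformres{U_{l_k}}{f}{f}{\Gamma} \ge \rformres{U_l}{f}{f}{\Gamma}$: indeed, either $l \in S$ (take $l_k = l$), or when $l$ was examined it was rejected because $U_l$ met some already-selected $U_{l_k}$, and that $l_k$ came earlier in the ordering, hence has energy at least $\rformres{U_l}{f}{f}{\Gamma}$. Consequently
\begin{equation}\label{eq:greedy_domination}
 \sum_l \rformres{U_l}{f}{f}{\Gamma} \le \sum_{l_k \in S} \sum_{l \in N(l_k)} \rformres{U_{l_k}}{f}{f}{\Gamma} \le c \sum_{l_k \in S} \rformres{U_{l_k}}{f}{f}{\Gamma} ,
\end{equation}
where in the first inequality we assigned each $l$ to a dominating $l_k \in S \cap N(l)$ and used $N(l) \ni l_k \iff N(l_k) \ni l$, and in the second we used $\abs{N(l_k)} \le c$. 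Finally, by Lemma~\ref{le:cover_overlap} and Lemma~\ref{le:rform_additivity} (cf.~\eqref{eq:energy_sum_cover}), we have $\rform{f}{f}{\Gamma} \le \sum_l \rformres{U_l}{f}{f}{\Gamma}$, and combining this with~\eqref{eq:greedy_domination} gives $\rform{f}{f}{\Gamma} \le c \sum_k \rformres{U_{l_k}}{f}{f}{\Gamma}$, as desired.
\end{proof}

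The only genuinely delicate point is the first one: justifying that the energy $\rform{f}{f}{\Gamma}$ decomposes additively over a finite partition refining a given disjoint family $\{U_{l_k}\}$, which requires knowing that $f \in \rfdomainres{U_{l_k}}{\Gamma}$ for each $k$ (this holds since $f \in \rfdomain{\Gamma}$ and by Lemma~\ref{le:rform_additivity} the restriction of any $f \in \rfdomain{\Gamma}$ lies in $\rfdomainres{V}{\Gamma}$ for every $V \in \metregions$) and that one can complete the disjoint collection to a partition by regions of $\metregions$ — the latter is routine given the structure of $\metregions$ (each $\ol{U_{l_k}}$ is cut out by finitely many loops of $\Gamma$, so the complement within a large $V \in \metregions$ is again a finite union of such regions). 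Everything else is a standard greedy/bounded-overlap combinatorial argument.
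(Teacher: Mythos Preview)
Your proof is correct and follows essentially the same approach as the paper: order the regions by decreasing energy, select greedily for disjointness, and combine the bounded-overlap constant from Lemma~\ref{le:cover_overlap} with the bound $\rform{f}{f}{\Gamma} \le \sum_l \rformres{U_l}{f}{f}{\Gamma}$ from~\eqref{eq:energy_sum_cover}. Your treatment is in fact slightly more explicit than the paper's, since you spell out the left inequality $\sum_k \rformres{U_{l_k}}{f}{f}{\Gamma} \le \rform{f}{f}{\Gamma}$ via Lemma~\ref{le:rform_additivity}, whereas the paper leaves this implicit.
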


\begin{proof}
 Let us order the collection $(U_l)$ such that $\rformres{U_l}{f}{f}{\Gamma}$ is in decreasing order as $l$ increases. We define $(U_{l_k})$ inductively. Let $U_{l_1} = U_1$. Suppose that $U_{l_1},\ldots,U_{l_k}$ are defined. Let
 \[ l_{k+1} \defeq \min\{ l > l_k \mid U_l \cap (U_{l_1} \cup \cdots \cup U_{l_k}) = \emptyset \} . \]
 Then the collection $(U_{l_k})$ is mutually disjoint by definition. Moreover, for each $l \notin \{l_k\}$ there is $l_k < l$ with $U_{l_k} \cap U_l \neq \emptyset$. By Lemma~\ref{le:cover_overlap}, each $U_{l_k}$ intersects at most $c$ other $U_l$, and since we have ordered the regions according to $\rformres{U_l}{f}{f}{\Gamma}$, we have $\rformres{U_l}{f}{f}{\Gamma} \le \rformres{U_{k_l}}{f}{f}{\Gamma}$ for each $l > l_k$. Therefore we have
 \[ \rform{f}{f}{\Gamma} \le \sum_l \rformres{U_l}{f}{f}{\Gamma} \le c\sum_k \rformres{U_{l_k}}{f}{f}{\Gamma} . \]
\end{proof}

\begin{lemma}\label{le:energy_in_extremities}
 For each $M$ there is a constant $c>0$ such that the following is true. Let $U_l$ be a good region as in the definition of the event $\Gregion_{z,j}$ from Section~\ref{se:good_regions}. Let $f$ be a $\rformres{U_l}{\cdot}{\cdot}{\Gamma}$-harmonic function. Then
 \[ \max_i \rformres{V_{l,i}}{f}{f}{\Gamma} \ge c\rformres{U_l}{f}{f}{\Gamma} . \]
\end{lemma}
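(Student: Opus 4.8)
The statement asserts that for a harmonic function $f$ on a good region $U_l$, a constant fraction of the total energy must be concentrated in one of the ``extremity'' regions $V_{l,i}$ (the regions between $v_{l,i}$ and $v'_{l,i}$). The idea is that the points $\{v_{l,i}\}$ form the boundary of $U_l$, so all the ``boundary values'' of $f$ live on these points, and to transmit the energy through $U_l$ the function $f$ must vary across the $V_{l,i}$ by a comparable amount. First I would normalize so that $\sup_{K_l} f - \inf_{K_l} f = 1$. By the defining inequality~\eqref{eq:good_region_energy} of the event $\Gregion_{z,j}$, this gives $\rformres{U_l}{f}{f}{\Gamma} \asymp (\median[2^{-j}]{})^{-1}$ with constants depending only on $M$.

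\textbf{Key steps.} The next step is to observe that the extreme values of $f$ on $\ol{U_l} \cap \Upsilon_\Gamma$ are attained on the boundary $\{v_{l,i}\}$ by the maximum principle for harmonic functions (property~(RF\ref{it:rf_markov})). Let $v_{l,i_{\max}}$, $v_{l,i_{\min}}$ be boundary points where $f$ is (nearly) maximal, resp.\ minimal; then $f(v_{l,i_{\max}}) - f(v_{l,i_{\min}}) \ge \tfrac12$, say (using continuity of $f$ in the resistance metric via~\eqref{eq:fin_energy_continuous}, and the fact that $\sup_{U_l} f$, $\inf_{U_l} f$ are within the closure of the boundary values). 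Now each $v_{l,i}$ is separated from the core region $\wh{U}_l$ — and hence from all the other $v_{l,i'}$ — by the chain $v_{l,i} \to v'_{l,i} \to u_{l,i}$ passing through $V_{l,i}$ and $V'_{l,i}$; by property~\eqref{it:weak_rform_cut_point} (series law, Lemma~\ref{le:weak_rmet_series_law}) applied in $U_l$, $v'_{l,i}$ is a cut point separating $v_{l,i}$ from $\wh{U}_l$. Therefore, by the variational characterization of resistance,
\[
 \abs{f(v_{l,i}) - f(v'_{l,i})}^2 \le \rmetres{V_{l,i}}{v_{l,i}}{v'_{l,i}}{\Gamma} \cdot \rformres{V_{l,i}}{f}{f}{\Gamma} \le M\median[2^{-j}]{} \cdot \rformres{V_{l,i}}{f}{f}{\Gamma},
\]
using the upper bound in~\eqref{eq:extremities_energy} from $\Eannharm_{z,j}$. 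A similar bound holds for $\wh U_l$: since there are at most $M$ of the points $v'_{l,i}$, and they all lie on $\partial \wh U_l$, the oscillation of $f$ over $\{v'_{l,i}\}$ is at most $\asymp \median[2^{-j}]{}^{1/2}\,\rformres{\wh U_l}{f}{f}{\Gamma}^{1/2}$ — but this alone is not quite enough; one wants instead to deduce that either the oscillation of $f$ over $\{v'_{l,i}\}$ is $\ge \tfrac14$, in which case $\rformres{\wh{U}_l}{f}{f}{\Gamma} \gtrsim (\median[2^{-j}]{})^{-1}$, or else the oscillation over $\{v'_{l,i}\}$ is $\le \tfrac14$, in which case there exists $i$ with $\abs{f(v_{l,i}) - f(v'_{l,i})} \ge \tfrac18$ and hence $\rformres{V_{l,i}}{f}{f}{\Gamma} \ge \tfrac{1}{64M}(\median[2^{-j}]{})^{-1} \gtrsim \rformres{U_l}{f}{f}{\Gamma}$, which is what we want.

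\textbf{Handling the first case.} The remaining case is when the energy is concentrated in $\wh U_l$ rather than in any $V_{l,i}$. Here I would recurse: $\wh U_l$ itself decomposes, via the loops $\gamma_i^L,\gamma_i^R$, into sub-extremity regions $V'_{l,i}$ and a smaller core, and by~\eqref{eq:extremities_energy} the resistances across the $V'_{l,i}$ are also $\asymp \median[2^{-j}]{}$. But $V'_{l,i}$ is \emph{not} one of the regions $V_{l,i}$ in the statement, so this by itself does not conclude — which suggests the statement should actually be read as: since $\rformres{U_l}{f}{f}{\Gamma} = \sum_i \rformres{V_{l,i}}{f}{f}{\Gamma} + \rformres{\wh U_l}{f}{f}{\Gamma}$ by additivity (Lemma~\ref{le:rform_additivity}), it suffices to rule out that \emph{all} of $\sum_i \rformres{V_{l,i}}{f}{f}{\Gamma}$ is $o(\rformres{U_l}{f}{f}{\Gamma})$. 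If it were, then $f$ would be nearly constant across each $V_{l,i}$, so $f(v_{l,i}) \approx f(v'_{l,i})$ for all $i$; since the $\{v'_{l,i}\}$ are the boundary of $\wh U_l$ and $f$ restricted there has oscillation nearly $1$ (inherited from $\{v_{l,i}\}$), we would get $\rformres{\wh U_l}{f}{f}{\Gamma} \le \rformres{U_l}{f}{f}{\Gamma}$ while also (replacing $f$ on $\wh U_l$ by a competitor that is harmonic with boundary values $f(v'_{l,i})$, no — $f$ already is harmonic there). Hmm: the cleanest route is to bound $\rformres{\wh U_l}{f}{f}{\Gamma}$ \emph{above} in terms of $\max_i \rformres{V_{l,i}}{f}{f}{\Gamma}$ by exhibiting a test function on $\wh U_l$ that is constant and then the harmonic extension, controlling its energy by $M$ times the boundary oscillation which is $\le M \max_i\abs{f(v_{l,i})-f(v'_{l,i})}^2/(\text{resistance across }\wh U_l) \le M^2 \max_i \rformres{V_{l,i}}{f}{f}{\Gamma}$ — so that $\rformres{U_l}{f}{f}{\Gamma} \le (M+M^2)\max_i \rformres{V_{l,i}}{f}{f}{\Gamma}$, giving $c = 1/(M+M^2)$. \textbf{The main obstacle} is exactly this: making precise the claim that the oscillation of $f$ over the boundary $\{v'_{l,i}\}$ of $\wh U_l$ is controlled by $\max_i |f(v_{l,i}) - f(v'_{l,i})|$ plus the oscillation over $\{v_{l,i}\}$, while simultaneously getting a two-sided comparison of the resistance across $\wh U_l$ to $\median[2^{-j}]{}$ — the latter is where one must invoke both inequalities in~\eqref{eq:extremities_energy} together with the parallel/series structure of the $u_{l,i}$ separating $\wh U_l$ from $\bIn A_{z,j}$, and possibly the lower resistance bound (Lemma~\ref{le:resistance_lb_cle} style) built into the good region event.
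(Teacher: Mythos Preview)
Your case split into ``oscillation over $\{v'_{l,i}\}$ small'' versus ``large'' is a natural start, and the small-oscillation case does close: if $\max_{i,i'}|f(v'_{l,i})-f(v'_{l,i'})| \le 1/4$ then some $|f(v_{l,i})-f(v'_{l,i})| \gtrsim 1$, and~\eqref{eq:extremities_energy} gives the conclusion. But the large-oscillation case has a genuine gap. Your ``cleanest route'' implicitly asserts that the oscillation of $f$ over $\{v'_{l,i}\}$ is controlled by $\max_i|f(v_{l,i})-f(v'_{l,i})|$, and this is simply false: if $f(v'_{l,i})=f(v_{l,i})$ for every $i$, the right-hand side vanishes while the left-hand side equals~$1$. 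Of course, exactly that configuration is forbidden by harmonicity (no current would flow through any $V_{l,i}$, so none could flow through $\wh U_l$ either, forcing all $f(v'_{l,i})$ to coincide), but your argument never invokes harmonicity beyond the maximum principle, and the maximum principle alone cannot rule it out.

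The paper closes this gap with a perturbation argument that uses harmonicity in its variational form. Normalize so that $f(v_{l,1})=0$, $f(v_{l,2})=1$, $0\le f\le 1$; by~\eqref{eq:extremities_energy} and~\eqref{eq:good_region_energy} it suffices to show $f(v'_{l,1})\ge c$. Suppose not, and for $a\in[0,1]$ set $f_a(v'_{l,i})=f(v'_{l,i})\vee a$, harmonically extended into $\wh U_l$ and each $V_{l,i}$. Since $\rmetres{\wh U_l}{v'_{l,1}}{v'_{l,2}}{\Gamma}\le M\median[2^{-j}]{}$ (this uses the resistance upper bound on $K_l$ built into the good-region construction), max-flow-min-cut gives a path of conductance $\gtrsim(\median[2^{-j}]{})^{-1}$ in the trace network on $\{v'_{l,i}\}$, so $\rformres{\wh U_l}{f_a}{f_a}{\Gamma}$ decreases \emph{linearly} in $a$ for small $a$. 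Meanwhile each $\rformres{V_{l,i}}{f_a}{f_a}{\Gamma}=(f_a(v'_{l,i})-f(v_{l,i}))^2/\rmetres{V_{l,i}}{v_{l,i}}{v'_{l,i}}{\Gamma}$ increases at most \emph{quadratically} in $a$. Hence $\rformres{U_l}{f_a}{f_a}{\Gamma}<\rformres{U_l}{f}{f}{\Gamma}$ for small $a>0$, contradicting that $f$ is the energy minimizer. This is where your sketch falls short: the missing idea is to exploit minimality of $f$ directly, rather than trying to bound $\rformres{\wh U_l}{f}{f}{\Gamma}$ in terms of quantities it does not control.
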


\begin{proof}
 Upon relabeling, it suffices to assume that $f(v_{l,1}) = 0$, $f(v_{l,2}) = 1$, and $0 \le f \le 1$ on $K_l$. By~\eqref{eq:extremities_energy},~\eqref{eq:good_region_energy}, we are done if we show that $f(v'_{l,1}) \ge c$ for a constant $c$ depending only on $M$. We explain that if $f(v'_{l,1}) < c$, then $\rformres{V_l}{f}{f}{\Gamma}$ cannot be optimal. Throughout the proof, $c$ denotes a constant depending only on $M$ whose value may change from line to line.
 
 For each $a \in [0,1]$, let $f_a$ be the function with $f_a(v'_{l,i}) = f(v'_{l,i}) \vee a$ for each $i$, and harmonically interpolate in each of the regions $V_{l,i}$ and in $\wh{U}_l$. Let $w_{ij} \ge 0$ be the edge conductances corresponding to the trace resistance form $\rformrestr{\wh{U}_l}{\{v'_{l,i}\}}{\cdot}{\cdot}{\Gamma}$~\eqref{eq:trace_form}. Since $\rmetres{\wh{U}_l}{v'_{l,1}}{v'_{l,2}}{\Gamma} \le M\median[2^{-j}]{}$ by the definition of the event $\Gregion_{z,j}$, by the max-flow-min-cut theorem (see e.g.~\cite[Lemma~2.5]{dw2024tightness}), there is some path from $v'_{l,1}$ to $v'_{l,2}$ in the corresponding graph using edges with conductances at least $\wt{c}(\median[2^{-j}]{})^{-1}$. It follows that if $a < c$, the energy $\rformres{\wh{U}_l}{f_a}{f_a}{\Gamma}$ decreases linearly in $a$. On the other hand, $\rformres{V_{l,i}}{f_a}{f_a}{\Gamma} = \rmetres{V_{l,i}}{v'_{l,i}}{v_{l,i}}{\Gamma}^{-1}(f_a(v'_{l,i})-f_a(v_{l,i}))^2$ increases at most quadratically in $a$ for each $i$ with $f_a(v'_{l,i}) < c$. Therefore, by Lemma~\ref{le:rform_additivity}, the energy $\rformres{U_l}{f_a}{f_a}{\Gamma}$ decreases in $a \in [0,c]$. This gives the desired contradiction and shows that $f(v'_{l,1}) \ge c$ when $f$ is harmonic.
\end{proof}

\subsection{Proof of bi-Lipschitz equivalence}
\label{subsec:weak_bilipschitz_proof}

We use the ``good regions'' defined in Section~\ref{se:good_regions} to complete the proof of Proposition~\ref{prop:weak_bi_lipschitz}. Recall the events $\Gregion_{z,j}$ defined above Proposition~\ref{pr:good_region}.

\begin{figure}[ht]
\centering
\includegraphics[width=0.48\textwidth,page=1]{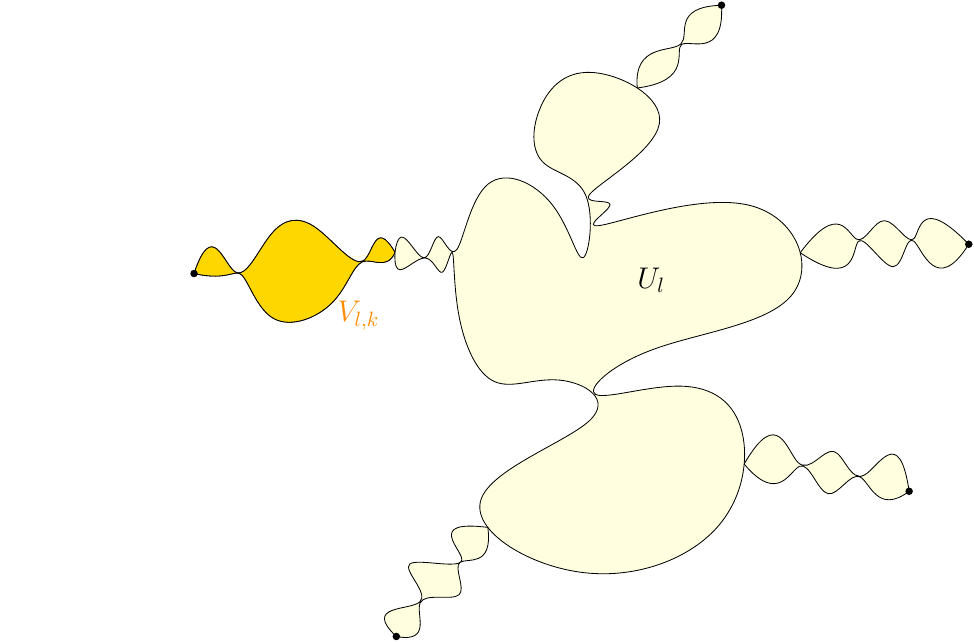}\hspace{0.03\textwidth}\includegraphics[width=0.48\textwidth,page=3]{f_construction1.pdf}
\caption{\label{fi:f_construction_opposite_way} In the proof of Proposition~\ref{prop:weak_bi_lipschitz}, if $V_{l,k} \in \CI_{n-1}$ and $v_{n,i} \in V_{l,k}$, it might be that $\wt{g}_{n-1}(v_{n,j}) < \inf_{K_n} f$. In the depicted case, we set $\wt{g}_n$ to be constant in the region tiled in blue.}
\end{figure}

\begin{figure}[ht]
\centering
\includegraphics[width=0.48\textwidth,page=1]{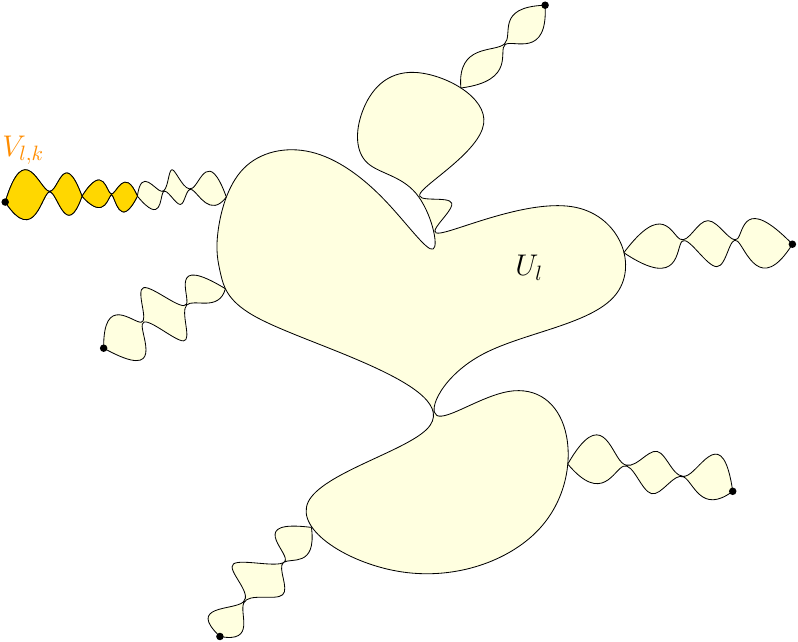}\hspace{0.03\textwidth}\includegraphics[width=0.48\textwidth,page=3]{f_construction2.pdf}
\caption{\label{fi:f_construction_same_way} In the proof of Proposition~\ref{prop:weak_bi_lipschitz}, if $V_{l,k} \in \CI_{n-1}$ and $v_{n,i} \in V_{l,k}$, it might be that $\wt{g}_{n-1}(v_{n,j}) < \inf_{K_n} f$. In the depicted case, we harmonically interpolate in $V'_{l,k}$ (tiled in blue) instead of $V_{n,i}$.}
\end{figure}

\begin{proof}[Proof of Proposition~\ref{prop:weak_bi_lipschitz}]
Let $M,\rextremitylb,\rextremityub$ be such that the conclusion of Corollary~\ref{co:covering} holds. Let $\{U_l\}$ be a minimal associated cover. We show that there is a constant $c$ depending on $M$ such that
\begin{equation}\label{eq:energies_comparable}
 \trformres{V}{\cdot}{\cdot}{\Gamma} \le c\rformres{V}{\cdot}{\cdot}{\Gamma} 
 \quad\text{for each } V \in \metregions .
\end{equation}
Suppose that $V \in \metregions$. By~\eqref{eq:rmet_to_form_sep}, it suffices to show that for each finite set $A \subseteq \Upsilon_\Gamma \cap \ol{V}$ we have
\begin{equation}\label{eq:energies_comparable_fin}
 \trformrestr{V}{A}{\cdot}{\cdot}{\Gamma} \le c\rformrestr{V}{A}{\cdot}{\cdot}{\Gamma} 
\end{equation}
where $\rformrestr{V}{A}{\cdot}{\cdot}{\Gamma}, \trformrestr{V}{A}{\cdot}{\cdot}{\Gamma}$ are the trace resistance forms on $A$.

To this end, we let $f$ be a $\rformres{V}{\cdot}{\cdot}{\Gamma}$-harmonic function with prescribed values on $A$. For each $\epsilon > 0$, we will construct a function $\wt{f}$ with $\abs{\wt{f}(x)-f(x)} < \epsilon$ for each $x \in A$ and $\trformres{V}{\wt{f}}{\wt{f}}{\Gamma} \le c\rformres{V}{f}{f}{\Gamma}$ where $c$ is a constant depending only on $M$. This will imply
\[ \trformrestr{V}{A}{\wt{f}}{\wt{f}}{\Gamma} \le \trformres{V}{\wt{f}}{\wt{f}}{\Gamma} \le c\rformres{V}{f}{f}{\Gamma} = c\rformrestr{V}{A}{f}{f}{\Gamma} . \]
Since resistance forms on finite sets are continuous (due to the explicit expression~\eqref{eq:df_discrete}), this will imply~\eqref{eq:energies_comparable_fin}.

We can assume that $V = \bigcup_l U_l$, otherwise we consider $\wt{V} = V \cup \bigcup_{l: U_l \cap V \neq \emptyset} U_l$ instead. Note that by the definition of $\metregions$, if $\delta$ is smaller than the diameters of the loops bounding $V$, then $\wt{V} \setminus V$ consists of a collection of dead ends in $\wt{V}$ relative to $V$, so that functions on $V$ can be extended to $\wt{V}$ by setting them constant in each of the dead ends, and their energies are unchanged due to Lemma~\ref{le:weak_rform_dead_ends}.

Recall~\eqref{eq:fin_energy_continuous} that every $f \in \rfdomainres{V}{\Gamma}$ is continuous. Therefore we can let $\delta > 0$ be small enough so that
\begin{equation}\label{eq:small_cover}
 \sup_{\Bpath(x,2\delta)} f - \inf_{\Bpath(x,2\delta)} f < \epsilon
 \quad\text{for every } x \in A .
\end{equation}
Let $\{ U_l \}$ be the cover given in Corollary~\ref{co:covering} with this $\delta$. We order the sets so that
\[ \sup_{K_l} f \]
is in increasing order as $l$ increases. We recall the notation $w_{l,i},v_{l,i},v'_{l,i},u_{l,i},V_{l,i},V'_{l,i}$. We let $j_l \in \N$ be the scale of the good region $U_l$ (i.e.\ $U_l$ arises from the event $\Gregion_{z_l,j_l}$).

We will define a sequence of functions $(\wt{g}_n)$ that satisfy the following conditions inductively.
\begin{enumerate}[(i)]
\item\label{it:gn_domain} The function $\wt{g}_n$ is defined on $\bigcup_{l=1}^n K_l$.
\item\label{it:gn_osc} For each $l \le n$ and $z \in K_l$ we have
\[
\min_{l' \le n : K_{l'} \cap K_l \neq \emptyset} \inf_{K_{l'}} f \le \wt{g}_n(z) \le \max_{l' \le n : K_{l'} \cap K_l \neq \emptyset} \sup_{K_{l'}} f .
\]
\item\label{it:gn_energy} We have that
\[
\trform{\wt{g}_n}{\wt{g}_n}{\Gamma} \le \trform{\wt{g}_{n-1}}{\wt{g}_{n-1}}{\Gamma}+c\rformres{U_n}{f}{f}{\Gamma}
\]
for a constant $c$ depending only on $M$.
\newcounter{saveenumi}
\setcounter{saveenumi}{\value{enumi}}
\end{enumerate}
This will conclude the proof since by~\eqref{it:gn_energy} and~\eqref{eq:energy_sum_cover} we have with $\wt{g}$ the final $\wt{g}_n$ generated by the algorithm before it terminates
\[ \trform{\wt{g}}{\wt{g}}{\Gamma} \lesssim \sum_n \rformres{U_n}{f}{f}{\Gamma} \lesssim \rform{f}{f}{\Gamma} . \]
Further, by~\eqref{it:gn_osc} and~\eqref{eq:small_cover} we have that $\abs{\wt{g}(x)-f(x)} < \epsilon$ on $A$.

We now define the functions $\wt{g}_n$ satisfying the properties~\eqref{it:gn_domain}--\eqref{it:gn_energy}. For technical reasons, we also define for each $n$ a collection $\CI_n$ of some of the $V_{l,i}$ with $l \le n$ so that the following properties hold.
\begin{enumerate}[(i)]
\setcounter{enumi}{\value{saveenumi}}
 \item\label{it:g_ge_f} For each $z \in \bigcup_{l=1}^n K_l \setminus \bigcup \CI_n$ we have $\wt{g}_n(z) \ge f(z)$.
 \item\label{it:max_f_inside} For each $V_{l,i} \in \CI_n$ we have $\wt{g}_n(v'_{l,i}) = \sup_{K_l} f$.
\end{enumerate}
The reason we need to introduce $\CI_n$ is that it is not clear that we can make~\eqref{it:g_ge_f} hold on $\bigcup_{l=1}^n K_l$.

We set $\wt{g}_1|_{K_1} = \sup_{K_1} f$ and $\CI_1 = \emptyset$.  Then we clearly have that~\eqref{it:gn_domain}--\eqref{it:max_f_inside} are satisfied for $n=1$.

Suppose that $n \in \N$ and we have defined $\wt{g}_1,\ldots,\wt{g}_{n-1}$ and $\CI_{n-1}$ satisfying~\eqref{it:gn_domain}--\eqref{it:max_f_inside}. We now define $\wt{g}_n$ as follows. We let $\CI_n$ be the same as $\CI_{n-1}$ except for the differences mentioned below. Let $\{v_{n,i}\}$ be the boundary points of $U_n$. For each $v_{n,i}$, we consider the following cases.
\begin{enumerate}[\textbullet]
 \item If $v_{n,i} \notin \bigcup_{l=1}^{n-1} U_l$, then set $\wt{g}_n(v_{n,i}) = \sup_{K_n} f$.
 \item If $v_{n,i} \in \bigcup_{l=1}^{n-1} U_l \setminus \bigcup \CI_{n-1}$, then set $\wt{g}_n(v_{n,i}) = \wt{g}_{n-1}(v_{n,i})$, $\wt{g}_n(v'_{n,i}) = \sup_{K_n} f$, and harmonically interpolate in $V_{n,i}$. Add $V_{n,i}$ to $\CI_n$. Note that by~\eqref{it:g_ge_f},~\eqref{eq:extremities_energy}, and~\eqref{eq:good_region_energy}, we have
 \[ \median[2^{-j_n}]{} \trformres{V_{n,i}}{\wt{g}_n}{\wt{g}_n}{\Gamma} \lesssim \left(\sup_{K_n} f - f(v_{n,i})\right)^2 \lesssim \median[2^{-j_n}]{} \rformres{U_n}{f}{f}{\Gamma} . \]
 In particular, all the required items hold.
 \item If $v_{n,i} \in V_{l,k}$ for some $V_{l,k} \in \CI_{n-1}$, we distinguish the following cases.
 \begin{enumerate}[(a)]
  \item $U_n$ faces in the opposite way from $U_l$ (see Figure~\ref{fi:f_construction_opposite_way}). Set $\wt{g}_n$ to be constant $\wt{g}_{n-1}(v'_{l,k})$ on $V_{l,k} \setminus U_n$ (this step only decreases energy), and then harmonically interpolate between $v_{n,i}$ and $v'_{n,i}$ as before. Remove $V_{l,k}$ from $\CI_n$ and add $V_{n,i}$ to $\CI_n$. By~\eqref{it:max_f_inside}, we have in particular
  \[ \wt{g}_n(v_{n,i}) = \wt{g}_{n-1}(v'_{l,k}) = \sup_{K_l} f \ge f(v_{n,i}) , \]
  so all the required items hold again.
  \item $U_n$ faces in the same way as $U_l$ (see Figure~\ref{fi:f_construction_same_way}). We set $\wt{g}_n = \wt{g}_{n-1}$ on $V_{l,k}$ and harmonically interpolate in the region $V'_{l,k}$ between $v'_{l,k}$ and $u_{l,k}$ instead, with $\wt{g}_n(v'_{l,k}) = \wt{g}_{n-1}(v'_{l,k})$ and $\wt{g}_n(u_{l,k}) = \sup_{K_n} f$. We keep $V_{l,k}$ in $\CI_n$. Note that since $\wt{g}_{n-1}(v'_{l,k}) = \sup_{K_l} f$ by~\eqref{it:max_f_inside}, the condition~\eqref{it:g_ge_f} still holds for $\wt{g}_n$. We claim that this also respects the condition~\eqref{it:gn_energy}. Indeed, note that the region $U_n$ cannot be much bigger than $U_l$, due to the condition on $w_{l,i}$ in the event $\Eann_{z_n,j_n}$. (And $U_n$ cannot be much smaller than $U_l$ either as it would be covered by $U_l$ otherwise.) Therefore we have $\median[2^{-j_l}]{} \asymp \median[2^{-j_n}]{}$ by Proposition~\ref{pr:quantiles_metric}. This implies, by~\eqref{eq:extremities_energy},~\eqref{eq:good_region_energy},
  \[
  \median[2^{-j_l}]{} \trformres{V'_{l,k}}{\wt{g}_n}{\wt{g}_n}{\Gamma} \asymp (\wt{g}_n(u_{l,k})-\wt{g}_n(v'_{l,k}))^2 \le \left(\sup_{K_n} f - f(v'_{l,k})\right)^2 \lesssim \median[2^{-j_n}]{} \rformres{U_n}{f}{f}{\Gamma} .
  \]
 \end{enumerate}
\end{enumerate}
Set $\wt{g}_n$ to be constant $\sup_{K_n} f$ in the remaining parts of $K_n$. Finally, remove the regions $V_{l,k} \in \CI_{n-1}$ from $\CI_n$ that are covered by $U_n$.
\end{proof}

\section{Uniqueness}
\label{sec:uniqueness}

In this section we complete the proof that any two weak \clekp{} resistance forms are scalar multiples of each other. This will also show that they are in fact (strong) \clekp{} resistance forms, thus completing the proof of Theorem~\ref{thm:unique_metric}.

In the previous section, we proved that if we have two weak $\CLE_{\kappa'}$ resistance forms $(\rformres{V}{\cdot}{\cdot}{\Gamma})_{V \in \metregions}$ and $(\trformres{V}{\cdot}{\cdot}{\Gamma})_{V \in \metregions}$ with comparable scaling factors, then they are bi-Lipschitz equivalent with deterministic constants $0 < c_1 \le c_2 < \infty$. We show in Section~\ref{subsec:uniqueness} that in this setting one can take $c_1 = c_2$. Then we show in Section~\ref{subsec:exponent_unique} that there is a unique scaling exponent $\resexp > 0$ depending only on $\kappa'$ such that every \clekp{} resistance form satisfies scaling covariance with this exponent $\resexp > 0$. This in particular lifts the assumption~\eqref{eq:ass_medians_comparable} on the comparability of the scaling factors.

\subsection{Uniqueness}
\label{subsec:uniqueness}

\begin{proposition}
\label{prop:weak_uniqueness}
Suppose that we have the same setup as in Section~\ref{se:bilipschitz_setup} where $(\rformres{V}{\cdot}{\cdot}{\Gamma})_{V \in \metregions}$ and $(\trformres{V}{\cdot}{\cdot}{\Gamma})_{V \in \metregions}$ are two conditionally independent weak \clekp{} resistance forms such that~\eqref{eq:ass_medians_comparable} holds. Then there exists a constant $c_0 \in (0,\infty)$ so that $\rformres{V}{\cdot}{\cdot}{\Gamma} = c_0 \trformres{V}{\cdot}{\cdot}{\Gamma}$ for all $V \in \metregions$.
\end{proposition}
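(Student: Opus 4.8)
The plan is to upgrade the bi-Lipschitz bound of Proposition~\ref{prop:weak_bi_lipschitz} to an exact identity by an ergodic averaging argument. Set
\[ \Lambda = \inf\{c>0 : \trformres{V}{f}{f}{\Gamma} \le c\,\rformres{V}{f}{f}{\Gamma} \text{ for all } V \in \metregions,\ f \in \rfdomainres{V}{\Gamma},\ \text{a.s.}\} \]
and let $\lambda$ be defined analogously with the reversed inequality. By Proposition~\ref{prop:weak_bi_lipschitz} these are deterministic and $0 < \lambda \le \Lambda < \infty$; it suffices to show $\lambda = \Lambda$, since then $c_0 = 1/\Lambda$ does the job. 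First I would record that, by the scale- and translation-covariance and the Markovian property of the joint law of the two forms (together with the absolute continuity used to define $\median[\delta]{}$), the distribution of the ``local ratio'' $\trformres{U}{h}{h}{\Gamma}/\rformres{U}{h}{h}{\Gamma}$ of a harmonic function $h$ on a good region $U$ at scale $\delta$ (in the sense of Section~\ref{se:good_regions}) depends on $\delta$ only through rescaling and, after this normalization, is the same for all good regions at that scale. This uses \eqref{eq:good_region_energy} and its $\trform$-analogue, which is available because $\trform{\cdot}{\cdot}{\Gamma}$ is itself a weak $\CLE_{\kappa'}$ resistance form with a comparable median by \eqref{eq:ass_medians_comparable}, so the events of Section~\ref{se:good_regions} can be taken to hold simultaneously for both forms.

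The heart of the argument is the claim that this local ratio converges, as $\delta \searrow 0$, to a deterministic constant $c_0^{-1}$, uniformly over good regions. I would prove this by a law of large numbers along a chain. A good region $U$ at scale $\delta$ contains, by \cite[Theorem~1.5]{mw2017intersections}, of order $\delta^{-\ddouble}$ pairwise separated self-intersection points of the bounding strand pair, hence a chain of $N(\delta) \to \infty$ sub-bubbles through whose cut points every admissible path must pass; by the series law (Lemma~\ref{le:weak_rmet_series_law}) the resistance across $U$, for either form, is the sum of the resistances across these sub-bubbles. Normalizing each sub-bubble resistance by the median at its own scale, the independence-across-scales structure of \cite{amy-cle-resampling} (Lemma~\ref{le:separation_event}, together with the resampling bounds) shows that the normalized sub-bubble resistances for the two forms evolve as an exponentially mixing Markov chain; the resulting law of large numbers, with a variance bound coming from the exponential mixing, forces $\rmettres{U}{x}{y}{\Gamma}/\rmetres{U}{x}{y}{\Gamma}$ to concentrate, and since $N(\delta) \to \infty$ the limit is deterministic and, by stationarity, independent of the region, which we call $c_0^{-1}$. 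The uniform resistance bounds of Section~\ref{se:resistance_bounds} (Lemmas~\ref{le:resistance_ub_cle} and~\ref{le:resistance_lb_cle}) are used to control the contribution of atypically large or small sub-bubbles and to transfer the statement about a single resistance to the harmonic-function ratio appearing in \eqref{eq:good_region_energy}.

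Given this, the two optimal constants are pinned down as follows. For any fixed $V \Subset C$ and any $\rformres{V}{\cdot}{\cdot}{\Gamma}$-harmonic $f$, I would use Corollary~\ref{co:covering} to cover $V$ by good regions at a small scale $\delta$, refine this into an exact additive decomposition of both $\rformres{V}{f}{f}{\Gamma}$ and $\trformres{V}{f}{f}{\Gamma}$ along the cut loops bounding these regions via Lemma~\ref{le:rform_additivity}, and apply the previous step to conclude $\trformres{V}{f}{f}{\Gamma} = (c_0^{-1} + o_\delta(1))\rformres{V}{f}{f}{\Gamma}$, with the error uniform by Proposition~\ref{pr:good_region}. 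Letting $\delta \searrow 0$ gives $\trformres{V}{f}{f}{\Gamma} = c_0^{-1}\rformres{V}{f}{f}{\Gamma}$ for every harmonic $f$, hence for all $f \in \rfdomainres{V}{\Gamma}$ by the trace formula \eqref{eq:rmet_to_form_sep} and the density of harmonic extensions of finite boundary data, and then for all $V \in \metregions$; in particular $\lambda = \Lambda = c_0^{-1}$ and $c_0$ is the desired constant. The main obstacle is the concentration step: making the law of large numbers rigorous given that the independence across scales holds only on the separation events of Lemma~\ref{le:separation_event}, that at each level the ``chain'' is really a finite parallel network of strands rather than a single series link, and that the sub-bubbles range over many scales — all of which require combining the exponential mixing with the uniform upper and lower resistance estimates of Section~\ref{se:resistance_bounds}. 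A secondary technical point is arranging the good-region cover into a genuine partition, so that the energy decomposition is exact rather than only up to the overlap constant of Lemma~\ref{le:cover_overlap}, and controlling the energy of $f$ on the residual region left uncovered by Corollary~\ref{co:covering} as $\delta \searrow 0$.
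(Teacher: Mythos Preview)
Your approach has a genuine gap at the concentration step, and the paper proceeds quite differently.

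First, you assert that the local ratio distribution ``depends on $\delta$ only through rescaling'', but weak $\CLE_{\kappa'}$ resistance forms carry no scale-covariance axiom --- that is established only later, in Proposition~\ref{prop:exponent_unique}, \emph{using} Proposition~\ref{prop:weak_uniqueness}. The assumption~\eqref{eq:ass_medians_comparable} bounds the ratio of medians but does not make the law of the energy ratio scale-invariant. The paper introduces the set $\shortcutscales$ of ``shortcut scales'' precisely because this law may vary with $j$: at each scale either~\eqref{eq:shortcut_scale} holds or it does not, and the argument branches on which alternative occurs at a positive density of scales.

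Second, the law of large numbers you sketch is not proved and is where the real content would have to be. Your claim that a good region at scale $\delta$ contains $\asymp \delta^{-\ddouble}$ separated intersection points is dimensionally off: a region of diameter $\delta$ has $O(1)$ intersection points at its own scale, and passing to finer sub-bubbles puts you at many different scales simultaneously, so there is no stationary sequence to average. Even granting a chain decomposition, the resistance ratio across the chain is a ratio of two correlated sums, and exponential mixing alone does not give concentration of such a ratio without moment control on the individual ratios that you have not established; you correctly flag this as the main obstacle but do not resolve it.

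The paper sidesteps concentration entirely via a contradiction argument. Assuming the optimal constants satisfy $c_1 < c_2$, by definition of $\shortcutscales$ either $\shortcutscales$ or its complement has positive density along infinitely many dyadic windows. In the first case, Lemmas~\ref{le:shortcut_bichordal_sle}--\ref{le:annulus_with_shortcuts} use independence across scales to boost the positive probability of a shortcut (ratio $\le \sqrt{c_1 c_2}$ in a bubble of controlled resistance) to probability as close to $1$ as desired, and Proposition~\ref{pr:good_region} then yields a cover by good regions in which such shortcuts occur. For any $\CE$-harmonic $f$, one uses Lemma~\ref{le:disjoint_subcover} and Lemma~\ref{le:energy_in_extremities} to locate a definite fraction of the $\CE$-energy of $f$ in the shortcut bubbles $V_{l,i}$, and replaces $f$ there by the $\wt\CE$-harmonic interpolant to produce $\wt f$ with $\trformres{V}{\wt f}{\wt f}{\Gamma} \le (c_2-\epsilon)\rformres{V}{f}{f}{\Gamma}$, contradicting the optimality of $c_2$. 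The second case is symmetric in $c_1$. No limit $\delta \to 0$ of a ratio is taken; one only needs that the ratio falls strictly between $c_1$ and $c_2$ with positive probability at enough scales, which is automatic from $c_1 < c_2$.
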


In the following, we assume that $0 < c_1 \leq c_2 < \infty$ are deterministic constants so that
\begin{equation}\label{eq:bilipschitz_ass}
 c_1\rformres{V}{\cdot}{\cdot}{\Gamma} \le \trformres{V}{\cdot}{\cdot}{\Gamma} \le c_2\rformres{V}{\cdot}{\cdot}{\Gamma} .
\end{equation}
for every $V \in \metregions$.

We have shown in Proposition~\ref{prop:weak_bi_lipschitz} that such constants $c_1,c_2$ exist. Our goal in this section is to show that we can we can take $c_1=c_2$. We assume throughout that $c_1$ (resp.\ $c_2$) is the largest (resp.\ smallest) constant so that~\eqref{eq:bilipschitz_ass} holds. We note that the results of this section still hold if the \clekp{} resistance forms are defined only on the event that $\CL \cap \partial\D = \emptyset$.

We assume that $c_1 < c_2$ and argue that on sufficiently many scales, the probability is positive that there exist ``shortcuts'' in the sense that $\rmetres{V}{v}{v'}{\Gamma}/\rmettres{V}{v}{v'}{\Gamma}$ is bounded away from $c_1$ or from $c_2$. Using a spatial independence argument, we will see that with probability $1$ such shortcuts exist everywhere. We begin by observing that on each scale $2^{-j}$, for a ``typical'' region $V$ bounded between two intersecting loops of $\Gamma$ that has Euclidean size $2^{-j}$, the ratio $\rmetres{V}{v}{v'}{\Gamma}/\rmettres{V}{v}{v'}{\Gamma}$ is either bounded away from $c_2$ or from $c_1$ with positive probability. We will distinguish between the scales where the former (resp.\ latter) is the case. This is the subject of the following definition.

Recall that if $D \subseteq \C$ is a simply connected domain and $x,y \in \partial D$ are distinct, then there is a unique law on pairs of curves $(\eta'_1,\eta'_2)$ from $x$ to $y$ such that the conditional law of $\eta'_2$ given $\eta'_1$ (resp.\ $\eta'_1$ given $\eta'_2$) is that of an \slekp{} in the complementary connected component to the right of $\eta'_1$ (resp.\ to the left of $\eta'_2$). See \cite[Section~2.2.1]{my2025geouniqueness}. We refer to $(\eta'_1,\eta'_2)$ as the \emph{bichordal \slekp{}} in $(D,x,y)$.

\newcommand*{\shortcutscales}{\CJ}

Let $j \in \N$, and let $(\eta'_1,\eta'_2)$ be a bichordal \slekp{} in $(2^{-j}\D, -i2^{-j}, 2^{-j})$. Let $I_j$ be the event that $\eta'_1 \cap \eta'_2 \cap B(0,1/2) \neq \emptyset$, and note that by scale-invariance $\p[I_j]$ does not depend on $j$. On the event $I_j$, let $x_j$ (resp.\ $y_j$) the first (resp.\ last) intersection point of $\eta'_1 \cap \eta'_2 \cap B(0,1/2)$, and let $V_j$ be the region bounded between the segments of $\eta'_1,\eta'_2$ from $x_j$ to $y_j$. Let $\shortcutscales$ be the set of $j \in \N$ such that
\begin{equation}\label{eq:shortcut_scale}
 \p\left[ \frac{\rmetres{V_j}{x_j}{y_j}{\Gamma}}{\rmettres{V_j}{x_j}{y_j}{\Gamma}} \le \sqrt{c_1 c_2} \mmiddle| I_j \right] \ge 1/2 .
\end{equation}
The idea is that if a positive density of integers is in $\CJ$, then we show that we can find shortcuts everywhere that imply $c_2$ is not optimal. On the other hand, if a positive density of integers is in $\N \setminus \CJ$, then $c_1$ is not optimal.

The following lemma states that if a shortcut exists with positive probability, it exists with a probability arbitrarily close to $1$.

\begin{lemma}\label{le:shortcut_bichordal_sle}
 For each $p \in (0,1)$ there exist $N \in \N$ and $\epsilon > 0$ such that the following is true. Let $D$ be a simply connected domain and $\varphi\colon \h \to D$ a conformal transformation. Let $j,k \in \N$ be such that $\abs{\varphi'(i)} \ge 2^{-j}$, and suppose that
 \[ \abs{\shortcutscales \cap \{j,\ldots,j+k\}} \ge N . \]
 Let $(\eta'_1,\eta'_2)$ be a bichordal \slekp{} in $(D,\varphi(0),\varphi(\infty))$, and given $(\eta'_1,\eta'_2)$, sample a \clekp{} and the internal metrics in the region between $\eta'_1,\eta'_2$. Let $G_{D,\varphi}$ be the event that there are $x,y \in \eta'_1 \cap \eta'_2$ with
 \[ 
 \frac{\rmet{x}{y}{\Gamma}}{\rmett{x}{y}{\Gamma}} \le \sqrt{c_1 c_2} 
 \quad\text{and}\quad
 \rmet{x}{y}{\Gamma} \ge \epsilon\,\median[2^{-j-k}]{} .
 \]
 Then
 \[ \p[ G_{D,\varphi} ] \ge p . \]
\end{lemma}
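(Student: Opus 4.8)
\textbf{Proof plan for Lemma~\ref{le:shortcut_bichordal_sle}.}

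The plan is to deduce the statement from the definition of $\shortcutscales$ together with the independence-across-scales machinery. First I would reduce to a uniform statement over a ``typical'' scale: by the definition~\eqref{eq:shortcut_scale} of $\shortcutscales$, for each $j \in \shortcutscales$ the bichordal \slekp{} in $(2^{-j}\D,-i2^{-j},2^{-j})$ produces (on the event $I_j$) a region $V_j$ with $\rmetres{V_j}{x_j}{y_j}{\Gamma}/\rmettres{V_j}{x_j}{y_j}{\Gamma} \le \sqrt{c_1 c_2}$ with conditional probability at least $1/2$, and by Lemma~\ref{le:resistance_lb_cle} we may also arrange (at the cost of a further fixed positive probability) that $\rmetres{V_j}{x_j}{y_j}{\Gamma} \ge m\,\median[2^{-j}]{}$ for a deterministic $m$. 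Hence for each $j \in \shortcutscales$ there is a uniform $p_0 > 0$ (independent of $j$ by scale-invariance) such that the ``shortcut event'' at scale $2^{-j}$ has conditional probability at least $p_0$, given the configuration outside $B(0,2^{-j})$, provided the relevant marked-point configuration is $\dsep$-separated. The point of working inside a bichordal \slekp{} region rather than a \clekp{} loop region is exactly the absolute continuity explained in~\cite[Section~3.1]{amy2025tightness}, so that $\rmetres{V_j}{\cdot}{\cdot}{\Gamma}$ and $\rmettres{V_j}{\cdot}{\cdot}{\Gamma}$ are well-defined on compact subsets.

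Next I would set up the independence across scales. Working in $D$ via $\varphi$, pick a point $z_0 = \varphi(i)$, so that $\distE(z_0,\partial D) \asymp \abs{\varphi'(i)} \ge 2^{-j}$, and consider the scales $j,\ldots,j+k$. For each such scale $j'$ which additionally lies in $\shortcutscales$, one can try to produce the shortcut inside an annulus $A_{z_0,j'}$ around $z_0$. The obstruction is that the bichordal structure of $(\eta'_1,\eta'_2)$ near $z_0$ at scale $2^{-j'}$ is not exactly a rescaled copy of the reference bichordal \slekp{}; this is where one invokes a resampling argument (as in the proofs of Lemmas~\ref{le:good_ann_event} and~\ref{le:good_ann_event_harm}) combined with the local total-variation continuity of partial explorations (Proposition~\ref{prop:mccle_tv_convergence_int}) to transfer the uniform lower bound $p_0$ on the shortcut event to the actual configuration, on the separation event $\Esep_{z_0,j'}$. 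By Lemma~\ref{le:separation_event}, the separation events hold for at least, say, a $9/10$ fraction of the scales $j',\ldots,j'+k$ except on an event of probability $\le c e^{-bk}$ for any prescribed $b$. Combining, among the $\ge N$ scales in $\shortcutscales \cap \{j,\ldots,j+k\}$, a definite positive fraction are ``usable'' (separated), and on each usable scale the conditional shortcut probability is at least $p_0$; since these conditional probabilities are with respect to a filtration that is decreasing in scale (the $\sigma$-algebras $\CF_{z_0,j'}$ generated by the outside), a standard second-Borel--Cantelli / conditional-independence estimate shows that the probability that the shortcut fails on \emph{all} usable scales is at most $(1-p_0)^{cN} + ce^{-bk}$, which is $\le 1-p$ once $N$ is taken large enough (and then $k \ge N$).

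Finally, for the quantitative lower bound $\rmet{x}{y}{\Gamma} \ge \epsilon\,\median[2^{-j-k}]{}$, I would note that the shortcut region constructed at scale $2^{-j'}$ has $\rmet{x}{y}{\Gamma} \ge m\,\median[2^{-j'}]{}$, and by the polynomial comparability of the renormalization constants across scales (Proposition~\ref{pr:quantiles_metric}), $\median[2^{-j'}]{} \ge \median[2^{-j'}]{} \gtrsim (2^{j+k-j'})^{-\dsle-o(1)}\median[2^{-j-k}]{}$; since $j' \le j+k$, this is bounded below by a fixed power of $2^{-(j+k)+j'}$ times $\median[2^{-j-k}]{}$. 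The worst case is $j' = j$, giving $\rmet{x}{y}{\Gamma} \ge \epsilon\,\median[2^{-j-k}]{}$ for a constant $\epsilon$ depending on $N$ (equivalently on $p$) but not on $D,\varphi,j,k$; one must double-check that the internal metric $\rmet{\cdot}{\cdot}{\Gamma}$ of the full \clekp{} dominates the internal metric of the subregion $V_{j'}$ up to the monotonicity of Lemma~\ref{le:weak_rmet_monotonicity}, which goes in the favorable direction here. The main obstacle I anticipate is making the resampling transfer of the shortcut event fully rigorous and uniform over all domains $D$ and over all scales simultaneously, i.e.\ checking that the absolute-continuity constants in~\cite[Section~3.1]{amy2025tightness} and the total-variation bounds in Proposition~\ref{prop:mccle_tv_convergence_int} can indeed be chosen uniformly on the separation events; this is essentially the same bookkeeping as in the proof of Lemma~\ref{le:good_ann_event_harm}, so I would model the argument closely on that.
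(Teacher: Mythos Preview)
Your proposal is correct and follows essentially the same approach as the paper, which simply refers to \cite[Lemma~5.7]{my2025geouniqueness} and sketches the same mechanism: absolute continuity transfers the scale-$j'$ shortcut event (for $j' \in \shortcutscales$) to the flow-line/bichordal picture with a uniform lower bound $p_0$, and independence across scales turns the $N$ available trials into success probability $\ge p$. One small correction: in your last paragraph, monotonicity (Lemma~\ref{le:weak_rmet_monotonicity}) actually goes the \emph{unfavorable} way (larger region gives smaller resistance); the right observation is that since $x,y \in \eta'_1 \cap \eta'_2$ are cut points of the region between $\eta'_1,\eta'_2$, compatibility (Proposition~\ref{pr:cle_rmet_char}\eqref{it:rmet_compatibility}) gives $\rmet{x}{y}{\Gamma} = \rmetres{V_{j'}}{x}{y}{\Gamma}$, so both the ratio bound and the lower bound transfer exactly.
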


The proof of Lemma~\ref{le:shortcut_bichordal_sle} is exactly the same as that of \cite[Lemma~5.7]{my2025geouniqueness}. Recall that the right boundary of $\eta'_1$ and the left boundary of $\eta'_2$ can be described by a pair of flow lines intersecting with angle~$\angledouble$~\eqref{eq:angledouble}. The idea is that if $j \in \shortcutscales$, then by absolute continuity, an event analogous to~\eqref{eq:shortcut_scale} for such a pair of flow lines has positive probability $p_0$ bounded from below. By the independence across scales, if $N$ is large enough compared to $p_0$, then we have many (almost) independent trials to create such an event in $D$.

\begin{lemma}\label{le:shortcut_bichordal_cle_unif}
 For any $p \in (0,1)$ and $r > 0$ there exist $N \in \N$ and $\epsilon > 0$ such that the following is true. Let $D$ be a simply connected domain and $\varphi\colon \D \to D$ a conformal transformation. Let $x_1,x_2,x_3,x_4 \in \partial D$ such that $\abs{\varphi^{-1}(x_i)-\varphi^{-1}(x_{i'})} \ge r$ for each $i \neq i'$. Let $j,k \in \N$ be such that $\abs{\varphi'(0)} \ge 2^{-j}$, and suppose that
 \[ \abs{\shortcutscales \cap \{j,\ldots,j+k\}} \ge N . \]
 Let $(\eta'_1,\eta'_2,\Gamma)$ be a bichordal \clekp{} in $(D;x_1,x_2,x_3,x_4)$ conditioned on the event $\{ \eta'_1 \cap \eta'_2 \neq \emptyset \}$, and sample the internal metrics in the region between $\eta'_1,\eta'_2$. Let $G_{D,\ul{x}}$ be the event that there are $x,y \in \eta'_1 \cap \eta'_2$ with
 \[ 
 \frac{\rmet{x}{y}{\Gamma}}{\rmett{x}{y}{\Gamma}} \le \sqrt{c_1 c_2} 
 \quad\text{and}\quad
 \rmet{x}{y}{\Gamma} \ge \epsilon\,\median[2^{-j-k}]{} .
 \]
 Then
 \[ \p[ G_{D,\ul{x}} ] \ge p . \]
\end{lemma}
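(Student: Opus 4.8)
The plan is to deduce Lemma~\ref{le:shortcut_bichordal_cle_unif} from Lemma~\ref{le:shortcut_bichordal_sle} by using the explicit description of a bichordal \clekp{} in terms of its pair of boundary-touching chords together with the continuity of the multichordal \clekp{} law under perturbation of the domain and marked points. Recall that a bichordal \clekp{} $(\eta'_1,\eta'_2,\Gamma)$ in $(D;x_1,x_2,x_3,x_4)$ consists of a pair of chords $(\eta'_1,\eta'_2)$ sampled from an explicit law (the unique invariant law of the resampling Markov chain; see Section~\ref{se:mcle}), together with conditionally independent \clekp{}'s in the complementary components. Conditioned on $\eta'_2$, the law of $\eta'_1$ is that of an \slekp{} in the component to the right of $\eta'_2$, and vice versa. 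The key point is that, restricted to a compact subset of $D$ and away from the marked points, the law of $(\eta'_1,\eta'_2,\Gamma)$ conditioned on $\{\eta'_1 \cap \eta'_2 \neq \emptyset\}$ is mutually absolutely continuous with respect to the law of a bichordal \slekp{} in $(D,x,y)$ for a suitable pair of boundary points $x,y$, together with the \clekp{} in the region between them.

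First I would fix a reference domain: by conformal invariance it suffices to take $D = \D$ and $\ul{x} = (x_1,\ldots,x_4)$ a fixed $r$-separated (in the uniformizing coordinate) configuration, and handle the general case by transporting via $\varphi$; note $\abs{\varphi'(0)} \ge 2^{-j}$ controls how distances scale, exactly as in Lemma~\ref{le:shortcut_bichordal_sle}. Then I would argue that there is $\rho > 0$ (depending only on $r$) and a compact $K \Subset \D$ such that, on the event $\{\eta'_1 \cap \eta'_2 \cap B(0,1/2) \neq \emptyset\}$ (which has probability bounded below uniformly in $r$-separated configurations by the bichordal analogue of Proposition~\ref{pr:link_probability}), the restriction of $(\eta'_1,\eta'_2)$ to $K$ is absolutely continuous with respect to the restriction of a bichordal \slekp{} in some $(\D, x, y)$ with $x,y$ at distance $\ge \rho$ from $\ul{x}$, with Radon--Nikodym derivative bounded above and below. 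This is the standard flow-line / $\SLE_{\kappa'}(\rho)$ absolute continuity input already invoked in Section~\ref{se:bilipschitz_setup} and in \cite{amy2025tightness, my2025geouniqueness}: the right boundary of $\eta'_1$ and the left boundary of $\eta'_2$ are flow lines of a GFF meeting at angle $\angledouble$ from \eqref{eq:angledouble}. Since the \clekp{} $\Gamma$ and the internal metrics $\rmet{\cdot}{\cdot}{\Gamma}$, $\rmett{\cdot}{\cdot}{\Gamma}$ in the region between $\eta'_1$ and $\eta'_2$ are, given $(\eta'_1,\eta'_2)$, sampled from the same conditional law in both the bichordal \clekp{} and the bichordal \slekp{} pictures, the absolute continuity extends to the full configuration $(\eta'_1,\eta'_2,\Gamma,\rmet{\cdot}{\cdot}{\Gamma},\rmett{\cdot}{\cdot}{\Gamma})$ restricted to $K$.

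Given this, I would apply Lemma~\ref{le:shortcut_bichordal_sle}: choose $p' \in (p,1)$, let $N, \epsilon'$ be the constants from that lemma for $p'$, and let $c_{\mathrm{RN}}$ be the lower bound on the Radon--Nikodym derivative from the previous paragraph; then the event $G_{\D,\varphi}$ there — which is measurable with respect to the configuration restricted to $K$ (shortcuts $x,y$ in a fixed compact neighborhood of the intersection set, with $\rmet{x}{y}{\Gamma} \ge \epsilon' \median[2^{-j-k}]{}$) — has probability at least $c_{\mathrm{RN}} p'$ minus the probability that the intersection point escapes $K$ or $B(0,1/2)$, under the bichordal \clekp{} law conditioned on $\{\eta'_1 \cap \eta'_2 \neq \emptyset\}$. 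Taking $\epsilon$ slightly smaller than $\epsilon'$ to absorb the conformal distortion of $\median[\cdot]{}$ under $\varphi$ (using Proposition~\ref{pr:quantiles_metric} to compare $\median[2^{-j-k}]{}$ before and after the map, which is why we need $\abs{\varphi'(0)} \ge 2^{-j}$), and increasing $N$ if necessary so that $c_{\mathrm{RN}} p' - (\text{escape probability}) \ge p$, gives $\p[G_{D,\ul{x}}] \ge p$. The main obstacle I anticipate is making the absolute continuity quantitative and uniform over all $r$-separated $\ul{x}$ and all $j,k$: one must be careful that the Radon--Nikodym bound between the bichordal \clekp{} chords and the bichordal \slekp{} does not degenerate as $\ul{x}$ ranges over $r$-separated configurations, and that the event $G$ in Lemma~\ref{le:shortcut_bichordal_sle} really can be taken to depend only on the configuration inside a fixed compact set — this should follow by inspecting the proof of \cite[Lemma~5.7]{my2025geouniqueness} and the separation/independence-across-scales estimates (Lemma~\ref{le:separation_event}, Lemma~\ref{lem:break_loops}), but it requires checking that the shortcut-producing events there are localized.
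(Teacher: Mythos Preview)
Your approach is workable in spirit but differs substantially from the paper's, and there is one genuine logical slip.

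The paper does not use any absolute continuity/Radon--Nikodym comparison between the bichordal \clekp{} chords and a bichordal \slekp{}. Instead it uses an \emph{exact conditional law identification}: conditionally on $\eta'_1 \cap \eta'_2 \neq \emptyset$, let $x'$ (resp.\ $y'$) be the first (resp.\ last) intersection point, and condition on the segments of $\eta'_1,\eta'_2$ before $x'$ and after $y'$. Then the segments between $x'$ and $y'$ have \emph{exactly} the law of a bichordal \slekp{} in the resulting random domain $(D',x',y')$. One then only needs to control the conformal radius of $D'$: choose $k_0$ large enough (uniformly in $r$-separated $\ul{x}$, via Proposition~\ref{prop:mccle_tv_convergence_int}) so that with probability at least $(p+1)/2$ the uniformizing map $\wt\varphi\colon \h\to D'$ satisfies $|\wt\varphi'(i)| \ge 2^{-j-k_0}$. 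Take $N$ large enough that $k_0 < N/2$ and that Lemma~\ref{le:shortcut_bichordal_sle} applies with $(p+1)/2$ in place of $p$; then $\p[G_{D,\ul x}] \ge ((p+1)/2)^2 \ge p$. This sidesteps entirely the issue you flag at the end about uniform Radon--Nikodym bounds, because no such bounds are needed.

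Your argument, by contrast, asserts that the restriction of the bichordal \clekp{} chords to a compact $K$ is absolutely continuous with respect to a bichordal \slekp{} in some $(\D,x,y)$. This is not a standard statement: the two objects have different numbers of marked points and different global topology, and you do not specify which $(x,y)$ or justify the claim. Even granting it, your bound ``at least $c_{\mathrm{RN}}\,p'$'' goes the wrong way: if $c_{\mathrm{RN}}<1$ is a fixed lower bound on the derivative, then $c_{\mathrm{RN}}\,p'$ is bounded away from $1$ regardless of how close $p'$ is to $1$, so you cannot reach arbitrary $p<1$. To transfer a high-probability event you would need the \emph{upper} bound $C_{\mathrm{RN}}$ on the derivative and argue via the complement: $\p_{\mathrm{CLE}}[G^c] \le C_{\mathrm{RN}}(1-p')$. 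The paper's conditional-law approach is both simpler and avoids this pitfall altogether.
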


\begin{proof}
 Let $x'$ (resp.\ $y'$) be the first (resp.\ last) intersection point of $\eta'_1 \cap \eta'_2$. The conditional law of the segments between $x'$ and $y'$ is a bichordal \slekp{} in $(D',x',y')$ where $D'$ is a connected component after removing from $D$ the segments before $x'$ and after $y'$. Let $k_0 \in \N$ be large enough such that the following holds. Let $E'$ be the event that there is a conformal map $\wt{\varphi}$ mapping $(\h,0,\infty)$ to $(D',x',y')$ such that $\abs{\wt{\varphi}'(i)} \ge 2^{-j-k_0}$. Then $\p[E'] \ge (p+1)/2$. By the continuity of the bichordal \clekp{} law with respect to the marked points (Proposition~\ref{prop:mccle_tv_convergence_int}), we can choose $k_0$ (depending on $r$) such that this holds uniformly for all choices of $(D;x_1,x_2,x_3,x_4)$.
 
 Suppose that $N$ is large enough so that $k_0 < N/2$ and that the statement of Lemma~\ref{le:shortcut_bichordal_sle} holds with $p$ replaced by $(p+1)/2$. We obtain
 \[ \p[ G_{D,\ul{x}} ] \ge \p[ G_{D',\wt{\varphi}} \mid E' ] \,\p[E'] \ge ((p+1)/2)^2 \ge p . \]
\end{proof}

Let $\CF_{z,j},\wt{\CF}_{z,j}$ and $\Esep_{z,j}$ be as defined in Section~\ref{se:bilipschitz_setup}.

\newcommand*{\Eannshortcut}{E^{\mathrm{sc}}}
\newcommand*{\Gregionsc}{G^{\mathrm{sc}}}

\begin{lemma}\label{le:annulus_with_shortcuts}
For each $p \in (0,1)$ and $\dsep > 0$ there exist $N \in \N$ and $\epsilon > 0$ such that the following is true. Let $z,j$ be such that $B(z,2^{-j}) \subseteq \D$, and suppose that
\[ \abs{\shortcutscales \cap \{j,\ldots,j+5N\}} \ge N . \]
Let $\Eannshortcut_{z,j}$ be the event that the event $\Eannharm_{z,j}$ from Section~\ref{subsec:good_annuli} occurs and we can select the regions $V_i$ for each $i$ such that
\[ \frac{\rmetres{V_i}{v_i}{v'_i}{\Gamma}}{\rmettres{V_i}{v_i}{v'_i}{\Gamma}} \le c_2-\epsilon . \]
Then
\[ \p[ \Eannshortcut \mid \CF_{z,j} ] \one_{\Esep_{z,j}} \ge p\one_{\Esep_{z,j}} . \]
\end{lemma}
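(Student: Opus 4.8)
The plan is to prove Lemma~\ref{le:annulus_with_shortcuts} by combining the good annulus construction from Lemma~\ref{le:good_ann_event_harm} with the uniform shortcut estimate from Lemma~\ref{le:shortcut_bichordal_cle_unif}. First I would fix $p\in(0,1)$ and $\dsep>0$, and set $p' = (1+p)/2$. Apply Lemma~\ref{le:good_ann_event_harm} with probability $p''$ (close to $1$, to be determined) to obtain parameters $M,\rextremitylb,\rextremityub$ so that $\p[\Eannharm_{z,j}\mid\CF_{z,j}]\one_{\Esep_{z,j}}\ge p''\one_{\Esep_{z,j}}$; this gives us, on $\Eannharm_{z,j}$, the $m\le M$ pairs of strands $\gamma_i^L,\gamma_i^R$ and the points $w_i,v_i,v'_i,u_i$ with the extremities satisfying~\eqref{eq:extremities_energy}. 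The key point is that conditionally on $\CF_{z,j}$ and on the outer strands up to the level where the $v_i$ are revealed, the region $V_i$ bounded between the segments of $\gamma_i^L,\gamma_i^R$ from $v_i$ to $v'_i$ has (up to absolute continuity) the law of the region between a bichordal \clekp{} (equivalently between two flow lines meeting at angle $\angledouble$), with marked points that are uniformly separated on the relevant scale because of $\Esep_{z,j}$ and the definition of $\Eannharm_{z,j}$ (which controls $\abs{v_i-v'_i}$, $\abs{v_i-w_i}$ from below by $\rextremitylb 2^{-j}$).

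Next I would invoke Lemma~\ref{le:shortcut_bichordal_cle_unif} with $p$ replaced by a value close to $1$ and $r$ a constant depending only on $\rextremitylb,\dsep$: since $\abs{\shortcutscales\cap\{j,\ldots,j+5N\}}\ge N$, and since the conformal radius of $V_i$ seen from an interior point is comparable to $2^{-j}$, the hypotheses of Lemma~\ref{le:shortcut_bichordal_cle_unif} are met for each fixed $i$ (after a deterministic rescaling by $2^j$ and using that the statement is scale-invariant; here one uses $\abs{\varphi'(0)}\gtrsim 2^{-j}$, which follows from the geometry of $V_i$). This produces, for each $i$, a pair of points $x,y\in\gamma_i^L\cap\gamma_i^R$ inside $V_i$ with $\rmet{x}{y}{\Gamma}/\rmett{x}{y}{\Gamma}\le\sqrt{c_1c_2}$ and $\rmet{x}{y}{\Gamma}\ge\epsilon_0\median[2^{-j-5N}]{}$, with probability at least $p'''$ close to $1$, conditionally on $\CF_{z,j}$ on $\Esep_{z,j}$. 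By relabelling (shrinking the region from $v_i,v'_i$ to these $x,y$, which only involves moving to a sub-region of $V_i$ bounded between the same two strands, and re-choosing the other extremity points along the strands accordingly — one can do this using the series-type structure, absorbing the reduction into the freedom allowed in the definition of $\Eannharm$), we may take $V_i$ itself to be this sub-region, so that $\rmetres{V_i}{v_i}{v'_i}{\Gamma}/\rmettres{V_i}{v_i}{v'_i}{\Gamma}\le\sqrt{c_1c_2}$; since $\sqrt{c_1c_2}\le c_2 - \epsilon$ for a suitable $\epsilon>0$ when $c_1<c_2$ (and when $c_1=c_2$ the conclusion is trivially false, so we are in the case $c_1<c_2$), this gives the ratio bound required by $\Eannshortcut_{z,j}$. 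One also needs the lower bound $\rmetres{V_i}{v_i}{v'_i}{\Gamma}\in[M^{-1}\median,M\median]$ to still hold after relabelling; this is precisely why the quantitative lower bound $\rmet{x}{y}{\Gamma}\ge\epsilon_0\median$ in Lemma~\ref{le:shortcut_bichordal_cle_unif} is needed — together with the upper tail from Lemma~\ref{le:resistance_ub_cle} (as used in the proof of Lemma~\ref{le:good_ann_event_harm}), it forces $\rmetres{V_i}{v_i}{v'_i}{\Gamma}$ into the required two-sided window after possibly enlarging $M$.

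Finally I would do the bookkeeping over the $m\le M$ regions: each application of the shortcut lemma succeeds with conditional probability at least $p'''$; intersecting over the (at most $M$) relevant indices and with the event $\Eannharm_{z,j}$, and choosing $p'',p'''$ close enough to $1$ depending on $M$ and $p$, gives $\p[\Eannshortcut_{z,j}\mid\CF_{z,j}]\one_{\Esep_{z,j}}\ge p\one_{\Esep_{z,j}}$. Here $N$ is taken to be the maximum of the value of $N$ from Lemma~\ref{le:shortcut_bichordal_cle_unif} (with the boosted probability and the value of $r$ fixed above) and a constant ensuring the window $\{j,\ldots,j+5N\}$ is large enough to contain the scales used; the $5N$ appears because the good region event already uses a few scales and one wants the shortcut scales to be realized within the annular region.

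The main obstacle I anticipate is making the ``relabelling'' step rigorous: one must verify that replacing $V_i$ by the sub-region delimited by the shortcut endpoints $x,y$ is compatible with all the structural requirements in the definitions of $\Eann_{z,j}$ and $\Eannharm_{z,j}$ — in particular that the new triple of consecutive extremity points $w_i,v_i,v'_i,u_i$ can be chosen along the same two strands with the prescribed spacing, that $\{u_i\}$ still separates $\bIn A_{z,j}$ from $\bOut A_{z,j}$, and that the resistance windows~\eqref{eq:extremities_energy} are preserved. This is essentially a matter of choosing the shortcut scales to be well inside the annulus and invoking the infinitely-many-intersections property of two intersecting \slekp{}-type strands, but it requires care with the conditioning so that the independence-across-scales input from Lemma~\ref{le:shortcut_bichordal_cle_unif} is applied correctly given $\CF_{z,j}$ and the separation event $\Esep_{z,j}$.
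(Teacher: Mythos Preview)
Your overall plan—combine Lemma~\ref{le:good_ann_event_harm} with Lemma~\ref{le:shortcut_bichordal_cle_unif}—is correct, but the paper's execution differs from yours in two places that together dissolve the obstacle you flagged.

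First, instead of conditioning on ``the outer strands up to the level where the $v_i$ are revealed'' (which leaves the bichordal domain random and the conditioning delicate), the paper performs a partial exploration $\Gamma_\outside^{*, B(w,r_3 2^{-j}), B(w,r_4 2^{-j})}$ at each point $w$ of a \emph{fixed} grid in $A_{z,j}$, and introduces an auxiliary event requiring that for each $i$ some cut point of $V_i$ lies near a grid point where this exploration has exactly four well-separated marked points (the two strands meeting there). That auxiliary event fails with conditional probability at most $(1-p)/2$ on $\Esep_{z,j}$, by Proposition~\ref{prop:mccle_tv_convergence_int} together with the fact that intersecting strands intersect infinitely often and marked points are a.s.\ distinct. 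Then Lemma~\ref{le:shortcut_bichordal_cle_unif} is applied (with probability boosted according to the grid size) once at each grid point, and a union bound over the grid handles all $i$ simultaneously. The bichordal domain in each application is now determined by a deterministic exploration rule, so your conditioning concern does not arise.

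Second, the paper does \emph{not} relabel $V_i$. It keeps the original $v_i,v'_i$ from $\Eannharm_{z,j}$ and uses that the shortcut endpoints $x,y$ are cut points of $V_i$: by the series law, $\rmetres{V_i}{v_i}{v'_i}{\Gamma}$ and $\rmettres{V_i}{v_i}{v'_i}{\Gamma}$ each split into three pieces. Applying the bi-Lipschitz bound~\eqref{eq:bilipschitz_ass} to the two outer pieces, the ratio bound $\le\sqrt{c_1c_2}$ to the middle piece, the quantitative lower bound $\rmetres{V_i}{x}{y}{\Gamma}\ge\epsilon\,\median[2^{-j-5N}]{}\ge\epsilon\,c(N)\,\median[2^{-j}]{}$ (the last inequality by Proposition~\ref{pr:quantiles_metric}), and the two-sided window~\eqref{eq:extremities_energy} for the whole $V_i$, one obtains $\rmetres{V_i}{v_i}{v'_i}{\Gamma}/\rmettres{V_i}{v_i}{v'_i}{\Gamma}\le c_2-\epsilon'$ directly for some $\epsilon'>0$. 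The relabelling problem you worried about never arises.
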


\newcommand*{\Eexpl}{E^3}
\newcommand*{\rexplstart}{r_3}
\newcommand*{\rexplend}{r_4}
\newcommand*{\rexplsep}{r_5}

\begin{proof}
We will deduce Lemma~\ref{le:annulus_with_shortcuts} from Lemma~\ref{le:shortcut_bichordal_cle_unif}. Let $\rexplstart > \rexplend > 0$ and $\rexplsep > 0$ be small constants whose values will be decided upon later. Consider the following partial exploration of the \clekp{} in $A_{z,j}$. Let $w \in \rexplend\Z^2 \cap A_{z,j}$ and consider the partial exploration $\Gamma_\outside^{*, B(w,\rexplstart 2^{-j}), B(w,\rexplend 2^{-j})}$. Let $\Eexpl_{z,j}$ be the event that $\Eannharm_{z,j}$ occurs and for each $i$ there is a cut point $x$ of $V_i$ such that the following holds. Let $w \in \rexplend\Z^2 \cap A_{z,j}$ be the point closest to $x$. Then the partial exploration $\Gamma_\outside^{*, B(w,\rexplstart 2^{-j}), B(w,\rexplend 2^{-j})}$ has exactly $4$ marked points which correspond to the two strands intersecting at $x$, and the marked points have distance at least $\rexplsep 2^{-j}$.

Recall that intersecting strands of a \clekp{} intersect each other infinitely many times, and that the marked points of a partially explored \clekp{} are almost surely distinct \cite[Lemma~3.18]{amy-cle-resampling}. Therefore, by Proposition~\ref{prop:mccle_tv_convergence_int}, for each $\dsep > 0$ there are constants $\rexplstart,\rexplend,\rexplsep$ such that
\[ \p[ \Eannharm_{z,j} \cap (\Eexpl_{z,j})^c \mid \CF_{z,j} ] \one_{\Esep_{z,j}} \le (1-p)/2 \]
for each $z,j$ with $B(z,2^{-j}) \subseteq \D$.

Once we have fixed $\rexplstart,\rexplend,\rexplsep$, we apply Lemma~\ref{le:shortcut_bichordal_cle_unif} with $1-\rexplend^2 (1-p)/2$ in place of $p$ and $r = \rexplsep$. Taking a union bound over $w \in \rexplend\Z^2 \cap A_{z,j}$, we obtain that with conditional probability at least $p$, for each $i$ there are cut points $x,y$ of $V_i$ with
\[ 
 \frac{\rmetres{V_i}{x}{y}{\Gamma}}{\rmettres{V_i}{x}{y}{\Gamma}} \le \sqrt{c_1 c_2} 
 \quad\text{and}\quad
 \rmetres{V_i}{x}{y}{\Gamma} \ge \epsilon\,\median[2^{-j-5N}]{} \ge \epsilon\,c(N)\median[2^{-j}]{} 
\]
where we have used Proposition~\ref{pr:quantiles_metric} in the last inequality.
Since we are on the event $\Eannharm_{z,j}$, we conclude by~\eqref{eq:extremities_energy} and our standing assumption~\eqref{eq:bilipschitz_ass}.
\end{proof}

\begin{proof}[Proof of Proposition~\ref{prop:weak_uniqueness}]
 We note that at least one of the following must hold. The set of $n \in \N$ such that
 \begin{equation}\label{eq:shortcut_density}
  \frac{\abs{\shortcutscales \cap \{ n,\ldots,2n \}}}{n} \ge \frac{1}{2} 
 \end{equation}
 is infinite, or the set of $n$ where it is $\le 1/2$ is infinite. Suppose the former is true. We will show that there is a deterministic constant $\epsilon > 0$ such that
 \begin{equation}\label{eq:energy_improve}
  \frac{\trformres{V}{\cdot}{\cdot}{\Gamma}}{\rformres{V}{\cdot}{\cdot}{\Gamma}} \le c_2-\epsilon
 \end{equation}
 for every $V \in \metregions$. This contradicts the minimality of $c_2$. Conversely, if the latter is true, then by swapping the roles of $\trform{}{}{}$ and $\rform{}{}{}$ we obtain a contradiction to the maximality of $c_1$.
 
 Note that for fixed $N \in \N$, if $n \in \N$ is sufficiently large and~\eqref{eq:shortcut_density} holds, then the number of $j \in \{n,\ldots,2n\}$ with
 \[ \abs{\shortcutscales \cap \{j,\ldots,j+5N\}} \ge N \]
 is at least $n/4$.
 
 Let $\Gregionsc_{z,j}$ be defined as $\Gregion_{z,j}$ in Section~\ref{se:good_regions} with the stronger condition $\Eannshortcut_{z,j}$ from Lemma~\ref{le:annulus_with_shortcuts}. Combining Proposition~\ref{pr:good_region} with Lemma~\ref{le:annulus_with_shortcuts} and Lemma~\ref{le:separation_event}, we see that we can choose the parameters of the event $\Gregionsc_{z,j}$ such that the following holds almost surely. For each $\delta > 0$ there exists a finite collection of balls $\{ B(z_k,2^{-j_k}) \}$ with $2^{-j_k} < \delta$ such that $B(0,1-\delta) \subseteq \bigcup_k B(z_k,2^{-j_k-1})$ and $\Gregionsc_{z_k,j_k}$ occurs for each $k$. Let $\{U_l\}$ be a minimal associated cover as described below Corollary~\ref{co:covering}.
 
 Suppose that $V \in \metregions$, $A \subseteq \Upsilon_\Gamma \cap \ol{V}$ is a finite set, and $f$ is a $\rformres{V}{\cdot}{\cdot}{\Gamma}$-harmonic function with prescribed values on $A$. We argue that there is a function $\wt{f}$ with $\wt{f} = f$ on $A$ and
 \[ \trformres{V}{\wt{f}}{\wt{f}}{\Gamma} \le (c_2-\epsilon)\rformres{V}{f}{f}{\Gamma} . \]
 By~\eqref{eq:rmet_to_form_sep}, this will imply~\eqref{eq:energy_improve} and conclude the proof.
 
 We can assume that $V = \bigcup_l U_l$, otherwise we consider $\wt{V} = V \cup \bigcup_{l: U_l \cap V \neq \emptyset} U_l$ instead. Note that by the definition of $\metregions$, if $\delta$ is smaller than the diameters of the loops bounding $V$, then $\wt{V} \setminus V$ consists of a collection of dead ends, so that functions on $V$ can be extended to $\wt{V}$ by setting them constant in each of the dead ends, and their energies are unchanged due to Lemma~\ref{le:rform_additivity}.
 
 In the following, we let $\epsilon > 0$ denote an absolute constant (depending on the parameters of $\Gregionsc_{z,j}$ chosen above) whose value may change from line to line.
 
 Consider a disjoint sub-collection $\{ U_{l_k} \}$ of $\{ U_l \}$ as in Lemma~\ref{le:disjoint_subcover} with
 \[ \sum_k \rformres{U_{l_k}}{f}{f}{\Gamma} \ge \epsilon\,\rformres{V}{f}{f}{\Gamma} . \]
 By Lemma~\ref{le:energy_small_nbhd}, we can also assume that $\delta$ is small enough so that less than $1/2$ fraction of the energy of $f$ is in the $U_{l_k}$ that intersect $A$. In each of the $U_{l_k}$ that do not intersect $A$, by Lemma~\ref{le:energy_in_extremities}, there is a $V_{l_k,i}$ with
 \[ \rformres{V_{l_k,i}}{f}{f}{\Gamma} \ge \epsilon\,\rformres{U_{l_k}}{f}{f}{\Gamma} . \]
 We denote $\wt{V}_k = V_{l_k,i}$. Let $\wt{f} = f$ in $V \setminus \bigcup_k \wt{V}_k$, and harmonically interpolate in each $\wt{V}_k$. By the definition of $\Gregionsc_{z,j}$ we have
 \[ \trformres{\wt{V}_k}{\wt{f}}{\wt{f}}{\Gamma} \le (c_2-\epsilon)\rformres{\wt{V}_k}{f}{f}{\Gamma} . \]
 For the remaining regions we have, by our standing assumption~\eqref{eq:bilipschitz_ass},
 \[ \trformres{V \setminus \bigcup_k \wt{V}_k}{\wt{f}}{\wt{f}}{\Gamma} \le c_2\rformres{V \setminus \bigcup_k \wt{V}_k}{f}{f}{\Gamma} . \]
 By Lemma~\ref{le:rform_additivity} we conclude that
 \[ \begin{split}
  \trformres{V}{\wt{f}}{\wt{f}}{\Gamma} 
  &\le (c_2-\epsilon) \sum_k \rformres{\wt{V}_k}{f}{f}{\Gamma} + c_2\rformres{V \setminus \bigcup_k \wt{V}_k}{f}{f}{\Gamma} \\
  &\le c_2\rformres{V}{f}{f}{\Gamma} - \epsilon \sum_k \rformres{\wt{V}_k}{f}{f}{\Gamma} \\
  &\le c_2\rformres{V}{f}{f}{\Gamma} - \epsilon^2 \rformres{V}{f}{f}{\Gamma}
 \end{split} \]
 which concludes the proof.
\end{proof}

\subsection{Uniqueness of the exponent}
\label{subsec:exponent_unique}

In this section we show the scaling covariance of \clekp{} resistance forms with a scaling factor $\lambda^{\resexp}$ where $\resexp > 0$ depends only on $\kappa'$ (Proposition~\ref{prop:exponent_unique}).

\begin{proposition}
\label{prop:exponent_unique}
There exists $\resexp \in [\ddouble,\dsle]$ so that the following is true. Suppose that $(\rformres{V}{\cdot}{\cdot}{\Gamma})_{V \in \metregions}$ is a weak \clekp{} resistance form, then it is a (strong) \clekp{} resistance form with the scaling exponent~$\resexp$.
\end{proposition}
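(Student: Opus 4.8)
\textbf{Proof strategy for Proposition~\ref{prop:exponent_unique}.}
The plan is to first establish the \emph{existence} of at least one weak \clekp{} resistance form satisfying scale covariance (with \emph{some} exponent $\alpha$), and then use the uniqueness result (Proposition~\ref{prop:weak_uniqueness}) to conclude that \emph{every} weak \clekp{} resistance form is a scalar multiple of this one, hence inherits scale covariance with the same exponent. The natural candidate for the first step is a subsequential limit $\rmet{\cdot}{\cdot}{\Gamma}$ of the approximation scheme $(\median{\epsilon}^{-1}\rmetapprox{\epsilon}{\cdot}{\cdot}{\Gamma})$ constructed in Section~\ref{se:rmet_construction}, which is already known to be a weak \clekp{} resistance form (Proposition~\ref{pr:lim_weak_rform}). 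Its scale covariance should follow from the scale covariance of the objects it is built from: the gasket measure $\meas{\cdot}{\Gamma}$ is conformally covariant with exponent $\dcle$, the geodesic metric $\met{\cdot}{\cdot}{\Gamma}$ is scale covariant with exponent $\geoexp$ (Section~\ref{se:geodesic_metric}), and the Poisson intensity $\lambda_\epsilon = \epsilon^{-\dcle-\rateexp}$ has an explicit scaling. The subtle point is the renormalization constant: under scaling by $\lambda$, the definition~\eqref{eq:median_def} of $\median{\epsilon}$ transforms into $\median{\lambda\epsilon}$ up to the appropriate power, and Proposition~\ref{pr:quantiles_metric} gives $\median{\lambda\delta} = \lambda^{\alpha+o(1)}\median{\delta}$ for $\alpha \in [\ddouble,\dsle]$. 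Taking the subsequential limit and using that $\median{\epsilon_n}$ is asymptotically a pure power (this requires a short argument that the limiting ratio $\median{\lambda\epsilon}/\median{\epsilon}$ converges, or an extraction of a further subsequence along which the exponent is well-defined) should produce a well-defined scaling exponent $\alpha$ for the limiting metric.

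The cleanest way to pin down that the exponent is a genuine constant (not just subsequence-dependent) is as follows. For each subsequential limit, the limiting resistance metric $\rmet{\cdot}{\cdot}{\Gamma}$ satisfies, for each fixed $\lambda \in (0,1]$, an exact identity $\rmet{\lambda\cdot}{\lambda\cdot}{\lambda\Gamma} = g(\lambda)\rmet{\cdot}{\cdot}{\Gamma}$ for some deterministic $g(\lambda) > 0$ — this is because composing the scaling with itself forces $g$ to be multiplicative, $g(\lambda_1\lambda_2) = g(\lambda_1)g(\lambda_2)$, and the estimates of Lemma~\ref{le:resistance_ub_cle} and Lemma~\ref{le:resistance_lb_cle} (or rather their analogues giving two-sided polynomial bounds on typical resistances across scales, together with Proposition~\ref{pr:quantiles_metric}) force $g$ to be a measurable multiplicative function that is polynomially bounded above and below, hence $g(\lambda) = \lambda^\alpha$ for a unique $\alpha$. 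The bound $\alpha \in [\ddouble,\dsle]$ then follows directly from Proposition~\ref{pr:quantiles_metric}: since $\median{\lambda\delta}$ lies between $\lambda^{\dsle+o(1)}\median{\delta}$ and $\lambda^{\ddouble+o(1)}\median{\delta}$, and the limiting scaling factor is the limit of $\median{\lambda\epsilon_n}/\median{\epsilon_n}$, we get $\lambda^{\dsle} \le g(\lambda) \le \lambda^{\ddouble}$ for $\lambda \in (0,1]$, i.e.\ $\alpha \in [\ddouble,\dsle]$.

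Now for an \emph{arbitrary} weak \clekp{} resistance form $(\rformres{V}{\cdot}{\cdot}{\Gamma})_{V\in\metregions}$: I would apply Proposition~\ref{prop:weak_uniqueness} with the second form taken to be (a conditionally independent copy of) the scale-covariant limit form constructed above. The hypothesis~\eqref{eq:ass_medians_comparable} that the two families of medians $\median[\delta]{}$, $\mediant[\delta]{}$ are comparable needs to be checked, but this is exactly where we use that \emph{both} forms satisfy the two-sided resistance estimates of Section~\ref{se:resistance_bounds} (Lemma~\ref{le:resistance_ub_cle} and Lemma~\ref{le:resistance_lb_cle}) with the \emph{same} normalizing sequence $\median[\delta]{}$ — these lemmas are stated for a general class of weak \clekp{} metrics, so they apply to both, giving $\median[\delta]{} \asymp \mediant[\delta]{}$ up to a deterministic constant uniformly in $\delta$; alternatively one argues by the bi-Lipschitz bound of Proposition~\ref{prop:weak_bi_lipschitz} applied to a bootstrap. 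Granting this, Proposition~\ref{prop:weak_uniqueness} gives $\rformres{V}{\cdot}{\cdot}{\Gamma} = c_0\trformres{V}{\cdot}{\cdot}{\Gamma}$ for a deterministic $c_0$, where $\trform{\cdot}{\cdot}{\Gamma}$ is the scale-covariant form; since scalar multiplication commutes with the scaling operation, $\rform{\cdot}{\cdot}{\Gamma}$ is scale covariant with the same exponent $\alpha$. Finally, one checks that $\rform{\cdot}{\cdot}{\Gamma}$ satisfies the \emph{strong} axioms of Definition~\ref{def:cle_rform} — the locality and additivity (Definition~\ref{def:cle_rform}\ref{it:rform_additive}, \ref{it:rform_compatibility}) are already verified in Section~\ref{sec:weak_cle_rform} for weak forms (Lemma~\ref{le:rform_additivity} and the locality lemma), the determinism axioms~\ref{it:rform_determined}, \ref{it:rform_translation_invariant} upgrade from their conditional-law weak counterparts because $c_0$ is deterministic and the limit form was constructed as a measurable function of $\Gamma$, and translation invariance~\ref{it:rform_translation_invariant} follows from~\eqref{it:weak_rform_invariance} once determinism is established.

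\textbf{Main obstacle.} The technical heart is verifying that the renormalization constants $\median[\delta]{}$, $\mediant[\delta]{}$ of two \emph{a priori unrelated} weak \clekp{} resistance forms are comparable with a deterministic constant uniformly in $\delta$ (i.e.\ establishing~\eqref{eq:ass_medians_comparable}), since Proposition~\ref{prop:weak_uniqueness} takes this as a hypothesis. The resolution is a bootstrap: first use only the two-sided resistance estimates of Section~\ref{se:resistance_bounds} (which hold for every weak \clekp{} metric with the \emph{same} normalization $\median[\delta]{}$ because they are phrased in terms of the intrinsic scale), giving $c^{-1}\mediant[\delta]{} \le \median[\delta]{} \le c\mediant[\delta]{}$ directly from the definitions; then feed this into Propositions~\ref{prop:weak_bi_lipschitz} and~\ref{prop:weak_uniqueness}. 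A second, more minor subtlety is the passage from the subsequence-dependent limiting scaling factor to a genuine power law, which requires the multiplicativity argument sketched above rather than a naive limit.
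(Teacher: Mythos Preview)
Your overall two-step plan (establish scale covariance for some form, then transfer via uniqueness) is in the right spirit, but both steps take unnecessary detours and the second has a genuine gap.

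\textbf{Step 1.} You propose to build a scale-covariant form by going back to the approximation scheme and tracking how $\median{\epsilon}$ scales. The paper's Lemma~\ref{lem:scaling_factor_power} is much cleaner: for an \emph{arbitrary} weak \clekp{} resistance form $\rform{\cdot}{\cdot}{\Gamma}$, the $\lambda$-rescaled form $\trform{\cdot}{\cdot}{\Gamma}=\rform{\lambda\cdot}{\lambda\cdot}{\lambda\Gamma}$ is again a weak \clekp{} resistance form, and for \emph{this particular pair} the hypothesis~\eqref{eq:ass_medians_comparable} is automatic, since $\mediant[\delta]{}=\median[\lambda\delta]{}$ and Proposition~\ref{pr:quantiles_metric} bounds $\median[\lambda\delta]{}/\median[\delta]{}$ uniformly in $\delta$. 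Proposition~\ref{prop:weak_uniqueness} then gives $\trform{\cdot}{\cdot}{\Gamma}=c(\lambda)\rform{\cdot}{\cdot}{\Gamma}$, and your multiplicativity argument yields $c(\lambda)=\lambda^{-\alpha}$ with $\alpha\in[\ddouble,\dsle]$. So every weak form already has \emph{some} exponent; no special construction is needed.

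\textbf{Step 2.} Here is the gap. You want to compare an arbitrary weak form to a reference scale-covariant one via Proposition~\ref{prop:weak_uniqueness}, and you claim the hypothesis~\eqref{eq:ass_medians_comparable} follows from Lemmas~\ref{le:resistance_ub_cle}--\ref{le:resistance_lb_cle} ``with the same normalizing sequence $\median[\delta]{}$''. But those lemmas are each stated relative to the form's \emph{own} median; they give no relation between $\median[\delta]{}$ and $\mediant[\delta]{}$ for two unrelated forms. Worse, once Step~1 is done, each form has an exponent, so $\median[\delta]{}\asymp\delta^{\alpha}$ and $\mediant[\delta]{}\asymp\delta^{\wt\alpha}$; if $\alpha\neq\wt\alpha$ then~\eqref{eq:ass_medians_comparable} \emph{fails} and Proposition~\ref{prop:weak_uniqueness} is simply unavailable. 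Your bootstrap is circular.

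The paper resolves this without invoking Proposition~\ref{prop:weak_uniqueness} for the two unrelated forms. After Step~1, both forms are exactly scale-covariant, so in the good-region estimate~\eqref{eq:good_region_energy} one may replace $\median[2^{-j}]{}$ by $2^{j\alpha}$ (resp.\ $2^{j\wt\alpha}$). Rerunning the construction in the proof of Proposition~\ref{prop:weak_bi_lipschitz} with these explicit powers, the ratio of energies in a good region at scale $2^{-j}$ is $\asymp 2^{j(\wt\alpha-\alpha)}$. If $\alpha>\wt\alpha$, taking the cover arbitrarily fine ($j\to\infty$) forces $\trform{\cdot}{\cdot}{\Gamma}\le \varepsilon\,\rform{\cdot}{\cdot}{\Gamma}$ for every $\varepsilon>0$, hence $\trform{\cdot}{\cdot}{\Gamma}=0$, a contradiction. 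This is the missing idea: uniqueness of the exponent comes from rerunning the bi-Lipschitz \emph{construction} with mismatched scales, not from another application of the uniqueness proposition.
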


We will prove Proposition~\ref{prop:exponent_unique} in two steps.  The first step, stated and proved in Lemma~\ref{lem:scaling_factor_power} just below, is to show that each weak \clekp{} resistance form is a (strong) \clekp{} resistance form with some scaling exponent $\resexp > 0$. We will then prove that the value of $\resexp > 0$ does not depend on the choice of the \clekp{} resistance form, but only on the choice of $\kappa' \in (4,8)$.

\begin{lemma}
\label{lem:scaling_factor_power}
Suppose that we have the setup of Section~\ref{se:bilipschitz_setup} but condition on the event that $\CL \cap \partial\D = \emptyset$.\footnote{This conditioning implies that for each $\lambda \in (0,1]$, the law of~$\lambda\Gamma$ is absolutely continuous with respect to the law of~$\Gamma$, see \cite[Lemma~7.2]{my2025geouniqueness}.} Let $(\rformres{V}{\cdot}{\cdot}{\Gamma})_{V \in \metregions}$ be a weak \clekp{} resistance form. Then there exists $\resexp \in [\ddouble,\dsle]$ such that the following holds. Let $\lambda \in (0,1]$ and define $(\trformres{V}{f}{f}{\Gamma})_{V \in \metregions} = (\rformres{\lambda V}{f(\lambda^{-1}\cdot)}{f(\lambda^{-1}\cdot)}{\lambda\Gamma})_{V \in \metregions}$. Then almost surely $\trformres{V}{\cdot}{\cdot}{\Gamma} = \lambda^{-\resexp} \rformres{V}{\cdot}{\cdot}{\Gamma}$ for each $V \in \metregions$.
\end{lemma}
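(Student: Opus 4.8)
The key point is that rescaling by $\lambda$ produces \emph{another} weak \clekp{} resistance form, which by Proposition~\ref{prop:weak_uniqueness} must be a scalar multiple of the original. First I would check the defining properties of a weak \clekp{} resistance form from Definition~\ref{def:weak_cle_rform} for the family $(\trformres{V}{\cdot}{\cdot}{\Gamma})_{V \in \metregions}$: the continuity property~\eqref{it:weak_rform_rmet}, the Markovian property~\eqref{it:weak_rform_markov}, the compatibility~\eqref{it:weak_rform_compatibility}, the cut-point~\eqref{it:weak_rform_cut_point} and cut-loop~\eqref{it:weak_rform_cut_loop} properties all transfer under the deterministic map $V \mapsto \lambda V$, $f \mapsto f(\lambda^{-1}\cdot)$, $\Gamma \mapsto \lambda\Gamma$ because these are all purely local/topological constraints that commute with the scaling of the plane, and the \clekp{} gasket is scale-invariant in law. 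The translation-invariance property~\eqref{it:weak_rform_invariance} transfers because translation and scaling commute in the obvious way. Since we condition on $\CL \cap \partial\D = \emptyset$, for $\lambda \in (0,1]$ the law of $\lambda\Gamma$ is absolutely continuous with respect to the law of $\Gamma$ (as noted in the footnote, citing \cite[Lemma~7.2]{my2025geouniqueness}), so $(\trformres{V}{\cdot}{\cdot}{\Gamma})_{V \in \metregions}$ is indeed a weak \clekp{} resistance form for the same $\Gamma$.

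Next I would compare the scaling constants. Let $\median[\delta]{}$ be the renormalization median associated with $(\rformres{V}{\cdot}{\cdot}{\Gamma})$ as in Section~\ref{se:bilipschitz_setup}. Directly from the definition, the median $\mediant[\delta]{}$ associated with $(\trformres{V}{\cdot}{\cdot}{\Gamma})$ satisfies $\mediant[\delta]{} = \lambda^{-1}\median[\lambda\delta]{}$: indeed, rescaling the bichordal/flow-line configuration of scale $\delta$ for $\wt{\FR}$ by $\lambda$ gives a configuration of scale $\lambda\delta$ for $\FR$, and the supremum in~\eqref{eq:median_def} transforms correspondingly. By Proposition~\ref{pr:quantiles_metric}, $\median[\lambda\delta]{} \le \lambda^{\ddouble+o(1)}\median[\delta]{}$ and $\median[\lambda\delta]{} \ge \lambda^{\dsle+o(1)}\median[\delta]{}$, so $\lambda^{\dsle-1+o(1)}\median[\delta]{} \le \mediant[\delta]{} \le \lambda^{\ddouble-1+o(1)}\median[\delta]{}$, which in particular means $\median[\delta]{}/\mediant[\delta]{}$ is bounded above and below by constants \emph{uniform in $\delta$} (for each fixed $\lambda$). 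Hence the comparability assumption~\eqref{eq:ass_medians_comparable} holds for the pair $(\FR, \wt{\FR})$, where one must also arrange conditional independence of the two forms given $\Gamma$ — but since $\wt{\FR}$ is a deterministic function of $\FR$, one instead applies Proposition~\ref{prop:weak_uniqueness} to $\FR$ and an independent copy $\FR'$ of it, concludes $\FR' = c_0\FR$, then does the same for $\wt{\FR}$ and an independent copy, and matches the constants; alternatively, one resamples as in the standard argument. The upshot is that there is a deterministic $c(\lambda) \in (0,\infty)$ with $\trformres{V}{\cdot}{\cdot}{\Gamma} = c(\lambda)\rformres{V}{\cdot}{\cdot}{\Gamma}$ for all $V$, a.s.

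Finally I would identify $c(\lambda) = \lambda^{-\alpha}$. Applying the scaling map for $\lambda_1$ then $\lambda_2$ is the same as applying it once for $\lambda_1\lambda_2$, so $c(\lambda_1\lambda_2) = c(\lambda_1)c(\lambda_2)$; together with measurability (in fact monotonicity in $\lambda$, which follows from the monotonicity Lemma~\ref{le:weak_rmet_monotonicity} combined with the comparison of medians), the multiplicative functional equation forces $c(\lambda) = \lambda^{-\alpha}$ for some real $\alpha$. To pin down $\alpha \in [\ddouble, \dsle]$, observe that $c(\lambda) = \mediant[\delta]{}/\median[\delta]{}$ (taking $V = U^\delta$ and comparing the two medians directly, using that the forms are scalar multiples), so $\lambda^{-\alpha} = \lambda^{-1}\median[\lambda\delta]{}/\median[\delta]{}$, i.e.\ $\median[\lambda\delta]{} = \lambda^{1-\alpha}\median[\delta]{}$; plugging into Proposition~\ref{pr:quantiles_metric} gives $\lambda^{\dsle+o(1)} \le \lambda^{1-\alpha} \le \lambda^{\ddouble+o(1)}$ as $\lambda\to 0$, hence $1-\alpha \in [-\dsle, -\ddouble]$, i.e.\ $\alpha \in [\ddouble, \dsle]$. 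The main obstacle I anticipate is the bookkeeping around conditional independence when invoking Proposition~\ref{prop:weak_uniqueness} — since $\wt{\FR}$ is not independent of $\FR$ — which is handled by the standard device of comparing each of $\FR, \wt{\FR}$ against a fresh independent copy and transitivity of ``being a scalar multiple''; beyond that the argument is a routine assembly of the tools already developed.
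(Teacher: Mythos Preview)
Your overall strategy is exactly the paper's: observe that the rescaled family is again a weak \clekp{} resistance form, verify~\eqref{eq:ass_medians_comparable} via Proposition~\ref{pr:quantiles_metric}, invoke Proposition~\ref{prop:weak_uniqueness} to obtain a constant $c(\lambda)$, and use multiplicativity plus the median scaling to identify $c(\lambda)=\lambda^{-\alpha}$ with $\alpha\in[\ddouble,\dsle]$. Your remark on conditional independence is more careful than the paper, and the workaround (compare $\wt{\FR}$ against a fresh independent copy $\FR'$ of $\FR$, which is conditionally independent of $\wt{\FR}$ since $\wt{\FR}$ is a measurable function of $\FR$) is the correct way to make that step rigorous.

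However, your scaling bookkeeping contains several errors that happen not to derail the conclusion only by accident. First, there is no factor $\lambda^{-1}$ in the median relation: since $\rmettres{U^\delta}{x}{y}{\Gamma^\delta}=\rmetres{\lambda U^\delta}{\lambda x}{\lambda y}{\lambda\Gamma^\delta}$ and $(\lambda U^\delta,\lambda\Gamma^\delta)$ has the law of $(U^{\lambda\delta},\Gamma^{\lambda\delta})$, one has simply $\mediant[\delta]{}=\median[\lambda\delta]{}$, as the paper writes. Second, $\wt{\CE}=c(\lambda)\CE$ at the level of forms becomes $\wt{\FR}=c(\lambda)^{-1}\FR$ at the level of resistance metrics, so $\mediant[\delta]{}/\median[\delta]{}=c(\lambda)^{-1}$, not $c(\lambda)$. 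Combining these correctly gives $\median[\lambda\delta]{}=\lambda^{\alpha}\median[\delta]{}$, and then Proposition~\ref{pr:quantiles_metric} yields $\ddouble\le\alpha\le\dsle$. Third, your implication ``$\lambda^{\dsle+o(1)}\le\lambda^{1-\alpha}\le\lambda^{\ddouble+o(1)}$ hence $1-\alpha\in[-\dsle,-\ddouble]$'' is false for $\lambda\in(0,1)$ (where $\lambda^a$ is decreasing in $a$); it gives $1-\alpha\in[\ddouble,\dsle]$, which combined with your earlier errors would place $\alpha$ in the wrong interval. Fix these three points and your argument is complete and identical to the paper's.
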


\begin{proof}
Note that $\trform{\cdot}{\cdot}{\Gamma} = (\trformres{V}{\cdot}{\cdot}{\Gamma})_{V \in \metregions}$ is also a weak \clekp{} resistance form. Let $\rmett{\cdot}{\cdot}{\Gamma} = (\rmettres{V}{\cdot}{\cdot}{\Gamma})_{V \in \metregions}$ be the associated resistance metrics, and let $\mediant[\delta]{}$ be as defined in Section~\ref{se:bilipschitz_setup}. Then $\mediant[\delta]{}/\median[\delta]{} = \median[\lambda\delta]{}/\median[\delta]{} \in [\lambda^{\dsle+o(1)},\lambda^{\ddouble+o(1)}]$ uniformly in $\delta \in (0,1]$ by Proposition~\ref{pr:quantiles_metric}. Therefore the condition~\eqref{eq:ass_medians_comparable} is satisfied, so by Proposition~\ref{prop:weak_uniqueness} we have $\trform{\cdot}{\cdot}{\Gamma} = c(\lambda)\rform{\cdot}{\cdot}{\Gamma}$ almost surely for a constant $c(\lambda) > 0$.

It follows that if $\lambda,\lambda' \in (0,1]$, then almost surely
\[ \rform{f((\lambda'\lambda)^{-1}\cdot)}{f((\lambda'\lambda)^{-1}\cdot)}{\lambda'\lambda\Gamma} = c(\lambda')\rform{f(\lambda^{-1}\cdot)}{f(\lambda^{-1}\cdot)}{\lambda\Gamma} = c(\lambda')c(\lambda)\rform{f}{f}{\Gamma} \]
and therefore $c(\lambda'\lambda) = c(\lambda')c(\lambda)$. Moreover, $\lambda \mapsto c(\lambda)$ is strictly decreasing with $\lim_{\lambda \to 0} c(\lambda) = \infty$ due to Proposition~\ref{pr:quantiles_metric}.
It follows that $c(\lambda) = \lambda^{-\resexp}$ for some $\resexp > 0$. It also follows from Proposition~\ref{pr:quantiles_metric} that $\resexp \in [\ddouble,\dsle]$.
\end{proof}

\begin{proof}[Proof of Proposition~\ref{prop:exponent_unique}]
It remains to show that if $\rform{\cdot}{\cdot}{\Gamma}$ and $\trform{\cdot}{\cdot}{\Gamma}$ are two weak \clekp{} resistance forms with respective scaling exponents $\resexp, \wt{\resexp}$, then $\resexp = \wt{\resexp}$. Suppose that $\resexp > \wt{\resexp}$. Following the proof of Proposition~\ref{pr:good_region}, there is a covering by good regions~$\{U_l\}$ analogously to Corollary~\ref{co:covering} such that
\[
 M^{-1}2^{j\resexp} \le \frac{\rformres{U_l}{f}{f}{\Gamma}}{(\sup_{K_l} f - \inf_{K_l} f)^2} \le M2^{j\resexp}
\]
for each non-constant $\rformres{U_l}{\cdot}{\cdot}{\Gamma}$-harmonic function $f$ and
\[
 M^{-1}2^{j\wt{\resexp}} \le \frac{\trformres{U_l}{f}{f}{\Gamma}}{(\sup_{K_l} f - \inf_{K_l} f)^2} \le M2^{j\wt{\resexp}}
\]
for each non-constant $\trformres{U_l}{\cdot}{\cdot}{\Gamma}$-harmonic function $f$.

By the exact same proof as for Proposition~\ref{prop:weak_bi_lipschitz}, we obtain that $\trform{\cdot}{\cdot}{\Gamma} = 0$ which is a contradiction.
\end{proof}

\section{The \clekp{} Brownian motion}
\label{se:diffusion}

We now turn to prove that the process defined by the $\CLE_{\kappa'}$ resistance form is a \clekp{} Brownian motion and likewise a \clekp{} Brownian motion defines a $\CLE_{\kappa'}$ resistance form, so as to complete the proof of Theorem~\ref{thm:unique_process}.  We will divide the two directions in the proof of Theorem~\ref{thm:unique_process} into Propositions~\ref{prop:resistance_defines_bm} and~\ref{prop:bm_defines_resistance}, stated and proved below. We then show in
Section~\ref{se:non_conf_inv} that the \clekp{} Brownian motion is not conformally invariant.

\subsection{$\CLE_{\kappa'}$ resistance form defines a \clekp{} Brownian motion}
\label{subsec:resistance_defines_bm}

\begin{proposition}
\label{prop:resistance_defines_bm}
Suppose that $(\rform{\cdot}{\cdot}{\Gamma}, \rfdomain{\Gamma})$ is a \clekp{} resistance form in the sense of Definition~\ref{def:cle_rform}, and let $\meas{\cdot}{\Gamma}$ be the \clekp{} gasket measure (see Section~\ref{se:cle_measure}). Let $\clebm$ be the Hunt process associated with the Dirichlet form $(\rform{\cdot}{\cdot}{\Gamma}, \rfdomain{\Gamma})$ on $L^2(\Upsilon_\Gamma, \meas{\cdot}{\Gamma})$. Then $\clebm$ is a \clekp{} Brownian motion in the sense of Definition~\ref{def:cle_brownian_motion}.
\end{proposition}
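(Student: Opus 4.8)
The plan is to verify each of the four properties in Definition~\ref{def:cle_brownian_motion} for the Hunt process $\clebm$ associated with $(\rform{\cdot}{\cdot}{\Gamma},\rfdomain{\Gamma})$ on $L^2(\Upsilon_\Gamma,\meas{\cdot}{\Gamma})$, drawing on the general theory reviewed in Section~\ref{se:rmet_rform} and the axioms of Definition~\ref{def:cle_rform}.

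First I would address property~I. Since $(\rform{\cdot}{\cdot}{\Gamma},\rfdomain{\Gamma})$ is by assumption a \emph{regular} resistance form (Definition~\ref{def:cle_rform}\ref{it:rform_resistance}), and $(\Upsilon_\Gamma,\rmet{\cdot}{\cdot}{\Gamma})$ is a compact resistance metric space (by \cite[Lemma~1.10]{amy2025tightness} together with the topological equivalence of $\rmet{\cdot}{\cdot}{\Gamma}$ and $\dpathY$), and $\meas{\cdot}{\Gamma}$ is a finite Borel measure with $\supp_{\rmet{\cdot}{\cdot}{\Gamma}}(\meas{\cdot}{\Gamma}) = \Upsilon_\Gamma$ (this support statement should follow from Proposition~\ref{pr:measure_lb}), the results \cite[Corollary~6.4 and Theorem~9.4]{k2012resistance} give that $(\rform{\cdot}{\cdot}{\Gamma},\rfdomain{\Gamma})$ is a regular Dirichlet form on $L^2(\Upsilon_\Gamma,\meas{\cdot}{\Gamma})$ with an associated $\meas{\cdot}{\Gamma}$-symmetric Hunt process of infinite lifetime (recall from Section~\ref{se:rmet_rform} that $\mathrm{P}_x[\sigma_y<\infty]=1$ for all $x,y$ in this setup). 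The locality of the resistance form (Definition~\ref{def:cle_rform}, noting the form is local hence strongly local as a Dirichlet form) yields via \cite[Theorem~4.3.4]{cf2012dform} that $\clebm$ has continuous sample paths, i.e.\ is a diffusion. This is property~I, and property~II is immediate since the Dirichlet form \emph{is} the resistance form, whose associated resistance metric is topologically equivalent to $\dpathY$ by Definition~\ref{def:cle_rform}\ref{it:rform_resistance}.

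The substance of the proof lies in properties~III and~IV, where I need to translate the statements about resistance forms in Definition~\ref{def:cle_rform}\ref{it:rform_determined}--\ref{it:rform_scale_covariant} into statements about the killed Markov process $\clebm_U$. The key tool is the identification of the killed process's Dirichlet form: for open simply connected $U \subseteq \C$, the process $\clebm_U$ obtained by killing $\clebm$ on exiting $U$ has Dirichlet form given by the part $(\rform{\cdot}{\cdot}{\Gamma}|_{U^*}, \rfdomain{\Gamma}|_{U^*})$ on $L^2(U^* \cap \Upsilon_\Gamma, \meas{\cdot}{\Gamma}|_{U^*})$ — here I mean the part of the Dirichlet form on the open set $U^*\cap\Upsilon_\Gamma$ in the sense of \cite[Section~3.3]{cf2012dform} or \cite[Theorem~4.4.3]{cf2012dform}. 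I would argue that this part form is determined by the forms $(\rformres{V}{\cdot}{\cdot}{\Gamma})_{V\in\metregions[U]}$ via the additivity and compatibility properties~\ref{it:rform_additive},\ref{it:rform_compatibility}: any function of finite energy supported away from the boundary of $U^*$ lives in some $\ol{V}\cap\Upsilon_\Gamma$ with $V\in\metregions[U]$, and its energy is computed inside $V$; taking an exhaustion by such $V$ and using \eqref{eq:rmet_to_form_sep}-type limits recovers the part form. Combined with property~\ref{it:rform_determined} (the collection $(\rformres{V}{\cdot}{\cdot}{\Gamma})_{V\in\metregions[U]}$ is a.s.\ determined by $(\Gamma_{U^*},\dpathY[U])$) and the fact that $\meas{\cdot \cap U^*}{\Gamma}$ is determined by $\Gamma_{U^*}$ (Section~\ref{se:cle_measure}), this gives property~III. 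Properties~IV(a) and~IV(b) then follow by combining~\ref{it:rform_translation_invariant},~\ref{it:rform_scale_covariant} for the forms with the corresponding transformation rules for the measure $\meas{\cdot}{\Gamma}$ (translation-invariance and the conformal covariance $\meas{\lambda A}{\lambda\Gamma}=\lambda^{\dcle}\meas{A}{\Gamma}$ from Section~\ref{se:cle_measure}): if the form scales by $\lambda^{\alpha}$ and the measure by $\lambda^{\dcle}$, then the associated process experiences a time change by $\lambda^{\dcle+\alpha}$, which is exactly the time-change $S_\lambda\clebm(t)=\lambda\clebm(\lambda^{-\dcle-\alpha}t)$ appearing in Definition~\ref{def:cle_brownian_motion}.

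The main obstacle I anticipate is making rigorous the claim that the part of the Dirichlet form on the open set $U^*\cap\Upsilon_\Gamma$ is determined by $(\Gamma_{U^*},\dpathY[U])$ — i.e.\ carefully matching the ``local'' structure of the resistance form encoded in the family $(\rformres{V}{\cdot}{\cdot}{\Gamma})_{V\in\metregions[U]}$ with the probabilistic notion of killing on exit, including checking that the killed process is a nice Hunt process (which requires $\meas{\cdot}{\Gamma}$ to charge $U^*\cap\Upsilon_\Gamma$ and the part form to be regular — this should follow from the general theory since subsets of $\Upsilon_\Gamma$ inherit resistance-metric structure, but needs spelling out) and that the measurable map $F_U$ can be chosen in the claimed equivariant way. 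The measurability and the compatibility of the choices of $F_U$ across translations and scalings require some care but are of the same nature as the corresponding statements for the resistance form in Definition~\ref{def:cle_rform}, which are already assumed; the heat kernel existence (\cite[Theorem~10.4]{k2012resistance}) is available as an extra sanity check that the process is well-behaved.
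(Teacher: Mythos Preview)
Your approach is essentially the same as the paper's: verify properties I--II via the general resistance-form theory, then for III--IV identify the killed process with the part Dirichlet form and show this is determined by the local forms $(\rformres{V}{\cdot}{\cdot}{\Gamma})_{V\in\metregions[U]}$ together with the measure. For the ``main obstacle'' you flag, the paper resolves it exactly as you suggest but with one concrete shortcut you should use: by continuity of $\clebm$ it suffices to handle $\clebm_{U_1}$ for $U_1\Subset U$, and \cite[Lemma~5.14]{amy-cle-resampling} furnishes a \emph{single} $V_1\in\metregions[U]$ with $U_1\cap\Upsilon_\Gamma\subseteq\ol{V_1}\cap\Upsilon_\Gamma$, so the part form on $U_1$ (via \cite[Theorem~3.3.8]{cf2012dform}) agrees with that of $\rformres{V_1}{\cdot}{\cdot}{\Gamma}$ by property~\ref{it:rform_additive}---no exhaustion or limiting argument is needed.
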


\begin{proof}
 By the definition of a \clekp{} resistance form, for each $V \in \metregions$, the space $(\ol{V} \cap \Upsilon_\Gamma, \rmetres{V}{\cdot}{\cdot}{\Gamma})$ is a compact resistance metric space. By the general theory (see Section~\ref{se:rmet_rform}), this defines a regular Dirichlet form on $L^2(\ol{V} \cap \Upsilon_\Gamma, \meas{\cdot}{\Gamma_V})$, and hence a $\meas{\cdot}{\Gamma_V}$-symmetric Hunt process $\clebm^V$ which is a diffusion process with infinite lifetime by the locality of the form and the compactness of the space (recall also that the two metrics $\dpathY$ and $\rmet{\cdot}{\cdot}{\Gamma}$ are topologically equivalent).

 For $U \subseteq \C$, let $\clebm_U$ denote the process killed upon exiting $U$ (called the \emph{part process} in \cite{cf2012dform}). We claim that the process $\clebm_U$ is determined by $(\rformres{V}{\cdot}{\cdot}{\Gamma})_{V \in \metregions[U]}$. Since the process $\clebm$ is continuous, it suffices to show for each $U_1 \Subset U$ that $\clebm_{U_1}$ is determined by $(\rformres{V}{\cdot}{\cdot}{\Gamma})_{V \in \metregions[U]}$. For notational simplicity, let us also pretend that $U_1 \cap \Upsilon_\Gamma$ has just one connected component (otherwise we consider each connected component separately). By \cite[Lemma~5.14]{amy-cle-resampling}, there is $V_1 \in \metregions[U]$ with $U_1 \cap \Upsilon_\Gamma \subseteq \ol{V_1} \cap \Upsilon_\Gamma$. By \cite[Theorem~3.3.8]{cf2012dform}, the Dirichlet form associated with $\clebm_{U_1}$ is given by
 \begin{align*}
  (\rfdomain{\Gamma})_{U_1} &= \{ f \in \rfdomain{\Gamma} \mid f=0 \text{ on } \Upsilon_\Gamma \setminus U_1 \} ,\\
  (\rform{f}{f}{\Gamma})_{U_1} &= \rform{f}{f}{\Gamma} \quad\text{for } f \in (\rfdomain{\Gamma})_{U_1} .
 \end{align*}
 In particular, by the property~\ref{it:rform_additive}, it agrees with the Dirichlet form associated with $\clebm^{V_1}_{U_1}$. The latter is determined by $\rformres{V_1}{\cdot}{\cdot}{\Gamma}$. This shows the claim.

 The translation invariance and scale invariance now follows from the corresponding properties of the resistance forms and the measure.
\end{proof}

\subsection{\clekp{} Brownian motion defines a $\CLE_{\kappa'}$ resistance form}
\label{subsec:bm_defines_resistance}

\begin{proposition}
\label{prop:bm_defines_resistance}
Suppose that $\clebm$ is a \clekp{} Brownian motion in the sense of Definition~\ref{def:cle_brownian_motion}. Let $(\rform{\cdot}{\cdot}{\Gamma}, \rfdomain{\Gamma})$ be the associated Dirichlet form. Then $(\rform{\cdot}{\cdot}{\Gamma}, \rfdomain{\Gamma})$ is a \clekp{} resistance form in the sense of Definition~\ref{def:cle_rform}.
\end{proposition}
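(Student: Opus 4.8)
The plan is to reverse the construction carried out in Proposition~\ref{prop:resistance_defines_bm}, recovering from the \clekp{} Brownian motion (and its killed subprocesses) a family of internal resistance forms satisfying all seven properties of Definition~\ref{def:cle_rform}. By assumption, $\clebm$ is a symmetric diffusion with infinite lifetime on $(\Upsilon_\Gamma,\dpathY)$ with respect to $\meas{\cdot}{\Gamma}$, and its Dirichlet form $(\rform{\cdot}{\cdot}{\Gamma},\rfdomain{\Gamma})$ is a resistance form whose resistance metric $\rmet{\cdot}{\cdot}{\Gamma}$ is topologically equivalent to $\dpathY$; this is precisely property~\ref{it:rform_resistance}, except for locality which follows from the diffusion property (continuous sample paths $\Rightarrow$ strongly local Dirichlet form $\Rightarrow$ local resistance form, via the correspondence recalled in Section~\ref{se:rmet_rform}). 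To define $(\rformres{V}{\cdot}{\cdot}{\Gamma},\rfdomainres{V}{\Gamma})$ for $V \in \metregions$, I would take the trace of $(\rform{\cdot}{\cdot}{\Gamma},\rfdomain{\Gamma})$ on $\ol{V} \cap \Upsilon_\Gamma$ as in~\eqref{eq:trace_form}; equivalently, by~\cite[Theorem~5.2.2]{cf2012dform} and~\cite[Lemma~2.6]{chk-time-change-resistance}, this is the Dirichlet form of the time-changed process obtained by restricting $\clebm$ to the times it spends in $\ol{V} \cap \Upsilon_\Gamma$, and its resistance metric is $\rmet{\cdot}{\cdot}{\Gamma}|_{(\ol{V}\cap\Upsilon_\Gamma)^2}$. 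Since $(\ol{V} \cap \Upsilon_\Gamma, \dpathY[\ol{V}])$ is compact by~\cite[Lemma~1.10]{amy2025tightness} and the two metrics are topologically equivalent, this is a regular local resistance form on a compact space, and its resistance metric is topologically equivalent to $\dpathY[\ol{V}]$.

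The additivity property~\ref{it:rform_additive} is the technical heart. Given $V_1,\ldots,V_n \in \metregions$ pairwise disjoint with $\bigcup_i \ol{V_i} \cap \Upsilon_\Gamma = \Upsilon_\Gamma$, I would argue as follows. By~\cite[Lemmas~5.12,~5.14]{amy-cle-resampling}, the sets $\ol{V_i} \cap \Upsilon_\Gamma$ overlap only in a finite set of points, and in fact $\Upsilon_\Gamma$ decomposes as a finite gluing of the $\ol{V_i} \cap \Upsilon_\Gamma$ along finitely many cut points. The locality of $(\rform{\cdot}{\cdot}{\Gamma},\rfdomain{\Gamma})$ means the process cannot jump between regions, so a path from $\ol{V_i} \cap \Upsilon_\Gamma$ into a neighboring region must pass through one of these finitely many separating points; this is exactly the situation handled by Lemma~\ref{le:rforms_cutpoint} and Lemma~\ref{le:weights_gluing}. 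Concretely, choosing a countable dense set $\{z_m\}$ in $\Upsilon_\Gamma$ that includes the separating points, applying Lemma~\ref{le:weights_gluing} to the finite traces $\CE|_{A_m}$ (with $A_m = \{z_1,\ldots,z_m\}$) inductively over the gluing structure, and sending $m \to \infty$ via~\eqref{eq:rmet_to_form_sep}, gives $\rform{f}{f}{\Gamma} = \sum_i \rformres{V_i}{f}{f}{\Gamma}$ and the corresponding characterization of $\rfdomain{\Gamma}$. The compatibility property~\ref{it:rform_compatibility} then follows from additivity by the same argument used in the proof of Proposition~\ref{pr:cle_rmet_char}: for $V \subseteq V'$, additivity forces any $\rformres{V'}{\cdot}{\cdot}{\Gamma}$-harmonic function to be constant on each $\dpathY$-component of $\ol{V'} \cap \Upsilon_\Gamma \setminus \ol{V}$, so the trace on $\ol{V} \cap \Upsilon_\Gamma$ equals $\rformres{V}{\cdot}{\cdot}{\Gamma}$ and the energies agree.

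For the locality/determinism property~\ref{it:rform_determined}: by property~III of Definition~\ref{def:cle_brownian_motion}, for each open simply connected $U$ the killed process $\clebm_U$ is $\p$-a.s.\ determined by $(\Gamma_{U^*}, \dpathY[U])$. By~\cite[Theorem~3.3.8]{cf2012dform} the Dirichlet form of $\clebm_U$ is $\{f \in \rfdomain{\Gamma} : f = 0 \text{ on } \Upsilon_\Gamma \setminus U\}$ with the restricted energy, and for $V \in \metregions[U]$ this form determines $\rformres{V}{\cdot}{\cdot}{\Gamma}$ by restriction to $\ol{V}\cap\Upsilon_\Gamma$ together with the trace operation — one uses~\cite[Lemma~5.14]{amy-cle-resampling} to find $V_1 \in \metregions[U]$ containing any given $U_1 \Subset U$ and the additivity~\ref{it:rform_additive} to match the killed-process form with the part of $\rformres{V_1}{\cdot}{\cdot}{\Gamma}$, exactly as in the proof of Proposition~\ref{prop:resistance_defines_bm} run in reverse. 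The translation invariance~\ref{it:rform_translation_invariant} and scale covariance~\ref{it:rform_scale_covariant} (with the same $\alpha$ as in the scaling of $\clebm$, i.e.\ $S_\lambda \clebm(t) = \lambda\clebm(\lambda^{-\dcle-\alpha}t)$ corresponds to energy scaling $\lambda^\alpha$) then transfer directly from properties~IV(a) and~IV(b) of Definition~\ref{def:cle_brownian_motion}, since time-change by a scalar does not affect the Dirichlet form (the measure scaling being absorbed), and restriction/trace commute with the push-forward maps $(T_z)_*$ and $(S_\lambda)_*$. I expect the main obstacle to be property~\ref{it:rform_additive}: one must carefully verify that the finite-point gluing description of $\Upsilon_\Gamma$ in terms of the $\ol{V_i}\cap\Upsilon_\Gamma$ is valid (handling the subtlety that $\ol{V_i}$ are simply connected with respect to their own internal metrics but sit inside $\Upsilon_\Gamma$ in a possibly complicated way), and that locality of the process rules out any ``hidden'' energy across the gluing; the rest is bookkeeping with traces and the results of Section~\ref{se:rmet_rform}.
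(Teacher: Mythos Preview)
Your definition of the internal forms $(\rformres{V}{\cdot}{\cdot}{\Gamma},\rfdomainres{V}{\Gamma})$ as \emph{traces} of the global form on $\ol{V}\cap\Upsilon_\Gamma$ is the wrong object, and this breaks both additivity~\ref{it:rform_additive} and determinism~\ref{it:rform_determined}. The trace form has resistance metric equal to the \emph{restriction} $\rmet{\cdot}{\cdot}{\Gamma}|_{(\ol{V}\cap\Upsilon_\Gamma)^2}$, which allows current to flow through $\Upsilon_\Gamma\setminus\ol{V}$; concretely, on any finite set $A\supseteq\{v_1,\ldots,v_n\}$ containing the separating points, the trace weights $w(v_i,v_j)$ pick up the effective conductance through the complement, whereas the correct internal form must have these cross-boundary weights equal to zero. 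A two-region example makes the failure of additivity transparent: if $\Upsilon_\Gamma$ is split by $\{v_1,v_2\}$ into $V_1,V_2$, then the trace form on $\ol{V_1}$ equals the internal energy on $V_1$ plus an extra term $w_{12}(f(v_1)-f(v_2))^2$ with $w_{12}$ the conductance through $V_2$, and symmetrically for $V_2$; summing double-counts. Your appeal to Lemma~\ref{le:weights_gluing} is circular here, since that lemma \emph{assumes} the additive decomposition $\CE = \CE_1+\CE_2$ and reads off the consequence for weights; it cannot be used to establish additivity. Likewise, the trace on $\ol{V}$ depends on $\Gamma$ globally, so~\ref{it:rform_determined} cannot hold for it.

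The paper avoids this by constructing $\rmetres{V}{\cdot}{\cdot}{\Gamma}$ directly: on each finite $A$ containing the separating points $\{v_1,\ldots,v_n\}$ (and enough further points to separate them pairwise inside $\ol{V}$), one takes the global weights $w$ and \emph{kills} the entries $w(v_i,v_j)$, checks consistency in $A$ via the continuity of $\clebm$ and the harmonic extension formula~\eqref{eq:harm_ext}, and thus obtains a genuine resistance metric on $\ol{V}\cap\Upsilon_\Gamma$. The same continuity argument shows these internal metrics are determined by the killed process $\clebm_U$ for $V\in\metregions[U]$. Rather than verifying Definition~\ref{def:cle_rform} head-on, the paper then checks the resistance-metric characterization of a \emph{weak} $\CLE_{\kappa'}$ resistance form in Proposition~\ref{pr:cle_rmet_char} (items~\eqref{it:rmet_compatibility}--\eqref{it:rmet_cut_loop} follow again from the vanishing of cross-boundary weights), and invokes the already-established Proposition~\ref{prop:exponent_unique} to upgrade to a strong $\CLE_{\kappa'}$ resistance form. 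If you want to salvage your route, you would need to replace the trace by this ``kill the outside'' construction and then argue additivity from there; but at that point you are essentially reproducing the paper's proof.
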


\begin{proof}
 Let~$(\rform{\cdot}{\cdot}{\Gamma}, \rfdomain{\Gamma})$ be the Dirichlet form of the process $\clebm$. By its definition, $(\rform{\cdot}{\cdot}{\Gamma}, \rfdomain{\Gamma})$ is a resistance form and its associated resistance metric $\rmet{\cdot}{\cdot}{\Gamma}$ is a.s.\ continuous with respect to~$\dpathY$.

 We now construct a resistance metric $\rmetres{V}{\cdot}{\cdot}{\Gamma}$ for each $V \in \metregions$ and show that the collection $(\rmetres{V}{\cdot}{\cdot}{\Gamma})_{V \in \metregions}$ satisfies the characterization in Proposition~\ref{pr:cle_rmet_char}.

 Let $V \in \metregions$. By the definition of $\metregions$, there is a finite set of points $v_1,\ldots,v_n$ such that each point in $\Upsilon_\Gamma \setminus \ol{V}$ is either separated from $V$ by $\{v_1,\ldots,v_n\}$ or is contained in a dead end. Let $A \subseteq \ol{V} \cap \Upsilon_\Gamma$ be a finite set that contains $\{v_1,\ldots,v_n\}$ and further points that separate each pair $v_i,v_j$ in $\ol{V} \cap \Upsilon_\Gamma$. Let $w$ be the weight function associated with $\rmet{\cdot}{\cdot}{\Gamma}|_A$ and let
 \[\begin{split}
  w_V(x,y) =
  \begin{cases}
   w(x,y) , & \{x,y\} \nsubseteq \{v_1,\ldots,v_n\} ,\\
   0 , & \{x,y\} \subseteq \{v_1,\ldots,v_n\} .
  \end{cases}
 \end{split}\]
Define $\rmetres{V}{\cdot}{\cdot}{\Gamma}|_A$ to be the associated resistance metric on $A$. We claim that this definition of $\rmetres{V}{\cdot}{\cdot}{\Gamma}$ is consistent, i.e.\ if $A \subseteq A'$ are two sets as above, then they define the same metric when restricted to $A$. Suppose that $A' = A \cup \{x'\}$ and $w'$ is the weight function associated with $\rmet{\cdot}{\cdot}{\Gamma}|_{A'}$. We claim that $w(v_i,v_j) = w'(v_i,v_j)$ for each $i \neq j$.

Let $f_i,f'_i \in \rfdomain{\Gamma}$ be the $\rform{\cdot}{\cdot}{\Gamma}$-harmonic functions with boundary values $\delta_{v_i}$ on $A$, $A'$, respectively. Then $w(v_i,v_j) = -\rform{f_i}{f_j}{\Gamma}$ and $w'(v_i,v_j) = -\rform{f'_i}{f'_j}{\Gamma}$. Moreover, since $(A,w)$ is the trace of the network $(A',w')$, we have $\rformtr{A}{f_i}{f_j}{\Gamma} = \rformtr{A'}{f_i}{f_j}{\Gamma}$.

Since $A$ is chosen to contain points that separate $v_i,v_j$ in $\ol{V} \cap \Upsilon_\Gamma$, the point $x'$ can be connected to at most one of $v_i,v_j$. Assume that it is separated from $v_j$. Since the process $\clebm$ is continuous, by~\eqref{eq:harm_ext}, we have $w'(x',v_j) = 0$ and $f_j = f'_j$. It follows that
\begin{align*}
\rformtr{A'}{f'_i}{f'_j}{\Gamma}
&= \rformtr{A'}{f_i}{f'_j}{\Gamma} \quad\text{(since $f_i$ differs from $f_i'$ only at $x'$ and $w'(x',v_j) = 0$)}\\
&= \rformtr{A'}{f_i}{f_j}{\Gamma} \quad\text{(since $f_j = f_j'$)}
\end{align*}
and therefore $w(v_i,v_j) = w'(v_i,v_j)$ as claimed.

In particular, this shows that $\rmetres{V}{\cdot}{\cdot}{\Gamma}$ indeed defines a resistance metric on $\ol{V} \cap \Upsilon_\Gamma$.

Next, we argue that for each $V \in \metregions[U]$, the metric $\rmetres{V}{\cdot}{\cdot}{\Gamma}$ as constructed above is determined by $\clebm_U$ and $\meas{\cdot}{\Gamma_{U^*}}$. Since both are supposed to be Markovian and translation invariant, we get the Markovianity and translation invariance of $(\rmetres{V}{\cdot}{\cdot}{\Gamma})_{V \in \metregions[U]}$.

To see the claim, let us note that by \cite[Theorem~3.3.8]{cf2012dform}, the process $\clebm_U$ and $\meas{\cdot}{\Gamma_{U^*}}$ together determine $\rform{f}{f}{\Gamma}$ for each $f \in (\rfdomain{\Gamma})_U =  \{ f \in \rfdomain{\Gamma} \mid f=0 \text{ on } \Upsilon_\Gamma \setminus U \}$. Let $A$ be as in the construction above, and let $x \in A$, $y \in A \setminus \{v_1,\ldots,v_n\}$. Let $f_x,f_y$ be the harmonic functions with boundary value $\delta_x$ resp.\ $\delta_y$ on $A$. Since the process $\clebm$ is continuous, by~\eqref{eq:harm_ext}, the function $f_y$ is supported on $\ol{V}$. Let $\wt{f}_x \in \rfdomain{\Gamma}$ be a function that agrees with $f_x$ in a neighborhood of $V$ and is zero on the complement of $U$. By the locality, we have $\rform{f_x}{f_y}{\Gamma} = \rform{\wt{f}_x}{f_y}{\Gamma}$. Since $\wt{f}_x,f_y \in (\rfdomain{\Gamma})_U$, this shows the claim.

Finally, we need to check the items~\eqref{it:rmet_compatibility}--\eqref{it:rmet_cut_loop} in Proposition~\ref{pr:cle_rmet_char}.

We show~\eqref{it:rmet_compatibility}. Let $V,V' \in \metregions$, $V \subseteq V'$, and $x,y \in \ol{V} \cap \Upsilon_\Gamma$ such that for every $u \in V' \setminus V$ there is a point $z \in \ol{V}$ that separates $u$ from $x,y$ in $\ol{V'} \cap \Upsilon_\Gamma$. Let $A$ (resp.\ $A'$) be a set as in the construction of $\rmetres{V}{x}{y}{\Gamma}$ (resp.\ $\rmetres{V'}{x}{y}{\Gamma}$) above where we can assume $A = A' \cap \ol{V}$. Moreover, we can add further points so that for each $u \in A'$ that is separated from $x,y$ in $\ol{V'} \cap \Upsilon_\Gamma$ by a single point $z \in \ol{V}$, the point $z$ is also in $A$. Since the process $\clebm$ is continuous, by~\eqref{eq:harm_ext}, the point $z$ also separates $u$ from $x,y$ in the network $(A',w_{V'})$. By the same argument as above, the weights $w_V$ and $w_{V'}$ agree on the $2$-connected component of $x,y$. Therefore $\rmetres{V}{x}{y}{\Gamma} = \rmetres{V'}{x}{y}{\Gamma}$ as desired.

The items~\eqref{it:rmet_cut_point} and~\eqref{it:rmet_cut_loop} follow similarly by including $z$ (resp.\ $a,b$) into the set $A$ and noting that $z$ separates $x$ from $y$ (resp.\ $A \cap \ol{V_{a,b}}$ from $A \cap \ol{V_{b,a}}$) in the network.
\end{proof}

\subsection{Non-conformal invariance}
\label{se:non_conf_inv}

We now explain that the \clekp{} Brownian motion is not conformally invariant. Let us consider the conformal transformation $\varphi\colon \h \to \D$ with $\varphi(0)=-i$, $\varphi(\infty)=i$, $\varphi(i)=0$. Let $\Gamma$ be a nested \clekp{} in $\h$.

Suppose that $x_1,\ldots,x_n \in \Upsilon_\Gamma$ are such that for each $i$, the point $x_i$ separates $\{x_j \mid j < i\}$ from $\{x_j \mid j > i\}$ in $\Upsilon_\Gamma$. Let $\clebm$ be the \clekp{} Brownian motion on $\Upsilon_\Gamma$, and let $(\clebmlaw{x})_{x \in \Upsilon_\Gamma}$ denote its law. By~\eqref{eq:harm_ext}, we have $\clebmlaw{x_i}[\sigma_{x_{i-1}} < \sigma_{x_{i+1}}] = \rmet{x_i}{x_{i+1}}{\Gamma}/\rmet{x_{i-1}}{x_{i+1}}{\Gamma}$. Therefore, if $\varphi(\clebm)$ had the same law as the \clekp{} Brownian motion on $\varphi(\Upsilon_\Gamma)$ (modulo time change), there would be some $c>0$ such that
\begin{equation}\label{eq:conf_inv_implication}
 \rmet{\varphi(x_i)}{\varphi(x_j)}{\varphi(\Gamma)} = c\,\rmet{x_i}{x_j}{\Gamma}
 \quad\text{for each } i,j .
\end{equation}
We will construct a positive probability event for $\Gamma$ on which~\eqref{eq:conf_inv_implication} fails.

For $k \in \Z$, let $\Gamma_\outside^{*, \h \setminus A(0,2^{k-1},2^{k+1}), \h \setminus A(0,2^{k-2},2^{k+2})}$ be the partial exploration of $\Gamma$ as defined in Section~\ref{se:mcle}. Let $G^k_0$ be the event that $\Gamma_\outside^{*, \h \setminus A(0,2^{k-1},2^{k+1}), \h \setminus A(0,2^{k-2},2^{k+2})}$ contains exactly two strands $\ell^k_1,\ell^k_2$ crossing $A(0,2^{k-2},2^{k+2})$, and that $\ell^k_1 \cap \ell^k_2 \neq \emptyset$. Note that by scale-invariance $p_0 = \p[G^k_0] > 0$ does not depend on $k$.

Let $x_k$ (resp.\ $y_k$) be the first (resp.\ last) intersection point of $\ell^k_1 \cap \ell^k_2$. By the scaling covariance of the \clekp{} resistance metric, the conditional law of $\rmet{x_k}{y_k}{\Gamma}$ given $G^k_0$ is the same as the conditional law of $2^{\resexp k}\rmet{x_0}{y_0}{\Gamma}$ given $G^0_0$ where $\resexp > 0$ is the exponent in Theorem~\ref{thm:unique_metric}. Therefore, if we let
\[
 E^k_1 = G^k_0 \cap \{ M^{-1}2^{\resexp k} \le \rmet{x_k}{y_k}{\Gamma} \le M2^{\resexp k} \} ,
\]
then $\p[G^k_0 \setminus E^k_1]$ does not depend on $k$ and $\lim_{M\to\infty} \p[G^k_0 \setminus E^k_1] = 0$. By applying the independence across scales argument from \cite[Proposition~4.10]{amy2025tightness}, we can choose $M$ so that for some $k$ that can be made as large as we want we have $\p[E^k_1 \cap E^{-k}_1] > 0$.

Next, we let
\[
 E^k_2 = G^k_0 \cap \{ M^{-1}\abs{\varphi'(0)}^{\resexp} 2^{-\resexp \abs{k}} \le \rmet{\varphi(x_k)}{\varphi(y_k)}{\varphi(\Gamma)} \le M\abs{\varphi'(0)}^{\resexp} 2^{-\resexp \abs{k}} \} .
\]
By applying Lemma~\ref{prop:mccle_tv_convergence_int}, we see that $\lim_{M\to\infty} \sup_{k<0} \p[G^k_0 \setminus E^k_2] = 0$. Moreover, as a consequence of Theorem~\ref{thm:unique_metric}, the \clekp{} resistance metric is reflection symmetric, hence $\p[E^k_2] = \p[E^{-k}_2]$. Therefore, by the independence across scales argument as before, for some $M$ we have $\p[E^k_1 \cap E^k_2 \cap E^{-k}_1 \cap E^{-k}_2] > 0$ for some $k$ that can be made as large as we want.

Finally, by Lemma~\ref{pr:link_probability}, the probability that $E^k_1 \cap E^k_2 \cap E^{-k}_1 \cap E^{-k}_2$ occurs and the strands $\ell^k_1,\ell^k_2,\ell^{-k}_1,\ell^{-k}_2$ link to one single loop of $\Gamma$ is positive. If $k$ is chosen sufficiently large, we see that~\eqref{eq:conf_inv_implication} fails on this event as on $E_2^k \cap E^{-k}_2$ we have that $\rmet{\varphi(x_k)}{\varphi(y_k)}{\varphi(\Gamma)}$ is comparable to $\rmet{\varphi(x_{-k})}{\varphi(y_{-k})}{\varphi(\Gamma)}$ but on $E_1^k \cap E^{-k}_1$ we have that $\rmet{x_k}{y_k}{\Gamma}$ is not comparable to $\rmet{x_{-k}}{y_{-k}}{\Gamma}$.

\bibliographystyle{alpha}
\providecommand{\noopsort}[1]{}

\end{document}